\newcommand{\dist}{\operatorname{dist}}
\newcommand{\dv}{\operatorname{div}}
\newcommand{\IR}{\mathbb{R}}
\newcommand{\IN}{\mathbb{N}}
\newcommand{\IZ}{\mathbb{Z}}
\newcommand{\dd}{\, d}
\newcommand{\supp}{\operatorname{supp}}
\newcommand{\loc}{{\mathrm{loc}}}
\newcommand{\eps}{\varepsilon}
\newcommand{\Hdot}{\dot{H}^1}
\providecommand{\U}[1]{\protect\rule{.1in}{.1in}}
\newtheorem{theorem}{Theorem}[section]
\newtheorem{condition}[theorem]{Condition}
\newtheorem{corollary}[theorem]{Corollary}
\newtheorem{definition}[theorem]{Definition}
\newtheorem{lemma}[theorem]{Lemma}
\newtheorem{notation}[theorem]{Notation}
\newtheorem{proposition}[theorem]{Proposition}
\newtheorem{remark}[theorem]{Remark}
\renewenvironment{proof}[1][\proofname]{%
  \par\pushQED{\qed}\normalfont%
  \topsep6\p@\@plus6\p@\relax
  \trivlist\item[\hskip\labelsep\bfseries#1\@addpunct{.}]%
  \ignorespaces
}{%
  \popQED\endtrivlist\@endpefalse
}
\title{The Method of Reflections, Homogenization and Screening for Poisson
and Stokes equations in perforated domains}
\author{Richard H\"{o}fer\thanks{University Bonn}}
\begin{document}
\bigskip

\bigskip

\bigskip

\begin{center}
{\Large The Method of Reflections, Homogenization and Screening for Poisson
and Stokes Equations in Perforated Domains}

\bigskip

Richard H\"{o}fer\footnote{Institute for Applied Mathematics, University of Bonn, Endenicher Allee 60, 53115 Bonn, Germany}, Juan J. L. Vel\'{a}zquez\footnotemark[1]

\bigskip

\today
\end{center}

\begin{abstract}
\noindent We study the convergence of the Method of Reflections for the Dirichlet problem of the Poisson and the Stokes equations in perforated domains which consist in the exterior of balls.
We prove that the method converges if the balls are contained in a bounded region and the density of the electrostatic capacity of the balls is sufficiently small. If the capacity density is too large or the balls extend to the whole space, the method diverges,
but we provide a suitable modification of the method that converges to the solution of the Dirichlet problem also in this case.
We give new proofs of classical homogenization results for the Dirichlet problem of the Poisson and the Stokes equations in perforated domains using the (modified) Method of Reflections.
\end{abstract}

\section{Introduction}

In this paper, we consider Poisson and Stokes equations in perforated domains
\begin{equation}
-\Delta u=f\ \ \text{in\ }\mathbb{R}^{3}\diagdown K\ \ ,\ \ u=0\ \ \text{in\ }%
K, \label{S1E1}%
\end{equation}%
and
\begin{equation}
-\Delta v+\nabla p=f\ ,\ \ \nabla\cdot v=0\ \ \text{in\ }\mathbb{R}%
^{3}\diagdown K\ \ ,\ \ v=0\ \ \text{in\ }K. \label{S1E2}%
\end{equation}
where $u$ is a scalar function, and $v$ is a vector field with values in
$\mathbb{R}^{3}.$
Here, the set $K$ consists of mutually disjoint balls,
\begin{equation}
K=\bigcup_{i \in I  }\overline{B_{r_i}\left(
x_i\right)  }, \label{S1E3}%
\end{equation}
where $I$ is a finite or countable index set.

Problems analogous to \eqref{S1E1} and \eqref{S1E2} have been often studied in
the physics literature using the so-called Method of Reflections. This method
allows to obtain some formal series for the solutions of these equations which
eventually should approximate them.

However, the series obtained by means of the Method of Reflections are
divergent for problems like \eqref{S1E1} and \eqref{S1E2} where $K$ extends to
the whole space. This divergence takes place even if the source term $f$ is
compactly supported or decays very fast at infinity.

The purpose of this paper is to obtain what is the precise mathematical
meaning of the formal series obtained by means of the Method of Reflections
and to explain how these series can be used to obtain the asymptotic behaviour
of the solutions of \eqref{S1E1} and \eqref{S1E2}
in the limit of small balls and the number of balls per unit volume tending to infinity.

\subsection{The Method of Reflections}

The Method of Reflections in hydrodynamic equations was introduced by
Smoluchowski (cf. \cite{Smo11}). This method allows to approximate the solutions of
boundary value problems for the Poisson or Stokes equations in domains with
complex boundaries consisting of many connected components. We write any of
those equations as
\begin{equation}
\mathcal{L}\phi=f\ \ \text{in\ \ }\Omega\, \label{S1E4}%
\end{equation}
where $\phi$ is the solution to be computed and $f$ is a suitable source term,
and where $\mathcal{L}$ could be in principle any linear elliptic operator. We
will assume by definiteness that we wish to solve these equations in the
domain $\Omega=\mathbb{R}^{d}\diagdown\bigcup_{j}C_{j},$ where the sets
$C_{j},$ which from now on will be denoted as particles, are compact sets and
$C_{j}\cap C_{k}=\emptyset$ if $j\neq k.\ $The boundary conditions might be
Dirichlet, Neumann or Robin or any other type as long as they are linear. We
will write the boundary condition at each set $C_{j}$\ as
\begin{equation}
\mathcal{B}\phi=g_{j}\ \ \text{on\ }\partial C_{j}. \label{S1E5}%
\end{equation}

Suppose that the exterior boundary value problem outside each of the sets
$C_{j}$ can be solved explicitly, i.e., we have explicit formulas (typically in
terms of integrals) for the problems%
\begin{equation}
\mathcal{L}\psi_{j}=0\ \ \text{in\ \ }\mathbb{R}^{d}\diagdown C_{j}%
,\ \ \mathcal{B}\psi_{j}=h_{j}\text{ on }\partial C_{j}. \label{S1E6}%
\end{equation}

It is then possible to compute iteratively a solution for the boundary value
problem \eqref{S1E4}, \eqref{S1E5} in $\Omega$ as follows. We write as zero
order approximation $\Phi_{0}$ to the solution of \eqref{S1E4}, \eqref{S1E5}
just as the solution of
\begin{equation}
\mathcal{L}\Phi_{0}=f\ \text{\ in }\mathbb{R}^{d}.\ \ \ \ \  \label{S1E6a}%
\end{equation}
This solution cannot be expected to satisfy the boundary condition
\eqref{S1E5}. We then define a first order approximation to $\phi$ adding to
$\Phi_{0}$ the solutions of the problems \eqref{S1E6} where $h_{j}$ is chosen
as the difference between the desired boundary condition and the one given by
$\Phi_{0}.$ More precisely we define $\Phi_{1,j}$ as the solution of%
\begin{equation}
\mathcal{L}\Phi_{1,j}=0\ \ \text{in\ \ }\mathbb{R}^{d}\diagdown C_{j}%
,\ \ \mathcal{B}\Phi_{1,j}=g_{j}-\mathcal{B}\Phi_{0}\text{ on }\partial C_{j}.
\label{S1E7}%
\end{equation}

We then define $\Phi_{1}=\sum_{j}\Phi_{1,j}.$ Then $\Phi_{0}+\Phi_{1}$ yields
a new approximation to $\phi.$ This new approximation does not satisfy 
the boundary conditions on $\bigcup_{j}\partial C_{j}.$ We can then define a
new correction $\Phi_{2},$ defining functions $\Phi_{2,j}$ in a manner
analogous to \eqref{S1E7}. More precisely we define inductively functions
$\Phi_{k,j}$ as%
\begin{align}
\mathcal{L}\Phi_{k,j}  &  =0\ \ \text{in\ \ }\mathbb{R}^{d}\diagdown
C_{j},\ \ \mathcal{B}\Phi_{k,j}=-\mathcal{B}\left(  \sum_{\ell\neq j}%
\Phi_{k-1,\ell}\right)  \ \text{on }\partial C_{j}\ \text{for\ }%
k=2,3,...,\label{S1E7a}\\
\Phi_{k}  &  =\sum_{j}\Phi_{k,j} \label{S1E7b}%
\end{align}
\ \ 

Iterating the method, we obtain a series $\Psi_{N}=\Phi_{0}+\Phi_{1}+\Phi
_{2}+...+\Phi_{N}$. The reason, why this sequence can be hoped to converge to the solution
of the boundary value problem \eqref{S1E4}, \eqref{S1E5} is that $\Psi_N$ satisfies \eqref{S1E4} and,
by induction,
\[
	\mathcal{B} \Psi_{N} = g_j - \mathcal{B} \Phi_{N+1,j}    \ \text{on }\partial C_{j}.
\]

There are several variations of the Method of Reflections in the literature.
In some cases the corrections $\Phi_{k,j}$ are not computed simultaneously for
all the particles but sequentially in each of the particles (cf. for instance
\cite{Luke}). Nevertheless the main idea of the method is always the same, and
it consists in adding recursively the corrective terms required to have the
desired boundary conditions.

Variations of the Method of Reflection have been used extensively to compute solutions of
Poisson and Stokes equations (cf. \cite{HapBre}, \cite{IB01}, \cite{TT97}, and \cite{Kirp} to mention only a few).
However the mathematical results yielding rigorous conditions on the
convergence of this method and its precise range of applicability are much
more limited. Convergence of the Method of Reflections has been considered in
\cite{Luke} for a particular type of boundary conditions which arise naturally
in sedimentation problems, and in \cite{Tra} for Dirichlet boundary conditions.

There are several clear difficulties that one encounters when trying to prove
the
convergence of the method described above to the solution $\phi.$ If there are
infinitely many particles $C_{j}$, it is not clear whether the functions $\Phi_{k}$
would be defined since they are given by a series with infinitely many
terms. Actually, the divergence of these series might be expected in this
situation because the solutions of Poisson and Stokes equations yield long
range interactions which decay as power laws with a too slow decay. Even if
the functions $\Phi_{k}$ are well defined, the convergence of $\left\{
\Psi_{N}\right\}  _{N}$ as $N\rightarrow\infty$ is not clear. Divergence of
this series might happen if the particles $C_{j}$ are too close and their
mutual interactions do not tend to zero sufficiently fast. More
precisely, divergence is expected if
\[
\Big | \sum_{l\neq j}\Phi_{k,l}(x_{j}) \Big | >\left\vert \Phi
_{k,j}(x_{j})\right\vert
\]
for most of the particles $j$. Indeed this condition implies, that adding
$\Phi_{k}$ does not bring the function closer to the right boundary conditions
at those particles $j$.

In order to investigate the application of the Method of Reflections to Problem
\eqref{S1E1}, let us consider for simplicity the special case
of particles with equal radii distributed on a lattice, i.e.,
\begin{equation}
	\label{eq:lattice}
K=\bigcup_{x \in (d \IZ)^3 }\overline{B_{r}\left(
x\right)  } ,
\end{equation}
where $d>0$ is the particle distance.
For the analysis of the convergence, it turns out
that some characteristic length is of great importance, namely the screening length.
This concept was introduced in the physics literature in \cite{MarqRoss}. A
precise mathematical discussion of this length and its relevance in phase
transition problems driven by diffusive effects can be found in \cite{NO},
\cite{NV}. The following precise definition of the screening length is well
suited in the context of the Methods of Reflections. We consider equal charges
on all particles that are contained in a ball of radius $\rho$. Then, we look
at the potential at the particle which is at the center of this ball. This
potential is the sum of the potential that is induced by the charge on that
particular particle and the potential due to all the other particles. Then,
the screening length is the critical radius $\rho$ at which those two portions
are equal. More precisely, we define $u_j$ to be the unique solution with $u_j(x) \to 0$
as $|x| \to \infty$ of the problem
\begin{align*}
-\Delta u_{j}  &  =0\ \ \text{in \ }\mathbb{R}^{3}\diagdown B_{j},\\
u_{j}  &  =1\ \ \ \text{on\ \ }\partial B_{j}.%
\end{align*}
Then, the screening length is defined as
\begin{equation}
\Lambda:=\sup\Bigg\{  \rho>0:\sup_{\partial B_{j}}\Bigg(  \sum_{l\neq
j,\ x_{l}\in B_{\rho}}u_{l}\left(  x\right)  \Bigg)  <1\Bigg\}.  \label{S3E3}%
\end{equation}

If we now apply the Method of Reflections for Poisson equation to the system
containing only the particles in a cloud of radius $R$, i.e., for
$K_R = K \cup B_R(0)$ with $K$ as in \eqref{eq:lattice}, a sufficient condition for
convergence would be
\[
R<\Lambda.
\]
Indeed, adding $\Phi_{k}$ would then really bring the function closer to the
right boundary conditions for most of the particles, leading to the estimate
\[
\Vert\Phi_{k+1}\Vert\leq\theta\Vert\Phi_{k}\Vert
\]
in a suitable norm, where
\begin{equation}
\label{condConvergence}
\theta:=\sup_{\partial B_{k}}\left(  \sum_{j\neq k,\ x_{j}\in B_{R}}%
u_{k}\left(  x\right)  \right)  <1.
\end{equation}

This condition is similar to the sufficient condition obtained in \cite{Tra}
for the convergence of the Method of Reflections for the Laplace equation in
exterior domains with Dirichlet boundary conditions. The condition there reads
\begin{equation}
\label{cond:Traytak}
\max_{i}\sum_{k\neq i}\frac{\frac{r}{\left\vert x_{i}-x_{k}\right\vert }%
}{1-\frac{r}{\left\vert x_{i}-x_{k}\right\vert }}<1.
\end{equation}
In many particles systems with small radii and typical distance between
particles $d$, the Conditions \eqref{condConvergence} and \eqref{cond:Traytak} are roughly equivalent to%
\[
d^{-3}r\max_{i}\sum_{k\neq i}\frac{d^{3}}{\left\vert x_{i}-x_{k}\right\vert
}<C%
\]
with $C$ of order one. Approximating the sum by an integral and
assuming that the particles are contained in ball with radius $R$, this would
be equivalent to%
\[
d^{-3}r\int_{B_{R}\left(  0\right)  }\frac{dy}{\left\vert y\right\vert
}<C.%
\]
Thus, the screening length $\Lambda$ is of order $\sqrt{r^{-1}d^{3}}$.

Therefore, for general particle distributions, 
it is natural to define 
\begin{equation}
	\label{eq:capacityBound}
	 \mu_0 := \inf_{i \in I} r_i d_i^{-3},
\end{equation}
where $d_i$ denotes the distance of the particle $i$ to the closest other particle.
We will give the precise conditions for the particle distributions at the beginning of Section 2. 

\subsection{Main Results for the Screened Poisson Equation}

In order to avoid divergences but still allow for infinitely many particles,
instead of Poisson equation, we will consider first a modified version of the
problem \eqref{S1E1}, namely the screened Poisson equation
\begin{equation}
-\Delta u+\xi^{-2}u=f\ \ \text{in\ }\mathbb{R}^{3}\diagdown
K\ \ ,\ \ u=0\ \ \text{in\ }K\ \label{S1E8}%
\end{equation}
for some $\xi>0.$ The basic difference between \eqref{S1E1} and \eqref{S1E8}
is that the Green's function associated to the second problem decreases
exponentially in distances of order $\xi.$ Thus, the series defining the
functions $\Phi_{k}$ are well defined. Moreover, particle interactions decay
exponentially on distances of order $\xi$. Therefore, the series in the Method
of Reflections converges for infinitely many particles provided $\mu_0 < C \xi^{-2}
$, as stated in the following theorem.

\begin{theorem}  
\label{SeriesPoissonScreened}Suppose that $\mathcal{L}=-\Delta+\xi^{-2}$ and
$\mathcal{B}=I.$ There exists $C_0>0$ depending only on
$\alpha$ and $\kappa$ from Condition \ref{cond:particlesNotToClose} and \ref{cond:radiusSmallerXi},
and  there exists $\eps <1 $ depending only on $\kappa$ with the following properties. 
Let $\Omega=\mathbb{R}^{3}\diagdown K$ with $K$ as in
\eqref{S1E3}, and let $g_{j}=0$ on $\partial C_{j}.$ Suppose also that $f\in
H^{-1}\left(  \mathbb{R}^{3}\right)  $ and define $\Phi_{0}$ as in
\eqref{S1E6a} and inductively the functions $\Phi_{k}$ by means of
\eqref{S1E7a}, \eqref{S1E7b}. Suppose that $\mu_0$  defined in \eqref{eq:capacityBound} satisfies
\begin{equation}
\mu_0 < C_0 \xi^{-2}. \label{S1E9}%
\end{equation}

Then the series $\sum_{k=0}^{\infty}\Phi_{k}$ converges to the unique solution $u$ of
\eqref{S1E8} in $H^{1}\left(  \mathbb{R}^{3} \backslash K\right) $. Moreover,
\begin{equation}
	\label{eq:expConv}
	\Big\| \sum_{k=0}^{N}\Phi_{k} - u \Big\|_{H^1(\IR^3 \backslash K)} \leq C \eps^N \| f \|_{H^{-1}(\IR^3)},
\end{equation}
where $C$ depends only on $\xi$
In particular, the convergence is
uniform in all particle configurations satisfying \eqref{S1E9}.
\end{theorem}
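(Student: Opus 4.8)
First I would recast the Method of Reflections as a geometric iteration in the Hilbert space $\mathcal{H}:=H^{1}(\IR^{3})$ with the energy inner product $(u,v)_{\xi}:=\int_{\IR^{3}}\nabla u\cdot\nabla v+\xi^{-2}uv$, whose norm is equivalent to the usual $H^{1}$-norm with constants depending only on $\xi$; this equivalence is the only source of the $\xi$-dependence of $C$ in \eqref{eq:expConv}. Since $f\in H^{-1}=\mathcal{H}'$, the term $\Phi_{0}=(-\Delta+\xi^{-2})^{-1}f$ is well defined with $\|\Phi_{0}\|_{\xi}\le C(\xi)\|f\|_{H^{-1}}$. For each $i\in I$ let $V_{i}\subset\mathcal{H}$ be the closure of the smooth functions supported away from $C_{i}=\overline{B_{r_{i}}(x_{i})}$ and let $P_{i}$ be the $(\cdot,\cdot)_{\xi}$-orthogonal projection onto $V_{i}^{\perp}$. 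One checks that $P_{i}\phi$ equals $\phi$ on $C_{i}$ and is $\mathcal{L}$-harmonic on $\IR^{3}\setminus C_{i}$, i.e.\ $P_{i}$ is precisely the reflection at the $i$-th ball; hence, extending each reflection inside its ball by the reflected field (this choice is immaterial since the conclusion is stated in $H^{1}(\IR^{3}\setminus K)$), induction yields $\Phi_{1,i}=-P_{i}\Phi_{0}$, $\Phi_{k,i}=-P_{i}\bigl(\sum_{\ell\neq i}\Phi_{k-1,\ell}\bigr)$ for $k\ge2$, and $\sum_{k=0}^{N}\Phi_{k}=-\Phi_{N+1,i}$ on all of $C_{i}$ (as $P_{i}$ is the identity there), for every $i,N$.

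Everything then reduces to a single geometric estimate, which I would prove at the start of Section~2: there are a symmetric nonnegative array $(\gamma_{i\ell})_{i\ne\ell}$ and $\gamma<1$, depending only on $\alpha,\kappa$ and on $\mu_{0}\xi^{2}$, with $\gamma\to0$ as $\mu_{0}\xi^{2}\to0$, such that
\[
  |(v_{i},v_{\ell})_{\xi}|\le\gamma_{i\ell}\,\|v_{i}\|_{\xi}\|v_{\ell}\|_{\xi}\quad(v_{i}\in V_{i}^{\perp},\,v_{\ell}\in V_{\ell}^{\perp}),\qquad\sup_{i}\sum_{\ell\neq i}\gamma_{i\ell}\le\gamma.
\]
This is the screening estimate behind \eqref{condConvergence}: by self-adjointness of $P_{\ell}$ and $v_{\ell}=P_{\ell}v_{\ell}$, $|(v_{i},v_{\ell})_{\xi}|=|(P_{\ell}v_{i},v_{\ell})_{\xi}|\le\|P_{\ell}v_{i}\|_{\xi}\|v_{\ell}\|_{\xi}$; since an element of $V_{i}^{\perp}$ is the screened-harmonic extension of its trace, with amplitude of order $r_{i}^{-1/2}\|v_{i}\|_{\xi}$ and decay $\tfrac{r_{i}}{|x-x_{i}|}e^{-|x-x_{i}|/\xi}$, one gets $\|P_{\ell}v_{i}\|_{\xi}\le\gamma_{i\ell}\|v_{i}\|_{\xi}$ with $\gamma_{i\ell}\sim\tfrac{\sqrt{r_{i}r_{\ell}}}{|x_{i}-x_{\ell}|}e^{-|x_{i}-x_{\ell}|/\xi}$; comparing $\sum_{\ell\neq i}\gamma_{i\ell}$ with $r\,d^{-3}\int_{\IR^{3}}\tfrac{e^{-|y|/\xi}}{|y|}\dd y\sim rd^{-3}\xi^{2}=\mu_{0}\xi^{2}$ using the separation and uniformity in Condition~\ref{cond:particlesNotToClose} gives $\sup_{i}\sum_{\ell\neq i}\gamma_{i\ell}\le C(\alpha)\mu_{0}\xi^{2}$, while Condition~\ref{cond:radiusSmallerXi} is used to absorb the self-energy $\xi^{-2}\int v_{i}^{2}$ against $\int|\nabla v_{i}|^{2}$. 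Making these amplitude-to-energy conversions rigorous for infinitely many non-identical balls, with constants controlled purely by $\alpha,\kappa$, is the step I expect to be the main obstacle.

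After that the rest is soft. From the cross-term bound, Cauchy--Schwarz, AM--GM and Schur's test one obtains quasi-orthogonality $(1-\gamma)\sum_{i}\|w_{i}\|_{\xi}^{2}\le\bigl\|\sum_{i}w_{i}\bigr\|_{\xi}^{2}\le(1+\gamma)\sum_{i}\|w_{i}\|_{\xi}^{2}$ for $w_{i}\in V_{i}^{\perp}$, and, testing $\sum_{i}\|P_{i}\phi\|_{\xi}^{2}=\bigl(\sum_{i}P_{i}\phi,\phi\bigr)_{\xi}$ against it, $\sum_{i}\|P_{i}\phi\|_{\xi}^{2}\le(1+\gamma)\|\phi\|_{\xi}^{2}$. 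With $\|\Phi_{k}\|_{*}^{2}:=\sum_{i}\|\Phi_{k,i}\|_{\xi}^{2}$, these bounds make each $\Phi_{k}=\sum_{i}\Phi_{k,i}$ converge in $\mathcal{H}$ by induction on $k$ and give $\|\Phi_{1}\|_{*}\le(1+\gamma)^{1/2}\|\Phi_{0}\|_{\xi}$, $\|\Phi_{k}\|_{\xi}\le(1+\gamma)^{1/2}\|\Phi_{k}\|_{*}$. The contraction is then immediate: with $a_{\ell}:=\|\Phi_{k,\ell}\|_{\xi}$, the identity $\|\Phi_{k+1,i}\|_{\xi}^{2}=-\sum_{\ell\neq i}(\Phi_{k+1,i},\Phi_{k,\ell})_{\xi}$ and the cross-term bound give $\|\Phi_{k+1,i}\|_{\xi}\le\sum_{\ell\neq i}\gamma_{i\ell}a_{\ell}$, hence $\|\Phi_{k+1}\|_{*}\le\gamma\|\Phi_{k}\|_{*}$ by Schur again.

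Finally I would fix $C_{0}$ (depending only on $\alpha,\kappa$) small enough that \eqref{S1E9} forces the contraction factor below some $\eps<1$. Then $\|\Phi_{k}\|_{*}\le\eps^{k-1}\|\Phi_{1}\|_{*}$, so $\sum_{k}\|\Phi_{k}\|_{\xi}<\infty$ and $\Psi:=\sum_{k\ge0}\Phi_{k}$ exists in $\mathcal{H}$. Letting $N\to\infty$ in $\sum_{k=0}^{N}\Phi_{k}=-\Phi_{N+1,i}$ on $C_{i}$ (using $\|\Phi_{N+1,i}\|_{\xi}\le\|\Phi_{N+1}\|_{*}\to0$ and $H^{1}$-restriction to $C_{i}$) gives $\Psi=0$ on $K$; since $\mathcal{L}\Psi=\mathcal{L}\Phi_{0}=f$ in $\IR^{3}\setminus K$ and such a $\Psi$ is unique by coercivity of $(\cdot,\cdot)_{\xi}$, we get $\Psi=u$. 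Summing the tail, $\bigl\|\sum_{k=0}^{N}\Phi_{k}-u\bigr\|_{\xi}=\bigl\|\sum_{k>N}\Phi_{k}\bigr\|_{\xi}\le(1+\gamma)^{1/2}(1-\eps)^{-1}\eps^{N}\|\Phi_{1}\|_{*}\le C(\xi)\eps^{N}\|f\|_{H^{-1}}$; passing to the $H^{1}$-norm and restricting to $\IR^{3}\setminus K$ yields \eqref{eq:expConv}, and uniformity over admissible configurations is automatic since $\eps$ and $C_{0}$ depend only on $\alpha,\kappa$.
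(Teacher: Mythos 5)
Your argument is correct and delivers the theorem, but it follows a genuinely different route from the paper. Both proofs rest on the same two pillars: the pairwise interaction estimate (your bound $\|P_{\ell}v_{i}\|_{\xi}\le\gamma_{i\ell}\|v_{i}\|_{\xi}$ with $\gamma_{i\ell}\sim\sqrt{r_{i}r_{\ell}}\,e^{-|x_{i}-x_{\ell}|/\xi}/|x_{i}-x_{\ell}|$ is exactly Lemma \ref{lem:correlationest}, proved there by the cutoff-and-duality computation you sketch, so the step you flag as the main obstacle is indeed where the paper spends its effort) and the Riemann-sum comparison turning row sums of $\gamma_{i\ell}$ into $C(\alpha,\kappa)\mu_{0}\xi^{2}$ via Conditions \ref{cond:Capacity}--\ref{cond:radiusSmallerXi}. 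Where you diverge is the convergence mechanism: you run a direct Schur-test contraction on the corrections, $\|\Phi_{k+1}\|_{*}\le\gamma\|\Phi_{k}\|_{*}$ with $\|\Phi_{k}\|_{*}^{2}=\sum_{i}\|\Phi_{k,i}\|_{\xi}^{2}$, using the identity $\|\Phi_{k+1,i}\|_{\xi}^{2}=-\sum_{\ell\neq i}(\Phi_{k+1,i},\Phi_{k,\ell})_{\xi}$ together with quasi-orthogonality, which is a correct and quite elementary argument yielding absolute convergence of the double series under the smallness condition $\sup_{i}\sum_{\ell\neq i}\gamma_{i\ell}<1$ (essentially \eqref{condConvergence}, \eqref{cond:Traytak}). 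The paper instead packages the partial sums as $(1-L)^{N}G_{0}f$ with $L=\sum_{i}Q_{i}$, bounds $\|L\|\le 1+C_{1}\xi^{2}\mu_{0}$ (Proposition \ref{pro:Aone}, which is your quasi-orthogonality upper bound), proves coercivity of $L$ on $H_{0}^{1}(\IR^{3}\setminus K)^{\perp}$ (Lemma \ref{lem:LCoercive}), and concludes by the spectral theorem (Theorem \ref{th:SeriesConvergentImpliesSolution}). Your route buys a shorter, more quantitative proof of this particular theorem, with $\eps$ essentially universal and no need for the coercivity lemma or spectral calculus; the paper's operator-theoretic route buys more: convergence under the weaker condition $\|L\|<2$, and, more importantly, the framework (self-adjointness, positivity, kernel identification, spectral gap) that carries over verbatim to the resummed iterations $(1-\gamma L)^{N}$ of Theorems \ref{CapOrderOne} and \ref{ConvWholeSpace} and to the homogenization results, which a term-by-term contraction cannot reach once the capacity is no longer small. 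Two minor points to tighten if you write this up: justify unconditional convergence of $\Phi_{k}=\sum_{i}\Phi_{k,i}$ for countable $I$ from square-summability plus the quasi-orthogonality upper bound (as you indicate), and note that your boundary identity $\sum_{k=0}^{N}\Phi_{k}=-\Phi_{N+1,i}$ on $C_{i}$ carries the correct sign (the paper's \eqref{eq:boundaryOfPsi} has a typo there), so passing to the limit and invoking uniqueness is sound.
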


As indicated above, if $ \mu_0 \gtrsim\xi^{-2}$, the condition \eqref{S1E9}
fails. In that case the series $\sum_{k=0}^{\infty}\Phi_{k}$ is in general
divergent and the Method of Reflections cannot be applied, at least not in the form
stated in Theorem \ref{SeriesPoissonScreened}. However, it turns out that it
is possible to give a meaning to the formal series arising in the Method of Reflections
in order to obtain a modified series which converges to the solution of
\eqref{S1E8}.

\begin{theorem}
\label{CapOrderOne} Suppose that Conditions \ref{cond:particlesNotToClose} and \ref{cond:radiusSmallerXi} hold with 
some constants $\alpha$ and $\kappa$, and suppose
\[
	\mu_0 \leq C_\ast \xi^{-2} ,
\]  
for some constant $C_\ast < \infty$. Then, 
there exists a double sequence $q\left(  k,N\right)  $ defined for $k, N\in\mathbb{N}$ and $0\leq
k\leq N,$ depending only on $\alpha$, $\kappa$, and $C_\ast$
with the following properties. For all $ k \in \IN$,  $\lim_{N \rightarrow \infty} q\left(  k,N\right)  =1$ ,
and for all $ f \in  H^{-1}(\IR^3)$, the sequence
\[
\Psi_{N}=\sum_{k=0}^{N}q\left(  k,N\right)  \Phi_{k}%
\]
converges as $N\rightarrow\infty$ to the unique solution $u$ of
\eqref{S1E8} in $H^{1}\left(  \mathbb{R}^{3} \backslash K\right) $. 

Moreover, there exists a constant $\eps <1$ depending only on
$\alpha$, $\kappa$,  and $C_\ast$ such that
\[
	\| \Psi_N - u \|_{H^1(\IR^3 \backslash K)} \leq C \eps^N \| f \|_{H^{-1}(\IR^3)},
\]
where $C$ depends only on $\xi$.
\end{theorem}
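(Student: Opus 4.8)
The plan is to recast the Method of Reflections as the iteration of a single self-adjoint operator on a Hilbert space, and then to replace the (in general divergent) geometric series for its resolvent by a Chebyshev resummation; the coefficients $q(k,N)$ will be the coefficients of the approximating polynomials.

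\emph{Step 1 (the reflection operator).} I would work in $H:=H^{1}(\IR^{3})$ with the energy inner product $\langle v,w\rangle_{H}:=\int_{\IR^{3}}\nabla v\cdot\nabla w+\xi^{-2}vw$, and for each $i\in I$ introduce $H_{i}:=\{v\in H:\ v=0\text{ on }C_{i}\}$ and the orthogonal projection $Q_{i}:=\mathrm{Id}-\Pi_{H_{i}}$ onto $H_{i}^{\perp}$, whose elements solve $\mathcal L\psi=0$ in $\IR^{3}\setminus C_{i}$. Then $\Phi_{0}=\mathcal L^{-1}f$, $\Phi_{1,i}=-Q_{i}\Phi_{0}$, and on the Hilbert direct sum $\mathcal H:=\bigoplus_{i}H_{i}^{\perp}$ the operator $T$ given by $(T\phi)_{i}:=-Q_{i}\sum_{\ell\neq i}\phi_{\ell}$ satisfies $\Phi_{k,\cdot}=T^{k-1}\Phi_{1,\cdot}$ for $k\ge 1$ by \eqref{S1E7a}, while $\Phi_{k}=\Sigma\,\Phi_{k,\cdot}$ with $\Sigma\phi:=\sum_{i}\phi_{i}$. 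A short computation with the orthogonal projections $Q_{i}$ gives $T=T^{\ast}$ and $\Sigma^{\ast}\Sigma=\mathrm{Id}-T$, so that $\langle(\mathrm{Id}-T)\phi,\phi\rangle_{\mathcal H}=\|\Sigma\phi\|_{H}^{2}\ge 0$. The core estimate, which also underlies Theorem~\ref{SeriesPoissonScreened}, is that under Conditions~\ref{cond:particlesNotToClose}, \ref{cond:radiusSmallerXi} and $\mu_{0}\le C_{\ast}\xi^{-2}$ one has a two-sided spectral bound
\[
\sigma(T)\subseteq[-M,\ 1-\delta],\qquad \|\Sigma\|^{2}=\|\mathrm{Id}-T\|\le 1+M,
\]
with $M,\delta>0$ depending only on $\alpha,\kappa,C_{\ast}$: the bound $\|T\|\le M$ is, up to constants, the bound $\mu_{0}\lesssim\xi^{-2}$ used in Theorem~\ref{SeriesPoissonScreened}, while $\sup\sigma(T)\le 1-\delta$ is the uniform coercivity $\|\sum_{i}\phi_{i}\|_{H}^{2}\ge\delta\sum_{i}\|\phi_{i}\|_{H}^{2}$ on $\mathcal H$, which encodes the separation hypothesis. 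Since $\mathrm{Id}-T\ge\delta\,\mathrm{Id}$ is then boundedly invertible, I would verify that $w:=\Phi_{0}+\Sigma(\mathrm{Id}-T)^{-1}\Phi_{1,\cdot}$ solves \eqref{S1E8}: each summand of $\Sigma(\mathrm{Id}-T)^{-1}\Phi_{1,\cdot}$ is $\mathcal L$-harmonic in $\IR^{3}\setminus K$, and the fixed-point identity $\Xi=\Phi_{1,\cdot}+T\Xi$ for $\Xi:=(\mathrm{Id}-T)^{-1}\Phi_{1,\cdot}$ forces $Q_{i}w=0$ for all $i$, i.e. $w=0$ on $K$; hence $w=u$.

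\emph{Step 2 (reduction to a scalar problem).} Set $q(0,N):=1$ and, for a polynomial $P_{N}(t):=\sum_{k=1}^{N}q(k,N)t^{k-1}$ of degree $N-1$, $\Psi_{N}=\Phi_{0}+\Sigma\,P_{N}(T)\Phi_{1,\cdot}$. Using $\|\Sigma\|\le\sqrt{1+M}$ and $\|\Phi_{1,\cdot}\|_{\mathcal H}\le C\|\Phi_{0}\|_{H}\le C(\xi)\|f\|_{H^{-1}}$ one obtains
\[
\|\Psi_{N}-u\|_{H^{1}(\IR^{3}\setminus K)}\le C\,\big\|(\mathrm{Id}-T)^{-1}-P_{N}(T)\big\|\,\|f\|_{H^{-1}} .
\]
By the spectral theorem it therefore suffices to choose $P_{N}$ of degree $N-1$ with $\sup_{[-M,1-\delta]}\bigl|\,1/(1-t)-P_{N}(t)\,\bigr|\le C\eps^{N}$ for some $\eps<1$ depending only on $M,\delta$, and with $q(k,N)\to 1$ for every fixed $k$; this is possible because $1/(1-t)$ is analytic on $[-M,1-\delta]$, its only pole $t=1$ being at distance $\delta$.

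\emph{Step 3 (Chebyshev resummation).} I would take $P_{N}(t):=(1+r_{N}(t))/(1-t)$ with $r_{N}(t):=-T_{N}(\ell(t))/T_{N}(\ell(1))$, where $\ell$ is the affine bijection $[-M,1-\delta]\to[-1,1]$ and $T_{N}$ the $N$-th Chebyshev polynomial. Since $\ell(1)=\frac{1+M+\delta}{1+M-\delta}>1$, one has $|T_{N}(\ell(1))|\sim\tfrac12\rho^{N}$ with $\rho:=\ell(1)+\sqrt{\ell(1)^{2}-1}>1$, hence $\|r_{N}\|_{[-M,1-\delta]}\le C\rho^{-N}$; moreover $1+r_{N}$ vanishes at $t=1$, so $P_{N}$ is a genuine polynomial of degree $N-1$. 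Then $P_{N}(T)-(\mathrm{Id}-T)^{-1}=(\mathrm{Id}-T)^{-1}r_{N}(T)$ has norm $\le\delta^{-1}\|r_{N}\|_{\sigma(T)}\le C\eps^{N}$ with $\eps:=\rho^{-1}<1$ depending only on $\alpha,\kappa,C_{\ast}$. Finally, expanding at $t=0$: the order-$m$ Taylor coefficient of $1+r_{N}$ is $O(\rho^{-N})$ for $m=0$ and equals $-(\ell')^{m}T_{N}^{(m)}(\ell(0))/(m!\,T_{N}(\ell(1)))=O_{m}(N^{2m}\rho^{-N})$ for $m\ge 1$, using $\max_{[-1,1]}|T_{N}^{(m)}|=|T_{N}^{(m)}(1)|\le N^{2m}$; multiplying by $\sum_{m\ge 0}t^{m}$ shows the $(k-1)$-st coefficient of $P_{N}$ equals $1+O_{k}(N^{2k}\rho^{-N})$, so $q(k,N)\to 1$ for every fixed $k$, and $q(k,N)$ depends only on $\alpha,\kappa,C_{\ast}$ through $M,\delta$.

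\emph{Main obstacle.} The only substantial ingredient is the uniform spectral gap $\sup\sigma(T)\le 1-\delta$ of Step~1, equivalently the uniform coercivity $\|\sum_{i}\phi_{i}\|_{H}^{2}\gtrsim\sum_{i}\|\phi_{i}\|_{H}^{2}$ on $\mathcal H$: this is precisely the quantitative statement behind Theorem~\ref{SeriesPoissonScreened} written in spectral form, and it is here that the separation Condition~\ref{cond:particlesNotToClose} and the screening length genuinely enter. Everything after it — the resummation and the verification that $q(k,N)\to 1$ — is a soft functional-calculus and Chebyshev-approximation computation.
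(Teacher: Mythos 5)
Your proposal is correct, but it takes a genuinely different route from the paper. The paper stays on $H^1(\IR^3)$ with the single operator $L=\sum_i Q_i$: it bounds $\|L\|\le 1+C_1\xi^2\mu_0$ (Proposition \ref{pro:Aone}), proves coercivity of $L$ on $H^1_0(\IR^3\backslash K)^\perp$ (Lemma \ref{lem:LCoercive}), deduces via the spectral theorem that the damped iteration $(1-\gamma L)^N G_0 f$ with a fixed $\gamma\le 1/\|L\|$ converges exponentially to $PG_0f$ (Proposition \ref{pro:abstractProjection}, Corollary \ref{cor:solbyprojection}), and then identifies $(1-\gamma L)^N G_0 f=\sum_k q(k,N,\gamma)\Phi_k$ by a purely combinatorial computation (Lemma \ref{lem:combinatorics}), with $q$ expressed through incomplete Beta functions. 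You instead lift the scheme to $\mathcal{H}=\bigoplus_i H_i^\perp$, represent the solution as $\Phi_0+\Sigma(\mathrm{Id}-T)^{-1}\Phi_{1,\cdot}$ using $\Sigma^\ast\Sigma=\mathrm{Id}-T$ (and $\Sigma\Sigma^\ast=L$), and replace the Neumann series for $(\mathrm{Id}-T)^{-1}$ by Chebyshev acceleration; since $\Phi_k=\Sigma T^{k-1}\Phi_{1,\cdot}$, the identity $\Psi_N=\sum_k q(k,N)\Phi_k$ is immediate and the paper's combinatorial lemma is bypassed, the coefficients now being Taylor coefficients of the Chebyshev polynomials $P_N$. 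The analytic core is the same in both arguments, and you isolate it correctly: the inclusion $\sigma(T)\subset[-M,1-\delta]$ is exactly Lemma \ref{lem:locH^-1} transported by the isometry $G_0$ (the upper bound is Proposition \ref{pro:Aone}, the lower bound is the first inequality in \eqref{eq:locH^-1}, which also gives $\ker\Sigma=\{0\}$ and is spectrally equivalent to Lemma \ref{lem:LCoercive} because $\sigma(\Sigma^\ast\Sigma)\setminus\{0\}=\sigma(\Sigma\Sigma^\ast)\setminus\{0\}$); since you do not reprove it, your argument is complete only modulo that estimate, which the paper supplies. As for what each choice buys: the paper's Richardson-type damping gives simple, nonnegative, explicit coefficients and needs only the upper bound on $\|L\|$ for plain convergence (the gap enters only for the exponential rate), whereas your Chebyshev resummation needs both spectral endpoints from the outset but yields the essentially optimal geometric rate $\rho^{-N}$ and a cleaner derivation of the weighted series. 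Your Chebyshev bookkeeping (degree $N-1$ after the forced zero at $t=1$, $\|r_N\|\le C\rho^{-N}$, $q(k,N)=1+O_k(N^{2k}\rho^{-N})$) is correct, and the extra factor $\|\Sigma\|\,\delta^{-1}$ in your final estimate, which depends on $\alpha,\kappa,C_\ast$ rather than only on $\xi$, is a harmless bookkeeping difference from the stated constant.
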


\subsection{The Summation Procedure and the Main Result for the Poisson Equation}

Theorem \ref{CapOrderOne} can be thought as a summation method for the
original series $\sum_{k=0}^{\infty}\Phi_{k}.$ The precise construction of the
sequence $q\left(  k,N\right)  $ will be given in Section 2.

Theorems \ref{SeriesPoissonScreened} and \ref{CapOrderOne} refer to the
Dirichlet problem for the screened Poisson equation \eqref{S1E8} containing a
parameter $\xi$ which restricts the range of interaction between particles to
the finite value $\xi.$ It is natural to ask if the result can be generalized
to the Dirichlet Problem for the Poisson equation \eqref{S1E1} which
corresponds to $\xi=\infty.$

In this case, the series \eqref{S1E7b} defining the functions $\Phi_{k}$ does
not converge if the particles extend to the whole space $\IR^3$
and then the Method of Reflections as formulated in Theorem
\ref{SeriesPoissonScreened} becomes meaningless (see also \eqref{S1E9}).

Nevertheless,  using the formal
series $\sum_{k=0}^{\infty}\Phi_{k}$, it is possible to construct an alternative series which converges to the solution of
\eqref{S1E1}.
However, the relation between the original (divergent) series and the modified one, is much more
involved than in the case of the screened Poisson equation Theorem \ref{CapOrderOne}.
Therefore, we will first give an idea of the summation method.

The summation method is based on an interpretation of the Method of Reflections
using an abstract idea of Functional Analysis in
Hilbert spaces. It is well known that by means of convenient choices of
Hilbert spaces $H$, the solutions of many boundary value problems for a large
class of equations with the form \eqref{S1E4} is equivalent to the orthogonal
projection of $\mathcal{L}^{-1}f$ to the subspace of the Hilbert space for
which the boundary conditions hold. We denote here by $\mathcal{L}^{-1}$ the
operator solving \eqref{S1E4} in the whole space, which can be easily computed
using the Green's function associated to \eqref{S1E4}. We will denote this
orthogonal projection operator providing the solution of the boundary value
problem \eqref{S1E4} by $P$. This projection maps the Hilbert space $H$
into the subspace satisfying the boundary conditions, which will be denoted by
$V.$ On the other hand, we can associate another
orthogonal projection operator $P_{j}$ to the solution of the boundary
value problem for a single particle $j$. This
projection maps $H$ in a subspace $V_{j}$ for which the boundary conditions
are satisfied at the particle $j.$ We have $ V=\cap_{j}V_{j}.$ Let  $Q_{j}$ 
denote the orthogonal projection from $H$ in the orthogonal of $V_{j}$ in $H.$

It turns out that the partial sums for the Method of Projections $\sum
_{k=0}^{N}\Phi_{k}$ can be written as%
\[
\bigg(  1-\sum_{j}Q_{j}\bigg)  ^{N}\mathcal{L}^{-1}f.
\]
Thus, the Method of Projections converges to the solution of \eqref{S1E4} if%
\begin{equation}
P=\lim_{N\rightarrow\infty}\bigg(1 -\sum_{j}Q_{j}\bigg)  ^{N}
\label{S3E4}%
\end{equation}
in some suitable way. This result would hold trivially if the subspaces
$\left\{  V_{j}\right\}  $ were mutually orthogonal. However, if the angles
between some of these subspaces are too small a geometrical argument shows
that \eqref{S3E4} will fail. 
 It is precisely condition \eqref{S1E9} that ensures
that the convergence \eqref{S3E4} takes place for the Dirichlet Problem of the screened Poisson equation
\eqref{S1E8}. This is the main idea in the
Proof of Theorem \ref{SeriesPoissonScreened}.

A related geometrical interpretation of the Method of Reflections has been
analyzed in \cite{Luke}. The method used in \cite{Luke} can be
applied to systems with finitely many particles, and the convergence of the
Method of Reflections used there, which does not treat all the particles
simultaneously but sequentially, leads to showing that
\[
\lim_{N\rightarrow\infty}\bigg(  \prod_{j}P_{j}\bigg)  ^{N}=P,%
\]
where the product is taken over the finite number of particles chosen in any
order. Actually, the Method of Reflections used in \cite{Luke} cannot be
applied in the case of Dirichlet boundary conditions. Instead, it is applied to the
Stokes system imposing the set of mixed boundary conditions at the particles
satisfied by sedimenting inertialess particles, and to the Poisson
equation with analogous boundary conditions. 

As indicated above, the convergence stated in \eqref{S3E4} cannot be expected
if \eqref{S1E9} fails. However, a geometrical argument shows that, as long as
the sum $\sum_{j}Q_{j}$ is convergent, the following convergence takes place.%
\begin{equation}
P=\lim_{N\rightarrow\infty}\bigg(  I-\gamma\sum_{j}Q_{j}\bigg)  ^{N},
\label{S3E5}%
\end{equation}
if $\gamma>0$ is small enough. Actually the right hand side can be written as a series directly related
to the original series $\sum_{k=0}^{N}\Phi_{k}$ stated in
Theorem \ref{CapOrderOne}.

For the Poisson equation with particles extending to the whole space, the series $\sum_{j}Q_{j}$ is
in general divergent. However, a similar idea can be applied by including in $\gamma$ an
additional dependence on the particle position.

\begin{theorem}
\label{ConvWholeSpace}
Let $f\in \dot{H}^{-1}\left(  \mathbb{R}^{3}\right)  .$ There exists a $\gamma_0 >0$
depending only on $\mu_0$ from Equation \eqref{eq:capacityBound} and $\kappa$ from Condition \ref{cond:particlesNotToClose} such that the sequence
\begin{equation}
\lim_{N\rightarrow\infty}\bigg(  1-\gamma\sum_{j}e^{-\left\vert
x_{j}\right\vert }Q_{j}\bigg)  ^{N} (-\Delta)^{-1} f\ \label{S3E6}%
\end{equation}
converges to the solution of \eqref{S1E1} in $\dot{H}^{1}\left(
\mathbb{R}^{3}\right)  $ for all $\gamma < \gamma_0$.
\end{theorem}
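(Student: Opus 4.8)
\textit{Proof strategy.} The plan is to read the right‑hand side of \eqref{S3E6} as an abstract operator identity in the Hilbert space $H := \dot H^1(\IR^3)$ with inner product $\langle u,v\rangle = \int_{\IR^3}\nabla u\cdot\nabla v$, exactly along the lines of \eqref{S3E4}--\eqref{S3E5}. For each $j$ set $V_j := \{u\in H : u = 0 \text{ quasi-everywhere on } \overline{B_{r_j}(x_j)}\}$, let $P_j$ be the orthogonal projection of $H$ onto $V_j$, and $Q_j := I - P_j$; let $V := \bigcap_j V_j = \{u\in H : u = 0 \text{ q.e. on } K\}$ and let $P$ be the orthogonal projection onto $V$. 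As in the variational setup underlying Theorem \ref{CapOrderOne}, if $w_0 := (-\Delta)^{-1}f\in\dot H^1(\IR^3)$ denotes the free‑space solution (which exists since $f\in\dot H^{-1}$), then the unique solution $u$ of \eqref{S1E1} in $\dot H^1(\IR^3)$ is exactly $u = Pw_0$: indeed $u - w_0 \perp V$ and $u\in V$. Writing $T := \sum_j e^{-|x_j|}Q_j$, the theorem therefore reduces to showing that $(I - \gamma T)^N w_0 \to Pw_0$ in $H$ as $N\to\infty$ whenever $\gamma$ is small enough.

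The analytic core is to prove that $T$ is a well-defined bounded, self-adjoint, nonnegative operator on $H$ with $\|T\|\le M$ for some $M = M(\mu_0,\kappa)<\infty$. Each summand $e^{-|x_j|}Q_j$ is nonnegative with norm $\le 1$, so for the partial sums $T_\rho := \sum_{|x_j|\le\rho}e^{-|x_j|}Q_j$ one has $\langle T_\rho u, u\rangle = \sum_{|x_j|\le\rho}e^{-|x_j|}\|Q_j u\|^2\ge 0$. I would estimate $\sum_j e^{-|x_j|}\|Q_j u\|^2\le M\|u\|^2$ by combining three ingredients: first, the localised capacitary estimate for a single perforation, of the form $\|Q_j u\|^2\lesssim r_j\,|\overline u_j|^2 + \int_{B_{cd_j}(x_j)}|\nabla u|^2$, where $\overline u_j$ is an average of $u$ over a ball of radius comparable to $d_j$ about $x_j$ (a Poincar\'e/trace inequality on one cell, reflecting that the operator "mass" of $Q_j$ scales like the capacity $\sim r_j$); second, Condition \ref{cond:particlesNotToClose}, which makes the enlarged balls $B_{cd_j}(x_j)$ boundedly overlapping and bounds the local capacity density by a constant times $\mu_0$, so that the contributions with $x_j$ in a fixed unit cube sum to $\lesssim (1+\mu_0)\,\|u\|^2_{\dot H^1}$ over a fixed neighbourhood of that cube; third, the exponential weights $e^{-|x_j|}$, which are summable against the at most polynomially growing number of unit cubes, converting the per-cube bounds into a global bound $M\|u\|^2$. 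Since the $T_\rho$ are nonnegative with $\|T_\rho\|\le M$ and $\sum_j e^{-|x_j|}\|Q_j u\|^2 < \infty$, the inequality $\|(T_{\rho'}-T_\rho)u\|^2\le \|T_{\rho'}-T_\rho\|\,\langle (T_{\rho'}-T_\rho)u,u\rangle$ shows that $(T_\rho u)$ is Cauchy in $H$; the strong limit $T$ is then bounded, self-adjoint, and $0\le T\le MI$ as quadratic forms.

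With $T$ in hand the rest is soft and parallels the proof of Theorem \ref{CapOrderOne}. From $\langle Tu,u\rangle = \sum_j e^{-|x_j|}\|Q_j u\|^2$ one reads off that $Tu=0\iff Q_j u = 0$ for all $j\iff u\in\bigcap_j V_j = V$, so $\ker T = V$ and, by self-adjointness, $\overline{\operatorname{ran}T} = V^\perp$. Fix now $0 < \gamma < \gamma_0 := 1/M$. On $V$ we have $(I-\gamma T)^N = I = P$ for all $N$; on $V^\perp$ the operator $T$ has spectrum in $[0,M]$ with $0$ not an eigenvalue, hence $I - \gamma T$ has spectrum in $(0,1]$ with no eigenvalue at $1$, and by the spectral theorem (dominated convergence for the spectral measure of $I-\gamma T$, with integrand $t^{2N}\le 1$ tending to $0$ on $(0,1)$ and the measure carrying no atom at $1$) $(I-\gamma T)^N\to 0$ strongly on $V^\perp$. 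Therefore $(I-\gamma T)^N\to P$ strongly on $H$, and in particular $(I-\gamma T)^N w_0\to Pw_0 = u$ in $\dot H^1(\IR^3)$. Since $\gamma_0 = 1/M$ depends only on $\mu_0$ and $\kappa$, this is the assertion.

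The step I expect to be the genuine obstacle is the uniform bound $\|T\|\le M(\mu_0,\kappa)$ in the second paragraph. The trivial estimate $\|Q_j u\|\le\|u\|$ is hopeless with infinitely many particles, and $\sum_j Q_j$ really is unbounded when the perforations fill $\IR^3$; what rescues the weighted sum is the interplay between the capacitary smallness of each $Q_j$, the locally finite capacity density encoded by $\kappa$ and $\mu_0$, and the exponential localisation of the weights. Making this quantitative needs the sharp one-cell capacitary/Poincar\'e estimate together with a Schur-type summation over cells; everything downstream — the identification $u = Pw_0$, the computation $\ker T = V$, and the spectral passage to the limit — is routine and essentially identical to the screened case, the only new feature being that the position-dependent weight $e^{-|x_j|}$ is strictly positive (so $\ker T$ is still all of $V$) yet decaying (so $T$ stays bounded).
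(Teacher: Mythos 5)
Your argument is correct and follows the same skeleton as the paper's proof: identify the Dirichlet solution as $P G_0 f$ via the variational characterization (the paper's Lemma \ref{lem:poibdry}), show that the weighted operator $\sum_j e^{-|x_j|}Q_j$ is bounded, nonnegative and self-adjoint on $\dot H^1(\IR^3)$ with norm controlled by $\mu_0$ and $\kappa$ (the paper's Lemma \ref{lem:Lbounded}), note that its kernel is exactly $\dot H^1_0(\IR^3\setminus K)$, and conclude by the spectral theorem that $(1-\gamma L)^N\to P$ strongly for $\gamma<1/\|L\|$ (the paper's Proposition \ref{pro:nonuniformAbstractProjection}). Where you genuinely diverge is in the proof of the key boundedness estimate. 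The paper obtains it by the duality route: the almost-orthogonality Lemma \ref{lem:pLocH^-1} for families of $\dot H^{-1}$ functions supported in the balls (proved, as in Lemmas \ref{lem:correlationest}--\ref{lem:locH^-1}, with cutoffs and the decay coming from the exponential weight), combined with the inequality $\|L_N u\|^2\le(1+C\mu_0)(L_Nu,u)$ and the spectral theorem to bound $\|L_N\|$ uniformly. You instead bound the quadratic form directly, $\sum_j e^{-|x_j|}\|Q_ju\|^2\le M\|u\|^2$, via a one-cell capacitary/Poincar\'e estimate $\|Q_ju\|^2\lesssim r_j|\overline u_j|^2+\|\nabla u\|^2_{L^2(B_{d_j/2}(x_j))}$ and summation over unit cubes against the exponential weight; since the partial sums are nonnegative and self-adjoint, the form bound gives the norm bound and your Cauchy argument for the partial sums is sound. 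This route is more elementary and self-contained for the present theorem; the price is that it does not produce the $H^{-1}$ correlation estimate itself, which the paper reuses later (e.g.\ in Lemma \ref{lem:pMonopapprox} for the homogenization argument). One point you should make explicit when writing it up: since $u\in\dot H^1$ is only in $L^6$, the capacity terms $\sum_j e^{-|x_j|}r_j|\overline u_j|^2\lesssim \mu_0\sum_j e^{-|x_j|}\|u\|^2_{L^2(B_{d_j/2}(x_j))}$ cannot be closed with a plain $L^2$ bound; you need H\"older and Sobolev, $\sum_q e^{-c\,\dist(q,0)}\|u\|^2_{L^2(q)}\lesssim\|e^{-c|\cdot|}\|_{L^{3/2}}\|u\|^2_{L^6}\lesssim\|\nabla u\|^2_{L^2}$, together with a separate (trivial) treatment of the finitely many particles per cube with $d_j\gtrsim1$. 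With that detail supplied, your proof is complete and gives $\gamma_0=1/M$ depending only on $\mu_0$ and $\kappa$, as required.
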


\begin{remark}
We denote by $\Hdot(\IR^3) := \{  v \in L^6(\IR^3) \colon \nabla v \in L^2 (\IR^3) \}$ the homogeneous Sobolev space
 and by
$\dot{H}^{-1}\left(  \mathbb{R}^{3}\right)  $ its dual space.
\end{remark}

\subsection{Homogenization Results}
To illustrate the possible use of the Method of Reflections,
we will give a proof of classical homogenization results in perforated domains 
using only the tools developed in this paper.

For simplicity we will only consider regular particle configurations \eqref{eq:lattice}.
We have already explained the importance of the quantity $d^{-3}r$ when we
introduced the screening length $\Lambda$. Furthermore, we can draw the
following analogy to the theory of electrostatics. The electrostatic capacity
of a conductor is the charge induced on it by a difference of potential. In
the case of the system under consideration, we consider the difference of $u$
between the surface of the sphere and sufficiently far from it, at distances
of the order of the particle distances. It turns out that $d^{-3}r$ is of the
order of the density of the electrostatic capacity of the particles of the
system. We recall that the electrostatic capacity of a sphere of radius $r$ is
$4\pi r$ (cf. \cite{Jackson}).
The role of the electrostatic capacity in the solution of the Dirichlet
problem for the Laplace equation in perforated domains was already recognized
in \cite{CiMu}, \cite{MK}. The question considered in this paper was the
homogenization problem%
\begin{equation}
-\Delta u_{r}=f\ \ \text{in\ }\Omega\diagdown K_{r}\ \ ,\ \ u_{r}%
=0\ \ \text{in\ }K_{r}\cup\partial\Omega,\ \ \label{S2E2}%
\end{equation}
where $\Omega$ is an open bounded subset of $\mathbb{R}^{n}$ and $K_{r}$ is
the sequence of domains%
\begin{equation}
K_{r}=\bigcup_{x\in\left(  d\mathbb{Z}^{3}\right)  }\overline{B_{r}\left(
x\right)  }, \label{S2E6}%
\end{equation}
where the density of electrostatic capacity $\mu=\frac{4\pi r}{d^{3}}$ is
assumed to be constant. It was proved in \cite{CiMu} that for $f\in L^{2}\left(  \Omega\right) $
the sequence of solutions $u_{r}$ converges weakly in
$H^{1}\left(  \Omega \right)  $ as $r\rightarrow0$ to the solution of
\begin{equation}
-\Delta u+\mu u=f\ \ \text{in\ }\Omega\ \ ,\ \ u=0\ \ \text{in\ }%
\partial\Omega. \label{S2E3}%
\end{equation}

The results of \cite{CiMu} do not require to assume that $\mu$ is constant,
and more general particle configurations than the ones in \eqref{S1E3} can be
considered. Generalizations have been developed, including more general elliptic operators, in particular
Stokes equations \cite{All}, \cite{DGR}, \cite{MK}. Most of the homogenization results for elliptic problems have
been obtained in bounded domains. The homogenization problem associated to
\eqref{S2E2} has been considered in \cite{NV1}, \cite{NV}. In particular, it
was proved in those papers that assuming that $f\in L^{\infty}\left(
\mathbb{R}^{n}\right)  $, the unique bounded solutions of \eqref{S2E2} converge
weakly in $H_{loc}^{1}\left(  \mathbb{R}^{n}\right)  $ as $r\rightarrow0$ to
the solution of \eqref{S2E3} with $\Omega=\mathbb{R}^{n}$. The proof of the
homogenization results in \cite{NV1}, \cite{NV} relies heavily in the
derivation of the so-called screening estimate, which states that the
fundamental solution for the Laplace equation in a perforated domain with
homogeneous Dirichlet boundary conditions decreases exponentially over
distances of the order of the screening length $\Lambda=\frac{1}{\sqrt{\mu}}.$
The proof of this estimate given in \cite{NV} uses the maximum principle for
second order elliptic operators and therefore the proof cannot be easily
generalized to higher order operators.

Since the convergence result in Theorem
\ref{ConvWholeSpace} is
uniform in particle configurations as in \eqref{S2E6}
if the capacity density remains bounded, it turns out that
it is possible to use it to derive also homogenization results not using
Maximum Principle arguments.

\begin{theorem}
\label{HomogLambZero} Suppose that $f\in H^{-1}\left(  \mathbb{R}^{3}\right)
.$ Then, the problems \eqref{S1E8} with
$K=K_{r}$ as in \eqref{S2E6} and constant $\mu = \frac{4\pi r}{d^{3}}$ have unique solutions $u_{r} \in H^{1}\left(  \mathbb{R}^{3}\right) $. In the limit $ r \to 0$, $u_r$ 
converges weakly in $H^{1}\left(  \mathbb{R}^{3}\right)$
to the unique  solution $u\in H^{1}\left(  \mathbb{R}^{3}\right) $ of the problem
\begin{equation}
-\Delta u+\mu u=f\ \ \text{in\ }\mathbb{R}^{3}.
\end{equation}
\end{theorem}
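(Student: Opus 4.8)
The plan is to deduce the homogenization result directly from the uniform-in-configuration convergence of the (modified) Method of Reflections established in Theorem \ref{CapOrderOne}, bypassing the maximum principle entirely. First I would note that for the lattice configuration \eqref{S2E6} with $\mu = 4\pi r/d^3$ held constant, one has $\mu_0 = r d^{-3} = \mu/(4\pi)$, which is a fixed number independent of $r$; hence Condition \ref{cond:particlesNotToClose} and Condition \ref{cond:radiusSmallerXi} hold with constants $\alpha,\kappa$ that can be chosen uniformly in $r$ (for $r$ small, $r < \xi$ automatically, and the lattice is uniformly separated after rescaling). Thus $\mu_0 \le C_\ast \xi^{-2}$ for a fixed $C_\ast$ and all small $r$, so Theorem \ref{CapOrderOne} applies with a rate $\eps < 1$ and summation coefficients $q(k,N)$ that do \emph{not} depend on $r$. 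Existence and uniqueness of $u_r \in H^1(\IR^3)$ for \eqref{S1E8} is the standard Lax--Milgram statement for the coercive bilinear form $\int \nabla u \cdot \nabla \varphi + \xi^{-2} \int u\varphi$ on $H^1_0(\IR^3 \backslash K_r)$, extended by zero; the uniform bound $\|u_r\|_{H^1(\IR^3)} \le C\|f\|_{H^{-1}}$ is immediate, so along a subsequence $u_r \wto u$ weakly in $H^1(\IR^3)$.

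The heart of the argument is to identify the weak limit $u$ as the solution of $-\Delta u + \mu u = f$. For this I would fix $N$ and use \eqref{eq:expConv}-type control: by Theorem \ref{CapOrderOne}, $\|u_r - \Psi_N^{(r)}\|_{H^1} \le C\eps^N \|f\|_{H^{-1}}$ with $C$ depending only on $\xi$ (not on $r$), where $\Psi_N^{(r)} = \sum_{k=0}^N q(k,N)\Phi_k^{(r)}$. So it suffices to understand the limit of each partial sum $\Psi_N^{(r)}$ as $r \to 0$, and then let $N \to \infty$ using the uniform tail bound. Each $\Phi_k^{(r)}$ is an explicit finite-depth reflection iterate: $\Phi_0^{(r)} = (-\Delta + \xi^{-2})^{-1} f$ is independent of $r$, and $\Phi_{k}^{(r)} = -\sum_j (\text{single-ball screened-harmonic extension of } \mathcal{B}\sum_{\ell \neq j}\Phi_{k-1,\ell}^{(r)})$. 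The key computation is the $r \to 0$ asymptotics of a single reflection: the screened capacitary potential of $B_r(x_i)$ with boundary data $\approx \phi(x_i)$ behaves, to leading order, like $\phi(x_i)$ times the single-ball potential, whose "strength" is $4\pi r$ (the capacity), so that $\sum_j$(single-ball corrections) acts in the limit like convolution against $4\pi r \sum_{x_i \in d\IZ^3}\delta_{x_i} \rightharpoonup \mu\, dx$ composed with $(-\Delta+\xi^{-2})^{-1}$. Concretely I would show $\Phi_k^{(r)} \to \Phi_k^{(0)}$ (strongly in $H^1_{loc}$, and with uniform $H^1(\IR^3)$ bounds) where $\Phi_k^{(0)} = (-(-\Delta+\xi^{-2})^{-1}\mu)^k \Phi_0$, i.e. the Neumann series iterate for the homogenized operator. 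Summing, $\sum_k q(k,N)\Phi_k^{(0)} \to (-\Delta+\xi^{-2}+\mu)^{-1}f =: u^{\rm hom}$ as $N\to\infty$ (the $q(k,N)$ being a legitimate summation method for this convergent-for-small-$\mu$ or Abel-summable series), and combining the three limits ($r\to 0$ at fixed $N$, then $N\to\infty$, controlled by $C\eps^N$) gives $u = u^{\rm hom}$, which pins down the whole sequence and upgrades subsequential to full weak convergence.

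The main obstacle is the interchange of the $r \to 0$ and $N \to \infty$ limits together with making the single-reflection asymptotics rigorous and \emph{uniform in $k$}: one must show that $\Phi_k^{(r)} \to \Phi_k^{(0)}$ with an error that, after multiplication by $q(k,N)$ and summation over $0 \le k \le N$, is $o(1)$ as $r \to 0$ for each fixed $N$ — this requires a quantitative rate in the capacitary asymptotics (the correction to "$4\pi r \delta_{x_i}$" is $O(r^2)$ plus discretization error $O(r d^{-4}\cdot d) $ from replacing the lattice sum by an integral, both $\to 0$) and control of how these errors propagate through the $k$-fold iteration, where each iteration applies a bounded operator of norm $\le \eps < 1$ so that the accumulated error stays geometrically summable. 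The exponential-in-$N$ convergence from Theorem \ref{CapOrderOne}, crucially uniform in the configuration, is exactly what lets the tail past $N$ be discarded before taking $r \to 0$, so no maximum principle or screening estimate in the perforated domain is needed.
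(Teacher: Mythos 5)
There is a genuine gap, and it starts with which equation the theorem is actually about. Despite the (typographical) reference to \eqref{S1E8}, the asserted limit problem is $-\Delta u+\mu u=f$, i.e.\ the theorem concerns the Poisson problem \eqref{S1E1} in the perforated domain; the screened case is treated in the paper as a separate ``analogous result'' whose limit equation is $-\Delta u+(\xi^{-2}+\mu)u=f$. Your argument lives entirely in the screened framework with a fixed finite $\xi$, and indeed your own identification of the limit is $u^{\rm hom}=(-\Delta+\xi^{-2}+\mu)^{-1}f$, which does \emph{not} solve the equation stated in the theorem — so even if every step were filled in, you would have proved the analogous screened statement, not the one at hand. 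For the actual Poisson case your central tool disappears: Theorem \ref{CapOrderOne} cannot be invoked, since for $\xi=\infty$ and particles occupying the whole lattice the sums $\Phi_k=\sum_i\Phi_{k,i}$ diverge and $\sum_iQ_i$ is unbounded (long-range interactions); the paper must instead use the spatially cut-off operator $L_r=\sum_i e^{-|x_i|}Q_i$, and then there is \emph{no} configuration-uniform exponential rate $C\eps^N$ in $H^1(\IR^3)$ at all, because the cutoff destroys coercivity on $\dot H^1_0(\IR^3\setminus K_r)^\perp$ (convergence is slower the farther the mass of $f$ sits from the origin). The uniform-in-$r$ control that replaces your $C\eps^N$ bound is only \emph{local} (Proposition \ref{pro:pSolByScattering}), and its proof needs exactly the ingredients you claim to avoid: coercivity in compacta (Lemma \ref{lem:LCoerciveInCompacta}) and a screening/decay estimate in the perforated domain obtained by hole filling and a Poincar\'e inequality in perforated annuli (Lemmas \ref{lem:decayQuasiHarmonic}, \ref{lem:poincareAnnulus}). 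The paper's point is that this screening estimate is proved \emph{without} the maximum principle, not that it is dispensable.

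Two further items in your outline are also not covered by your argument. First, the $H^1(\IR^3)$ well-posedness and the uniform bound $\|u_r\|_{H^1}\le C\|f\|_{H^{-1}}$ for the Poisson problem are not a Lax--Milgram statement for a coercive form on $H^1$; they rely on the Poincar\'e inequality for the perforated domain (Lemma \ref{lem:poincare}, Corollaries \ref{cor:PoincarePerforated} and \ref{cor:Existence}), which is absent from your proposal and is also what upgrades weak $\dot H^1$ convergence to weak $H^1$ convergence at the end. Second, the asserted strong $H^1_{\mathrm{loc}}$ convergence of each reflection iterate to the corresponding Neumann-series iterate, with errors propagating stably through $k$ iterations, is exactly the technical heart of the matter and is only asserted: in the paper one obtains weak limits of the fixed-$M$ approximations (Proposition \ref{pro:pseqweaklimit}), and even this requires rewriting powers of $L_r$ so that no particle index appears back-to-back (Lemma \ref{lem:powersOfL}) together with the quantitative uniform-charge-density (monopole) approximation of the single-particle operators (Lemma \ref{lem:pMonopapprox}), precisely because weak limits do not commute with taking powers. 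The overall scheme you propose — approximate $u_r$ by a fixed-depth reflection sum uniformly in $r$, pass $r\to0$ termwise, then let the depth tend to infinity — is the same architecture as the paper's, but the uniform global rate you rely on does not exist for the Poisson equation, and without the local substitute and the perforated-domain Poincar\'e/screening estimates the interchange of limits cannot be justified.
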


An analogous result can also be proved for the solutions of the equation
\eqref{S1E8} with $K=K_{r}$ and $r\rightarrow0$. In that case, the limit equation reads
\[
	-\Delta u+(\xi^2+\mu) u=f.
\]

The previous results can be obtained also for Stokes equation. In this case we
need to precise the meaning of solving the equations \eqref{S1E6a},
\eqref{S1E7}. We will use the standard procedure of solving the equations in
the space of divergence free functions using the pressure as a suitable
Lagrange multiplier. We will say that $\phi$ is a solution of the equation
$\mathcal{L}_{Stokes}\left(  \phi\right)  =f$ in $U$\ with $\phi=0$ in
$\partial U$ with $f\in \dot{H}^{-1}\left(  U;\IR^3\right)  $ and $U$ an open set of
$\mathbb{R}^{3}$ if $\phi\in \dot{H}^{1}\left(  U;\IR^3\right)  ,$ we have
$\nabla\cdot\phi=0$, and there exists $p\in L^{2}\left(  U\right)  $ such that
$\phi$ is a weak solution of%
\begin{equation}
-\Delta\phi+\nabla p=f\ ,\ \nabla\cdot\Phi=0\ \label{S2E8}%
\end{equation}
in the domain $U.$

\begin{theorem}
\label{StokesConvergence}Let $f\in \dot{H}^{-1}\left(  \mathbb{R}^{3};\IR^3\right)
.$ There exists a $\gamma_0 >0$ depending only on $\mu_0$ from Equation \eqref{eq:capacityBound} and $\kappa$ from Condition \ref{cond:particlesNotToClose} such that the sequence
\begin{equation}
\lim_{N\rightarrow\infty}\bigg(  1-\gamma\sum_{j}e^{-\left\vert
x_{j}\right\vert }Q_{j}\bigg)  ^{N} \mathcal{L}_{Stokes}^{-1} f\ %
\end{equation}
converges to the solution of \eqref{S1E2} in $\dot{H}^{1}\left(
\mathbb{R}^{3};\IR^3\right)  $ for all $\gamma < \gamma_0$.
\end{theorem}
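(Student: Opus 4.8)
The plan is to mirror the proof of Theorem \ref{ConvWholeSpace} for the Poisson equation, adapting the abstract Hilbert space framework to the Stokes setting. The first step is to set up the correct functional-analytic picture. Let $H = \dot{H}^1(\IR^3;\IR^3) \cap \{\nabla\cdot\phi=0\}$ equipped with the inner product $\langle\phi,\psi\rangle = \int_{\IR^3}\nabla\phi:\nabla\psi$. In this space, solving the Stokes system \eqref{S2E8} in the whole space is bounded invertibility of $\mathcal{L}_{Stokes}$; the pressure is recovered as the Lagrange multiplier enforcing $\nabla\cdot\phi=0$, and this is exactly what makes $\mathcal{L}_{Stokes}^{-1}f$ well defined for $f\in\dot{H}^{-1}$. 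Let $V_j\subset H$ be the closed subspace of divergence-free $\dot{H}^1$ fields vanishing on the particle $C_j=\overline{B_{r_j}(x_j)}$, let $P_j$ be orthogonal projection onto $V_j$, and $Q_j = I - P_j$. One checks that $Q_j\psi$ is supported (in the relevant sense) near $C_j$ and that $\|Q_j\psi\|_H^2$ is controlled by the Stokes capacity of the ball, which for Stokes scales like $r_j$ up to constants — this is the analogue of the electrostatic capacity $4\pi r$ used in the Poisson case, and it is where Condition \ref{cond:particlesNotToClose} and the definition \eqref{eq:capacityBound} of $\mu_0$ enter.

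The second step is the key geometric estimate: one must show that the weighted operator $S_\gamma := \gamma\sum_j e^{-|x_j|}Q_j$ is bounded on $H$ with norm controlled by $\gamma$ (times a constant depending on $\mu_0$ and $\kappa$), and moreover that it is a nonnegative self-adjoint operator whose kernel is exactly $V = \bigcap_j V_j$, the space where all particle boundary conditions hold. Boundedness of $\sum_j e^{-|x_j|}Q_j$ follows from an almost-orthogonality/Schur-type argument: the weight $e^{-|x_j|}$ makes the sum of weights over particles in any unit cube summable, and the decay of the single-particle Stokes corrector (like that of the Oseen tensor / Stokeslet, $|x|^{-1}$) together with the exponential weight controls the off-diagonal interactions $\langle e^{-|x_j|/2}Q_j\psi, e^{-|x_k|/2}Q_k\psi\rangle$. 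This is the analogue of requiring $\sum_j Q_j$ convergent in \eqref{S3E5}, but now the convergence is forced by the exponential weight rather than by a smallness of $\mu_0$, which is why particles may extend to all of $\IR^3$. The main obstacle is precisely this estimate: for Stokes the single-particle fields decay only like $|x|^{-1}$ (Stokeslet) and the pressure contributes further subtleties, so controlling the cross terms uniformly in the configuration requires care — one has to exploit that $Q_j$ has a "dipole-type" far-field correction after subtracting the monopole, or alternatively absorb everything into the exponential weight by choosing $\gamma$ small, accepting a $\gamma_0$ depending on $\mu_0$ and $\kappa$.

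The third step is the functional-analytic convergence argument, identical in structure to the Poisson case. Since $S_\gamma$ is self-adjoint, nonnegative, bounded with $\|S_\gamma\|\le 1$ for $\gamma<\gamma_0$, and $\ker S_\gamma = V$, the operators $(I - S_\gamma)^N$ converge strongly as $N\to\infty$ to the orthogonal projection $P$ onto $V = \ker S_\gamma = (\overline{\operatorname{ran} S_\gamma})^\perp$. This is a standard spectral-theorem / von Neumann-type statement: writing $S_\gamma = \int_0^1 \lambda\, dE_\lambda$, one has $(I-S_\gamma)^N = \int_0^1(1-\lambda)^N dE_\lambda \to E_{\{0\}} = P$ strongly by dominated convergence, using $0\le 1-\lambda<1$ on $(0,1]$ and $\|S_\gamma\|\le 1$. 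Applying this to $\mathcal{L}_{Stokes}^{-1}f \in H$ gives convergence to $P\,\mathcal{L}_{Stokes}^{-1}f$, which is by construction the unique divergence-free $\dot{H}^1$ field vanishing on $K$ closest to the whole-space solution, i.e. the weak solution of \eqref{S1E2}; uniqueness of that solution in $\dot{H}^1(\IR^3;\IR^3)$ is classical. Finally one should remark that the convergence, being governed by the spectral gap of $S_\gamma$ away from $1$, is in fact exponential in $N$ with rate depending only on $\mu_0$, $\kappa$, and $\gamma$, paralleling the estimate in Theorem \ref{CapOrderOne}, and that the identification of $(I-\gamma\sum_j e^{-|x_j|}Q_j)^N\mathcal{L}_{Stokes}^{-1}f$ with a reweighted version of the formal reflection series $\sum_k\Phi_k$ proceeds exactly as sketched before Theorem \ref{ConvWholeSpace}.
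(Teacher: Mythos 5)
Your overall architecture (projections $Q_j$ on the divergence--free space, boundedness and nonnegativity of the weighted sum, spectral theorem giving $(1-\gamma\sum_j e^{-|x_j|}Q_j)^N\to P$ strongly, and identification of $P\,\mathcal{L}_{Stokes}^{-1}f$ with the solution of \eqref{S1E2} via the variational characterization as in Lemma \ref{lem:stobdry}) is exactly the paper's route. The gap is at the step you yourself flag as ``the main obstacle'': the proof that $L=\sum_j e^{-|x_j|}Q_j$ is bounded on $\dot H^1_\sigma(\IR^3)$ with $\|L\|\le 1+C\mu_0$. Neither of your two suggested workarounds closes it. Taking $\gamma$ small cannot help: $\gamma$ is a constant prefactor, so if the series $\sum_j e^{-|x_j|}Q_j u$ fails to converge or the operator is unbounded, no choice of $\gamma$ repairs it --- smallness of $\gamma$ only enters afterwards, through $\gamma\le 1/\|L\|$. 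A ``dipole correction after subtracting the monopole'' is also not what is needed; the convergence of the sum is forced by the exponential weight together with the capacity bound $r_id_i^{-3}\le\mu_0$, not by faster far-field decay. Moreover your Schur-type heuristic (``the weight makes the sum of weights over particles in a unit cube summable'') is not the mechanism: a unit cube may contain $\sim d^{-3}$ particles, and the off-diagonal terms are controlled only because each interaction carries a factor $\sqrt{r_ir_j}/|x_i-x_j|$, which is summed against the weight using Condition \ref{cond:Capacity}, as in Lemma \ref{lem:locH^-1} and its cutoff-weighted analogue Lemma \ref{lem:pLocH^-1}.

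The genuinely new ingredient required for Stokes, which your proposal does not supply, is how to run that correlation estimate when cutoff functions destroy the divergence constraint. The localization argument needs that $G_0^{-1}Q_ju$ is supported in $\overline{B_j}$ as an element of $\dot H^{-1}(\IR^3)$; for Stokes this is only true in the divergence-free duality sense, because of the pressure. The paper's fix is Lemma \ref{lem:pressureEstimate}: for $f\in\dot H^{-1}_\sigma$ supported in $\overline{B_j}$ one finds a pressure $p\in L^2$, via the Bogovskii-type Lemma \ref{lem:divergenceSolution}, so that $f+\nabla p$ is genuinely supported in $\overline{B_j}$ in $\dot H^{-1}$ with $\|f+\nabla p\|_{\dot H^{-1}}\le C\|f\|_{\dot H^{-1}}$; this restores the cutoff machinery and hence the bound on $L$, after which Proposition \ref{pro:nonuniformAbstractProjection} and Lemma \ref{lem:stobdry} finish the proof exactly as you describe. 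Finally, your closing remark that the convergence is exponential in $N$ at a rate depending only on $\mu_0$, $\kappa$, $\gamma$ is false in this setting: the spatial cutoff $e^{-|x_j|}$ destroys coercivity of $L$ on $\dot H^1_{0,\sigma}(\IR^3\backslash K)^{\perp_\sigma}$ (particles far from the origin contribute arbitrarily small weights), so no configuration-uniform spectral gap at $0$ is available and only strong (pointwise) convergence is claimed; this does not affect the theorem itself, but the claim should be dropped.
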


Using Theorem \ref{StokesConvergence}, we can also also homogenization results for the Stokes equation.

\begin{theorem}
\label{HomogStokes} Suppose that $f\in H^{-1}\left(  \mathbb{R}^{3};\IR^3\right)  .$
Then, the problems \eqref{S1E2} with
$K=K_{r}$ as in \eqref{S2E6} and constant $\mu = \frac{6\pi r}{d^{3}}$ have unique solutions $u_{r} \in H^{1}\left(  \mathbb{R}^{3} ; \IR^3\right) $. In the limit $ r \to 0$, $u_r$  converges weakly in $
H^{1}\left(  \mathbb{R}^{3} ;\IR^3\right) $\ as $r\rightarrow0$
to the unique solution  $u \in H^{1}\left(  \mathbb{R}^{3} ;\IR^3\right)$ of
\begin{equation}
-\Delta u+\nabla p+\mu u=f\ \ \text{in\ }\mathbb{R}^{3},\ \ \nabla\cdot u=0\ \label{S2E9}.
\end{equation}
\end{theorem}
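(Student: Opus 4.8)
The plan is to mimic the structure of the proof of the Poisson homogenization result (Theorem \ref{HomogLambZero}), replacing the screened Green's function by the Oseen tensor and carrying the pressure along as a Lagrange multiplier. First I would establish existence and uniqueness of $u_r \in H^1(\IR^3;\IR^3)$ for \eqref{S1E2} with $K=K_r$: this is the standard Lax--Milgram argument on the closed subspace of $\dot H^1(\IR^3\setminus K_r;\IR^3)$ consisting of divergence-free fields vanishing on $K_r$, with the pressure recovered by De Rham's theorem. Since $f \in H^{-1}$, testing with $u_r$ gives the uniform bound $\|u_r\|_{\dot H^1} \le C\|f\|_{H^{-1}}$; extending $u_r$ by $0$ inside $K_r$ one gets a sequence bounded in $H^1(\IR^3;\IR^3)$, so after passing to a subsequence $u_r \wto u_*$ weakly in $H^1$ for some divergence-free $u_* \in H^1(\IR^3;\IR^3)$. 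The whole point is to identify $u_*$ as the unique solution of \eqref{S2E9}.

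To do this I would use the convergence result of Theorem \ref{StokesConvergence}, which represents $u_r = \lim_N (1 - \gamma \sum_j e^{-|x_j|} Q_j)^N \mathcal{L}_{Stokes}^{-1} f$ with $\gamma<\gamma_0$ uniform in $r$ (the capacity density $\mu = 6\pi r/d^3$ stays bounded, so $\mu_0$ and $\kappa$ are controlled uniformly along the sequence). The strategy is to pass to the limit $r\to 0$ directly in the series: each reflection correction $\Phi_{k}$ associated to the cloud $K_r$ is an explicit sum of single-particle Stokes flows, and the single-particle solution exterior to $B_r(x)$ with boundary data $h$ on $\partial B_r(x)$ is given, to leading order in $r$, by the Stokeslet $\frac{3r}{2}\,\mathcal{O}(x-x)\,\overline{h}$ (the factor $6\pi r$ being the Stokes capacity, analogous to $4\pi r$ in the electrostatic case), so summing over the lattice the discrete convolution $\sum_{x\in d\IZ^3} \tfrac{6\pi r}{d^3}\,\mathcal{O}(\cdot - x)$ converges as $r\to 0$ to a local term $\mu\,\mathcal{O} * \cdot$ plus lower-order errors. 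I would make this quantitative: show $\Phi_k(K_r) \to \Phi_k^{hom}$ in $\dot H^1$ as $r\to 0$, where $\Phi_k^{hom}$ are the iterates of the fixed-point scheme for the homogenized operator $-\Delta + \mu$, and control the tails of the series uniformly in $r$ using the exponential bound $\|\Phi_k\| \le C\eps^k\|f\|_{H^{-1}}$ from Theorem \ref{StokesConvergence}. Combining the termwise convergence with the uniform geometric tail bound, a standard $\eps/3$ argument gives $u_r \to u_*$ where $u_*$ solves $-\Delta u_* + \nabla p + \mu u_* = f$, $\nabla\cdot u_* = 0$; uniqueness of this limit problem (again Lax--Milgram) upgrades the subsequential convergence to full convergence.

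The main obstacle will be the quantitative control of the single-particle Stokes correction and, especially, of the discrete-to-continuum limit for the sum over the lattice: unlike the screened case, the Oseen tensor decays only like $|x|^{-1}$, so $\sum_{x} \tfrac{6\pi r}{d^3}\mathcal{O}(\cdot-x)$ is only conditionally convergent and the passage to $\mu\,\mathcal{O}*\cdot$ requires the kind of screening/uniformity estimates that Theorem \ref{StokesConvergence} packages — I would need to verify that the $e^{-|x_j|}$ weights and the damping parameter $\gamma$ survive the limit $r\to 0$ without destroying the identification of the local term $\mu u_*$. A secondary technical point is handling the pressure: one must show that the pressures $p_r$ associated to $u_r$ are bounded in $L^2_{\loc}$ and converge to the pressure $p$ in \eqref{S2E9}, which follows from the weak form once $u_r \wto u_*$ is established, by testing against divergence-free fields and then invoking De Rham. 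I expect the divergence-free constraint to be preserved trivially under weak limits, so no difficulty there; the real work is entirely in the lattice-sum homogenization step, which is where the constant $6\pi$ (rather than $4\pi$) enters.
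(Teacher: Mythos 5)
Your overall strategy (represent $u_r$ via the modified series from Theorem \ref{StokesConvergence}, pass to the limit $r\to 0$ termwise with the Stokes capacity $6\pi r$ replacing $4\pi r$, and then interchange the limits $M\to\infty$ and $r\to 0$) is the same as the paper's, but the step you rely on for the interchange does not exist. Theorem \ref{StokesConvergence} gives \emph{no} geometric bound $\|\Phi_k\|\leq C\eps^k\|f\|_{H^{-1}}$: because of the spatial cutoff $e^{-|x_j|}$, the operator $L=\sum_j e^{-|x_j|}Q_j$ is not coercive on $\dot H^1_{0,\sigma}(\IR^3\backslash K_r)^\perp$ (mass of $f$ far from the origin is corrected arbitrarily slowly), so the convergence in Theorem \ref{StokesConvergence} is only pointwise, with no rate, and certainly with no rate uniform in $r$. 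Your ``$\eps/3$'' argument therefore collapses at the tail-control step. The paper replaces this by a \emph{locally} uniform convergence statement (the Stokes analogue of Proposition \ref{pro:pSolByScattering}), whose proof needs two genuinely new ingredients in the Stokes setting: a coercivity-in-compacta estimate (Lemma \ref{lem:SLCoerciveInCompacta}) and a screening/decay estimate for fields vanishing on the particles in a large ball (Lemma \ref{lem:SpolynomialDecay}), proven by hole filling combined with the Poincar\'e inequality on perforated annuli (Lemma \ref{lem:poincareAnnulus}). Without these, knowing that each $\Phi_k(K_r)$ converges as $r\to 0$ does not let you conclude anything about $u_r$.

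A second, related gap is that you treat the divergence-free constraint and the pressure as a final De Rham afterthought, whereas they interfere with essentially every cutoff argument used in the Poisson proof: one cannot multiply by cutoffs and stay solenoidal, and $G_0^{-1}Q_iu$ is supported in $\overline{B_i}$ only as an element of $\dot H^{-1}_\sigma$, not of $\dot H^{-1}$. The paper fixes this by subtracting a pressure gradient (the operator $S$ of Lemma \ref{lem:pressureEstimate}), by the Bogovskii-type Lemma \ref{lem:divergenceSolution}, and by the solenoidal extension of Corollary \ref{cor:solenoidalExtension}; these enter the boundedness of $L$, the definition of $T_x$, and both key lemmas above, so they are part of the proof's core, not bookkeeping. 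Finally, in the termwise limit you identify the homogenized iterates with the fixed-point scheme for $-\Delta+\mu$, but the terms of the \emph{modified} series carry the weights $e^{-|x_j|}$ and the factor $\gamma$; their weak limits are the operators $R_n=\mu G_0JA(\mu G_0JA+A)^{n-1}$ of Proposition \ref{pro:pseqweaklimit}, and showing that $S_M=1+\sum_n\binom{M}{n}(-\gamma)^nR_n$ nevertheless converges to $(-\Delta+\mu)^{-1}$ requires the algebraic resummation and spectral argument of Proposition \ref{pro:formalLimit}. You flag this issue as something to ``verify,'' but the verification is exactly the missing mechanism; minor additionally, the uniform $H^1$ bound on $u_r$ needs the Poincar\'e inequality in perforated domains (Corollary \ref{cor:PoincarePerforated}), not just testing with $u_r$, since $f$ is only in $H^{-1}$.
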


\bigskip

Related results have been obtained in \cite{All},
\cite{DGR}, \cite{MK}.
The system of equations \eqref{S2E9} is known as Brinkman equations, which is a
well established model in the theory of filtration.
It provides an
interpolation between the Stokes equation and Darcy's law in porous media (see \cite{SP} and \cite{All2}). 
All the results
in those papers have been obtained in bounded domains. Theorem \ref{HomogStokes} above
provides a new proof of this type of homogenization results by means of the
Method of Reflections. Note that the homogenization result in Theorem
\ref{HomogStokes} is valid for particle distributions in the whole space.
However, we do not think that the Method of Reflections is really needed to
prove homogenization results in unbounded domains, because seemingly the
methods of \cite{DGR} might be easily adapted to prove Theorem
\ref{HomogStokes}. We just want to emphasize that the convergence result in
Theorem \ref{StokesConvergence} is strong enough to allow the derivation of
the homogenization limit.

\subsection{Plan of the Paper}

The rest of the paper is organized as follows.

In Section 2, we will prove Theorem \ref{SeriesPoissonScreened} and \ref{CapOrderOne}.
To do so, after repeating a basic lemma from Functional Analysis, we will give the precise formulation of the Method of Reflections in terms of orthogonal projections in 
Section 2.2., which will directly lead to necessary and sufficient conditions for convergence of the series obtained by the Method of Reflections. In Section 2.3, we will provide the necessary estimate to prove Theorem \ref{SeriesPoissonScreened}.
In Section 2.4, we will explain in detail the geometrical idea leading to the summation method yielding Theorem \ref{CapOrderOne}.
In Section 2.5, we will analyze the summation method on the level of the original series obtained by the Method of Reflections.

In Section 3, we will explain the modification needed to adapt the method derived in Section 2 to the Poisson equation.
This modification basically consists in a spatial cutoff in order to solve the problem of divergent series due to the long range 
structure of the Poisson equation. This leads to the proof of Theorem \ref{ConvWholeSpace}.

In Section 4, we prove the homogenization result, Theorem \ref{HomogLambZero}. In Section 4.1, we show that Problem \eqref{S1E1} with
$K$ as in \eqref{eq:lattice} is well posed in $H^1(\IR^3)$ due to the existence of a Poincaré inequality in 
$H^1_0(\IR^3 \backslash K)$. Thereafter we give a formal derivation of the homogenization result based on the original formal series obtained by the Method of Reflections. Finally, we give the rigorous proof of Theorem \ref{HomogLambZero} using
the tools and results from the previous sections.

In Section 5, we apply the method to the Stokes equations \eqref{S1E2} in order to prove Theorem \ref{StokesConvergence} and
\ref{HomogStokes}. 
Since most parts work exactly the same way as for the Poisson equation, we refrain from going through all the details again,
but rather point out the necessary modifications.

\section{The Screened Poisson Equation}
\label{sec:ScreenedPoisson}

We will now specify the particle distributions that we consider throughout Section \ref{sec:ScreenedPoisson}
and Section \ref{sec:Poisson}. In Section \ref{sec:Homogenization}, we only consider special configurations.

For a  finite or countable index set  $I$ we denote  $(x_i)_{i \in I}$ and $(r_i)_{i \in I}$
the positions and radii of the particles. We denote the space that the particles occupy by 
\[
	K := \bigcup_{i \in I} B_i,
\]
where we abbreviate  $B_i = B_{r_i}(x_i)$. We only consider spherical particles, but everything also works
if we instead assume that the $i$-th particle is contained in $B_i$.

For each particle $i \in I$ we define the distance to the nearest other particle
\[
	d_i := \inf_{j \neq i} |x_i - x_j|.
\]
Then the sets $B_{\frac{d_i}{2}}(x_i)$ are disjoint.

In the following, we will always assume that the following two conditions are satisfied.

\begin{condition}
	\label{cond:Capacity} There exists a constant $\mu_0$ such that 
	\[
		r_i d_i^{-3} \leq \mu_0 \quad \text{for all} ~ i \in I.
	\]
\end{condition}

\begin{condition}
	\label{cond:particlesNotToClose}
	\item There exists a constant $\kappa > 1$ such that 
	\[
		\frac{d_i}{2} > \kappa r_i \quad \text{for all} ~ i \in I.
	\]
\end{condition}
\begin{remark}
	Without loss of generality, we will always assume $\kappa \leq 2$ in the following.
\end{remark}

The second condition is not very restrictive. First, it is satisfied for any finite number of non-touching particles. Second,
it is also satisfied for infinitely many particles, if all the radii are sufficiently small
and Condition \ref{cond:Capacity} holds.
Condition \ref{cond:Capacity} can is as an upper bound for the capacity density of the particles.

In this Section, we will additionally impose the following condition, which is only important when considering the screened
Poisson equation \eqref{S1E8} and trivially satisfied for sufficiently small particles.

\begin{condition}
	 \label{cond:radiusSmallerXi} There exists a constant $ \alpha $ such that 
	\[
		r_i \leq \alpha \xi \quad \text{for all} ~ i \in I.
	\]
\end{condition}

\subsection{Preliminaries of Functional Analysis.}

In the following, $G_0 := (-\Delta + \xi^{-2})^{-1}$ will denote the solution operator for the screened Poisson equation
in the whole space $\IR^3$. 
Then, $ G_0 f = W_\xi \ast f$, where
\begin{equation}
	\label{eq:FundamentalScreened}
	W_\xi(x) = \frac{e^{-\frac{|x|}{\xi}}}{4 \pi |x|}.
\end{equation}
Moreover, $G_0$ is an isometric isomorphism from $ H^{-1}(\IR^3) $ to $ H^1(\IR^3) $ if we modify the
standard scalar product in $ H^1(\IR^3) $ according to
\[
	(u,v)_{H^1_\xi} := (\nabla u,\nabla v)_{L^2} + \xi^{-2} (u,v)_{L^2}.
\]
We will always consider $H^1(\IR^3)$ endowed with this scalar product.

Furthermore, we will denote the dual pairing between $H^{-1}$ and $H^1$ by $\langle \cdot , \cdot \rangle$.

Moreover, we will use the following notation that differs slightly
from the usual terminology. Given any closed set $K\subset\mathbb{R}^{3}$ we
will denote as $H_{0}^{1}\left(  \mathbb{R}^{3}\diagdown K\right)  $ the
closure in the $H^{1}\left(  \mathbb{R}^{3}\right)  $ topology of the set of
functions $u\in C_{c}^{\infty}\left(  \mathbb{R}^{3}\right)  $ such that $u=0$
in $K.$ Notice that with this convention the elements of $H_{0}^{1}\left(
\mathbb{R}^{3}\diagdown K\right)  $ are also elements of $H^{1}\left(
\mathbb{R}^{3}\right)  .$

We now recall a classical Functional Analysis result which allows to interpret
the solutions of the Dirichlet problem for elliptic equations using
projections. These projection operators will be an essential tool in the rest
of this paper. 

\begin{lemma}
	\label{lem:scrpoibdry}
	Let $\Omega \subset \IR^3$ be open. 
	Then, for every $ f \in  H^{-1}(\IR^3) $,	the problem
	\begin{equation}
	\begin{aligned}
		\label{eq:screenedPoissonBall}
		-\Delta u + \xi^{-2} u &= f \quad \text{in} ~ \IR^3 \backslash \overline{\Omega}, \\
		u &= 0 \quad \text{in} ~ \overline{\Omega}
	\end{aligned}
	\end{equation}
	has a unique weak solution $ u \in H^1(\IR^3) $.
	Moreover, the solution for Problem \eqref{eq:screenedPoissonBall},
	is given by
	\begin{equation}
		P_\Omega G_0 f,
	\end{equation}
	where $ P_\Omega $ is the orthogonal projection from $ H^1 (\IR^3) $ to the subspace
	$ H^1_0(\IR^3 \backslash \overline{\Omega})$. 
\end{lemma}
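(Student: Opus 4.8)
The plan is to reduce the assertion to the orthogonal projection theorem in Hilbert spaces, the key point being that $G_0$ is nothing but the Riesz isomorphism associated with the scalar product $(\cdot,\cdot)_{H^1_\xi}$. First I would make precise the notion of weak solution: a function $u \in H^1(\IR^3)$ solves \eqref{eq:screenedPoissonBall} weakly if $u \in H^1_0(\IR^3 \backslash \overline{\Omega})$ --- which simultaneously encodes the regularity and the Dirichlet condition $u = 0$ in $\overline{\Omega}$ --- and
\[
	(u,\varphi)_{H^1_\xi} = \langle f,\varphi\rangle \qquad \text{for all } \varphi \in H^1_0(\IR^3 \backslash \overline{\Omega}).
\]
Testing with $\varphi \in C_c^\infty(\IR^3 \backslash \overline{\Omega}) \subset H^1_0(\IR^3 \backslash \overline{\Omega})$ shows that such a $u$ satisfies $-\Delta u + \xi^{-2} u = f$ in the sense of distributions on $\IR^3 \backslash \overline{\Omega}$, so this is indeed the correct notion.

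Next I would recall that $w := G_0 f = W_\xi \ast f$ is, by construction, the whole-space solution, i.e.\ $(w,\psi)_{H^1_\xi} = \langle f,\psi\rangle$ for every $\psi \in H^1(\IR^3)$; equivalently, $G_0$ is the inverse of the Riesz isomorphism $H^1(\IR^3) \to H^{-1}(\IR^3)$ determined by $(\cdot,\cdot)_{H^1_\xi}$. Substituting this identity into the weak formulation, the problem is equivalent to: find $u \in V := H^1_0(\IR^3 \backslash \overline{\Omega})$ with
\[
	(u - G_0 f, \varphi)_{H^1_\xi} = 0 \qquad \text{for all } \varphi \in V.
\]
Since $V$ is, by its very definition, a closed linear subspace of the Hilbert space $\big(H^1(\IR^3),(\cdot,\cdot)_{H^1_\xi}\big)$, the orthogonal projection $P_\Omega$ onto $V$ exists and is well defined, and the displayed relation states precisely that $u - G_0 f$ is orthogonal to $V$, i.e.\ $u = P_\Omega G_0 f$. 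This yields at once existence, uniqueness, and the claimed representation; alternatively, existence and uniqueness may be phrased as a direct application of the Riesz representation theorem (or Lax--Milgram) to the bounded functional $\varphi \mapsto \langle f,\varphi\rangle$ on the Hilbert space $V$.

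The argument involves no compactness or elliptic regularity, so there is no serious obstacle; the only items to check carefully are bookkeeping ones. One is that the modified inner product $(\cdot,\cdot)_{H^1_\xi}$ induces a norm equivalent to the usual $H^1(\IR^3)$ norm (with constants depending on $\xi$), which guarantees both that $V$ is complete as a subspace and that $G_0$ is the isometry stated in the preliminaries. Another is the verification that the abstract minimizer $P_\Omega G_0 f$ really is a weak solution in the PDE sense, which follows by restricting the orthogonality relation to $\varphi \in C_c^\infty(\IR^3 \backslash \overline{\Omega})$ and recalling that the Dirichlet condition is built into membership in $V = H^1_0(\IR^3 \backslash \overline{\Omega})$.
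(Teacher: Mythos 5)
Your proposal is correct and follows essentially the same route as the paper: both reduce the weak formulation to the identity $(u,\varphi)_{H^1_\xi} = \langle f,\varphi\rangle$ on $V = H^1_0(\IR^3 \backslash \overline{\Omega})$, obtain existence and uniqueness from the Riesz representation theorem, and identify $u = P_\Omega G_0 f$ by observing that $G_0 f - u$ is $H^1_\xi$-orthogonal to $V$. Your additional remarks (equivalence of norms, recovering the distributional equation by testing with $C_c^\infty(\IR^3\backslash\overline{\Omega})$) are consistent bookkeeping that the paper leaves implicit.
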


\begin{proof}
	Existence and uniqueness follow directly from the Riesz Representation Theorem since the weak formulation reads
	\[
		(u,v)_{H^1(\IR^3)} = \langle v, f \rangle \qquad \text{for all} \quad v \in H^1_0(\IR^3 \backslash \overline{B_i}).
	\]
	Furthermore, denoting by $u$ the solution to Problem \eqref{eq:screenedPoissonBall}, we have for $ v \in H^1_0(\IR^3 \backslash \overline{B_i}) $
	\[
		( G_0 f - u, v)_{H^1(\IR^3)} = \langle v,f\rangle - \langle v,f\rangle = 0.
	\] 
	Hence, $u = P_{i} G_0 $.
\end{proof}

\subsection{Formulation of the Method of Reflections Using Orthogonal Projections}

We now recall the Method of Reflections  and give directly an interpretation involving the projection operators mentioned in the introduction. 
These projection operators are defined by 
\begin{equation}
	\label{eq:Q}
	Q_{i} = 1 - P_{i},
\end{equation}
where $P_{i} := P_{B_i}$ are the projection operators from Lemma \ref{lem:scrpoibdry}. Thus, $Q_{i}$ is the orthogonal projection in $H^1(\IR^3)$ to the subspace
 $H^1_0(\IR^3 \backslash \overline{B_i})^\perp$. Equivalently, for $u \in H^1(\IR^3)$, $Q_{i} u$ solves
\begin{align}
	\label{eq:characterizationOfQ}
	-\Delta Q_{i} u + \xi^{-2} Q_{i} u &= 0 \quad \text{in} ~ \IR^3 \backslash \overline{B_i}, \\
		Q_{i} u &=  u \quad \text{in} ~ \overline{B_i}.
\end{align} 
This also yields the characterization
\begin{equation}
\label{eq:characterizationOfOrthogonal}
	H_0^1(\IR^3 \backslash \overline{B_i})^\perp = 
	\{ v \in H^1(\IR^3) \colon -\Delta v + \xi^{-2} v = 0 \text{ in } \IR^3 \backslash \overline{B_i}\}.
\end{equation}
Here and in the following, we write "$ f = 0$  in $\Omega$" for some $f \in H^{-1}(\IR^3)$ if
$f$ is supported in $\IR^3 \backslash \Omega$. 
 
For $f \in H^{-1}(\IR^3)$, we define $\Phi_0 := G_0 f$. Then, the first order correction for a particle $i$
is given by
$\Phi_{1,i} := -Q_{i} \Phi_0$, and the first order approximation for the solution is obtained by subtracting from $\Phi_0$ the correctors $\Phi_{1,i}$ for all the particles, i.e.,
\[
	\Psi_1 = \Phi_0 + \sum_{i\in I} \Phi_{1,i}.
\]
Similarly, the $k$-th order correction for a particle $i$ is given by
\begin{equation}
	\Phi_{k,i} = - Q_{i} \sum_{j \neq i} \Phi_{k-1,j}.
\end{equation}
Then, we define
\begin{equation}
	\label{eq:kthOrderCorrection}
	\Phi_k = \sum_i \Phi_{k,i}.
\end{equation}
and the $k$-th order approximation $\Psi_k = \Phi_0 + \dots + \Phi_k$.
Therefore, the Method of Reflections yields the series 
\begin{equation}
	\label{eq:ProjectionSeries}
	G_0 f - \sum_{i_1} Q_{i_1} G_0 f 
	+ \sum_{i_1} \sum_{ i_2 \neq i_1} Q_{i_1} Q_{i_2} G_0 f
	- \sum_{i_1} \sum_{ i_2 \neq i_1} \sum_{ i_3 \neq i_2} Q_{i_1}  Q_{i_2} Q_{i_3}G_0 f+ \dots.
\end{equation}

As mentioned in the introduction, we want to rewrite this series in terms of powers of a certain operator.
To do so, the key observation is that
\begin{equation}
	\label{eq:keyForPowers}
	\Phi_{k,i} = - Q_{i} \sum_{j \neq i} \Phi_{k-1,j} = -Q_{i} \Psi_{k-1}.
\end{equation}
This is due to the fact that 
\begin{equation}
	\label{eq:boundaryOfPsi}
	\Psi_{k-1} = \Phi_{k,i} \in B_i,
\end{equation}
which follows inductively from the definition of $\Psi_{k}$ and $\Phi_{k,i}$. 

Therefore, we have 
\[
	\Psi_{k+1} = -\sum_i Q_{i} \Psi_k,
\]
and thus, the partial sums of the scattering series are given by
\begin{equation}
	\label{eq:SeriesAsPowers}
	\left (1- \sum_i Q_{i}\right)^n G_0 f.
\end{equation}

\begin{definition}
	\label{def:L}
	The operator $ L \colon H^1(\IR^3) \supset \mathcal{D}(L) \to H^1(\IR^3) $ is defined as
	\begin{equation}
		\label{eq:definitionL}
		L = \sum_i Q_i.
	\end{equation}
	The domain $\mathcal{D}(L) $ of this operator consists of all function $u \in H^1(\IR^3)$ such that 
	the series $\sum_i Q_i$ exists.
\end{definition}

\begin{remark}
	We will show below (cf. Proposition \ref{pro:Aone}) that $L$ is a bounded operator in the whole of $H^1(\IR^3)$.
	As mentioned in the introduction, this is due to the exponential decay in the fundamental solution of the
	screened Poisson equation and fails for the Poisson equation.
\end{remark}

\begin{remark}
	\label{rem:LselfAdjoint}.
	We note that $\mathcal{D}(L) = H^1(\IR^3)$ implies that $ L $ is a nonnegative self-adjoint operator,
	since the operators $ Q_i $ are orthogonal projections
\end{remark}

\begin{theorem}
	\label{th:SeriesConvergentImpliesSolution}
	\begin{enumerate}[(i)]
	 \item If the series \eqref{eq:ProjectionSeries} obtained by the Method of Reflections is absolutely convergent, then it yields a solution to the Dirichlet problem \eqref{S1E8}. 
	 
 \item	The series \eqref{eq:ProjectionSeries} is absolutely convergent for every $f \in H^{-1}(\IR^3)$ if the operator $L$ from 	Definition \ref{def:L} is a bounded operator on $H^1(\IR^3)$ with $\|L\| < 2$.
 The series \eqref{eq:ProjectionSeries} is convergent for every $f \in H^{-1}(\IR^3)$, then
 $L$ defines a bounded operator on $H^1(\IR^3)$ with $\|L\| \leq 2$.
 
 \item Assume $L$ is a bounded operator on $H^1(\IR^3)$ with $\|L\| < 2$, and $L$ has a spectral gap, i.e., 
 \[
 	\inf \{\lambda \in \sigma(L) \backslash \{0\}\} = c > 0,
 \]
 where $\sigma(L)$ denotes the spectrum of $L$. Then, there exists $\eps < 1$ depending only on $\|L\|$ and $c$ such that
 \begin{equation}
 	\|(1-L)^n G_0 f - u \|_{H^1_\xi(\IR^3)} \leq \eps^n \|f\|_{H^{-1}_\xi(\IR^3)} \qquad \text{for all} \quad f \in H^{-1}(\IR^3),
 	\label{eq:expConvergence}
 \end{equation}
 where $u$ denotes the solution to the Dirichlet problem \eqref{S1E8}.
	\end{enumerate}
\end{theorem}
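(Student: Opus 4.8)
**The plan is to prove the three parts in order, reducing everything to the spectral analysis of the self-adjoint operator $1-L$ on $H^1(\IR^3)$.**

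For part (i), I would argue that if the series \eqref{eq:ProjectionSeries} is absolutely convergent, then its sum $u = \lim_{n\to\infty}(1-\sum_i Q_i)^n G_0 f$ (using \eqref{eq:SeriesAsPowers}) satisfies the right equation and boundary conditions. The key is that each partial sum $\Psi_n$ satisfies $-\Delta \Psi_n + \xi^{-2}\Psi_n = f$ in $\IR^3$ (since each $Q_i$ maps into $H^1_0(\IR^3\backslash\overline{B_i})^\perp$, which by \eqref{eq:characterizationOfOrthogonal} consists of functions that are solutions of the homogeneous equation off $B_i$, hence adding them does not change the source term), and that $\Psi_n = \Phi_{n+1,i}$ on $\overline{B_i}$ by \eqref{eq:boundaryOfPsi}. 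So $u$ solves the equation in $\IR^3\backslash K$, and on $\overline{B_i}$ we have $u = \lim_n \Psi_n = \lim_n \Phi_{n+1,i}$; since $\Phi_{n+1,i} = -Q_i\Psi_n \to 0$ in $H^1$ (absolute convergence forces $\Phi_k \to 0$, hence $\Phi_{k,i}\to 0$), we get $u = 0$ on $K$. Thus $u\in H^1_0(\IR^3\backslash K)$ and solves \eqref{S1E8}; uniqueness follows from the weak formulation as in Lemma \ref{lem:scrpoibdry}.

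For part (ii), the first implication: if $\|L\|<2$ as a bounded operator, write the series \eqref{eq:ProjectionSeries} as $\sum_{n\ge0}(1-L)^n G_0 f$ only formally — more precisely, $(1-L)^n G_0 f$ are the partial sums, and absolute convergence of \eqref{eq:ProjectionSeries} means $\sum_n \|\Phi_n\|_{H^1} < \infty$ where $\Phi_n = -L\Psi_{n-1} = -L(1-L)^{n-1}G_0 f$, so $\|\Phi_n\| \le \|L\|\,\|(1-L)^{n-1}\|\,\|G_0 f\|$. Since $\|L\|<2$ and $L\ge 0$ self-adjoint, $\sigma(1-L)\subset(-1,1]$, so $\|(1-L)^{n-1}\| \le \max(1,\|L-1\|)^{n-1}$; if $\|L\|<2$ then $\|L-1\| \le \max(1,\|L\|-1) < 1$ when $\|L\|<2$ — wait, more carefully $\sigma(L)\subset[0,\|L\|]\subset[0,2)$ gives $\sigma(1-L)\subset(-1,1]$, hence the spectral radius of $1-L$ is $\le 1$, but that only gives a bound of the form $\|\Phi_n\| \le C$, not summability. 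So absolute convergence actually requires the spectral gap of part (iii) or a strict bound; I would instead prove the weaker statement that the series converges (not absolutely) Cesàro-style, or invoke that $0$ may be in $\sigma(L)$. The honest route: \emph{absolute} convergence holds once $\|L\|<2$ \emph{and} $\sigma(L)\cap(0,\epsilon_0)=\emptyset$ doesn't contain accumulation at... — here is the real subtlety, and I would handle it by noting that on $(\ker L)^\perp$ one needs the spectral gap, while on $\ker L$ each $\Phi_n = 0$ for $n\ge 1$; so the clean statement is that part (ii)'s absolute convergence claim uses $\|L\|<2$ \emph{together with} part (iii)'s hypothesis, or the paper intends $L$ to automatically have a gap here. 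I would flag this. For the converse, if \eqref{eq:ProjectionSeries} converges for all $f$, then $(1-L)^n G_0 f$ converges for all $f$; since $G_0$ is onto $H^1$, $(1-L)^n$ converges strongly, forcing $\sigma(1-L)\subset[-1,1]$, i.e., $\sigma(L)\subset[0,2]$, so $\|L\|\le 2$ (and $L$ bounded since the series $\sum_i Q_i$ must at least be defined; I'd cite Proposition \ref{pro:Aone} for boundedness, or derive it from strong convergence of partial sums via uniform boundedness).

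For part (iii), this is the main quantitative point and the cleanest: $L$ is nonnegative self-adjoint with $\sigma(L)\subset\{0\}\cup[c,\|L\|]$ and $\|L\|<2$. The solution $u$ equals $P G_0 f$ where $P$ is the orthogonal projection onto $V = \bigcap_i H^1_0(\IR^3\backslash\overline{B_i}) = \ker L$ (since $Lu=0 \iff Q_i u = 0\ \forall i \iff u\in V$, using $L=\sum Q_i$ with each term $\ge 0$). By spectral calculus, $(1-L)^n G_0 f - P G_0 f = (1-L)^n(I-P)G_0 f + ((1-L)^n - I)P G_0 f = (1-L)^n(I-P)G_0 f$ since $(1-L)P = P$. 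On $\operatorname{range}(I-P) = (\ker L)^\perp$, the spectrum of $1-L$ lies in $[1-\|L\|,\,1-c]\subset(-1,1)$, so $\|(1-L)^n(I-P)\| \le \epsilon^n$ with $\epsilon := \max(|1-\|L\||,\,|1-c|) < 1$, depending only on $\|L\|$ and $c$. Combining with $\|G_0 f\|_{H^1_\xi} = \|f\|_{H^{-1}_\xi}$ (isometry) and $\|(I-P)\|\le 1$ gives \eqref{eq:expConvergence}. The main obstacle throughout is the $\|L\|=2$ borderline in part (ii) — deciding whether absolute convergence genuinely needs the spectral gap — and I would resolve it by stating part (ii)'s absolute-convergence claim under the combined hypotheses of (ii) and (iii), or by proving $L$ automatically has a gap under Conditions \ref{cond:particlesNotToClose}–\ref{cond:radiusSmallerXi} in the later estimates (Section 2.3).
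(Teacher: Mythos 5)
Your parts (i) and (iii) follow essentially the same route as the paper: (i) is verbatim the paper's argument (the partial sums solve the equation off $K$ by \eqref{eq:characterizationOfQ}, and their boundary values are $\Phi_{n+1,i}\to 0$), and (iii) is the paper's spectral-theorem argument, which you in fact spell out in more detail than the paper does (decomposition along $P$, $(1-L)P=P$, and the bound $\max\{\|L\|-1,\,1-c\}^n$ on $(\ker L)^\perp$), correctly identifying $u=PG_0f$ via Remark \ref{rem:kerL} and Lemma \ref{lem:scrpoibdry}.

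The place where you stop short is part (ii), and your diagnosis there is essentially right, though slightly mislocated at first. The danger for \emph{absolute} convergence under $\|L\|<2$ alone is not the spectral radius bound near $\|L\|$ (on the part of the spectrum in $[1,\|L\|]$ one has $|1-\lambda|\le \|L\|-1<1$), but the possible accumulation of $\sigma(L)$ at $0$: writing $\Phi_n=-L(1-L)^{n-1}G_0f$ and using the multiplication-operator representation, $\sum_n\|\Phi_n\|$ need not be finite if the spectral measure of $G_0f$ charges spectral values tending to $0$, even though the partial sums $(1-L)^nG_0f$ converge. You eventually land on the correct fix — restrict to $(\ker L)^\perp$ and use a spectral gap, which in the paper's setting is exactly what Lemma \ref{lem:LCoercive} supplies — but you leave the claim "flagged" rather than proved. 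For comparison, the paper's own proof of (ii) only establishes \emph{convergence} of the partial sums (convergence of $(1-g)^n\varphi$ iff $-1<1-g\le 1$ $\nu$-a.e., sufficient if $\|L\|<2$, necessary only if $\|L\|\le 2$), so your hesitation reflects a genuine looseness in the statement/proof rather than a misunderstanding; a complete write-up should either prove (ii) in the paper's weaker "convergent" sense or invoke the gap from Lemma \ref{lem:LCoercive} to upgrade to geometric, hence absolute, summability. One further small correction on the converse direction of (ii): citing Proposition \ref{pro:Aone} is not appropriate there, since that estimate is proved under Conditions \ref{cond:particlesNotToClose}--\ref{cond:radiusSmallerXi} and gives a bound unrelated to $2$, whereas (ii) is an abstract implication; your alternative route (existence of the partial sums for every $f$ forces $\mathcal{D}(L)=H^1(\IR^3)$, hence $L$ self-adjoint and bounded as in Remark \ref{rem:LselfAdjoint}, then uniform boundedness plus self-adjointness gives $\|1-L\|\le 1$, i.e.\ $\|L\|\le 2$) is correct and matches the paper's reasoning.
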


\begin{proof}
	As above, we denote the partial sums of the series \eqref{eq:ProjectionSeries} by $\Psi_n$.
	Since $(-\Delta +\xi^{-2}) Q_i v = 0$ for all $v \in H^1(\IR^3)$ (cf. \eqref{eq:characterizationOfQ}), it follows
	\[
		(-\Delta +\xi^{-2}) \Psi_n = f \in \IR^3 \backslash K.
	\]
	Thus, this equation is also satisfied by the limit.
	By \eqref{eq:boundaryOfPsi} we have $\Psi_n = \Phi_{n+1,i} \to 0$ in $B_i$ since $\Phi_{n+1,i}$
	appears in the series \eqref{eq:ProjectionSeries} which we assumed to be absolutely convergent.
	This implies that the limit indeed solves \eqref{S1E8}.
	
	To prove the second statement, we observe that by \eqref{eq:SeriesAsPowers}, 
	the partial sums of the series \eqref{eq:ProjectionSeries}
can be written as $(1-L)^n G_0 f$. Since $G_0$ is an isometry, 
these partial sums only exist if $\mathcal{D}(L) = H^1(\IR^3)$. Then, by Remark \ref{rem:LselfAdjoint},
$L$ is a nonnegative self-adjoint operator. 
Thus, by the spectral theorem (for unbounded self-adjoint operators), up to an isometry, 
	$ L $ is a multiplication operator $T$ on $ H := L^2_\nu(X)$ 
	for some measure space $(X,\mathcal{A},\nu)$, i.e., there exists a function $f \in L^\infty_\nu (X)$ such that
	$ T \varphi = f \varphi $ for all $ \varphi \in L^2_\nu(X)$.
	Thus, $(1-L)^n G_0 f$ corresponds to 
	\[
		(1-f)^n \varphi
	\]
	which converges iff 
	\[
		-1 < (1-f) \leq 1 ~ \nu\text{-a.e.}
	\]
	Since $L$ is nonnegative, this is equivalent to $f < 2 $, $\nu$-a.e., and hence,
	a sufficient condition for convergence is $\|L \| < 2$, and a necessary condition is $\|L \| \leq 2$.
	
	If, in addition, $L$ has a spectral gap, then for $\nu$-a.e. $x$, $f(x) = 0$ or $ f(x) \geq c$ and \eqref{eq:expConvergence} follows.
\end{proof}

\begin{remark}
	\label{rem:kerL}
	It is essential to observe the following. If $L$ defines a bounded operator on $H^1(\IR^3)$ with $\|L\| < 2$,
	then $(1-L)^n$ converges to the orthogonal projection to the kernel of $L$. 
	Indeed, by decomposing any $u \in H^1(\IR^3)$ into $u = u_1 + u_2$, where $u_1 \in \ker L$ and
	$u_2 \in (\ker L)^\perp$, we see that $(1-L)^n u_2 = u_2 $ and $ (1-L)^n u_1 \to 0$ using the spectral theorem
	as in the proof above.

	We recall that $L = \sum_i Q_i$, where $Q_i$ are orthogonal projections to
	$ H_0^1(\IR^3 \backslash \overline{B_i})^\perp$. Therefore,
\begin{equation}
	\label{eq:kerLasIntersection}
	\ker L = \bigcap_i H_0^1(\IR^3 \backslash \overline{B_i}) = H_0^1(\IR^3 \backslash K) =: V.
\end{equation}
	Hence, the series \eqref{eq:ProjectionSeries} written as $(1-L)^n G_0 f$ converges to $P G_0 f$, where $P$
	denotes orthogonal projection to $V$. However, this is just a different way to see that the series indeed converges
	to the solution of Problem \eqref{S1E8}. Indeed, the fact that $P G_0 f$ solves Problem \eqref{S1E8} follows 
	directly from Lemma \ref{lem:scrpoibdry}.
\end{remark}

\subsection{Estimates for the Operator $L$}
\label{sec:AnEstimateForTheFirstOrderTermInTheScatteringSeries}

\begin{proposition}
	\label{pro:Aone}
	There exists a constant $C_1 > 0 $ depending only on $\alpha$ and $\kappa$ from Condition  
	\ref{cond:particlesNotToClose} and \ref{cond:radiusSmallerXi} such that 
	\begin{equation}
		\label{eq:Aone}
		 \| L \| \leq (1 + C_1 \xi^2\mu_0).
	\end{equation}
	
\end{proposition}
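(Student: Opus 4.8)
The plan is to bound $\|L\|$ by estimating, for a fixed $u \in H^1(\IR^3)$, the quantity $\langle L u, u\rangle_{H^1_\xi} = \sum_i \|Q_i u\|_{H^1_\xi}^2$ (using that each $Q_i$ is an orthogonal projection, so $\langle Q_i u, u\rangle = \|Q_i u\|^2$), and to compare this sum against $\|u\|_{H^1_\xi}^2$. The key point is to split off the "diagonal" contribution: near each particle $B_i$, the function $Q_i u$ should be controlled by $u$ itself on a neighbourhood of $B_i$, giving a term that sums to essentially $\|u\|^2$ because the enlarged balls $B_{d_i/2}(x_i)$ are disjoint (this is where Condition \ref{cond:particlesNotToClose} enters), while the "off-diagonal" error is where the smallness factor $\xi^2 \mu_0$ comes from.

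Concretely, I would proceed as follows. First, recall from \eqref{eq:characterizationOfQ} that $w_i := Q_i u$ solves $-\Delta w_i + \xi^{-2} w_i = 0$ in $\IR^3 \backslash \overline{B_i}$ with $w_i = u$ on $\overline{B_i}$, so $w_i$ is the $\xi$-harmonic extension of $u|_{B_i}$ to the exterior; equivalently $w_i$ minimizes $\|\,\cdot\,\|_{H^1_\xi(\IR^3)}$ among all $H^1$ functions agreeing with $u$ on $B_i$. Using this minimality against a suitable competitor (e.g. $u$ multiplied by a cutoff supported in $B_{d_i/2}(x_i)$, or the product of $u$ with the single-particle capacitary-type potential $u_i$ from the screening-length discussion), one gets
\[
\|Q_i u\|_{H^1_\xi(\IR^3)}^2 \le \|u\|_{H^1_\xi(B_{d_i/2}(x_i))}^2 + C\,(\text{error}_i),
\]
where $\text{error}_i$ is estimated by the $H^1_\xi$-energy of $u$ on the annulus $B_{d_i/2}(x_i) \backslash B_i$ times a factor involving $r_i$, $d_i$ and $\xi$. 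The scaling analysis already sketched in the introduction (the single-ball capacity is $\sim r_i$, and the relevant length is $d_i$) shows this factor is $\lesssim r_i d_i^{-1}$ from the capacity plus a $\xi$-dependent correction, and after summing and using $r_i d_i^{-3} \le \mu_0$ together with $d_i \lesssim \xi/\alpha^{-1}$-type bounds from Condition \ref{cond:radiusSmallerXi}, the total error is $\le C_1 \xi^2 \mu_0 \|u\|_{H^1_\xi(\IR^3)}^2$. Since the balls $B_{d_i/2}(x_i)$ are pairwise disjoint, $\sum_i \|u\|_{H^1_\xi(B_{d_i/2}(x_i))}^2 \le \|u\|_{H^1_\xi(\IR^3)}^2$, which gives the leading $1$ in \eqref{eq:Aone}.

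The main obstacle I expect is making the per-particle estimate \emph{uniform} in the configuration and correctly tracking the dependence on $\xi$: one must choose the competitor/cutoff at the scale $d_i$ (not $r_i$ or $\xi$), handle the case $d_i \gtrsim \xi$ versus $d_i \ll \xi$ separately if necessary, and control the explicit exterior $\xi$-harmonic extension of a general (not constant) Dirichlet datum $u|_{B_i}$ — for which I would use the Poisson-type representation for $-\Delta + \xi^{-2}$ on the exterior of a ball, or more robustly a trace/extension argument combined with the variational characterization, to avoid needing pointwise control of $u$. Once the single-particle bound $\|Q_i u\|_{H^1_\xi}^2 \le \|u\|^2_{H^1_\xi(B_{d_i/2}(x_i))} + C r_i d_i^{-1}(1 + (\xi/d_i)^{-1}\cdots)\|u\|^2_{H^1_\xi(B_{d_i/2}(x_i)\backslash B_i)}$ is in place with constants depending only on $\kappa$ and $\alpha$, summing over $i$, using disjointness and Condition \ref{cond:Capacity}, yields \eqref{eq:Aone}; in particular $\mathcal D(L) = H^1(\IR^3)$ and $L$ is bounded, justifying Remark \ref{rem:LselfAdjoint}.
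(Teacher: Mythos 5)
Your reduction of the statement to the bound $(Lu,u)_{H^1_\xi}=\sum_i\|Q_iu\|_{H^1_\xi}^2\le(1+C\xi^2\mu_0)\|u\|_{H^1_\xi}^2$ is legitimate (for a nonnegative self-adjoint bounded operator this is equivalent to \eqref{eq:Aone}), but the per-particle estimate you propose to prove it does not hold, and the way you plan to obtain it cannot produce the leading constant $1$. First, the claimed inequality $\|Q_iu\|^2\le\|u\|^2_{H^1_\xi(B_{d_i/2}(x_i))}+C\,r_id_i^{-1}\|u\|^2_{H^1_\xi(B_{d_i/2}(x_i)\backslash B_i)}$ is false in the regime $\mu_0\xi^2\gg 1$, which the proposition must cover (no smallness of $\mu_0\xi^2$ is assumed, and by Remark \ref{rem:EstimateLSharp} the term $\xi^2\mu_0$ in \eqref{eq:Aone} is really there): take $u\approx 1$ on $B_{d_i/2}(x_i)$ with $r_i\ll d_i\ll\xi$; then $\|Q_iu\|^2\approx r_i$ (the screened capacity), while your right-hand side is $\approx\xi^{-2}d_i^3+Cr_id_i^2\xi^{-2}\ll r_i$ as soon as $r_id_i^{-3}\gg\xi^{-2}$. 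The correct error term must be of capacity type, $\sim r_id_i^{-3}\|u\|^2_{L^2(B_{d_i/2})}$, not annulus energy times $r_id_i^{-1}$; also note that Condition \ref{cond:radiusSmallerXi} bounds $r_i$ by $\alpha\xi$ but gives no bound of $d_i$ in terms of $\xi$, so the factor $\xi^2$ cannot come from where you say it does. Second, and more structurally: any competitor built from cutoffs or capacitary potentials at scale $r_i$ or $d_i$ produces, per particle, terms like $r_i^{-2}\|u\|^2_{L^2(B_{\kappa r_i}(x_i))}\sim\|u\|^2_{L^6(B_{\kappa r_i}(x_i))}$ or $d_i^{-2}\|u\|^2_{L^2(B_{d_i/2}(x_i))}$ with order-one constants; these do not sum to $o(1)\cdot\|u\|^2$ (squares of local $L^6$ norms are not summable against the global norm), so at best you get $\|L\|\le C+C\xi^2\mu_0$ with some $C>1$. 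That weaker bound is useless for Theorem \ref{SeriesPoissonScreened}, which needs $\|L\|<2$ when $\mu_0\xi^2$ is small; the whole content of \eqref{eq:Aone} is the leading constant $1$.

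The paper obtains that constant for free by working on the source side rather than locally in $u$. Writing $Q_iu=G_0f_i$ with $f_i:=G_0^{-1}Q_iu$ supported in $\overline{B_i}$ (by \eqref{eq:characterizationOfOrthogonal}), and using that $G_0$ is an isometry, one expands exactly
\[
\|Lu\|_{H^1_\xi}^2=\Big\|\sum_i f_i\Big\|_{H^{-1}_\xi}^2=\sum_i\|f_i\|_{H^{-1}_\xi}^2+\sum_i\sum_{j\neq i}\langle G_0f_j,f_i\rangle,
\]
so the diagonal contributes with constant exactly $1$, and only the cross terms need estimating. Lemma \ref{lem:correlationest} bounds each cross term by $C\sqrt{r_ir_j}\,e^{-|x_i-x_j|/\xi}|x_i-x_j|^{-1}\|f_i\|\,\|f_j\|$, and summing with Condition \ref{cond:Capacity} (the $\xi^2$ arising from $\int_{\IR^3}e^{-|z|/\xi}|z|^{-1}\dd z\sim\xi^2$) gives Lemma \ref{lem:locH^-1}, i.e. $\|Lu\|^2\le(1+C_1\xi^2\mu_0)\sum_i\|Q_iu\|^2=(1+C_1\xi^2\mu_0)(Lu,u)$, from which \eqref{eq:Aone} follows (first for finite partial sums, then for $L$ by density). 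If you want to salvage your route, you would essentially have to reprove this almost-orthogonality of sources supported in disjoint balls; the purely local competitor argument, as proposed, does not close.
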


The key estimate for the proof of the above proposition is the following lemma. Roughly speaking, it states that correlations between $H^{-1}$ functions which are supported in the particles are controlled by the capacity density times the norms of the functions themselves.

\begin{lemma}
	\label{lem:locH^-1}
	Assume $(f_i)_{i \in I} \subset H^{-1}(\IR^3)$
	satisfies $ \supp f_i \subset \overline{B_i}$ for all $i \in I$.
	Then,
	\begin{equation}
		\label{eq:locH^-1}
		c \sum_i \| f_i \|_{H^{-1}_\xi(\IR^3)}^2 \leq \Big \| \sum_i f_i \Big \|_{H^{-1}_\xi(\IR^3)}^2 
		\leq (1+C_1 \mu_0\xi^2) \sum_i \| f_i \|_{H^{-1}_\xi(\IR^3)}^2,
	\end{equation}
	where $ c>0 $ is a universal constant and $C_1$ depends only on  $\alpha$ and $\kappa$ from Condition 
	\ref{cond:particlesNotToClose} and \ref{cond:radiusSmallerXi}.
\end{lemma}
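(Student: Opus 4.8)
The plan is to work in the Hilbert space $H^1(\IR^3)$ (endowed with the $\xi$-scalar product) via the isometry $G_0 \colon H^{-1}(\IR^3) \to H^1(\IR^3)$. Setting $w_i := G_0 f_i$, the statement \eqref{eq:locH^-1} becomes the two-sided bound $c \sum_i \|w_i\|_{H^1_\xi}^2 \leq \|\sum_i w_i\|_{H^1_\xi}^2 \leq (1+C_1\mu_0\xi^2) \sum_i \|w_i\|_{H^1_\xi}^2$. The key structural fact is that each $w_i = W_\xi \ast f_i$ solves $-\Delta w_i + \xi^{-2} w_i = 0$ in $\IR^3 \backslash \overline{B_i}$, i.e. $w_i \in H^1_0(\IR^3 \backslash \overline{B_i})^\perp$. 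Expanding the middle term gives $\|\sum_i w_i\|_{H^1_\xi}^2 = \sum_i \|w_i\|_{H^1_\xi}^2 + \sum_{i \neq j} (w_i, w_j)_{H^1_\xi}$, so everything reduces to estimating the off-diagonal cross terms $(w_i, w_j)_{H^1_\xi}$. Since $(w_i, w_j)_{H^1_\xi} = \langle f_j, w_i \rangle$ and $f_j$ is supported in $\overline{B_j}$, this pairing only sees $w_i$ restricted to $\overline{B_j}$; the exponential decay of $W_\xi$ over distance $\xi$ is what makes the sum of these terms small when $\mu_0 \xi^2$ is small.

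First I would establish a pointwise/local bound on $w_i$ away from $B_i$. Because $f_i \in H^{-1}(\IR^3)$ is supported in $\overline{B_i}$, one has $\|f_i\|_{H^{-1}(\IR^3)}^2 \gtrsim \|w_i\|_{H^1(B_{d_i/2}(x_i))}^2$ trivially, and for the upper bound I want to control $\|w_i\|_{H^1_\xi(B_j)}$ for $j \neq i$ in terms of $\|f_i\|_{H^{-1}_\xi}$ and the distance $|x_i - x_j|$. The cleanest route: since $w_i$ is $(-\Delta + \xi^{-2})$-harmonic outside $\overline{B_i}$ and in $H^1(\IR^3)$, interior elliptic estimates on an annulus plus the exponential decay built into the Green's function $W_\xi$ yield $\|w_i\|_{H^1_\xi(B_j)}^2 \lesssim \|f_i\|_{H^{-1}_\xi}^2 \, \big(\tfrac{r_i}{|x_i-x_j|}\big)^2 \, e^{-|x_i-x_j|/\xi} \cdot (\text{polynomial factors})$ — the factor $r_i/|x_i-x_j|$ coming from the capacity/monopole decay of a source confined to a ball of radius $r_i$, and crucially using Condition~\ref{cond:particlesNotToClose} so the balls $B_{d_i/2}(x_i)$ are disjoint and $|x_i - x_j| \geq \tfrac{1}{2}\max(d_i, d_j)$.

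Then I would sum: by Cauchy--Schwarz in the cross terms,
\[
	\Big| \sum_{i \neq j} (w_i, w_j)_{H^1_\xi} \Big|
	\leq \sum_{i \neq j} \|f_j\|_{H^{-1}_\xi} \, \|w_i\|_{H^1_\xi(B_j)}
	\leq \frac12 \sum_i \|f_i\|_{H^{-1}_\xi}^2 \Big( \sup_i \sum_{j \neq i} a_{ij} + \sup_j \sum_{i \neq j} a_{ij} \Big),
\]
where $a_{ij} \lesssim \tfrac{r_i}{|x_i-x_j|} e^{-c|x_i-x_j|/\xi}$. The remaining task is the combinatorial estimate $\sup_i \sum_{j \neq i} a_{ij} \lesssim \mu_0 \xi^2$. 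Here I compare the sum to an integral: the disjointness of $B_{d_j/2}(x_j)$ lets me bound $\sum_{j\neq i} \tfrac{r_i}{|x_i - x_j|} e^{-c|x_i-x_j|/\xi}$ by $r_i \mu_0^{-1} \cdot \text{(something)}$... more precisely, using $r_j d_j^{-3} \leq \mu_0$, each $x_j$ "represents" a volume $\sim d_j^3 \geq r_j/\mu_0$, and after covering $\IR^3$ by the disjoint balls one gets $\sum_{j\neq i} a_{ij} \lesssim \mu_0 \int_{\IR^3} |y|^{-1} e^{-c|y|/\xi}\,dy = C \mu_0 \xi^2$. For the lower bound $c\sum_i\|f_i\|^2$, note it suffices to absorb the cross terms: if $\mu_0\xi^2$ is small this is immediate, and for general $\mu_0\xi^2 \leq C_\ast$ one instead uses that $\|f_i\|_{H^{-1}_\xi}^2 = \|w_i\|_{H^1_\xi}^2$ can be recovered by testing $\sum_j w_j$ against something concentrated near $B_i$ together with the local estimate above — or simply restrict to the regime where Condition~\ref{cond:radiusSmallerXi} forces a universal lower bound via a covering/partition argument on the balls $B_{d_i/2}(x_i)$.

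The main obstacle is the decay estimate on $\|w_i\|_{H^1_\xi(B_j)}$ with the correct power of $r_i/|x_i-x_j|$: one must carefully track that a source supported in a ball of radius $r_i$ produces a field whose near-monopole behavior contributes the factor $r_i$ (not just $1$), since without it the combinatorial sum would only be bounded by $\mu_0^{1/3}$-type quantities rather than $\mu_0\xi^2$. This is exactly where the electrostatic-capacity heuristic from the introduction enters rigorously, and where Condition~\ref{cond:radiusSmallerXi} ($r_i \leq \alpha\xi$) is used to prevent the exponential weight from interfering with the near-field scaling.
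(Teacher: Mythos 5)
Your overall architecture for the second inequality --- passing to $w_i=G_0 f_i$ via the isometry, expanding the square, treating the cross terms $\langle f_j,w_i\rangle$ through the support of $f_j$ and the exponential decay of $W_\xi$, and then comparing the resulting sum with $\int_{\IR^3} e^{-|z|/\xi}|z|^{-1}\dd z\sim\xi^2$ using the disjoint balls $B_{d_j/2}(x_j)$ --- is exactly the paper's route. But the quantitative core has two genuine defects. First, the step $|\langle f_j,w_i\rangle|\le\|f_j\|_{H^{-1}_\xi}\|w_i\|_{H^1_\xi(B_j)}$ is not legitimate: $\|f_j\|_{H^{-1}_\xi}$ is dual to the \emph{global} $H^1_\xi$-norm, so you must test against a globally defined function that agrees with $w_i$ on $\overline{B_j}$. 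The paper constructs it as $\eta\,w_i$ with $\eta\in C_c^\infty(B_{\kappa r_j}(x_j))$, $\eta=1$ on $B_j$, $|\nabla\eta|\le C/r_j$ (Lemma \ref{lem:correlationest}), and the term $r_j^{-1}\|w_i\|_{L^2(B_{\kappa r_j}(x_j))}$ is precisely what produces a factor $\sqrt{r_j}$. The correct kernel is therefore $\sqrt{r_ir_j}\,e^{-|x_i-x_j|/\xi}/|x_i-x_j|$, not $r_i\,e^{-c|x_i-x_j|/\xi}/|x_i-x_j|$; your claimed local bound with the full power $r_i$ fails when $r_j\gg r_i$ (normalize a surface charge on $\partial B_i$ in $H^{-1}_\xi$: its far field per unit norm is of size $\sqrt{r_i}\,e^{-d/\xi}/d$, and the localization near $B_j$ contributes $\sqrt{r_j}$, not another $\sqrt{r_i}$). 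Second, and independently, your closing Schur-type step does not close with your kernel: in $\sup_i\sum_{j\ne i}a_{ij}$ the radius factor must be the one of the summation index $j$, because the sum over $j$ is converted into an integral via the disjoint balls $B_{d_j/2}(x_j)$, whose volumes are $\sim d_j^3\ge r_j/\mu_0$ by Condition \ref{cond:Capacity}. With $r_i$ in the numerator, $\sup_i r_i\sum_{j\ne i}e^{-c|x_i-x_j|/\xi}/|x_i-x_j|$ is \emph{not} bounded by $C\mu_0\xi^2$ under Conditions \ref{cond:Capacity}--\ref{cond:radiusSmallerXi}: take one particle with $r_i\sim\alpha\xi$ and arbitrarily many tiny, densely packed particles at distance $\sim\xi$ (still satisfying $r_jd_j^{-3}\le\mu_0$). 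The paper's symmetric bound is exploited by $\sqrt{r_ir_j}\le\mu_0(d_id_j)^{3/2}$ together with the crossed estimate $\sqrt{d_i^3d_j^3}\,\|f_i\|\|f_j\|\le\tfrac12\bigl(d_j^3\|f_i\|^2+d_i^3\|f_j\|^2\bigr)$, so that for fixed $i$ one only needs $\sum_{j\ne i}d_j^3\,e^{-|x_i-x_j|/\xi}/|x_i-x_j|\le C\xi^2$.

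The first inequality is also not proven. The constant $c$ in the lemma is universal, so ``absorb the cross terms when $\mu_0\xi^2$ is small'' does not suffice, and your fallback (``test against something concentrated near $B_i$'', or restricting to a special regime) is a gesture, not an argument: the bound must hold for all admissible configurations no matter how large $\mu_0\xi^2$ is. The paper proves it by a separate, capacity-free trick: writing $f:=\sum_i f_i$ and using cutoffs $\eta_i$ as above, one has $\sum_i\|f_i\|_{H^{-1}_\xi}^2=\sum_i\langle\eta_iG_0f_i,f_i\rangle=\sum_i\langle\eta_iG_0f_i,f\rangle\le\|f\|_{H^{-1}_\xi}\bigl\|\sum_i\eta_iG_0f_i\bigr\|_{H^1_\xi}$; since the functions $\eta_iG_0f_i$ have pairwise disjoint supports in the balls $B_{\kappa r_i}(x_i)$ and $\|\eta_iG_0f_i\|_{H^1_\xi}\le C\|G_0f_i\|_{H^1_\xi}$ (the $1/r_i$ from the cutoff being absorbed by the Sobolev inequality), squaring yields $\bigl(\sum_i\|f_i\|^2_{H^{-1}_\xi}\bigr)^2\le C\|f\|_{H^{-1}_\xi}^2\sum_i\|f_i\|^2_{H^{-1}_\xi}$, i.e.\ the lower bound with a universal constant. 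You need this (or an equivalent) argument rather than smallness of $\mu_0\xi^2$.
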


For the proof we need the following lemma.

\begin{lemma}
	\label{lem:correlationest}
	Let $i,j \in I$. Assume $ f \in H^{-1}(\IR^3) $ is supported in $ \overline{B_j} $. 
	Then, there exists a function $ v \in H^1_0(B_{\kappa r_i}(x_i)) $ such that $ v = G_0 f $ in $ B_i$, and 
	\begin{equation}
		\|v\|_{H^1_\xi(\IR^3)} \leq C \sqrt{r_i r_j} \frac{e^{-\frac{|x_i - x_j|}{\xi}}}{|x_i - x_j|} \|f\|_{H^{-1}_\xi(\IR^3)},
	\end{equation}
	for a constant $ C $ that depends only on  $\alpha$ and $\kappa$ from Condition 
	\ref{cond:particlesNotToClose} and \ref{cond:radiusSmallerXi}.
\end{lemma}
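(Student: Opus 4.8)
The plan is to take $v$ to be a suitable cut‑off of $G_0 f$ around $x_i$ and to reduce the bound on $\|v\|_{H^1_\xi(\IR^3)}$ to pointwise estimates for $G_0 f$ and $\nabla G_0 f$ near $B_i$.

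Fix constants $1 < a < b < \kappa$; such a choice exists by Condition \ref{cond:particlesNotToClose}. Choose $\chi_i \in C_c^\infty(B_{ar_i}(x_i))$ with $\chi_i \equiv 1$ on $B_i$ and $\|\nabla\chi_i\|_{L^\infty} \le C(\kappa)\,r_i^{-1}$, and set $v := \chi_i\,G_0 f$. Since $a < \kappa$, $v$ is supported in a compact subset of $B_{\kappa r_i}(x_i)$, hence $v \in H^1_0(B_{\kappa r_i}(x_i))$, and $v = G_0 f$ in $B_i$. Writing $\ell := |x_i - x_j|$, the claim follows once we prove
\[
	\|G_0 f\|_{L^\infty(B_{br_i}(x_i))} \le M := C(\alpha,\kappa)\,\frac{\sqrt{r_j}}{\ell}\,e^{-\ell/\xi}\,\|f\|_{H^{-1}_\xi(\IR^3)}, \qquad \|\nabla G_0 f\|_{L^\infty(B_{ar_i}(x_i))} \le C(\alpha,\kappa)\,\frac{M}{r_i},
\]
since then, using $|\supp\chi_i| \le C\,r_i^3$ and $\xi^{-2}r_i^2 \le \alpha^2$ (Condition \ref{cond:radiusSmallerXi}),
\[
	\|v\|_{H^1_\xi}^2 \le C\!\int_{\supp\chi_i}\!\big( |\nabla\chi_i|^2|G_0 f|^2 + |\nabla G_0 f|^2 + \xi^{-2}|G_0 f|^2 \big) \le C(\alpha,\kappa)\,r_i\,M^2,
\]
which is the asserted bound.

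For the pointwise estimate on $G_0 f$, I would pick $\eta_j \in C_c^\infty(B_{br_j}(x_j))$ with $\eta_j \equiv 1$ on $\overline{B_j}$ and $\|\nabla\eta_j\|_{L^\infty} \le C(\kappa)\,r_j^{-1}$, possible since $b < \kappa$ and $\kappa r_j < d_j/2$. Because $\overline{B_j}$ (the support of $f$) is disjoint from $B_{d_i/2}(x_i)$, the function $G_0 f$ solves $(-\Delta + \xi^{-2})G_0 f = 0$ in $B_{d_i/2}(x_i)$, and by a standard approximation argument (approximate $f$ in $H^{-1}$ by smooth functions supported near $\overline{B_j}$, use the convolution formula $G_0 f = W_\xi * f$ and interior elliptic regularity) one has, for $x \in B_{d_i/2}(x_i)$,
\[
	G_0 f(x) = \langle f,\, \eta_j W_\xi(x-\cdot)\rangle, \qquad\text{hence}\qquad |G_0 f(x)| \le \|f\|_{H^{-1}_\xi}\,\|\eta_j W_\xi(x-\cdot)\|_{H^1_\xi}.
\]
The geometric input is that, since $\kappa r_i < d_i/2$, $\kappa r_j < d_j/2$ and the balls $B_{d_i/2}(x_i)$, $B_{d_j/2}(x_j)$ are disjoint (so $\tfrac{d_i}{2} + \tfrac{d_j}{2} \le \ell$), for $x \in B_{br_i}(x_i)$ and $y \in B_{br_j}(x_j)$ one has $b(r_i + r_j) \le \tfrac{b}{\kappa}\ell$, whence
\[
	\Big(1 - \tfrac{b}{\kappa}\Big)\ell \le |x-y| \le 2\ell,
\]
and moreover $|x-y| \ge \ell - b(r_i+r_j) \ge \ell - 4\alpha\xi$ by Condition \ref{cond:radiusSmallerXi}, so that $e^{-|x-y|/\xi} \le e^{4\alpha}\,e^{-\ell/\xi}$. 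Inserting the bounds $|W_\xi(x-y)| \le C(\kappa)\,e^{-\ell/\xi}/\ell$ and $|\nabla W_\xi(x-y)| \le C(\kappa)\,(\xi^{-1}+\ell^{-1})\,e^{-\ell/\xi}/\ell$ into $\|\eta_j W_\xi(x-\cdot)\|_{H^1_\xi}^2$, using $|\supp\eta_j| \le C\,r_j^3$, $\|\nabla\eta_j\|_{L^\infty} \le C(\kappa)\,r_j^{-1}$, and absorbing lower order terms via $r_j^2 \le \alpha^2\xi^2$ and $r_j \le \ell/(2\kappa)$, I obtain $\|\eta_j W_\xi(x-\cdot)\|_{H^1_\xi} \le C(\alpha,\kappa)\,\sqrt{r_j}\,e^{-\ell/\xi}/\ell$ for all $x \in B_{br_i}(x_i)$, which is the bound $M$.

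For the gradient, $G_0 f$ solves $\Delta(G_0 f) = \xi^{-2}G_0 f$ in $B_{d_i/2}(x_i) \supset B_{br_i}(x_i)$, so the interior gradient estimate applied on balls of radius $(b-a)r_i$ centred in $B_{ar_i}(x_i)$, together with $(b-a)r_i\,\xi^{-2} \le C(\alpha,\kappa)\,((b-a)r_i)^{-1}$ (a consequence of $r_i \le \alpha\xi$), gives $\|\nabla G_0 f\|_{L^\infty(B_{ar_i}(x_i))} \le C(\alpha,\kappa)\,r_i^{-1}\,\|G_0 f\|_{L^\infty(B_{br_i}(x_i))} \le C(\alpha,\kappa)\,M/r_i$, completing the argument. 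The main obstacle is getting the exponential to decay in $|x_i-x_j|$ at the \emph{full} rate $1/\xi$: this is precisely why the cut‑off radii must be chosen strictly between $r$ and $\kappa r$ (so that the supports of $\chi_i$ and $\eta_j$ stay separated by a fixed fraction of $|x_i-x_j|$, using disjointness of the $B_{d_i/2}$‑balls) and why Condition \ref{cond:radiusSmallerXi} is needed, to turn $e^{-|x-y|/\xi}$ — with $|x-y|$ only comparable to $|x_i-x_j|$ — into $e^{-|x_i-x_j|/\xi}$ up to a constant. The remaining ingredients (the representation of $G_0 f$ by the cut‑off fundamental solution, the interior elliptic estimates, and the bookkeeping of lower order terms) are routine.
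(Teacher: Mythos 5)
Your proposal is correct and follows essentially the same route as the paper: you take $v$ to be a cutoff of $G_0 f$ near $x_i$ and reduce everything to pointwise bounds on $G_0 f$ obtained by pairing $f$ with a localized copy of the fundamental solution, $\big|\langle \eta_j W_\xi(x-\cdot), f\rangle\big| \le \|f\|_{H^{-1}_\xi}\|\eta_j W_\xi(x-\cdot)\|_{H^1_\xi}$, exactly as in the paper's proof. The only deviation is the gradient bound, which the paper gets by the same duality applied to $\theta\nabla W_\xi$ (giving the sharper rate $\sqrt{r_j}\,e^{-|x_i-x_j|/\xi}/|x_i-x_j|^2$), while you use interior elliptic estimates for $-\Delta u+\xi^{-2}u=0$ on balls of radius comparable to $r_i$, giving only $M/r_i$; this weaker bound still yields the stated estimate, since the term $|\nabla\chi_i|\,|G_0 f|$ already contributes at the rate $M/r_i$.
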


\begin{proof}	
	For $ z \in B_{\kappa r_i}(x_i)$, we define $ \theta \in C_c^\infty (B_{\kappa r_j}(z - x_j)) $ such that
	$ \theta = 1$ in $ B_{r_j}(z - x_j) $ and $ |\nabla \theta| \leq \frac{C}{r_j} $, 
	(where the constant depends on $\kappa$). 
	We use that $ f $ is supported in $ \overline{B_j} $. Therefore, using the fundamental solution \eqref{eq:FundamentalScreened},
	\begin{equation}
	\begin{aligned}
		|(G_0 f) (z)| &= |(W_\xi \ast f)(z)| = |((\theta W_\xi) \ast f)(z)| \\ 
		&= |\langle (\theta W_\xi)(z-\cdot),f \rangle| 
		\leq \|f\|_{H^{-1}_\xi(\IR^3)} \|\theta W_\xi \|_{H^1_\xi(\IR^3)},
	\end{aligned}
	\label{eq:ptw1}
	\end{equation}
	and
	\begin{equation}
		\label{eq:ptw2}
		|\nabla (G_0 f) (z)| \leq \| f \|_{H_\xi^{-1}(\IR^3)} \|  \theta \nabla W_\xi \|_{H^1_\xi(\IR^3)}.  
	\end{equation}
	Using Condition \ref{cond:particlesNotToClose} and \ref{cond:radiusSmallerXi}, we estimate
	\begin{align}
		\|\theta W_\xi \|_{H^1_\xi(\IR^3)} &\leq \| W_\xi \|_{H^1_\xi(B_{\kappa r_j}(z - x_j))} + 
		\frac{C}{r_j}\| W_\xi \|_{L^2(B_{\kappa r_j}(z - x_j))} \\
		&\leq C r_j^{3/2} e^{-\frac{|x_i - x_j|-\kappa r_j} {\xi}} \left( 
		\frac{1}{(|x - y|-\kappa r_j)^2} + \frac{1}{r_j(|x_i - x_j|-r_j)} + \frac{1}{\xi(|x_i - x_j|-r_j)}\right) \\
		& \leq C r_j^{1/2} \frac{ e^{-\frac{|x_i - x_j|}{\xi}}}{|x_i - x_j|},
	\end{align}
	and 
	\[
		\|\theta \nabla W_\xi \|_{H^1_\xi(\IR^3)} \leq  
		C r_j^{1/2}\frac{e^{-\frac{|x_i - x_j|}{\xi}}}{|x_i - x_j|^2}.
	\]
	Now, we use another cutoff function 
	$ \eta \in C_c^\infty (B_{\kappa r_i}(x_i)) $ such that
	$ \eta = 1$ in $ B_i $ and $ |\nabla \eta| \leq \frac{C}{r_i} $ to define $ v := \eta(G_0 f)$.
	Then, we get from the pointwise estimates on $ G_0 f $, \eqref{eq:ptw1} and \eqref{eq:ptw2}, 
	\begin{align}
		\| v\|_{H^1_\xi(\IR^3)} = \|\eta (G_0 f) \|_{H^1(\IR^3)} &\leq \| G_0 f \|_{H^1(B_{\kappa r_i}(x_i))} + 
		\frac{C}{r_i}\| G_0 f \|_{L^2(B_{\kappa r_i}( x_i))} \\
		&\leq C \sqrt{r_i r_j} \frac{e^{-\frac{|x_i - x_j|}{\xi}}}{|x_i - x_j|} \|f\|_{H^{-1}(\IR^3)}. \qedhere
	\end{align}
\end{proof}

\begin{proof}[Proof of Lemma \ref{lem:locH^-1}.]
	Let $ \eta_i \in C_c^\infty (B_{\kappa r_i}(x)) $ such that 
	$ \eta_i = 1 $ in 	$ B_i $ and $ |\nabla \eta_i | \leq \frac{C}{r_i} $. 
	Now, we observe that for all $ u \in H^1(\IR^3) $
	\[
		\| u \|_{L^2(B_{\kappa r_i}(x_i))} \leq \| u \|_{L^6(B_{\kappa r_i}(x_i))} \|1\|_{L^3(B_{\kappa r_i}(x_i))} 
		\leq  C r_i \| \nabla u \|_{L^2(\IR^3)},
	\]
	where we have used the Gagliardo-Nirenberg-Sobolev inequality $\|u\|_{L^6(\IR^3)} \leq C \| \nabla u \|_{L^2(\IR^3)}$.
	Hence,
	\begin{equation}
		\| \eta_i u \|_{H^1_\xi(\IR^3)} \leq \|u\|_{H^1_\xi(\IR^3)} + \frac{C}{r_i} \| u \|_{L^2(B_{\kappa r_i}(x_i))} 
		\leq C \| u \|_{H^1_\xi(\IR^3)}.
		\label{eq:cutoffest}
	\end{equation}
	On the other hand, denoting $f = \sum_i f_i$,
	\begin{align}
		\sum_i \| f_i \|_{H_\xi^{-1}(\IR^3)}^2 &= \sum_i\langle G_0 f_i,f_i \rangle = 
		\sum_i \langle  \eta_i G_0 f_i,f_i\rangle \\ &= \sum_i \langle \eta_i G_0 f_i,f \rangle 
		\leq \| f \|_{H^{-1}_\xi(\IR^3)} \Big \|\sum_i \eta_i G_0 f_i \Big \|_{H^1_\xi(\IR^3)}.
	\end{align}
By taking squares on both sides and using the fact that the balls $ B_{\kappa r_i}(x_i) $ are disjoint together with the preliminary estimate \eqref{eq:cutoffest}, we deduce
\[
	\bigg (\sum_i \| f_i \|_{H_\xi^{-1}(\IR^3)}^2 \bigg )^2 
	\leq C \| f \|_{H^{-1}_\xi(\IR^3)}^2 \sum_i  \|G_0 f_i \|_{H^1_\xi(\IR^3)}^2.
\]
Since $ G_0 $ is an isometry, this yields the first inequality in \eqref{eq:locH^-1}.

	For the second inequality, we use again that $ G_0 $ is an isometry to get
	\begin{align}
		\Big \| \sum_i f_i \Big \|_{H^{-1}_\xi(\IR^3)}^2 &= \Big \| \sum_i G_0 f_i \Big \|_{H^1_\xi(\IR^3)}^2 \\ 
		& = \sum_i \| G_0 f_i \|_{H^1_\xi(\IR^3)}^2 
		+ \sum_i \sum_{j \neq i} (G_0 f_i, G_0 f_j)_{H^1_\xi(\IR^3)} \\
		&= \sum_i \| f_i \|_{H_\xi^{-1}(\IR^3)}^2 
		+ \sum_i \sum_{j \neq i} \langle  G_0 f_j , f_i\rangle.
	\end{align}
	Let $ i \neq j $. Since $ f_i $ is supported in $ \overline{B_i}, $ we have
	\[
		\langle  G_0 f_j, f_i \rangle = \langle  v, f_i \rangle,
	\]
	for any $ v \in H^1(\IR^3) $ such that $ v = G_0 f_j $ in $B_i$. Therefore, application of Lemma \ref{lem:correlationest}
	yields
	\[
		 |\langle  G_0 f_j , f_i\rangle| 
					\leq C \sqrt{r_i r_j} \frac{e^{-\frac{|x_i - x_j|}{\xi}}}{|x_i - x_j|} \|f\|_{H^{-1}_\xi(\IR^3)}.
	\]
	Finally, taking the sum in $ i $ and $ j $ and using 
	\[ 
		\sqrt{r_i r_j} \| f_i \|_{H^{-1}_\xi(\IR^3)} \| f_j \|_{H^{-1}_\xi(\IR^3)} 
		\leq \frac{1}{2} \left(r_i  \| f_i \|_{H_\xi^{-1}(\IR^3)}^2 + r_j\| f_j\|_{H_\xi^{-1}(\IR^3)}^2 \right)
	\]
	and symmetry in $ i $ and $ j $, we conclude using Condition \ref{cond:Capacity}
	\begin{align}
		\sum_i \sum_{j \neq i} \langle G_0 f_j , f_i\rangle
		& \leq C\sum_i \sum_{j \neq i} r_i \frac{e^{-\frac{|x_i - x_j|}{\xi}}}{|x_i - x_j|} \| f_i \|_{H^{-1}_\xi(\IR^3)}^2 \\
		& \leq C \sum_i \mu_0  d_i^3 \| f_i \|_{H^{-1}_\xi(\IR^3)}^2 \\
		& \leq C \sum_i  \| f_i \|_{H^{-1}_\xi(\IR^3)}^2 \int_{\IR^3} \frac{e^{-\frac{|z|}{\xi}}}{|z|} \dd z \\
		& \leq C \mu_0 \xi^2 \sum_i \| f_i \|_{H^{-1}_\xi(\IR^3)}^2. \qedhere
	\end{align}
\end{proof}

\begin{proof}[Proof of Proposition \ref{pro:Aone}.]
	 We choose an enumeration of the index set $I$ and define 
	\[
		L_N := \sum_{i=1}^N Q_i,
	\]
	where $Q_i$ was defined in \eqref{eq:Q}.
	From \eqref{eq:characterizationOfOrthogonal} we see that every function in the image of $G_0 ^{-1} Q_i$ 
	is supported in $B_i$.
	Using that $G_0$ is an isometry, Lemma \ref{lem:locH^-1} implies
	\begin{equation}
		\label{eq:estf}
		\begin{aligned}
		\|L^N u\|_{H^1_\xi(\IR^3)}^2 \leq (1+C_1 \xi^2 \mu_0) \sum_{i=0}^N \| Q_{i} u \|_{H^1_\xi(\IR^3)}^2 
		&= (1+C_1 \xi^2\mu_0) \sum_{i=0}^N (Q_{i} u,u)_{H^1_\xi(\IR^3)} \\
		&= (1+C_1 \xi^2\mu_0) (L^N u,u )_{H^1_\xi(\IR^3)}.
		\end{aligned}
	\end{equation}
	As a sum of orthogonal projections, $L^N$ is self-adjoint. Thus, by the spectral theorem
	for self-adjoint bounded operators, up to an isometry, 
	$ L^N $ is a multiplication operator $S$ on $ H := L^2_\nu(X)$ 
	for some measure space $(X,\mathcal{A},\nu)$, i.e., there exists a function $f \in L^\infty_\nu (X)$ such that
	$ S \varphi = f \varphi $ for all $ \varphi \in L^2_\nu(X)$.
	The estimate \eqref{eq:estf} above yields 
	\[
		\int_X f^2 \varphi^2 \dd \nu \leq (1+C_1 \xi^2\mu_0) \int_X f \varphi^2 \dd \nu,
	\]
	implying
	\[
		\|L^N_r\| = \|f\|_{L^\infty(X)} \leq 1 + C_1 \mu_0\xi^2.
	\]
	
	On the other hand, convergence of $L^N u$ holds for any $u \in H^1(\IR^3)$ that is compactly supported,
	because particles lying outside of the support of $u$ do not play any role.
	By an $\frac{\eps}{3}$-argument, $L u = \sum_{i = 1}^\infty Q_{i} u $ is well defined for all $u \in H^1(\IR^3)$
	and $\|L\| \leq 1 + C_1 \xi^2 \mu_0$.
\end{proof}

\begin{remark}
\label{rem:EstimateLSharp}
	The second estimate in \eqref{eq:locH^-1} is sharp in the following sense.
	For all particle configurations, $\|L\| \geq 1$, and there exist particle configurations satisfying Conditions
 \ref{cond:Capacity}, \ref{cond:particlesNotToClose}, and \ref{cond:radiusSmallerXi},  such that
\[
	\|L \| \geq c \xi^2 \mu_0,
\]
for a universal constant $c$.
\end{remark}

\begin{proof}
	Consider any particle configuration and a function supported in one particle, i.e., $u \in H^1_0(B_i)$
	for some $i \in I$. Then $u$ is a fixed point of the operator $L = \sum_i Q_i$, because
	$Q_i u = u$ and $Q_j u = 0$ for all $j \neq i$. Hence $\| L \| \geq 1$.
	
	The fact that also the capacity $\mu_0$ has to appear on the right hand side follows more or less directly from the
	definition of the electrostatic capacity. The capacity of a set $K$ is defined as
	\[
		\|\nabla v \|_{L^2(\IR^3 \backslash K)}^2,
	\]
	where $v$ is the solution to 	
	\begin{align}
		-\Delta v &= 0 \quad \text{in} ~ \IR^3 \backslash K, \\
		v &= 1 \quad \text{in} ~ K.
	\end{align}
	Now we consider particles distributed on a lattice with equal radius $r$, i.e., the set $K$ occupied by the particles
	is 
	\[
		K=\bigcup_{x\in\left(  d\mathbb{Z}\right)^{3}  }\overline{B_{r}\left(x\right)  }.
	\]
	We choose $d << 1$ and consider $u \in H^1(\IR^3)$ such that $ u = 1 $ in $B := B_1(0)$.
	Then, for each $x_i \in \left(d\mathbb{Z}\right)^{3} \cap B$, we have for $y \in \IR^3 \backslash B_i$
	\[
		(Q_i u)(y) = r e^{\frac{r}{\xi}} \frac{e^{-\frac{|y-x_i|}{\xi}}}{|y-x_i|},
	\]
	and thus, 
	\[
		\| Q_i u \|_{H^1(\IR^3)}^2 \geq \| \nabla Q_i u \|_{L^2(\IR^3)}^2 
		\geq r^2 \int_r^\infty \frac{e^{-\frac{s}{\xi}}}{s^2} \dd s \geq C \xi^2 r.
	\]
	Therefore, using again that $Q_i$ is an orthogonal projection,
	\[
		\|L_r\| \geq c (L_r u,u)_{H^1(\IR^3)} = c \sum_x (Q_x u,u)_{H^1(\IR^3)}
		\geq c  \!\! \sum_{x \in \left( d\mathbb{Z}\right)^{3}  \cap B}  \!\! \|Q_x u\|^2_{H^1(\IR^3)} 
		\geq c \xi^2  \!\! \sum_{x \in \left( d\mathbb{Z}\right)^{3}  \cap B} r,
	\]
	where we put the norm of $u$ into the constant because $u$ has been chosen independently of the particle distribution.
	Since the number of $x_i$ in $\left( d\mathbb{Z}\right)^{3}  \cap B$ is of order $ d^{-3} = \mu_0 r^{-1}$,
	we conclude $ \|L\| \geq c\mu_0$. 
\end{proof}

Using the bound on the norm of $L$ that we proved in Proposition \ref{pro:Aone} it follows from Theorem 
\ref{th:SeriesConvergentImpliesSolution} that the series \eqref{eq:ProjectionSeries} obtained by the
Method of Reflections converges to the solution of Problem \eqref{S1E8}.
Uniform convergence also follows from Theorem \ref{th:SeriesConvergentImpliesSolution} and the following Lemma.

\begin{lemma}
	\label{lem:LCoercive}
	There exists a constant $c_1 > 0$ depending only on $\kappa$ from Condition	\ref{cond:particlesNotToClose} such that
	\[
		 (L u,u)_{H^1(\IR^3)} \geq c_1 \| u\|_{H^1(\IR^3)}^2,
	\]
	for all  $u \in H_0^1(\IR^3 \backslash K)^\perp$.
\end{lemma}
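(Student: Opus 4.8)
The plan is to bound $(Lu,u)_{H^1_\xi} = \sum_i \|Q_i u\|_{H^1_\xi}^2$ from below by testing $u$ against its own ``reaction charges'', pairing the charge localized at particle $i$ with $Q_i u$ rather than with $u$. First I would note that, since $u$ is orthogonal to $H^1_0(\IR^3\backslash K)$, the distribution $f := (-\Delta+\xi^{-2})u \in H^{-1}(\IR^3)$ is supported in $K = \bigcup_i \overline{B_i}$; as the closed balls $\overline{B_i}$ are pairwise at positive distance by Condition \ref{cond:particlesNotToClose}, $f$ decomposes as $f = \sum_i f_i$ with $f_i \in H^{-1}(\IR^3)$ and $\supp f_i \subset \overline{B_i}$ (multiply $f$ by smooth functions equal to $1$ near $\overline{B_i}$ and supported away from the other balls; the sum converges in $H^{-1}$ by Lemma \ref{lem:locH^-1}). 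Since $G_0$ is an isometry from $H^{-1}(\IR^3)$ to $H^1(\IR^3)$,
\[
	\|u\|_{H^1_\xi}^2 = \langle f, G_0 f\rangle = \langle f, u\rangle = \sum_i \langle f_i, u\rangle .
\]

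The crux is the identity $\langle f_i, u\rangle = \langle f_i, Q_i u\rangle$: indeed $u - Q_i u = P_i u \in H^1_0(\IR^3\backslash\overline{B_i})$, and any $H^{-1}$ distribution supported in $\overline{B_i}$ annihilates $H^1_0(\IR^3\backslash\overline{B_i})$ (approximate $P_i u$ in $H^1$ by elements of $C_c^\infty(\IR^3\backslash\overline{B_i})$, against which $f_i$ vanishes, and use continuity of the $H^{-1}$--$H^1$ pairing). Hence, by Cauchy--Schwarz twice,
\[
	\|u\|_{H^1_\xi}^2 = \sum_i \langle f_i, Q_i u\rangle \le \sum_i \|f_i\|_{H^{-1}_\xi}\|Q_i u\|_{H^1_\xi}
	\le \Big( \sum_i \|f_i\|_{H^{-1}_\xi}^2 \Big)^{1/2} \Big( \sum_i \|Q_i u\|_{H^1_\xi}^2 \Big)^{1/2} .
\]
For the first factor I invoke the lower bound in Lemma \ref{lem:locH^-1}, namely $\sum_i \|f_i\|_{H^{-1}_\xi}^2 \le c^{-1}\big\|\sum_i f_i\big\|_{H^{-1}_\xi}^2 = c^{-1}\|u\|_{H^1_\xi}^2$, which is uniform (it involves neither $\mu_0$ nor $\xi$). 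The second factor is exactly $(Lu,u)_{H^1_\xi}$, because each $Q_i$ is an orthogonal projection, so $\sum_i \|Q_i u\|^2 = \sum_i (Q_i u, u) = (Lu,u)$. Dividing by $\|u\|_{H^1_\xi}$ and squaring gives $(Lu,u)_{H^1_\xi} \ge c\,\|u\|_{H^1_\xi}^2$, and one may take $c_1 = c$, which depends only on $\kappa$.

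I expect the only real obstacle to be the soft-analysis bookkeeping: verifying that Condition \ref{cond:particlesNotToClose} genuinely separates the closed balls, that the cutoff construction yields $H^{-1}$ pieces $f_i$ supported in $\overline{B_i}$ with $\sum_i f_i = f$ (convergence in $H^{-1}$ coming from Lemma \ref{lem:locH^-1}), and that a distribution supported in $\overline{B_i}$ really does kill every $H^1$ function vanishing a.e. on $\overline{B_i}$. Once these routine points are settled, the argument is only Cauchy--Schwarz together with the already-proven Lemma \ref{lem:locH^-1}; in particular no smallness of $\mu_0\xi^2$ is used, which is exactly why the resulting constant $c_1$ depends on $\kappa$ alone.
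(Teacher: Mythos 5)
Your overall route is sound and genuinely different from the paper's. The paper never decomposes the source: it uses that $u\in H_0^1(\IR^3\backslash K)^\perp$ is the element of minimal $H^1_\xi$-norm in $X_u=\{v\colon v=u \text{ in } K\}$ and tests this minimality against the competitor $\sum_i\eta_i Q_iu$ (disjointly supported cutoffs of the single-particle reflections), which gives $\|u\|^2\le\sum_i\|\eta_iQ_iu\|^2\le C\sum_i\|Q_iu\|^2=C\,(Lu,u)$ in three lines. Your dual argument — write $f=G_0^{-1}u=\sum_i f_i$ with $\supp f_i\subset\overline{B_i}$, use $\langle f_i,u\rangle=\langle f_i,Q_iu\rangle$ (correct: $P_iu\in H^1_0(\IR^3\backslash\overline{B_i})$, and for a ball this space is annihilated by distributions supported in $\overline{B_i}$; alternatively pair through $G_0$ and use \eqref{eq:characterizationOfOrthogonal}), then Cauchy--Schwarz and the lower bound of Lemma \ref{lem:locH^-1} — trades the variational minimality for the already-proven lower bound in Lemma \ref{lem:locH^-1}, and it does deliver a constant depending only on $\kappa$, since only that lemma's constant $c$ enters the final chain.

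The genuine gap is the step you dismiss as routine: the convergence $\sum_i f_i=f$ in $H^{-1}_\xi$ with $f_i=\eta_i f$ does \emph{not} follow from Lemma \ref{lem:locH^-1}, which presupposes a summable family. To apply its lower bound you first need the partial sums $\sum_{i\in F}\eta_i f$ to be bounded uniformly in the finite set $F$, i.e.\ a uniform $H^1_\xi$-multiplier bound for $m_F=\sum_{i\in F}\eta_i$; since $\nabla(m_Fv)$ contains $\sum_i v\nabla\eta_i$, this amounts to $\sum_i r_i^{-2}\|v\|^2_{L^2(B_{\kappa r_i}(x_i))}\le C\|v\|^2_{H^1_\xi}$, which is \emph{false} with a $\kappa$-only constant (a lattice of $N^3$ balls of radius $r$ in a unit cube and $v\equiv1$ there makes the left side of order $\mu_0$ against $1+\xi^{-2}$ on the right); it does hold with constant $C(\kappa)(1+\mu_0\xi^2)$, e.g.\ by the trace-type estimate of Lemma \ref{lem:projest} on the disjoint balls $B_{d_i/2}(x_i)$, and since this input is only used qualitatively your final constant is unaffected — but it must be supplied, and it is not in Lemma \ref{lem:locH^-1}. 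You must also identify the limit with $f$, i.e.\ show $(1-\sum_i\eta_i)v\in H^1_0(\IR^3\backslash K)$, which for infinitely many balls of shrinking radii is an approximation argument, not a triviality. A cleaner repair avoids the infinite decomposition altogether: since $H_0^1(\IR^3\backslash K)^\perp=(\cap_iV_i)^\perp$ is the closed span of the ranges of the $Q_i$, prove the inequality for $u$ a finite sum of elements of these ranges (there your decomposition is finite and every step is immediate), and pass to the limit using the boundedness of $L$ from Proposition \ref{pro:Aone}.
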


\begin{proof}
	Let $ \eta_i \in C_c^\infty (B_{\kappa r_i(x_i)}) $ such that 
	$ \eta_i = 1 $ in 	$ B_i $  and $ |\nabla \eta_i | \leq \frac{C}{r_I} $. 
	Now, we observe that for all $ v \in H^1(\IR^3) $
	\[
		\| v \|_{L^2(B_{\kappa r_i}(x_i))} \leq \| v \|_{L^6(B_{\kappa r_i}(x_i))} \|1\|_{L^3(B_{\kappa r_i}(x_i))} 
		\leq  C r_i \| \nabla v \|_{L^2(\IR^3)},
	\]
	and hence,
	\begin{equation}
		\| \eta_x v \|_{H^1(\IR^3)} \leq \|v\|_{H^1(\IR^3)} + \frac{1}{r} \| v \|_{L^2(B_{2r}(x))} 
		\leq C \| v \|_{H^1(\IR^3)}.
	\end{equation}
	
	On the other hand, we know that every $u \in H_0^1(\IR^3 \backslash K)^\perp$ satisfies $-\Delta u + \xi^{-2} u = 0$ in $\IR^3 \backslash K$ (cf. Equation \eqref{eq:characterizationOfOrthogonal}).
	Thus, the variational form of this equation implies that $ u $ is the function of minimal norm 
	in the set $ X_u := \{ v \in H^1(\IR^3) \colon v = u ~ \text{in} ~ K\}$. 
	Clearly, $\sum_i \eta_i Q_x u \in X_u$, and hence,
	\begin{align}
		(L u , u)_{H^1(\IR^3)} &=  \sum_i ( Q_i u, u)_{H^1(\IR^3)} = \sum_i \|Q_i u \|_{H^1(\IR^3)}^2 \\
		&\geq c \sum_i \|\eta_i Q_i u \|_{H^1(\IR^3)}^2 
		= c \|\sum_i \eta_i Q_i u \|_{H^1(\IR^3)}^2 \geq c \|u\|^2_{H^1(\IR^3)}. \qedhere
	\end{align}
\end{proof}

\begin{proof}[Proof of Theorem \ref{SeriesPoissonScreened}]
By Proposition \ref{pro:Aone}, we have $\| L \| \leq 1 + C_1 \xi^2 \mu_0$.
Defining $C_0 := \frac{1}{2 C_1}$, we have $\|L\| \leq \frac{3}{2}$ if $\mu_0 \leq C_0 \xi^{-2}$.
 Furthermore, Lemma \ref{lem:LCoercive} implies 
\begin{equation}
	\label{eq:SpectralGap}
	\|L u\| \geq c_1 \|u\|
\end{equation}
for all $u \in H_0^1(\IR^3 \backslash K)^\perp$. By Remark \ref{rem:kerL}, we have
$\ker L = H_0^1(\IR^3 \backslash K)$. Thus, Estimate \eqref{eq:SpectralGap} implies that $L$ has a spectral gap.
Therefore, Theorem \ref{th:SeriesConvergentImpliesSolution} implies the exponential convergence 
 \begin{equation}
 	\|(1-L)^n G_0 f - u \|_{H^1_\xi(\IR^3)} \leq \eps^n \|f\|_{H^{-1}_\xi(\IR^3)} \qquad \text{for all} \quad f \in H^{-1}(\IR^3),
 \end{equation}
 for some $\eps$ depending only on $c_1$ and thus depending only  $\kappa$ from Condition 
	\ref{cond:particlesNotToClose}.
Since the norm $\| \cdot \|_{H^1_\xi(\IR^3)}$ is equivalent to the standard $H^1$-norm, this concludes the proof.
\end{proof}

\subsection{Convergence of a Modified Method of Reflections}

In the previous subsection, we proved that the series \eqref{eq:ProjectionSeries} obtained by the Method of Reflections 
converges for small capacities. 
Recall that the series is given by
\begin{equation}
	\label{eq:ReflectionSeriesCompressed}
	\lim_{n \to \infty} (1-L)^n G_0 f.
\end{equation}
First of all, we note that the series is indeed divergent if the capacity is sufficiently large.
Indeed, as shown in Remark \ref{rem:EstimateLSharp} the operator norm of $L$
diverges as the capacity tends to infinity and we have already observed in Theorem \ref{th:SeriesConvergentImpliesSolution} that the series is divergent if
the operator norm of $L$ is larger than $2$. 

Now we want to give the series a meaning for arbitrary capacities.
As seen in Remark \ref{rem:kerL}, the solution to Problem \eqref{S1E8}, which we want to obtain by the Method of Reflections, is given by $P G_0 f$, where
$P$ is the orthogonal projection to the kernel of $L$.
Therefore, the modification simply consists in replacing \eqref{eq:ReflectionSeriesCompressed} by
\begin{equation}
	\label{eq:ResummationCompressed}
	\lim_{n \to \infty} (1-\gamma L)^n G_0 f,
\end{equation}
with $\gamma := 1/ \|L\|$.
Using again the spectral theorem, we will show in Proposition \ref{pro:abstractProjection} below that
this ensures convergence to the solution to Problem \eqref{S1E8}. However, let us first give a heuristic explanation
why this can be expected.

We can give the following interpretation of the Method of Reflections using the representation \eqref{eq:ReflectionSeriesCompressed}.
To the solution of the equation without boundary conditions $G_0 f$, we add the sum of all
the correctors, which is $-L$. Doing this, we expect to push the function towards zero boundary conditions.
By iterating this, we hope to obtain a sequence converging to the solution to the Dirichlet problem \eqref{S1E8}.
However, if $G_0 f$ has the same sign in several particles that are close to each
other and sufficiently large (i.e., large capacity), then, the effect of $L$ is too large: The boundary conditions in each of those particles
are not only corrected by the corresponding projection operator, but they also undergo a push in the same direction by
the effect of all the other particles. In other words, we push in the right direction but too far.
Therefore, reducing the push by multiplying with $\gamma$ might solve this problem.

We can also give a purely geometrical interpretation.
Let $P$ denote the orthogonal projection to $\ker L $, and $Q$ the projection to its orthogonal complement.
We recall that $L$ is the sum of the operators $Q_i$ which are the orthogonal projections.
Let us denote their kernel by $V_i$. Then
\begin{equation}
	\ker L = \bigcap_i V_i =: V.
\end{equation}
If the subspaces $V_i$ were orthogonal to each other, than, we would have 
\[
	1-L = 1-\sum_i Q_i = 1 - Q = P,
\]  
and the convergence of $(1-L)^n$ to $P$ would trivially hold.

However, they are not orthogonal to each other. Indeed, the closer two particles are, the more they interact with each
other. Interaction of the particles, however, means lack of orthogonality. 
Therefore, the series diverges if there is too much interaction between particles close to each other -- 
too large capacity $\mu_0$ -- or if the interaction does not decay fast enough -- too large $\xi$.

In Figure \ref{fig:1}, we see what happens in the orthogonal complement $V^\perp$ if the angles between the subspaces
$V_i$ are small. We consider the simplest non-trivial case in which only two particles are present.
As we see in Figure \ref{fig:1}, $(1-L)x$ might end up on the other side of the origin then $x$.
In this example, $(1-L)x$ is still closer to the origin than $x$. This is a feature of the case of only two
subspaces since $\|L\| < 2$ as long as the subspaces $V_i$ have trivial intersection. Therefore, the Method of Reflections always yields a convergent sequence if there are only two particles and they do not intersect.
However, if more particles are present and the angles between the subspaces are sufficiently small, $(1-L)x$ will
be larger than $x$. In that case, adding a small parameter $\gamma$ in front of $L$ will solve this problem.
Indeed, as in Figure \ref{fig:1}, we can ensure that $(1-\gamma L)x$ lies on the same side of the origin as $x$ by 
choosing $\gamma < 1/\|L\|$.

\begin{figure}[ht]
\centering
\includegraphics[scale= 0.5]{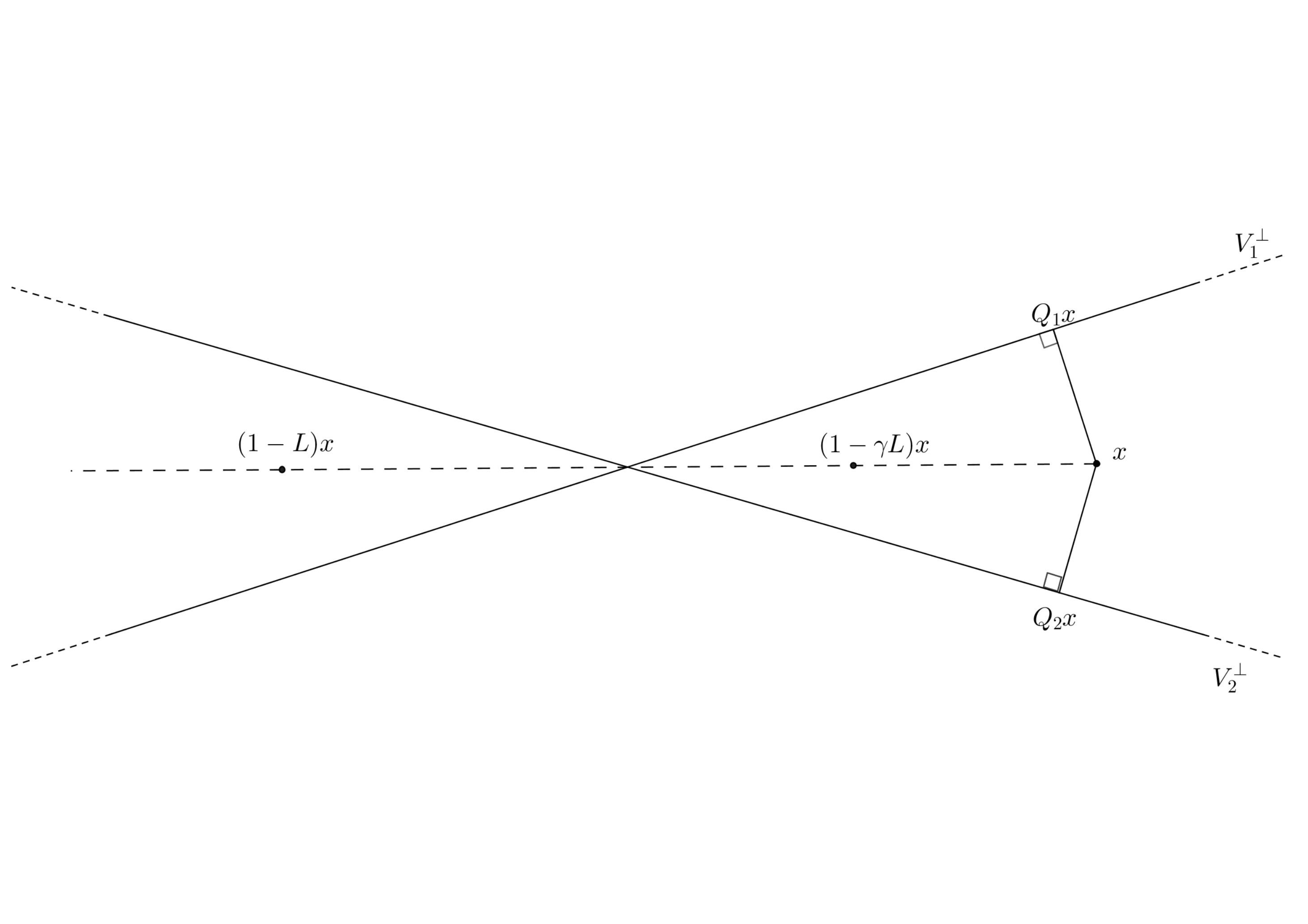}
\caption{}\label{fig:1}
\end{figure}

\begin{proposition}
	\label{pro:abstractProjection}
	Assume $ H $ is a Hilbert space and $ V_k \subset H$ are closed subspaces for $ k \in J$, where $j$ is a finite or
	countable index set. Define $Q_k$ to be the orthogonal projections from $ H$ to $V_k^\perp$.
	Let $ V = \cap_{k \in j} V_k$ and define $ P $ to be the orthogonal projection from $ H$ to $V$. 
	If $ S:= \sum_{k \in j} Q_k$ defines a bounded operator, then, for all $ 0< \gamma < \frac{2}{\|S\|} $,
	\[
		\lim_{M\to\infty} (1-\gamma S)^M = P,
	\]
	pointwise in $H$.
	Moreover, if $S$ is strictly positive in $V^\perp$, i.e.,
	there exists $ c > 0$ such that 
	\begin{equation}
		\label{eq:coercivityCondition}
		(Sx,x)_H \geq c \|x\|_H^2 \qquad \text{for all} \quad x \in V^\perp,
	\end{equation}
	then, 
	\begin{equation}
		\label{eq:exponentialConvergence}
		 \|(1-\gamma S)^M - P \| 
		\leq \max  \{1-\gamma c, \gamma \|S\| - 1  \}^M .
	\end{equation}
\end{proposition}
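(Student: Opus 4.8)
The plan is to exploit that $S$, being a sum of orthogonal projections, is a bounded nonnegative self-adjoint operator, and then to reduce the whole statement to elementary spectral calculus for the self-adjoint contraction $1-\gamma S$. First I would record the structural facts about $S$. Since each $Q_k$ is an orthogonal projection, for every $x \in H$ one has $(Sx,x)_H = \sum_{k} \|Q_k x\|_H^2 \geq 0$, so $S \geq 0$; together with boundedness (assumed) this makes $S$ self-adjoint. The same identity shows $Sx = 0 \iff Q_k x = 0$ for all $k \iff x \in V_k$ for all $k \iff x \in V$, so $\ker S = V$. Because $S$ is self-adjoint, $V = \ker S$ and $V^\perp = \overline{\operatorname{ran} S}$ are mutually orthogonal subspaces that both reduce $S$ and span $H$, and the orthogonal projection onto $V$ is exactly the spectral projection $E_S(\{0\})$; this is the operator $P$.

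Next I would analyze $T := 1-\gamma S$. It is self-adjoint with $\sigma(T) = 1 - \gamma\,\sigma(S) \subset [\,1-\gamma\|S\|,\,1\,]$, and the hypothesis $0 < \gamma < 2/\|S\|$ forces $1 - \gamma\|S\| > -1$, hence $\sigma(T) \subset (-1,1]$ and in particular $\|T\| \leq 1$. Realizing $T$ through the spectral theorem as multiplication by a measurable function $t$ with values in $(-1,1]$ on some $L^2(X,\nu)$, one has $t(x)^M \to \mathbf{1}_{\{t=1\}}(x)$ pointwise with $|t^M|\leq 1$, so dominated convergence yields $T^M \to E_T(\{1\})$ strongly as $M\to\infty$. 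Finally $E_T(\{1\})$ is the orthogonal projection onto $\{x : Tx = x\} = \ker S = V$, i.e.\ $E_T(\{1\}) = E_S(\{0\}) = P$; this proves $(1-\gamma S)^M \to P$ pointwise. The conceptual heart of the argument — and the only place the upper bound on $\gamma$ is genuinely used — is the exclusion of $-1$ from $\sigma(T)$, since any spectral mass at $-1$ would make $T^M$ oscillate and destroy convergence; everything else in this part is bookkeeping with the functional calculus.

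For the quantitative estimate, under the coercivity hypothesis $(Sx,x)_H \geq c\|x\|_H^2$ for all $x\in V^\perp$, the part of $S$ acting on the reducing subspace $V^\perp$ has spectrum contained in $[c,\|S\|]$, so the part of $T$ on $V^\perp$ has spectrum in $[\,1-\gamma\|S\|,\,1-\gamma c\,]$ and operator norm $\max\{|1-\gamma\|S\||,\,|1-\gamma c|\}$, which equals $\max\{\gamma\|S\|-1,\,1-\gamma c\}$ after a short sign check using $c\leq\|S\|$. On $V$ itself both $T^M$ and $P$ act as the identity, so $(T^M - P)|_V = 0$; combining the two pieces gives $\|(1-\gamma S)^M - P\| = \|(T|_{V^\perp})^M\| = \|T|_{V^\perp}\|^M$, since $A\mapsto\|A^M\|=\|A\|^M$ for self-adjoint $A$. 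This is precisely the asserted bound $\max\{1-\gamma c,\ \gamma\|S\|-1\}^M$, and it is strictly less than $1$ exactly because $0<\gamma<2/\|S\|$. I do not expect any serious obstacle beyond being careful that $V^\perp$ really reduces $S$ (which follows from self-adjointness and $\ker S = V$) and the elementary case analysis for the norm of $T|_{V^\perp}$.
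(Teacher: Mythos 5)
Your proof is correct and follows essentially the same route as the paper: both arguments rest on $S$ being a bounded nonnegative self-adjoint operator with $\ker S = V$, the multiplication-operator form of the spectral theorem, and dominated convergence, with the splitting $H = V \oplus V^\perp$ giving the quantitative bound. The only cosmetic difference is that you identify $P$ as the spectral projection of $1-\gamma S$ at the eigenvalue $1$ on all of $H$, while the paper restricts $S$ to $V^\perp$ from the outset; this changes nothing of substance.
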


\begin{remark}
	To optimize the exponential convergence in \eqref{eq:exponentialConvergence}, one can choose
	\[
		\gamma = \frac{2}{\|S\| + c}.
	\]
\end{remark}

\begin{proof}
	By definition of $ S $, we have $ \ker S =V $. 
	Thus, $(1-\gamma S)^M x = x $ for all $x \in V$.  On the other hand, as $S$ is self-adjoint, we have
	$ \mathcal{R}(S) \subset (\ker S)^\perp = V^\perp $.
	
	We define $T$ as the restriction  of $ S $ to $ V^ \perp $ (in both the domain 
	and the range) satisfies $ \|1- \gamma T\| \leq \max\{1-\gamma c,\gamma \|S\| - 1\}$.
	Thus, it suffices to show that $(1-T)^n \to 0$ pointwise in $H$.
	
	Being a sum of orthogonal projections, $ S $ and also $T$ are self-adjoint operators.
	Hence, by the spectral theorem, we can assume that $ T $ is a multiplication operator on $ H = L^2_\nu(X)$ 
	for some measure space $(X,\mathcal{A},\nu)$, i.e., there exists a function $f \in L^\infty_\nu (X)$ such that
	$ T \varphi  =  f \varphi $ for all $ \varphi \in L^2_\nu(X)$.
	Since $T$ is positive and bounded by $\|S\|$, we have $0 < f \leq \|S\|$. Therefore,
	\begin{align}
		\|(1-\gamma T)^M \varphi\|^2_H 
		&= \int_X |\varphi|^2 (1-\gamma f)^{2M} \dd \nu  \to 0.
	\end{align}
	
	If in addition, \eqref{eq:coercivityCondition} holds, then $c < f \leq \|S\|$. Thus,
	\begin{align}
	 \|(1-\gamma T) \varphi\|^2_H 
		&= \int_X |\varphi|^2 (1-\gamma f)^{2} \dd \nu  \\
		&\leq \|1-\gamma f \|^2_{L^\infty_\nu(X)} \|\varphi\|^2_H \\
		&\leq \max\{1-\gamma c,\gamma \|S\|- 1 \} \|\varphi\|^2_H. \qedhere
	\end{align}
\end{proof}

\begin{corollary}
	\label{cor:solbyprojection}
	Let $C_1$ be the constant from Proposition \ref{pro:Aone}. 
	Then, for all particle configuration satisfying
	\begin{equation}
		C_1 \mu_0 \xi^{2} \leq C_2 ,%
	\end{equation}
	for some $C_2 < \infty$, there exists a constant $ \gamma_0 $, which depends only on $C_2$, with the following property.
	For all $\gamma \leq \gamma_0$,
	\[
		(1-\gamma L)^M \to P \qquad \text{in} ~ \mathcal{L}(H^1(\IR^3)) \quad \text{as} ~ M \to \infty,
	\]
	where $ P $ is the orthogonal projection from $ H^1(\IR^3) $ to $H_0^1(\IR^3 \backslash K)$. 
		
Moreover, there exists $\eps <1 $ depending only on $\kappa$, and $C_2$ such that
\[
	\|(1-\gamma_0 L)^M - P\|_{\mathcal{L}(H^1(\IR^3))} \leq C \eps^n,
\]
where $C$ depends only on $\xi$.
\end{corollary}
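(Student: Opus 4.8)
The plan is to obtain Corollary \ref{cor:solbyprojection} as a direct application of Proposition \ref{pro:abstractProjection} to the operator $L$. Concretely, I would take $H = H^1(\IR^3)$ with the $\xi$-scalar product, $S = L$, and the closed subspaces $V_i = H_0^1(\IR^3 \backslash \overline{B_i})$, so that $Q_i = 1 - P_i$ is the orthogonal projection onto $V_i^\perp$ and $V = \bigcap_i V_i = H_0^1(\IR^3 \backslash K)$, with $P$ the orthogonal projection onto $V$. Three facts are then needed. First, $L = \sum_i Q_i$ defines a bounded operator with $\|L\| \leq 1 + C_1 \xi^2 \mu_0 \leq 1 + C_2$ under the hypothesis $C_1 \mu_0 \xi^2 \leq C_2$; this is Proposition \ref{pro:Aone}. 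Second, $\ker L = V$, which is Remark \ref{rem:kerL}, see \eqref{eq:kerLasIntersection}. Third, the strict positivity \eqref{eq:coercivityCondition} of $L$ on $V^\perp$: Lemma \ref{lem:LCoercive} furnishes a constant $c_1 > 0$ depending only on $\kappa$ such that $(Lu,u)_{H^1} \geq c_1 \|u\|_{H^1}^2$ for all $u \in V^\perp$.

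With these in hand, I would set $\gamma_0 := (1 + C_2)^{-1}$, which depends only on $C_2$. Since $\|L\| \geq 1$ for every non-empty particle configuration (Remark \ref{rem:EstimateLSharp}) while $\|L\| \leq 1 + C_2$, every $\gamma \leq \gamma_0$ satisfies $\gamma \leq \|L\|^{-1} < 2\|L\|^{-1}$, so Proposition \ref{pro:abstractProjection} applies. In particular, $\gamma \|L\| - 1 \leq 0$, and $c_1 \leq \|L\| \leq 1 + C_2$ forces $\gamma_0 c_1 \leq 1$, so the maximum in the quantitative estimate \eqref{eq:exponentialConvergence} reduces to $1 - \gamma c_1$; with $c = c_1$ it reads
\[
	\|(1-\gamma L)^M - P\|_{\mathcal{L}(H^1_\xi(\IR^3))} \leq (1 - \gamma c_1)^M .
\]
The right-hand side tends to $0$ as $M \to \infty$, giving operator-norm convergence $(1-\gamma L)^M \to P$ for every $\gamma \leq \gamma_0$; taking $\gamma = \gamma_0$ gives the exponential rate with $\eps := 1 - c_1(1+C_2)^{-1} < 1$, depending only on $\kappa$ (through $c_1$) and $C_2$. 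Passing from the $\xi$-norm to the standard $H^1$-norm costs only a configuration-independent factor $C = C(\xi)$ from the equivalence of the two norms, which yields the stated bound.

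No genuine analytic obstacle is expected: all the substance is already in Propositions \ref{pro:Aone} and \ref{pro:abstractProjection} and in Lemma \ref{lem:LCoercive}. The one point requiring attention is the dependence of constants: one must fix $\gamma_0$ using only the bound $\|L\| \leq 1 + C_2$ and the universal lower bound $\|L\| \geq 1$, not the exact value of $\|L\|$ (which depends on $\xi$ and on the geometry), so that $\gamma_0$ depends only on $C_2$ and $\eps$ only on $\kappa$ and $C_2$, as claimed.
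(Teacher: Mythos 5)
Your proposal is correct and is essentially the paper's own argument: the paper also sets $\gamma_0 = 1/(1+C_2)$, notes via Proposition \ref{pro:Aone} that $\gamma_0 \leq 1/\|L\|$, and then invokes Proposition \ref{pro:abstractProjection} together with Lemma \ref{lem:LCoercive} (and Remark \ref{rem:kerL} for $\ker L = H_0^1(\IR^3\backslash K)$). The extra details you supply, such as $\gamma\|L\|-1\leq 0$ and the norm-equivalence factor $C(\xi)$, are just the steps the paper leaves implicit.
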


\begin{proof}
	We define $\gamma_0 = 1/(1+C_2)$. Proposition \ref{pro:Aone} implies $ \gamma_0 \leq 1/\|L\| $
	Then, the assertion follows directly from Proposition \ref{pro:abstractProjection} and Lemma \ref{lem:LCoercive}.
\end{proof}

\subsection{The Modified Method of Reflections as a Summation Method}
\label{sec:TheResummationOnTheLevelOfTheScatteringSeries}

\begin{lemma}
	\label{lem:combinatorics}
	Let $f \in H^{-1}(\IR^3)$. 
	Let $\Phi_n$ as in \eqref{eq:kthOrderCorrection} be the $n$-th order correction obtained by the 
	Method of Reflections. Then, for all $\gamma > 0$
	\begin{equation}
		(1-\gamma L)^M G_0 f = \sum_{n=0}^M q(n,M,\gamma) \Phi_n,
	\end{equation}
	where $q(0,M,\gamma) := 0 $, $q(n,M,\gamma) = 0 $ for $n > M$, and
	\[
			q(n,M,\gamma) = \frac{M!}{(M-n)!(n-1)!} \int_0^{\gamma} t^{n-1}(1-t)^{M-n} \dd t  
			= \frac{M!}{(M-n)!(n-1)!} B(\gamma;n,M-n+1),
	\]
	for $0 < n \leq M$. Here,  $ B $ denotes the incomplete Beta function. In particular, for all $\gamma >0$, and  $n\in\IN$ it holds
	\[
		\lim_{M\to\infty} q(n,M,\gamma) = 1.
	\]
\end{lemma}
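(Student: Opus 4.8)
The plan is to avoid spectral theory altogether and reduce the claimed identity to an elementary identity between polynomials in the single operator $L$. Recall from \eqref{eq:SeriesAsPowers} that the partial sums of the Method of Reflections satisfy $\Psi_k = (1-L)^k G_0 f$; since $\Psi_k = \Phi_0 + \dots + \Phi_k$ (cf.\ \eqref{eq:kthOrderCorrection}), this gives $\Phi_0 = G_0 f$ and
\[
	\Phi_k = \Psi_k - \Psi_{k-1} = \big((1-L)^k - (1-L)^{k-1}\big) G_0 f = -L(1-L)^{k-1} G_0 f \qquad (k \geq 1).
\]
Hence $\Phi_k = p_k(L) G_0 f$ with $p_0(\lambda) := 1$ and $p_k(\lambda) := (1-\lambda)^k - (1-\lambda)^{k-1}$ for $k \geq 1$ (in the setting of Section \ref{sec:ScreenedPoisson}, $L$ is a bounded operator by Proposition \ref{pro:Aone}, so all these expressions are well-defined). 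Therefore the assertion $(1-\gamma L)^M G_0 f = \sum_{n=0}^M q(n,M,\gamma)\Phi_n$ follows once we establish, with the stated coefficients, the scalar polynomial identity
\[
	(1-\gamma \lambda)^M = \sum_{n=0}^M q(n,M,\gamma)\, p_n(\lambda),
\]
because both sides are then the same polynomial evaluated at $L$ and applied to $G_0 f$.

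To prove this polynomial identity I would substitute $x := 1-\lambda$, so that $p_n(\lambda) = x^n - x^{n-1}$ for $n \geq 1$ while $p_0 = 1$. The left-hand side becomes $\big((1-\gamma) + \gamma x\big)^M = \sum_{n=0}^M \binom{M}{n}\gamma^n (1-\gamma)^{M-n} x^n$, while the right-hand side telescopes to $q(M,M,\gamma)\, x^M + \sum_{n=0}^{M-1}\big(q(n,M,\gamma) - q(n+1,M,\gamma)\big) x^n$. Matching coefficients of $x^n$ yields $q(M,M,\gamma) = \gamma^M$ together with the first-order recursion $q(n,M,\gamma) - q(n+1,M,\gamma) = \binom{M}{n}\gamma^n (1-\gamma)^{M-n}$ for $0 \leq n \leq M-1$, which solves to
\[
	q(n,M,\gamma) = \sum_{k=n}^M \binom{M}{k}\gamma^k (1-\gamma)^{M-k}, \qquad 0 \leq n \leq M ;
\]
in particular $q(0,M,\gamma) = \big(\gamma + (1-\gamma)\big)^M = 1$, consistent with $\Phi_0 = G_0 f$. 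The remaining step is to recognize this binomial-tail sum as the incomplete Beta expression in the statement: for $1 \leq n \leq M$ one has the classical identity
\[
	\sum_{k=n}^M \binom{M}{k} p^k (1-p)^{M-k} = \frac{M!}{(n-1)!\,(M-n)!}\int_0^p t^{n-1}(1-t)^{M-n}\dd t,
\]
which I would verify by differentiating both sides in $p$ (the left side telescopes to $\frac{M!}{(n-1)!\,(M-n)!}\,p^{n-1}(1-p)^{M-n}$) and noting that both sides vanish at $p=0$.

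For the limit, I would read $q(n,M,\gamma) = \sum_{k=n}^M \binom{M}{k}\gamma^k(1-\gamma)^{M-k}$ as the tail probability $\mathbb{P}(X_M \geq n)$ of a $\mathrm{Binomial}(M,\gamma)$ variable $X_M$, and estimate the complementary sum $\sum_{k=0}^{n-1}\binom{M}{k}\gamma^k(1-\gamma)^{M-k}$: for each fixed $k$ this term is at most $\frac{M^k}{k!}\gamma^k(1-\gamma)^{M-k}$, which tends to $0$ as $M \to \infty$ (polynomial growth dominated by exponential decay, using $0 < \gamma \leq 1$, which is the range relevant here since $\|L\| \geq 1$); summing the finitely many terms with $k < n$ gives $q(n,M,\gamma) \to 1$, the case $n=0$ being trivial. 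No individual step is genuinely hard; the points requiring care are the bookkeeping of the endpoint terms in the telescoping, the precise normalization in the binomial--Beta identity, and the (harmless) restriction to $\gamma \leq 1$ in the limit statement.
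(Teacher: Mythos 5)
Your proof is correct, and it arrives at the paper's coefficients by a noticeably different algebraic route. The paper first proves, by induction, the identity $(-L)^M G_0 f=\sum_{n=1}^{M}(-1)^{M-n}\binom{M-1}{n-1}\Phi_n$, then expands $(1-\gamma L)^M$ binomially in powers of $\gamma L$ and interchanges sums, which produces the alternating expression $q(n,M,\gamma)=\sum_{l=n}^{M}\binom{M}{l}\gamma^{l}(-1)^{l-n}\binom{l-1}{n-1}$; this is then evaluated by an antiderivative/generating-function computation to reach the incomplete Beta form. You instead invert the relation, writing $\Phi_n=-L(1-L)^{n-1}G_0 f$, reduce the operator identity to a scalar polynomial identity in $x=1-\lambda$ (legitimate, since $L$ is bounded here), and solve the telescoping recursion $q(n)-q(n+1)=\binom{M}{n}\gamma^{n}(1-\gamma)^{M-n}$, $q(M,M,\gamma)=\gamma^{M}$, obtaining the binomial-tail form $q(n,M,\gamma)=\sum_{k=n}^{M}\binom{M}{k}\gamma^{k}(1-\gamma)^{M-k}$, which you convert to the Beta integral by the classical derivative identity (a polynomial identity in $\gamma$, so no restriction on $\gamma$ is needed for that step). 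What your route buys: the coefficients appear at once in a nonnegative, probabilistically interpretable form (for $\gamma\le 1$ they are $\mathrm{Binomial}(M,\gamma)$ tail probabilities), and this yields an actual proof of the limit $q(n,M,\gamma)\to 1$, which the paper asserts but does not argue. (Note that the paper's proof of this particular lemma is also purely algebraic, so ``avoiding spectral theory'' is not in itself a point of difference.)

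Two small remarks. First, your value $q(0,M,\gamma)=1$ disagrees with the statement's ``$q(0,M,\gamma):=0$'' but agrees with the paper's own proof and is the correct value: the component of $G_0f$ in $\ker L$ is unchanged by $(1-\gamma L)^M$, while $\Phi_n\in\mathcal{R}(L)$ for $n\ge 1$; the statement contains a typo. Second, your restriction to $0<\gamma\le 1$ in the limit is harmless and in fact more careful than the statement: as written, ``for all $\gamma>0$'' is false (for instance $q(1,M,2)=1-(-1)^M$ does not converge); the limit holds exactly for $0<\gamma<2$, and only $\gamma\le 1/\|L\|\le 1$ is ever used in the paper.
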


\begin{proof}
	As we have seen in \eqref{eq:SeriesAsPowers}, it holds
	\[
		\sum_{n=0}^M  \Phi_n = (-L_r)^M G_0 f.
	\]
	By induction, this leads to the following identity
\begin{equation}
	\label{eq:combinatorics}
		(-L_r)^M G_0 f = \sum_{n=1}^M (-1)^{M-n} \binom{M-1}{n-1} \Phi_n.
\end{equation}

Expanding $(1- \gamma L)^M$ and using \eqref{eq:combinatorics} leads to $q(0,M,\gamma) = 1$, $q(n,M,\gamma) = 0 $ for $n > M$, and, for $0 < n \leq M$,
\begin{equation}
\begin{aligned}
	q(n,M,\gamma) &= \sum_{l=n}^M \binom{M}{l}\gamma^{l} (-1)^{l-n} \binom{l-1}{n-1} \\
	&=  (-1)^n \sum_{l=n}^M \frac{M! }{l(M-l)!}\frac{(-\gamma)^{l}}{(n-1)!(l-n)!} \\
	&= (-1)^n \frac{M!}{(n-1)!} \sum_{k=0}^{M-n} \frac{1}{k+l}\frac{ (-\gamma)^{k+l}}{(M-n-k)!k!}.
\end{aligned}
\end{equation}
Defining 
\[
	\psi(z) := \sum_{k=0}^{M-n} \frac{1}{k+n}\frac{z^{k+l}}{(M-n-k)!k!},
\] 
we find
\begin{align}
	\frac{d}{dz} \psi(z) &= \sum_{k=0}^{M-n} \frac{z^{k+n-1}}{(M-n-k)!k!} \\
	&= \frac{z^{n-1}}{(M-n)!}(1+z)^{M-n},
\end{align}
and hence,
\[
	\psi(z) = \frac{1}{(M-n)!} \int_0^z t^{n-1}(1+t)^{M-n} \dd t.
\]
Inserting this in the above equation, we finally get
\begin{equation}
\begin{aligned}
	\sum_{l=n}^M \binom{M}{l}\gamma^{l} (-1)^{l-n} \binom{l-1}{n-1} 
	&= (-1)^n \frac{M!}{(M-n)!(n-1)!} \int_0^{-\gamma} t^{n-1}(1+t)^{M-n} \dd t \\
	&= \frac{M!}{(M-n)!(n-1)!} \int_0^{\gamma} t^{n-1}(1-t)^{M-n} \dd t \\
	&= \frac{M!}{(M-n)!(n-1)!} B(\gamma;n,M-n+1).
\end{aligned}
\end{equation}

\end{proof}

\begin{proof}[Proof of Theorem \ref{CapOrderOne}]
The result is a direct consequence of Corollary \ref{cor:solbyprojection} and Lemma \ref{lem:combinatorics}.
\end{proof}

\section{The Poisson Equation}
\label{sec:Poisson}

In order to directly apply the method to the Poisson equation, 
we need to change the spaces that we work in to make it possible to solve the Poisson equation in the whole space.

\begin{definition}
	We define the homogeneous Sobolev space $\dot{H}^1(\IR^3)$ as the closure of $ C_c^\infty(\IR^3) $ with respect to the $L^2$-norm of the gradient
	and denote its dual by $\dot{H}^{-1}(\IR^3)$.
	Moreover, for an open set $\Omega \subset \IR^3$, we define the space $\dot{H}_0^1(\Omega)$ to be 
	$\{u \in \Hdot \colon u = 0 ~ \text{in}~ \IR^3 \backslash \Omega\}$.
\end{definition}

Note that, with these definitions, the Laplacian is an isometry from $\Hdot $ into $ \dot{H}^{-1}(\IR^3)$. 
Correspondingly to the previous section, we denote $G_0 = (-\Delta)^{-1}$.

The following lemma corresponds to Lemma \ref{lem:scrpoibdry}.

\begin{lemma}
	\label{lem:poibdry}
	Let $\Omega \subset \IR^3$ be open. 
	Then, for every $ f \in  \dot{H}^{-1}(\IR^3) $,	the problem
	\begin{equation}
	\begin{aligned}
		\label{eq:poissonBall}
		-\Delta u  &= f \quad \text{in} ~ \IR^3 \backslash \overline{\Omega}, \\
		u &= 0 \quad \text{in} ~ \overline{\Omega}
	\end{aligned}
	\end{equation}
	has a unique weak solution $ u \in \dot{H}^1(\IR^3) $.
	Moreover, the solution for Problem \eqref{eq:poissonBall} is given by
	\begin{equation}
		 P_{\Omega} G_0 f,
	\end{equation}
	where $ P_{\Omega} $ is the orthogonal projection from $ \dot{H}^1 (\IR^3) $ to the subspace
	$ \dot{H}^1_0(\IR^3 \backslash \overline{\Omega})$.
\end{lemma}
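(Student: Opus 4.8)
The plan is to mirror the proof of Lemma~\ref{lem:scrpoibdry} almost verbatim, replacing the scalar product $(\cdot,\cdot)_{H^1_\xi}$ with the homogeneous one $(u,v)_{\dot H^1} := (\nabla u,\nabla v)_{L^2}$ and using that $G_0 = (-\Delta)^{-1}$ is an isometry from $\dot H^{-1}(\IR^3)$ onto $\dot H^1(\IR^3)$. First I would record the weak formulation of \eqref{eq:poissonBall}: a function $u \in \dot H^1_0(\IR^3 \backslash \overline{\Omega})$ is a weak solution iff
\[
	(\nabla u, \nabla v)_{L^2} = \langle v, f\rangle \qquad \text{for all} \quad v \in \dot H^1_0(\IR^3 \backslash \overline{\Omega}).
\]
Since $\dot H^1_0(\IR^3 \backslash \overline{\Omega})$ is a closed subspace of the Hilbert space $\dot H^1(\IR^3)$ (this is essentially its definition) and $v \mapsto \langle v, f\rangle$ is a bounded linear functional on it, the Riesz Representation Theorem gives existence and uniqueness of the weak solution $u$.

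Next I would identify $u$ with the projection. For any $v \in \dot H^1_0(\IR^3 \backslash \overline{\Omega})$ one has $(\nabla(G_0 f), \nabla v)_{L^2} = \langle v, f\rangle$ by definition of $G_0$, and also $(\nabla u, \nabla v)_{L^2} = \langle v, f\rangle$ by the weak formulation, so
\[
	(G_0 f - u, v)_{\dot H^1} = \langle v, f\rangle - \langle v, f\rangle = 0
\]
for all $v$ in the subspace. Hence $G_0 f - u \perp \dot H^1_0(\IR^3 \backslash \overline{\Omega})$, and since $u$ itself lies in that subspace, $u$ is exactly the orthogonal projection $P_\Omega G_0 f$.

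The only point requiring a little care — and the one I would flag as the main (mild) obstacle — is the functional-analytic setup of $\dot H^1(\IR^3)$: one must know that it is genuinely a Hilbert space with $(\nabla u,\nabla v)_{L^2}$ as scalar product (equivalently, that the Gagliardo--Nirenberg--Sobolev inequality $\|u\|_{L^6} \le C\|\nabla u\|_{L^2}$ makes the seminorm a norm on $C_c^\infty(\IR^3)$ and its completion embeds into $L^6$), that $-\Delta \colon \dot H^1(\IR^3) \to \dot H^{-1}(\IR^3)$ is an isometric isomorphism, and that $\dot H^1_0(\IR^3 \backslash \overline{\Omega})$ as defined is a closed subspace. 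All of these are already asserted in the excerpt (the definition of $\dot H^1$, the remark that $-\Delta$ is an isometry, and the remark identifying $\dot H^1(\IR^3)$ with $\{v \in L^6 \colon \nabla v \in L^2\}$), so once they are invoked the argument is identical to that of Lemma~\ref{lem:scrpoibdry} and carries no further difficulty.
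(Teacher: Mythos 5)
Your proof is correct and is exactly the argument the paper intends: the paper omits a proof of Lemma \ref{lem:poibdry}, stating only that it ``corresponds to'' Lemma \ref{lem:scrpoibdry}, and your adaptation (Riesz representation on the closed subspace $\dot{H}^1_0(\IR^3\backslash\overline{\Omega})$, followed by the orthogonality identity $(G_0 f - u, v)_{\dot{H}^1}=0$) is precisely that proof transposed to the homogeneous setting, with the prerequisites about $\dot{H}^1$ correctly identified.
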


As before, we define 
\[
	Q_i = 1 - P_i,
\]
where $P_{i} := P_{B_i}$ are the projection operators from Lemma \ref{lem:poibdry}.
Moreover, we note as in \eqref{eq:characterizationOfOrthogonal} that $Q_i$ is the orthogonal projection to 
\[
	 \dot{H}^1_0(\IR^3 \backslash \overline{B_i})^\perp = 
	 \{ v \in \Hdot(\IR^3) \colon -\Delta v = 0 \text{ in } \IR^3 \backslash \overline{B_i}\}.
\]

As mentioned before, the operator $ \sum_i Q_i$, which we have denoted $L$ for the screened Poisson equation,
will in general not be a bounded operator for infinitely particles. This is due to the long range interactions
of the Laplacian.
Therefore, we use a spatial cutoff to define the operator $L$ for the Poisson equation.

\begin{definition}
	\label{def:pL}
	We define
	\[
		L := \sum_i e^{-|x_i|} Q_i.
	\]
\end{definition}

\begin{remark}
	The choice of the specific exponential cutoff was only made for definiteness and to make the proof of the estimate for $L$ (cf. Lemma \ref{lem:Lbounded}) as analogous to the screened Poisson equation as possible. 
	However, any cutoff that maps $\Hdot(\IR^3)$ to $ \dot{H}^{-1}(\IR^3)$ would work (note that $\Hdot(\IR^3)$ is not contained in
	$ \dot{H}^{-1}(\IR^3)$). In particular, we could choose a polynomial cutoff with sufficiently fast decay.
\end{remark}

\subsection{Convergence of the Modified Method of Reflections}
\label{sec:ConvergenceOfTheSolutionRepresentation}

\begin{lemma}
	\label{lem:Lbounded}
	The operator $L$ from Definition \ref{def:pL} is a well defined, bounded, nonnegative, self-adjoint operator
	on $\Hdot(\IR^3)$ with
	\[
		\|L\| \leq (1+C\mu_0),
	\]
	where the constant $C$ depends only on $\kappa$ from Condition 
	\ref{cond:particlesNotToClose}.
\end{lemma}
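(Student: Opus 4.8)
The plan is to mirror, step by step, the argument used for the screened Poisson equation in Section~\ref{sec:ScreenedPoisson}, with the exponential prefactors $e^{-|x_i|}$ playing the role of the convergence-forcing mechanism that was previously supplied by the exponential decay of the fundamental solution $W_\xi$. The only genuinely new point is that $\Hdot(\IR^3)$ is not contained in $\dot H^{-1}(\IR^3)$, so one cannot literally repeat Lemma~\ref{lem:locH^-1}; the cutoff factors $e^{-|x_i|}$ are precisely what fixes this. First I would record the analogue of Lemma~\ref{lem:correlationest}: for $f \in \dot H^{-1}(\IR^3)$ supported in $\overline{B_j}$ there is $v \in \dot H^1_0(B_{\kappa r_i}(x_i))$ with $v = G_0 f$ on $B_i$ and
\[
	\|v\|_{\Hdot(\IR^3)} \leq C \sqrt{r_i r_j}\,\frac{1}{|x_i - x_j|}\,\|f\|_{\dot H^{-1}(\IR^3)},
\]
proved exactly as before but now using the Newtonian kernel $1/(4\pi|x|)$ in place of $W_\xi$ (the local estimates near $x_i$ are insensitive to the difference between the two kernels, and no exponential factor appears).

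Next I would establish the analogue of Lemma~\ref{lem:locH^-1} in the weighted form that is actually needed. Set $g_i := e^{-|x_i|/2} f_i$ whenever $f_i \in \dot H^{-1}(\IR^3)$ is supported in $\overline{B_i}$; the claim to prove is
\[
	\Big\| \sum_i e^{-|x_i|/2} f_i \Big\|_{\dot H^{-1}(\IR^3)}^2 \leq (1 + C\mu_0) \sum_i e^{-|x_i|} \|f_i\|_{\dot H^{-1}(\IR^3)}^2.
\]
The diagonal terms are immediate. For the off-diagonal terms one expands $\|\sum_i e^{-|x_i|/2} G_0 f_i\|^2$, pairs $\langle G_0 f_j, f_i\rangle$ against the localized representative $v$ from the lemma above, and arrives at a bound of the form $C \sum_{i}\sum_{j\neq i} e^{-(|x_i|+|x_j|)/2} r_i \frac{1}{|x_i-x_j|}\|f_i\|\,\|f_j\|$ after the symmetrization $\sqrt{r_ir_j}\|f_i\|\|f_j\| \le \tfrac12(r_i\|f_i\|^2 + r_j\|f_j\|^2)$. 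Using $e^{-(|x_i|+|x_j|)/2} \le e^{-|x_i|/2} e^{-|x_j|/2}$ and Condition~\ref{cond:Capacity} in the form $r_i \le \mu_0 d_i^3$, the sum over $j\neq i$ of $d_i^3/|x_i - x_j|$ is dominated by $\int_{\IR^3}|z|^{-1}e^{-|z|/2}\,dz < \infty$ once the extra decay $e^{-|x_j|/2}$ is absorbed into a comparison with an integral; this is the one place where the absence of the $\xi$-localization is compensated by the $e^{-|x_j|}$ cutoff, and it is the step I expect to require the most care — in particular, making the comparison of $\sum_{j\neq i} e^{-|x_j|/2} d_i^3/|x_i - x_j|$ with a convergent integral uniform in $i$, exploiting that the balls $B_{d_i/2}(x_i)$ are disjoint.

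Finally I would transfer this to the operator $L = \sum_i e^{-|x_i|} Q_i$ exactly as in the proof of Proposition~\ref{pro:Aone}: by \eqref{eq:characterizationOfOrthogonal} every function in the range of $G_0^{-1}Q_i$ is supported in $\overline{B_i}$, and writing $L_N := \sum_{i=1}^N e^{-|x_i|}Q_i$ one gets
\[
	\|L_N u\|_{\Hdot}^2 \leq (1 + C\mu_0) \sum_{i=1}^N e^{-|x_i|} \|Q_i u\|_{\Hdot}^2 = (1 + C\mu_0)(L_N u, u)_{\Hdot}
\]
since $(Q_i u, u) = \|Q_i u\|^2$. Self-adjointness of $L_N$ (a sum of scalar multiples of orthogonal projections with nonnegative coefficients) and the spectral theorem then give $\|L_N\| \le 1 + C\mu_0$ as in \eqref{eq:estf}. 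Passing to the limit $N\to\infty$: for compactly supported $u$ the series $\sum_i e^{-|x_i|}Q_i u$ converges because only finitely many particles meet $\supp u$ and the tail contributes nothing new in the relevant norm once one checks the off-diagonal smallness; an $\eps/3$ argument using density of compactly supported functions in $\Hdot(\IR^3)$ extends well-definedness and the bound to all of $\Hdot(\IR^3)$. Nonnegativity is clear from $(Lu,u) = \sum_i e^{-|x_i|}\|Q_i u\|^2 \ge 0$, and self-adjointness of the bounded limit follows from self-adjointness of each $L_N$. This yields $\|L\| \le 1 + C\mu_0$ with $C$ depending only on $\kappa$, as claimed.
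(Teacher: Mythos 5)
The overall strategy you follow (localize via the analogue of Lemma \ref{lem:correlationest}, prove a weighted analogue of Lemma \ref{lem:locH^-1}, then transfer to $L$ exactly as in Proposition \ref{pro:Aone}) is indeed the route the paper intends. However, your central intermediate inequality is wrong as stated, and this is a genuine gap rather than a cosmetic one. You claim, for all families $(f_i)$ with $\supp f_i\subset\overline{B_i}$,
\[
	\Big\| \sum_i e^{-|x_i|/2} f_i \Big\|_{\dot H^{-1}(\IR^3)}^2 \leq (1 + C\mu_0) \sum_i e^{-|x_i|} \|f_i\|_{\dot H^{-1}(\IR^3)}^2 .
\]
Since multiplying each $f_i$ by a scalar preserves the support condition, you may substitute $f_i = e^{|x_i|/2}g_i$, and the claim becomes $\|\sum_i g_i\|_{\dot H^{-1}}^2 \le (1+C\mu_0)\sum_i \|g_i\|_{\dot H^{-1}}^2$ for arbitrary localized $g_i$ — i.e.\ the cutoff has silently disappeared, and the claim is equivalent to the boundedness of $\sum_i Q_i$ \emph{without} the cutoff, which is exactly what fails for the Poisson equation and the reason Definition \ref{def:pL} carries the factor $e^{-|x_i|}$ at all. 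Concretely, for particles on the lattice $(d\IZ)^3$ take $g_i$ to be a fixed charge $r$ uniformly distributed on $\partial B_i$ for $|x_i|\le R$ and $g_i=0$ otherwise; then $\sum_i\|g_i\|^2\sim Nr$ while the cross terms contribute $\sim Nr\,\mu_0R^2$, so no constant depending only on $\kappa$ and $\mu_0$ can work. The same defect shows up inside your own sketch of the off-diagonal estimate: after splitting $e^{-(|x_i|+|x_j|)/2}$ and spending $e^{-|x_j|/2}$ to make the sum over $j$ comparable to a convergent integral, what remains attached to $\|f_i\|^2$ is only $e^{-|x_i|/2}$, and $e^{-|x_i|/2}\le e^{-|x_i|}$ is false, so you do not reach the claimed right-hand side.

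The statement you actually need — and the one the paper records as Lemma \ref{lem:pLocH^-1} — has the \emph{full} weight inside the norm on the left,
\[
	\Big\| \sum_i e^{-|x_i|} f_i \Big\|_{\dot H^{-1}(\IR^3)}^2 \leq (1 + C\mu_0) \sum_i e^{-|x_i|} \|f_i\|_{\dot H^{-1}(\IR^3)}^2 ,
\]
which is genuinely weaker (rescaling shows it is equivalent to an unweighted left-hand side against $\sum_i e^{+|x_i|}\|g_i\|^2$ on the right). In its proof the cross terms carry $e^{-|x_i|}e^{-|x_j|}$; one keeps $e^{-|x_i|}$ for the right-hand side and uses the \emph{entire} factor $e^{-|x_j|}$, together with $r\lesssim \mu_0 d^3$ and the disjointness of the balls $B_{d_j/2}(x_j)$, to dominate the sum over $j$ by $C\mu_0\int_{\IR^3} e^{-|y|}|x_i-y|^{-1}\,dy\le C\mu_0$ uniformly in $i$ — this is precisely how the cutoff replaces the factor $e^{-|x_i-x_j|/\xi}$ of the screened kernel in Lemma \ref{lem:locH^-1}. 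With that corrected lemma, your final transfer step is fine: writing $f_i=G_0^{-1}Q_iu$, one gets $\|L_Nu\|_{\Hdot}^2\le(1+C\mu_0)\sum_i e^{-|x_i|}\|Q_iu\|_{\Hdot}^2=(1+C\mu_0)(L_Nu,u)_{\Hdot}$, and the spectral argument and the density/$\eps$-argument go through as in Proposition \ref{pro:Aone}.
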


The proof follows the lines of the proof of the corresponding result for
the screened Poisson equation, Proposition
\ref{pro:Aone}. The only difference is that the exponential cutoff in the definition of $L$ replaces the
the exponential decay of the fundamental solution of the screened Poisson equation \eqref{eq:FundamentalScreened}. We omit the details of the proof. However, we state the lemma corresponding to Lemma \ref{lem:locH^-1} for further
reference.

\begin{lemma}
	\label{lem:pLocH^-1}
	Assume $ (f_i)_{i \in I} \subset \dot{H}^{-1}(\IR^3) $ 
	satisfy $ \supp f_i \subset \overline{B_i} $. Then,
	\[
		\Big \| \sum_i e^{-|x_i|} f_i \Big \|_{\dot{H}^{-1}(\IR^3)}^2 
		\leq (1+C\mu_0) \sum_i  e^{-|x_i|} \|f_i \|_{\dot{H}^{-1}(\IR^3)}^2,
	\]
	where the constant $C$ depends only on $\kappa$ from Condition
	\ref{cond:particlesNotToClose}.
\end{lemma}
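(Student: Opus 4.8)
The plan is to reproduce the proof of the upper bound in Lemma~\ref{lem:locH^-1} — the only half used in Lemma~\ref{lem:Lbounded} — with the exponential decay $e^{-|x_i-x_j|/\xi}$ of the screened fundamental solution replaced by the cutoff factors $e^{-|x_i|}$, $e^{-|x_j|}$. Since $G_0=(-\Delta)^{-1}$ is an isometry from $\dot{H}^{-1}(\IR^3)$ onto $\Hdot(\IR^3)$, I would expand
\[
	\Big\|\sum_i e^{-|x_i|}f_i\Big\|_{\dot{H}^{-1}(\IR^3)}^2
	=\sum_i e^{-2|x_i|}\|f_i\|_{\dot{H}^{-1}(\IR^3)}^2
	+\sum_i\sum_{j\neq i}e^{-|x_i|}e^{-|x_j|}\,\langle G_0 f_j,f_i\rangle ,
\]
using $(G_0 f_i,G_0 f_j)_{\dot{H}^1}=\langle G_0 f_j,f_i\rangle$ and $\supp f_i\subset\overline{B_i}$ exactly as in Lemma~\ref{lem:locH^-1}. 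Bounding the diagonal sum by $\sum_i e^{-|x_i|}\|f_i\|_{\dot{H}^{-1}(\IR^3)}^2$, the whole statement reduces to the cross terms.

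For the off-diagonal terms I would use the Poisson counterpart of Lemma~\ref{lem:correlationest}, obtained by running its cutoff argument with the Newtonian potential $\tfrac1{4\pi|\cdot|}$ in place of $W_\xi$ (the now absent exponential simply disappears): for $i\neq j$ there is $v\in\dot{H}_0^1(B_{\kappa r_i}(x_i))$ with $v=G_0 f_j$ in $B_i$ and $\|v\|_{\dot{H}^1(\IR^3)}\le C\sqrt{r_ir_j}\,|x_i-x_j|^{-1}\|f_j\|_{\dot{H}^{-1}(\IR^3)}$, $C=C(\kappa)$, hence $|\langle G_0 f_j,f_i\rangle|\le C\sqrt{r_ir_j}\,|x_i-x_j|^{-1}\|f_i\|_{\dot{H}^{-1}(\IR^3)}\|f_j\|_{\dot{H}^{-1}(\IR^3)}$. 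I would then view the double sum as a quadratic form in $\ell^2(I)$ with the symmetric, nonnegative kernel $M_{ij}=e^{-|x_i|/2}e^{-|x_j|/2}\sqrt{r_ir_j}\,|x_i-x_j|^{-1}$ ($M_{ii}=0$) and vector $a_i=e^{-|x_i|/2}\|f_i\|_{\dot{H}^{-1}(\IR^3)}$, so that by the Schur test it suffices to prove $\sup_i\sum_{j\neq i}M_{ij}\le C\mu_0$; this yields $\big|\sum_{i\neq j}e^{-|x_i|}e^{-|x_j|}\langle G_0 f_j,f_i\rangle\big|\le C\mu_0\sum_i e^{-|x_i|}\|f_i\|_{\dot{H}^{-1}(\IR^3)}^2$ and the lemma follows. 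To establish $\sup_i e^{-|x_i|/2}\sqrt{r_i}\sum_{j\neq i}e^{-|x_j|/2}\sqrt{r_j}\,|x_i-x_j|^{-1}\le C\mu_0$, I would use Conditions~\ref{cond:Capacity} and~\ref{cond:particlesNotToClose} to convert the radii into the volume scales $d_i^3$, $d_j^3$, and the disjointness of the balls $B_{d_j/2}(x_j)$ (together with $|x_i-x_j|\ge d_j$, which gives $|x_i-y|\asymp|x_i-x_j|$ on $B_{d_j/2}(x_j)$) to dominate the particle sum by the convolution $\big(e^{-|\cdot|/2}\ast|\cdot|^{-1}\big)(x_i)$, which is bounded uniformly in $x_i$ since $e^{-|\cdot|/2}\in L^1(\IR^3)\cap L^\infty(\IR^3)$ and $|\cdot|^{-1}\in L^1_{\loc}(\IR^3)+L^\infty(\IR^3)$.

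I expect the core of the work — and the one place where Lemma~\ref{lem:locH^-1} does not transfer verbatim — to be precisely this last uniform bound. In the screened case the exponential sits on the inter-particle distance $|x_i-x_j|$ and therefore passes, up to a bounded factor, to $|x_i-y|$ on the neighbourhood ball of particle $j$, which makes the comparison with a convolution integral immediate; here the weight depends on the positions $x_i,x_j$ themselves, and its oscillation over $B_{d_j/2}(x_j)$ is $e^{\pm d_j/2}$, which is not bounded when $d_j$ is large. The plan for that regime is to exploit instead that a particle $j$ with $d_j$ large is $d_j$-separated from every other particle and carries, together with $e^{-|x_j|/2}$, enough exponential suppression that a dyadic decomposition in $d_j$ and in $|x_j|$ yields a convergent geometric-type series bounded by an absolute constant, while isolated particles lying close to the origin are absorbed using the factor $e^{-|x_i|/2}$ attached to the row index $i$. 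Once $\sup_i\sum_{j\neq i}M_{ij}\le C\mu_0$ is secured, the remaining estimates are a line-by-line transcription of Section~\ref{sec:AnEstimateForTheFirstOrderTermInTheScatteringSeries}, yielding $\|\sum_i e^{-|x_i|}f_i\|_{\dot{H}^{-1}(\IR^3)}^2\le(1+C\mu_0)\sum_i e^{-|x_i|}\|f_i\|_{\dot{H}^{-1}(\IR^3)}^2$.
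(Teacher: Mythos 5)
Your skeleton — expand the square using that $G_0=(-\Delta)^{-1}$ is an isometry, bound the diagonal by $e^{-2|x_i|}\leq e^{-|x_i|}$, and control the cross terms via the Poisson analogue of Lemma \ref{lem:correlationest} (the same cutoff computation with the Newtonian kernel) — is exactly what the paper intends. The gap is in your reduction of the cross terms: the plain Schur test with the symmetric kernel $M_{ij}=e^{-|x_i|/2}e^{-|x_j|/2}\sqrt{r_ir_j}\,|x_i-x_j|^{-1}$ asks for $\sup_i\sum_{j\neq i}M_{ij}\leq C\mu_0$, and this intermediate claim is false under Conditions \ref{cond:Capacity} and \ref{cond:particlesNotToClose}. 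Take one particle at the origin with $d_i=1$ and $r_i=\min(\mu_0,\tfrac14)$, and $N\sim\eps^{-3}$ particles of radius $r_j=\mu_0\eps^3$ with mutual spacing $\eps$ filling the annulus $1\leq|x|\leq 2$; all exponential weights are of order one, yet
\[
\sum_{j\neq i}M_{ij}\ \gtrsim\ \eps^{-3}\sqrt{r_i\,\mu_0\eps^3}\ =\ \sqrt{r_i\mu_0}\;\eps^{-3/2}\ \longrightarrow\ \infty ,
\]
while the conclusion of the lemma of course still holds for this configuration — so it is the reduction, not the lemma, that breaks. The culprit is the factor $\sqrt{r_i}$ attached to the row index: only a full power $r_j\leq\mu_0 d_j^3$ sitting on the summation index can be traded, via disjointness of the balls $B_{d_j/2}(x_j)$, for a convergent integral. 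You need the weighted pairing used in Lemma \ref{lem:locH^-1}, $\sqrt{r_ir_j}\,e^{-\frac{|x_i|+|x_j|}{2}}\|f_i\|\|f_j\|\leq\tfrac12\bigl(e^{-|x_i|}r_j\|f_i\|^2+e^{-|x_j|}r_i\|f_j\|^2\bigr)$ (equivalently, a Schur test with weights $\sqrt{r_i}$), which reduces the lemma to $\sup_i\sum_{j\neq i}e^{-|x_i|/2}e^{-|x_j|/2}\,r_j\,|x_i-x_j|^{-1}\leq C\mu_0$.

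The second point is that the "hard regime" you single out (large $d_j$, oscillation $e^{\pm d_j/4}$ of the weight over $B_{d_j/2}(x_j)$) evaporates with a one-line observation you never invoke, and which is precisely what the paper means by "the cutoff replaces the exponential decay of $W_\xi$": by the triangle inequality $|x_i|+|x_j|\geq|x_i-x_j|$,
\[
e^{-|x_i|}e^{-|x_j|}\ \leq\ e^{-\frac{|x_i|+|x_j|}{2}}\,e^{-\frac{|x_i-x_j|}{2}},
\]
the same estimate the paper uses explicitly in the proof of Lemma \ref{lem:pMonopapprox}. After this, the exponential sits on the inter-particle distance; since $d_j\leq|x_i-x_j|$, for $y\in B_{d_j/2}(x_j)$ one has $|x_i-y|\leq\tfrac32|x_i-x_j|$, hence $e^{-|x_i-x_j|/2}|x_i-x_j|^{-1}\leq C\,e^{-|x_i-y|/3}|x_i-y|^{-1}$, and therefore $\sum_{j\neq i}e^{-|x_i-x_j|/2}\mu_0 d_j^3|x_i-x_j|^{-1}\leq C\mu_0\int_{\IR^3}e^{-|z|/3}|z|^{-1}\dd z\leq C\mu_0$ uniformly in $i$. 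With the weighted pairing plus this inequality the proof really is a line-by-line transcription of Lemma \ref{lem:locH^-1}; no dyadic decomposition in $d_j$ or $|x_j|$ is needed, and as written your dyadic plan is aimed at proving a statement that is not true.
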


As in Proposition \ref{pro:abstractProjection} we would like to prove convergence for 
\[
	(1-\gamma L)^n G_0 f = (1 - \sum_i \gamma e^{-|x_i|} Q_i)^n G_0 f.
\]
for sufficiently small $\gamma >0$.
The only difference is that, instead of putting the same small factor $\gamma$ in front of all the operators $Q_i$,
we now have factors depending on the particle position due to the spatial cutoff $e^{-|x_i|}$ in Definition \ref{def:pL}.
Thus, we will see in Proposition \ref{pro:nonuniformAbstractProjection} below, that convergence to the desired solution
still holds for sufficiently small $\gamma$.
However, due to the spatial cutoff, $L$ lacks the coercivity on $\dot{H}_0^1(\IR^3 \backslash K)^\perp$ the analogous of which 
we had in the case of the screened Poisson equation (cf. Lemma \ref{lem:LCoercive}): Clearly, if 
$u \in \dot{H}_0^1(\IR^3 \backslash K)^\perp$ is only non-zero in particles very far away from 
the origin, then, $\|Lu\|_{\Hdot}$ is very small compared to $\|u\|_{\Hdot}$. 
Hence, we cannot expect any result about uniform convergence of $(1-\gamma L)^n G_0$ from a purely abstract 
argument as in Proposition \ref{pro:abstractProjection}. Indeed, the farther the mass of the source term
$f$ is away from the origin, the slower we expect the convergence to take place.

\begin{proposition}
	\label{pro:nonuniformAbstractProjection}
	Let $ H $ be a Hilbert space and $ V_k \subset H$ closed subspaces for $ k \in J$, where $J$ is a finite or
	countable index set. Define $Q_k$ to be the orthogonal projections from $ H$ to $V_k^\perp$.
	Let $ V = \cap_{k \in J} V_k$ and define $ P $ to be the orthogonal projection from $ H$ to $V$. 
	Assume $\gamma_k >0$, $k \in J$, are chosen such that $ S:= \sum_{k \in J} \gamma_k Q_k$ defines a bounded operator with
	$\|S\| < 2$.
	Then,
	\[
		\lim_{M\to\infty} (1-S)^M   = P,
	\]
	pointwise in $H$.
	
	If $\|S\| \leq 1$, then for all $x \in H$,
	\begin{equation}
		\label{eq:posLargerNorm}
		( S x, x)_H \geq \| S x \|_H^2,
	\end{equation}
	and 
	\begin{equation}
		\label{eq:monotonPos}
		(S(1-S)x,(1-S)x)_H \leq (Sx,x)_H.
	\end{equation}
\end{proposition}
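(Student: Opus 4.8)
The plan is to mirror the proof of Proposition~\ref{pro:abstractProjection}: record the basic structural properties of $S$, and then reduce everything to elementary pointwise inequalities for a bounded multiplier via the spectral theorem.

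First I would note that, since each $Q_k$ is an orthogonal projection and $\gamma_k>0$, one has $(Sx,y)_H=\sum_{k\in J}\gamma_k(Q_kx,y)_H=(x,Sy)_H$ and $(Sx,x)_H=\sum_{k\in J}\gamma_k\|Q_kx\|_H^2\ge0$, so $S$ is a bounded nonnegative self-adjoint operator and $\sigma(S)\subset[0,\|S\|]$. Moreover $x\in\ker S$ iff $\sum_k\gamma_k\|Q_kx\|_H^2=0$ iff $Q_kx=0$ for every $k$, i.e. $\ker S=\bigcap_{k\in J}V_k=V$; hence $P$ is precisely the orthogonal projection onto $\ker S$.

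For the convergence statement, I would invoke the spectral theorem for bounded self-adjoint operators: up to a unitary equivalence $S$ is multiplication by some $f\in L^\infty_\nu(X)$ on $H=L^2_\nu(X)$, with $0\le f\le\|S\|<2$ $\nu$-a.e. Then $(1-S)^M$ is multiplication by $(1-f)^M$; because $-1<1-f\le1$, the multiplier tends pointwise to the indicator of $\{f=0\}$, which represents the orthogonal projection onto $\ker S=V$, i.e. $P$. Dominated convergence, with dominating function $|\varphi|^2\in L^1_\nu$ and using $|1-f|^{2M}\le1$, then gives $\|(1-S)^M\varphi-P\varphi\|_H^2\to0$ for each $\varphi\in H$, which is the asserted pointwise convergence.

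Finally, for \eqref{eq:posLargerNorm} and \eqref{eq:monotonPos} I would work in the same representation under the stronger hypothesis $\|S\|\le1$, i.e. $0\le f\le1$ $\nu$-a.e. Writing $x\leftrightarrow\varphi$, the inequality $f^2\le f$ gives
\[
(Sx,x)_H-\|Sx\|_H^2=\int_X(f-f^2)\,|\varphi|^2\dd\nu\ge0,
\]
and the inequality $f(1-f)^2\le f$ gives
\[
(S(1-S)x,(1-S)x)_H=\int_X f(1-f)^2\,|\varphi|^2\dd\nu\le\int_X f\,|\varphi|^2\dd\nu=(Sx,x)_H.
\]
(Equivalently, one can argue operator-theoretically from $0\le S\le1$: $S-S^2=S(1-S)\ge0$ yields \eqref{eq:posLargerNorm}, and $(2-S)S^2=\bigl((2-S)^{1/2}S\bigr)^2\ge0$ yields \eqref{eq:monotonPos}.) I do not anticipate a real obstacle here; the only points needing care are the identification $\ker S=V$, so that the spectral multiplier for $P$ is correct, and keeping track that the convergence statement uses only $\|S\|<2$ whereas the two monotonicity estimates rely on the sharper bound $\|S\|\le1$, which is exactly the range where $f^2\le f$ and $f(1-f)^2\le f$ hold.
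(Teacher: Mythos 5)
Your proof is correct and follows essentially the same route as the paper: reduce $S$ via the spectral theorem to a multiplication operator and use the elementary pointwise inequalities $f^2\le f$ and $f(1-f)^2\le f$ for $0\le f\le 1$, with the convergence statement handled exactly as in Proposition \ref{pro:abstractProjection}. The only cosmetic difference is that you work on all of $H$ and identify the limit multiplier as the indicator of $\{f=0\}$ (after checking $\ker S=V$), whereas the paper first splits off $V$ and restricts $S$ to $V^\perp$; the substance is the same.
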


\begin{proof}
	The statement about convergence is proven in the same way as in Proposition \ref{pro:abstractProjection}.

	Observe that estimates \eqref{eq:posLargerNorm} and \eqref{eq:monotonPos} are trivially satisfied in $V$.
	We define again $T$ as the restriction  of $ S $ to $ V^ \perp $ (in both the domain 
	and the range). Using the spectral theorem, we can assume $T$ to be a multiplication operator on
	$ H = L^2_\nu(X)$ 
	for some measure space $(X,\mathcal{A},\nu)$, i.e., there exists a function $f \in L^\infty_\nu (X)$ such that
	$ T \varphi $ = $ f \varphi $ for all $ \varphi \in L^2_\nu(X)$.
	By assumption, we know $0 < f \leq 1$. Therefore,
	\[
		(T \varphi, \varphi)_H = \int_X f \varphi^2 \dd \nu \geq  \int_X f^2 \varphi^2 \dd \nu = \| T \varphi \|_H^2,
	\]
	and
	\[
		(T (1-T) \varphi, (1-T)\varphi)_H = \int_X f(1-f)^2 \varphi^2 \dd \nu  \leq \int_X f \varphi^2 \dd \nu = 
		(T \varphi, \varphi)_H. \qedhere
	\]
\end{proof}

\begin{proof}[Proof of Theorem \ref{ConvWholeSpace}]
	We define $ \gamma_0 \leq 1/\|L_r\| $. Proposition \ref{pro:Aone} ensures that this is possible 
	in such a way that $\gamma_0$ depends only on $\mu_0$ and $\kappa$.   
	Then, the assertion follows directly from Proposition \ref{pro:nonuniformAbstractProjection} and
	Lemma \ref{lem:poibdry}.
\end{proof}

\subsection{The Modified Method of Reflections on the Level of the Original Series}

In this subsection, we will show how to compute the expansion of the term $(1-\gamma L)^n$ in order to obtain a
series similar to the original series obtained by the Method of Reflections \eqref{eq:ProjectionSeries}.
This is not only interesting in itself, but will be used to derive the homogenization results
Theorem \ref{HomogLambZero} and \ref{HomogStokes} in Section 4.

This leads to the following definition and lemma.

\begin{definition}
\label{def:A_beta}
	Let $n\in \IN_\ast$ and $ \beta \in \IN_\ast^n$, where we denote 
	$\IN_\ast := \IN \backslash \{0\}$. Then, we define the operator $A_\beta \colon \Hdot(\IR^3) \to \Hdot(\IR^3)$ by
	\begin{align}
		A_\beta = \sum_{i_1} e^{-\beta_1|x_{i_1}|} Q_{i_1} \sum_{i_2 \neq i_1} e^{-\beta_2|x_{i_2}|} Q_{i_2} \cdots
									\!\!	\sum_{i_n \neq i_{n-1}} \!\! e^{-\beta_n|x_{i_n}|} Q_{i_n}.
	\end{align}
\end{definition}

\begin{lemma}
	\label{lem:powersOfL}
	For all $n \in \IN_\ast $, the following identity holds
	\[
		(L_r)^n = \sum_{l=1}^n \sum_{\substack{\beta \in \IN_\ast^l \\ |\beta| = n}}A_\beta^{(r)}.
	\]
	In particular, for all $\beta \in \IN_\ast^n$, $A_\beta$ is a bounded operator with
	\[
		\|A_\beta\| \leq (1 + C \mu_0)^n,
	\]
	where $C$ is a universal constant.
\end{lemma}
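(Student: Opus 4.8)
The plan is to prove the operator identity $(L)^n = \sum_{l=1}^n \sum_{\beta \in \IN_\ast^l, |\beta| = n} A_\beta$ by induction on $n$, and then derive the norm bound on each $A_\beta$ as an easy consequence of Lemma \ref{lem:pLocH^-1}. First I would record the base case $n=1$: here $l$ can only be $1$, the only multi-index is $\beta = (1)$, and $A_{(1)} = \sum_i e^{-|x_i|} Q_i = L$ by Definition \ref{def:pL}, so the identity is trivially true.

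For the inductive step, I would assume the identity holds for $n$ and compute $L^{n+1} = L \cdot L^n = \big(\sum_j e^{-|x_j|} Q_j\big)\big(\sum_{l=1}^n \sum_{|\beta|=n} A_\beta\big)$. The key algebraic point is how left-multiplication by $L$ acts on a single $A_\beta$ with $\beta = (\beta_1,\dots,\beta_l)$. Writing $A_\beta = \sum_{i_1} e^{-\beta_1|x_{i_1}|}Q_{i_1} \cdots \sum_{i_l \neq i_{l-1}} e^{-\beta_l |x_{i_l}|} Q_{i_l}$, I would split the outer sum $\sum_j e^{-|x_j|} Q_j$ according to whether $j = i_1$ or $j \neq i_1$. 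The term $j = i_1$ uses $Q_{i_1}^2 = Q_{i_1}$ (these are orthogonal projections) and produces $e^{-|x_{i_1}|} e^{-\beta_1|x_{i_1}|} Q_{i_1} = e^{-(\beta_1+1)|x_{i_1}|}Q_{i_1}$ in the leading slot, i.e. exactly $A_{(\beta_1+1,\beta_2,\dots,\beta_l)}$, a multi-index of length $l$ with $|\cdot| = n+1$. The term $j \neq i_1$ prepends a genuinely new factor and yields $A_{(1,\beta_1,\dots,\beta_l)}$, a multi-index of length $l+1$ with weight $n+1$. Summing over all $\beta$ with $|\beta|=n$, every multi-index $\gamma$ with $|\gamma| = n+1$ is obtained: if $\gamma_1 \geq 2$ it arises uniquely from $\beta = (\gamma_1 - 1, \gamma_2, \dots)$ via the first mechanism, and if $\gamma_1 = 1$ it arises uniquely from $\beta = (\gamma_2,\gamma_3,\dots)$ via the second. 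Hence $L^{n+1} = \sum_{l=1}^{n+1}\sum_{|\gamma|=n+1} A_\gamma$, closing the induction. I would be slightly careful that the rearrangement of these (in general infinite) sums is justified — this follows because all the operator series involved converge in operator norm by Lemma \ref{lem:Lbounded}, so the manipulations are legitimate.

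For the norm bound, fix $\beta \in \IN_\ast^n$. I would first show that for any collection $(f_i)_{i\in I} \subset \dot H^{-1}(\IR^3)$ with $\supp f_i \subset \overline{B_i}$ and any real exponents $\beta_i \geq 1$, one has $\big\|\sum_i e^{-\beta_i |x_i|} f_i\big\|_{\dot H^{-1}}^2 \leq (1+C\mu_0)\sum_i e^{-\beta_i|x_i|}\|f_i\|_{\dot H^{-1}}^2$; this is just Lemma \ref{lem:pLocH^-1} applied to the functions $e^{-\beta_i|x_i|}f_i$ (using $e^{-\beta_i|x_i|} \leq e^{-|x_i|}$ to keep the correlation estimate, or re-running the same computation verbatim). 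Using that $G_0 = (-\Delta)^{-1}$ is an isometry $\dot H^{-1} \to \dot H^1$ and that the range of $Q_i$ consists of functions whose image under $G_0^{-1}$ is supported in $\overline{B_i}$, this gives exactly $\big\|\sum_i e^{-\beta_i|x_i|} Q_i u\big\|^2 \leq (1+C\mu_0) \sum_i e^{-\beta_i|x_i|} \|Q_i u\|^2 \leq (1+C\mu_0) \sum_i \|Q_i u\|^2 = (1+C\mu_0)\, (\sum_i Q_i u, u) \leq (1+C\mu_0)\|L\|\|u\|^2 / \inf e^{-|x_i|}$ — more simply, since each $Q_i$ is a projection, $\sum_i \|Q_i u\|^2 \leq \|L_{\text{unscreened}}\| \|u\|^2$; to keep the bound clean I would instead just iterate the single-slot estimate $n$ times. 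Concretely: applying the single-slot bound to the innermost sum shows $\big\|\sum_{i_n \neq i_{n-1}} e^{-\beta_n|x_{i_n}|}Q_{i_n} w\big\| \leq (1+C\mu_0)^{1/2}\|w\|$ for every $w$ (again because these are projections, dropping the disjointness-from-$i_{n-1}$ constraint only helps), hence each of the $n$ layers contributes a factor $(1+C\mu_0)^{1/2}$ and $\|A_\beta\| \leq (1+C\mu_0)^{n/2} \leq (1+C\mu_0)^n$.

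The main obstacle is the bookkeeping in the inductive step — making sure the two cases $j=i_1$ and $j\neq i_1$ biject cleanly onto $\{\gamma : |\gamma| = n+1\}$ partitioned by whether $\gamma_1 = 1$, and making sure the infinite-sum rearrangements are licensed by the operator-norm convergence from Lemma \ref{lem:Lbounded}; the norm estimate itself is routine once the single-slot inequality is in place.
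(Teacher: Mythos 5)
Your proof of the operator identity is essentially the paper's own argument: the same induction on $n$, the same key computation $L A_\beta = A_{(1,\beta)} + A_{(\beta_1+1,\beta')}$ based on $Q_{i}^2=Q_{i}$ (splitting the outer sum into $j=i_1$ and $j\neq i_1$), and the same observation that every $\gamma$ with $|\gamma|=n+1$ arises exactly once, according to whether $\gamma_1=1$ or $\gamma_1>1$. For the norm bound you take a different route: the paper deduces $\|A_\beta\|\leq(1+C\mu_0)^n$ from the identity itself, bounding the all-ones term by $\|L^n\|\leq\|L\|^n$ (Lemma \ref{lem:Lbounded}) and comparing general $\beta$ to the all-ones case, whereas you iterate a single-slot estimate coming from the weighted version of Lemma \ref{lem:pLocH^-1}. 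Your route is legitimate and arguably more transparent than the paper's positivity/domination argument, and the extension of Lemma \ref{lem:pLocH^-1} to cutoffs $e^{-\beta_k|x|}$, $\beta_k\geq 1$, is exactly what the paper also uses later (proof of Lemma \ref{lem:pMonopapprox}).

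However, your justification of the single-slot bound is flawed as written. Both intermediate bounds you float — $(1+C\mu_0)\|L\|\|u\|^2/\inf_i e^{-|x_i|}$ and $\sum_i\|Q_iu\|^2\leq\|L_{\mathrm{unscreened}}\|\,\|u\|^2$ — are vacuous here: for particles extending to the whole space $\inf_i e^{-|x_i|}=0$ and the unweighted operator $\sum_i Q_i$ is in general unbounded, which is precisely why the cutoff in Definition \ref{def:pL} was introduced; in particular $\sum_i\|Q_iu\|^2\leq\|u\|^2$ is false (the $Q_i$ are not mutually orthogonal), so ``because these are projections'' does not give a per-slot factor $(1+C\mu_0)^{1/2}$. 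The repair is routine: keep the weight and self-improve, i.e.\ writing $L_{\beta_k}:=\sum_{i} e^{-\beta_k|x_{i}|}Q_{i}$ (or the sum restricted to $i\neq i_{k-1}$, to which the weighted Lemma \ref{lem:pLocH^-1} applies just as well), one gets
\begin{equation}
	\Big\|\sum_{i\neq i_{k-1}} e^{-\beta_k|x_i|}Q_i w\Big\|^2 \leq (1+C\mu_0)\sum_i e^{-\beta_k|x_i|}(Q_iw,w)
	= (1+C\mu_0)\,(L_{\beta_k}w,w) \leq (1+C\mu_0)\,\|L_{\beta_k}w\|\,\|w\|,
\end{equation}
whence each slot has operator norm at most $1+C\mu_0$ (not $(1+C\mu_0)^{1/2}$), and iterating the $n$ slots still yields exactly the claimed bound $\|A_\beta\|\leq(1+C\mu_0)^n$. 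So the slip does not affect the final statement, but the per-slot constant and its justification need to be corrected along these lines.
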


\begin{proof}
	For $n=1$, the assertion is trivial. 
	Let $n \geq 2$ and $\beta \in \IN_\ast^n$. We write $\beta = (\beta_1,\beta')$ for some $\beta' \in \IN_\ast^{n-1}$.
	Using $Q_x^2 = Q_x$, it is easy to see that 
	\[
		L_r A_\beta = A_{(1,\beta)} + A_{(\beta_1 + 1 ,\beta')}.
	\]
	Observe that for every $1 \leq l \leq n+1$ and every $\gamma \in \IN_\ast^l$ with $|\gamma| = n+1$, either 
	$\gamma_1 = 1$, then, there exists a unique $\beta \in \IN_\ast^{l-1}$ with $|\beta| = n$ such that
	$\gamma = (1,\beta)$, or $\gamma_1 > 1$, then, $l \leq n$, and there exists a unique $\beta \in \IN_\ast^l$ with 
	$|\beta| = n$ such that $\gamma = (\beta_1 + 1,\beta')$.
	Therefore, the assertion for $n$ follows from the one for $n-1$.
	
	For $\beta \in \IN_\ast^n$ with $\beta_j =1$ for all $1 \leq j \leq n$, 
	the estimate for the operators $A_\beta$ follows directly from the bound on $L$ (see Lemma \ref{lem:Lbounded})
	and the identity that we just have proven, since all the operators $Q_{i}$ are positive.
	For general $\gamma \in \IN_\ast^n$, we clearly have $\|A_\gamma\| \leq \| A_\beta\|$ if $\beta $ is chosen
	as above. This concludes the proof.
\end{proof}

\section{Homogenization}
\label{sec:Homogenization}

In this section, we will consider particles of equal radii $r$ with centers on the lattice $ \Gamma := (d \IZ)^3$.
Then, Condition \ref{cond:Capacity} is satisfied with $\mu_0 = r d^{-3}$. 
For the homogenization, it is convenient to include the factor $4 \pi$ in the capacity density, which we define as
\[
	\mu := 4 \pi r d^{-3}
\]

We are interested in the limiting behavior of Problem \eqref{S1E1} for $r,d \to 0$ and fixed $\mu$.
Thus, throughout this section, we will consider $\mu$ as a fixed quantity.
Since for fixed $\mu$ Condition \ref{cond:particlesNotToClose} will be satisfied if $r$ is sufficiently small,
we will always assume that $r$ is chosen in such a way.
In the following, we will use $\Gamma$ as the index set for the particles (i.e., we index them by their space position). Moreover, since for fixed $\mu$, the particle configuration does only depend on $r$, 
we will write an index $r$ to indicate this dependence,
e.g., we write
\[
	K_r = \bigcup_{x \in \Gamma} \overline{B_x}.
\]

\subsection{A Poincaré Inequality for Perforated Domains}

An important feature of this regular particle distribution is that Problem \eqref{S1E1}
admits a unique solution in $H^1(\IR^3)$ for sources $f \in H^{-1}(\IR^3)$, instead of solutions only 
in $\Hdot(\IR^3)$ for sources in  $\dot{H}^{-1}(\IR^3)$.
This is due to the existence of a Poincaré inequality in the space $H^1_0(\IR^3 \backslash K)$.

We first notice the following local Poincaré inequality.

\begin{lemma}
	\label{lem:poincare}
	Assume $ z \in \IR^3$, $ R> \rho > 0 $ and $ u \in H^1(B_R(z))$ such that $ u = 0 $ in 
	$B_\rho(z)$. Then, the following Poincaré inequality holds:
	\[
		\|u\|_{L^2(B_R(z))}^2 	\leq \frac{R^3}{\rho} \|\nabla u\|^2_{L^2(B_R(z))}.
	\]
\end{lemma}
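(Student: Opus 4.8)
The plan is to reduce to a one-dimensional radial estimate along rays emanating from $z$. Write everything in polar coordinates centered at $z$, so that a point is $z + s\omega$ with $s \in (0,R)$ and $\omega \in S^2$. Since $u = 0$ on $B_\rho(z)$, for a.e. direction $\omega$ we have $u(z + \rho\omega) = 0$ (in the trace sense), and hence by the fundamental theorem of calculus along the ray,
\[
	u(z + s\omega) = \int_\rho^s \partial_t \big( u(z + t\omega) \big) \dd t = \int_\rho^s \nabla u(z + t\omega)\cdot \omega \dd t
\]
for $s \in (\rho, R)$ (and $u(z+s\omega) = 0$ for $s \le \rho$).

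The key steps are then as follows. First, apply Cauchy--Schwarz to the ray integral:
\[
	|u(z + s\omega)|^2 \leq (s - \rho) \int_\rho^s |\nabla u(z + t\omega)|^2 \dd t \leq R \int_0^R |\nabla u(z + t\omega)|^2 \dd t.
\]
Second, integrate this over the ball, using $\dd x = s^2 \dd s \dd\omega$:
\[
	\|u\|_{L^2(B_R(z))}^2 = \int_{S^2}\int_0^R |u(z+s\omega)|^2 s^2 \dd s \dd\omega
	\leq R^3 \int_{S^2}\int_0^R |\nabla u(z+t\omega)|^2 \dd t \dd\omega,
\]
where I bounded $s^2 \le R^2$ and pulled the (now $s$-independent) inner integral out, picking up a factor $\int_0^R \dd s = R$. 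Third, to recognize the right-hand side as $\|\nabla u\|_{L^2(B_R(z))}^2$ up to the weight, note that dropping the Jacobian $t^2$ can only help since we want a lower bound there — more precisely, since $t \le R$, we have $\int_0^R |\nabla u(z+t\omega)|^2 \dd t \le \frac{1}{\rho^2}\cdot\frac{R^2}{1}\cdots$ — actually the cleanest route is to insert $1 = t^2/t^2 \le t^2/\rho^2$ is wrong for $t$ near $R$; instead bound $\int_0^R |\nabla u|^2 \dd t$ directly is not quite $\int_0^R |\nabla u|^2 t^2 \dd t$. The right bookkeeping: we never restricted $t \le \rho$ away, so split is unnecessary; simply use that on the support of $\nabla u$ contributions with small $t$ are harmless because $u = 0$ there forces $\nabla u = 0$ a.e. on $B_\rho(z)$, hence the $t$-integral effectively runs over $(\rho, R)$ where $t^2 \ge \rho^2$, giving $\int_0^R |\nabla u(z+t\omega)|^2 \dd t \le \frac{1}{\rho^2}\int_0^R |\nabla u(z+t\omega)|^2 t^2 \dd t$. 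Substituting back yields $\|u\|_{L^2(B_R(z))}^2 \le \frac{R^3}{\rho^2}\cdot\rho \cdot \|\nabla u\|_{L^2(B_R(z))}^2$? Let me instead keep only $R^3/\rho$ by being less wasteful: combining the factor $R \cdot R^2 = R^3$ from above with $t^2 \ge \rho^2$ gives exactly $\frac{R^3}{\rho^2}$ — so to land on the stated $\frac{R^3}{\rho}$ one should not bound $s^2 \le R^2$ but rather $s^2 \le R^2$ and $R \le$ ...

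The main obstacle, then, is purely the constant bookkeeping: getting exactly $R^3/\rho$ rather than $R^3/\rho^2$ or $R^4/\rho$. The clean way is: from $|u(z+s\omega)|^2 \le (s-\rho)\int_\rho^s |\nabla u(z+t\omega)|^2\dd t$, multiply by $s^2$ and integrate in $s$; bound $(s-\rho) s^2 \le R^3$ and extend the $t$-range to $(0,R)$, obtaining $\int_{S^2}|u|^2 s^2\dd s \le R^3 \int_{S^2}\int_0^R |\nabla u(z+t\omega)|^2 \dd t\, \dd\omega$ — no extra $\int \dd s$ since the $s$-dependence was absorbed into $R^3$ — wait, there is still $\int_0^R \dd s$. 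So one genuinely gets $R^4/\rho \cdot(\ldots)$ unless one is more careful. I expect the intended argument uses the sharper pointwise bound $|u(z+s\omega)| \le \int_\rho^s |\partial_t u| \dd t$ followed by Hölder with the weight $t^2$ built in from the start: $|u(z+s\omega)|^2 \le \big(\int_\rho^s t^{-2}\dd t\big)\big(\int_\rho^s |\nabla u(z+t\omega)|^2 t^2\dd t\big) \le \frac{1}{\rho}\cdot\int_0^R |\nabla u(z+t\omega)|^2 t^2 \dd t$, and then $\int_0^R|u(z+s\omega)|^2 s^2\dd s \le \frac{R^3}{3}\cdot\frac{1}{\rho}\int_0^R|\nabla u(z+t\omega)|^2 t^2\dd t$; integrating over $S^2$ gives $\|u\|_{L^2(B_R(z))}^2 \le \frac{R^3}{\rho}\|\nabla u\|_{L^2(B_R(z))}^2$. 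That is the route I would write up, and the only real content is choosing this weighted Hölder split so the Jacobians match and the constant comes out as claimed.
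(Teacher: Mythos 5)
Your final route---the fundamental theorem of calculus along radial rays from $z$, followed by Cauchy--Schwarz with the Jacobian weight $t^2$ built in, so that $\int_\rho^R t^{-2}\,dt \le \frac{1}{\rho}$ and $\int_0^R s^2\,ds \le \frac{R^3}{3}$ produce the constant---is exactly the paper's proof (the paper merely reduces to $z=0$ and smooth functions by density before running the same computation). The earlier unweighted attempts you discard along the way are indeed off by factors of $R/\rho$, but the weighted Hölder split you settle on is correct and gives the stated bound $\frac{R^3}{\rho}$ with room to spare.
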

\begin{proof}
	It suffices to prove the estimate for $ z= 0 $ and for smooth functions.
	Let $ \varphi \in C^1(B_R(0)) $ such that $ \varphi \equiv 0 $ in $B_\rho(0)$.
	Then, denoting the unit sphere in $ \IR^3$ by $ S^2$ we have for every $ x \in S^2 $ and every $ t \in (\rho,R)$
	\[
		|\varphi(tx)| \leq \int_\rho^R |\nabla \varphi(sx)| \dd s.
	\]
	Thus,
	\begin{align}
		\int_{B_R(0)} |\varphi|^2 \dd y
		& \leq \int_{S^2} \int_\rho^R t^2 \left( \int_\rho^R |\nabla \varphi(sx)| \dd s \right)^2 \dd t \dd x \\
		& \leq \frac{1}{3} (R^3 - \rho^3) \int_\rho^R \frac{1}{s^2} \dd s 
		\int_{S^2} \int_\rho^R s^2 |\nabla \varphi(sx)|^2 \dd s \dd x\\
		& \leq \frac{R^3}{\rho} \int_{B_R(z)} |\nabla \varphi|^2 \dd y. \qedhere
	\end{align}
\end{proof}

\begin{corollary}
	\label{cor:PoincarePerforated}
	All $ u \in H^1_0(\IR^3 \backslash K_r)$ satisfy
	\[
		\|u\|^2_{L^2(\IR^3)} \leq  C \mu^{-1} \| \nabla u \|^2_{L^2(\IR^3)}
	\]
	for a universal constant $C$.
\end{corollary}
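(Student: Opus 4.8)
The plan is to tile $\IR^3$ by the cubes of the lattice $\Gamma = (d\IZ)^3$ and apply the local Poincaré inequality of Lemma \ref{lem:poincare} on each cube. Fix $u \in H^1_0(\IR^3 \backslash K_r)$; by density it suffices to treat $u \in C_c^\infty(\IR^3)$ with $u = 0$ on $K_r$. For each $x \in \Gamma$ let $Q_x := x + [-d/2,d/2]^3$ be the cube centered at the lattice point $x$; these cubes tile $\IR^3$ up to a null set. Each cube $Q_x$ is contained in the ball $B_{R}(x)$ with $R := \sqrt{3}\, d$, and on that ball $u$ vanishes in $B_r(x) = B_x$. Hence Lemma \ref{lem:poincare}, applied with $z = x$, $\rho = r$, and this $R$, gives
\[
	\|u\|_{L^2(Q_x)}^2 \leq \|u\|_{L^2(B_R(x))}^2 \leq \frac{R^3}{r} \|\nabla u\|_{L^2(B_R(x))}^2 = \frac{3\sqrt{3}\, d^3}{r}\|\nabla u\|_{L^2(B_R(x))}^2.
\]
Since $\mu = 4\pi r d^{-3}$, the prefactor is $\frac{3\sqrt{3}\, d^3}{r} = \frac{12\sqrt{3}\,\pi}{\mu} = C_0 \mu^{-1}$ for the universal constant $C_0 := 12\sqrt{3}\,\pi$.

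The one thing that needs care — and the main (mild) obstacle — is that the balls $B_R(x)$ overlap, so summing the inequality over $x \in \Gamma$ does not immediately reproduce $\|\nabla u\|_{L^2(\IR^3)}^2$ on the right. However, the overlap is uniformly bounded: a given point $y \in \IR^3$ lies in $B_R(x) = B_{\sqrt{3}d}(x)$ only for lattice points $x$ with $|x - y| < \sqrt{3}\,d$, and the number of such $x \in (d\IZ)^3$ is bounded by a universal constant $N_0$ (independent of $d$, by scaling). Therefore $\sum_{x \in \Gamma} \mathbf{1}_{B_R(x)} \leq N_0$ pointwise, and summing the displayed estimate over $x \in \Gamma$ yields
\[
	\|u\|_{L^2(\IR^3)}^2 = \sum_{x \in \Gamma} \|u\|_{L^2(Q_x)}^2 \leq C_0 \mu^{-1} \sum_{x \in \Gamma} \|\nabla u\|_{L^2(B_R(x))}^2 \leq C_0 N_0\, \mu^{-1} \|\nabla u\|_{L^2(\IR^3)}^2.
\]
Setting $C := C_0 N_0$ gives the claim for $u \in C_c^\infty$, and the general case follows by density since both sides are continuous with respect to the $\dot H^1$-norm. (Note that Condition \ref{cond:particlesNotToClose}, which for this configuration reads $d/2 > \kappa r$ with $\kappa > 1$, guarantees $r < d/2 < R$, so the hypothesis $R > \rho$ of Lemma \ref{lem:poincare} is satisfied; the constant $C$ comes out genuinely universal, not depending on $\kappa$.)
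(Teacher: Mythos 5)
Your proposal is correct and follows exactly the route the paper intends (and leaves implicit): tile $\IR^3$ by the lattice cubes, apply the local Poincar\'e inequality of Lemma \ref{lem:poincare} on balls $B_{\sqrt{3}d}(x)$ with $\rho=r$, use $\mu = 4\pi r d^{-3}$ to identify the constant, and sum using the uniformly bounded overlap of the balls. The only cosmetic remark is that in the closing density step the relevant convergence is in the full $H^1(\IR^3)$-topology (which is how $H^1_0(\IR^3\backslash K_r)$ is defined in the paper), not just in the $\dot H^1$-seminorm, but since approximation in $H^1$ controls both sides of the inequality this does not affect the argument.
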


\begin{corollary}
\label{cor:Existence}
	For all $f\in H^{-1}(\IR^3)$, there exists a unique weak solution $u \in H^1(\IR^3)$
	to the problem
	\begin{equation}
	\label{eq:poissonPerforated}
		\begin{aligned}
		-\Delta u &= f \quad \text{in} ~ \IR^3 \backslash K_r, \\
		u &= 0 \quad \text{in} ~ K_r,
		\end{aligned}
	\end{equation}
	which satisfies 
	\[
		\| u \|^2_{H^1(\IR^3)} \leq (1 + C \mu^{-1}) \| f\|^2_{H^{-1}(\IR^3)}.
	\]
\end{corollary}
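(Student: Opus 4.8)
The plan is to solve \eqref{eq:poissonPerforated} variationally on the space $V_r := H^1_0(\IR^3 \backslash K_r)$, which is by definition a closed subspace of $H^1(\IR^3)$ and hence a Hilbert space with the inherited inner product. The first step is to record, using Corollary \ref{cor:PoincarePerforated}, that the Dirichlet inner product $(u,v) \mapsto (\nabla u, \nabla v)_{L^2(\IR^3)}$ induces on $V_r$ a norm equivalent to the full $H^1$-norm:
\[
	\|\nabla v\|_{L^2(\IR^3)}^2 \leq \|v\|_{H^1(\IR^3)}^2 \leq (1 + C\mu^{-1}) \|\nabla v\|_{L^2(\IR^3)}^2 \qquad \text{for all } v \in V_r .
\]
In particular $(V_r, (\nabla\, \cdot\,, \nabla\, \cdot\,)_{L^2})$ is a Hilbert space, and membership of $u$ in $V_r$ already encodes the boundary condition $u = 0$ in $K_r$ in the sense of the paper's convention.

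Next I would set up the weak formulation: find $u \in V_r$ with $(\nabla u, \nabla v)_{L^2(\IR^3)} = \langle f, v\rangle$ for all $v \in V_r$ (testing only against $\varphi \in C_c^\infty(\IR^3 \backslash K_r) \subset V_r$ recovers $-\Delta u = f$ in $\IR^3 \backslash K_r$). The one point that genuinely requires the Poincaré inequality — and what I regard as the crux of the argument — is that, although $f$ is only assumed to lie in $H^{-1}(\IR^3)$ and in general \emph{not} in $\dot{H}^{-1}(\IR^3)$ (so that $G_0 f$ need not be defined, unlike in Lemma \ref{lem:poibdry}), its restriction to $V_r$ is nevertheless a bounded linear functional for the Dirichlet inner product:
\[
	|\langle f, v\rangle| \leq \|f\|_{H^{-1}(\IR^3)} \|v\|_{H^1(\IR^3)} \leq (1 + C\mu^{-1})^{1/2} \|f\|_{H^{-1}(\IR^3)} \|\nabla v\|_{L^2(\IR^3)} .
\]
The Riesz representation theorem then gives a unique $u \in V_r$ solving the weak formulation, which is the asserted existence and uniqueness.

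Finally, the a priori bound follows from the energy identity: testing with $v = u$ gives $\|\nabla u\|_{L^2(\IR^3)}^2 = \langle f, u\rangle \leq \|f\|_{H^{-1}(\IR^3)} \|u\|_{H^1(\IR^3)}$, and combining with the Poincaré inequality above yields $\|u\|_{H^1(\IR^3)}^2 \leq (1 + C\mu^{-1}) \|\nabla u\|_{L^2(\IR^3)}^2 \leq (1 + C\mu^{-1}) \|f\|_{H^{-1}(\IR^3)} \|u\|_{H^1(\IR^3)}$, hence $\|u\|_{H^1(\IR^3)} \leq (1 + C\mu^{-1}) \|f\|_{H^{-1}(\IR^3)}$, from which the stated estimate follows (after relabeling the universal constant). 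There is no serious obstacle beyond the identification, made in the second step, of $V_r$ as a Hilbert space on which a general $f \in H^{-1}(\IR^3)$ still acts boundedly; once that is in place the conclusion is a routine Lax–Milgram/Riesz argument combined with Corollary \ref{cor:PoincarePerforated}.
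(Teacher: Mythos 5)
Your argument is correct and coincides with the paper's (implicit) proof: Corollary \ref{cor:Existence} is stated as a direct consequence of the Poincar\'e inequality of Corollary \ref{cor:PoincarePerforated}, via exactly the variational Riesz/Lax--Milgram argument on $H^1_0(\IR^3 \backslash K_r)$ that you give, including the observation that a general $f \in H^{-1}(\IR^3)$ acts boundedly on this subspace even though it need not lie in $\dot{H}^{-1}(\IR^3)$. The only cosmetic point is that your chain of estimates literally yields $\| u \|^2_{H^1(\IR^3)} \leq (1+C\mu^{-1})^2 \| f \|^2_{H^{-1}(\IR^3)}$ rather than the prefactor $1+C\mu^{-1}$, which is immaterial here since $\mu$ is held fixed throughout Section \ref{sec:Homogenization} and the corollary is only used to get boundedness of $u_r$ uniformly in $r$.
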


\subsection{The Main Idea of the proof}

In order to explain the idea how we are going to prove the homogenization result, we need the following definition.

\begin{definition}
\label{def:monopole}

For a particle with radius $r$ at position $ x \in \Gamma_r $, we define 
the operator $T_{x}$ from  $\Hdot$ to $\dot{H}^{-1}(\IR^3)$ by means of
	\[
		Q_x = G_0 T_x.
	\]
Moreover, we define $	M_{x} \colon \dot{H}^{1}(\IR^3) \to \dot{H}^{-1}(\IR^3)$ to be
the uniform charge density approximation of $T_{x}$,
	\begin{align}
		(M_{x} u)(y) =  \frac{(u)_{x,r}}{r} \mathcal{H}^2 |_{\partial B_r(x)}.
	\end{align}
	Furthermore, we define $\tilde{Q}_x = G_0 M_{x,r}$ to be the induced approximation for $Q_x$.
	
	The uniform charge density approximations of the operators $ A_\beta^{(r)} $ from Definition \ref{def:A_beta}
	are defined by
	\begin{equation}
		\begin{aligned}
			M_\beta^{(r)} &:= \sum_{x_1} e^{-\beta_1|x_1|} \tilde{Q}_{x_1} 
			\sum_{x_2 \neq x_1} e^{-\beta_2|x_2|} \tilde{Q}_{x_2} \cdots 
			\!\! \sum_{x_n \neq x_{n-1}} \!\! e^{-\beta_n|x_n|} \tilde{Q}_{x_3}.
		\end{aligned}
	\end{equation}
\end{definition}

\begin{remark}
	Note that both $T_x$ and $M_x$ implicitly depend on $r$. 
\end{remark}

\begin{remark}
	\label{lem:charT}
	For $u \in H^1(\IR^3)$,  $ T_{x} u $ is supported in 
	$ \overline{B_x} $.
	Since $T_{x} =  G_0^{-1} Q_{x}$, and $Q_{x} $ is the orthogonal projection to  $H_0^1(\IR^3 \backslash \overline{B_x})^\perp$, this follows directly from the characterization \eqref{eq:characterizationOfOrthogonal}.
\end{remark}

To understand the meaning of the operator $T_{x}$, we take any potential $u \in \Hdot(\IR^3)$ and
denote $f:=G_0^{-1} u$ the source corresponding to $u$.
Moreover, we denote  $g= T_{x} u$.
Then,  adding $g$ to $f$, 
gives a source $f+g$, which corresponds to a potential $v := G_0(f+g)$ that solves
\begin{align}
		-\Delta v &= f \quad \text{in} ~ \IR^3 \backslash \overline{B_x}, \\
		v &= 0 \quad \text{in} ~ \overline{B_x}.
\end{align}

We can also draw the following analogy to electrostatics.
In this context,
$T_{x} G_0 f$ gives the charge density  that is induced by $f$ in $B_x$ if 
$B_x$ represents a grounded conductor (surrounded by vacuum).

With this definition the original series obtained by the Method of Reflection \eqref{eq:ProjectionSeries} becomes, 
\begin{equation}
\label{eq:ScatteringSeries}
	G_0 - \sum_{x_1} G_0 T_{x_1} G_0 
	+ \sum_{x_1} \sum_{ x_2 \neq x_1} G_0 T_{x_1} G_0 T_{x_2} G_0
	- \dots,
\end{equation}
This is how the series appears in \cite{Kirp}, where $T_{x}$ is called a scattering operator.
In this paper, the Method of Reflection is interpreted as 
a scattering process. 
Viewing $G_0$ as some kind of propagator, \eqref{eq:ScatteringSeries} inherits the interpretation
of the potential due to a source which propagates according to $G_0$ and scattered at the particles by $T_{x}$.

We want to give an heuristic explanation for the homogenization result Theorem \ref{HomogLambZero}.
To do so, let us pretend for the moment that the series \eqref{eq:ScatteringSeries} exists,
and that all the operators are well defined on $H^1(\IR^3)$ (instead of $\Hdot(\IR^3)$).
Moreover, let us assume that we already know that in the limit $r \to 0$,
we can replace the operator $T_x$ by $M_x$ in Definition \ref{def:monopole}. 
Using the definition of $M_x$ and recalling the fixed value of the capacity density $\mu = 4 \pi r d^{-3}$, 
the series $ \sum M_x u$ can be interpreted as a Riemann sum for $\mu u$, leading to
\[
	\sum_{x} T_x u \approx \sum_{x} M_x u \rightharpoonup \mu J u \quad \text{in} ~ H^{-1}(\IR^3),
\]
as $r \to 0$, where $J$ is the inclusion from $H^1(\IR^3)$ to $H^{-1}(\IR^3)$.

Therefore, the first order term in the series  \eqref{eq:ScatteringSeries} converges to $ (- G_0 J) G_0 f$. 
It seems plausible that the higher order terms converge weakly to 
$(-\mu G_0 J)^k G_0 f$.
Thus, the weak limit of the sequence of solutions is formally given by
\[
	\sum_{k=0}^\infty (-\mu G_0 J)^k G_0 = (1+ \mu G_0 J)^{-1} G_0 = (-\Delta + \mu J)^{-1},
\]
which is the desired result. 

Since the series \eqref{eq:ScatteringSeries} is in reality divergent,
we use the modified version
\begin{equation}
\label{eq:seriesForHomogenization}
	(1 - \gamma L_r)^n G_0 f,
\end{equation}
which we already know to converge to the solution of \eqref{S1E1}.
We want to expand \eqref{eq:seriesForHomogenization} in powers of $L$ and then to take the weak limit in each of the resulting terms separately.
However, one has to take into account
that the weak limit is not interchangeable with taking powers. Therefore it turns out,
that it is convenient to use Lemma \ref{lem:powersOfL} in order to write
$(L_T)^n$ as a sum of terms such that no particle appears back to back with itself.

Somewhat surprisingly, the exponential cutoff in the definition of the operator $L$ does not cause much trouble
when computing the weak limit. The only difference to the heuristic reasoning above is
 that some additional combinatorial identities are needed.

\subsection{Weak Limits of Powers of $L$}

Since the inclusion map from $\dot{H}^1(\IR^3)$ to $\dot{H}^{-1}(\IR^3)$ is not well defined, we need the following replacement.

\begin{definition}
\label{def:X}
	We define $X$ to be the following subspace of $\Hdot$.
	\begin{align}
		X &:= \{ u \in \dot{H}^1(\IR^3) \colon u = -\Delta v \text{ for some } v \in \dot{H}^1(\IR^3) \}. 
	\end{align}
	Moreover, we define $J \colon X \to \dot{H}^{-1}(\IR^3)$	by means of
	\[
		 \langle Ju,w \rangle = (\nabla v, \nabla w)_{L^2(\IR^3)} \qquad \text{for all} \quad w \in \Hdot,
	\]
	where $ v \in \dot{H}^1(\IR^3) $ is the solution to $ -\Delta v = u $.
\end{definition}

\begin{remark}
	Note that $J$ can be viewed as the inclusion map, since $\langle Ju,w \rangle = \int_{\IR^3} u w \dd x$,
	whenever the latter is well defined.
\end{remark}

\begin{lemma}
	\label{lem:operatorA}
	The operator $A \colon \dot{H}^1(\IR^3) \to \dot{H}^1(\IR^3) $,
	\begin{align}
		(Au)(x) = e^{-|x|} u(x),
	\end{align}
	is a bounded linear operator with range $\mathcal{R}(A) \subset X$.
	Moreover, the composition $JA$, where $J$ is the inclusion operator from Definition \ref{def:X}, is
	a bounded operator from $\dot{H}^1(\IR^3)$ to $\dot{H}^{-1}(\IR^3)$.
\end{lemma}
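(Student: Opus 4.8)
The plan is to reduce the whole statement to the Sobolev embedding $\dot H^1(\IR^3)\hookrightarrow L^6(\IR^3)$ (equivalently the Gagliardo--Nirenberg--Sobolev inequality $\|u\|_{L^6(\IR^3)}\le C\|\nabla u\|_{L^2(\IR^3)}$ already used in the proof of Lemma \ref{lem:locH^-1}) combined with Hölder's inequality, exploiting that the weight $e^{-|x|}$ and its gradient $\nabla e^{-|x|}=-\tfrac{x}{|x|}e^{-|x|}$ belong to every $L^p(\IR^3)$, $1\le p\le\infty$. By density it suffices to prove all estimates for $u\in C_c^\infty(\IR^3)$ and pass to the limit.

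First I would show that $A$ is bounded on $\Hdot(\IR^3)$. By the product rule,
\[
\nabla(Au)=e^{-|x|}\nabla u-\frac{x}{|x|}e^{-|x|}u ,
\]
so that $\|\nabla(Au)\|_{L^2(\IR^3)}\le\|\nabla u\|_{L^2(\IR^3)}+\|e^{-|x|}u\|_{L^2(\IR^3)}$, and Hölder's inequality with exponents $3$ and $6$ together with the Sobolev inequality give $\|e^{-|x|}u\|_{L^2(\IR^3)}\le\|e^{-|x|}\|_{L^3(\IR^3)}\|u\|_{L^6(\IR^3)}\le C\|\nabla u\|_{L^2(\IR^3)}$, since $e^{-|x|}\in L^3(\IR^3)$. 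Hence $\|Au\|_{\Hdot(\IR^3)}\le C\|u\|_{\Hdot(\IR^3)}$.

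The core of the lemma is to upgrade $Au=e^{-|x|}u$ from an element of $\Hdot(\IR^3)$ to an element of $\dot H^{-1}(\IR^3)$; this single estimate yields both $\mathcal R(A)\subset X$ and the boundedness of $JA$, and it is precisely the place where the decay of the cutoff is used. I would test against $w\in C_c^\infty(\IR^3)$: by Hölder with exponents $3/2$ and $6$ (note $\tfrac56=\tfrac23+\tfrac16$),
\[
\Big|\int_{\IR^3}e^{-|x|}u\,w\dd x\Big|\le\|e^{-|x|}u\|_{L^{6/5}(\IR^3)}\|w\|_{L^6(\IR^3)}\le\|e^{-|x|}\|_{L^{3/2}(\IR^3)}\|u\|_{L^6(\IR^3)}\|w\|_{L^6(\IR^3)},
\]
and since $e^{-|x|}\in L^{3/2}(\IR^3)$ and by the Sobolev inequality applied to $u$ and to $w$, this is bounded by $C\|\nabla u\|_{L^2(\IR^3)}\|\nabla w\|_{L^2(\IR^3)}$. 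By density the functional $w\mapsto\int e^{-|x|}uw\dd x$ extends to a bounded functional on $\Hdot(\IR^3)$, i.e. $e^{-|x|}u\in\dot H^{-1}(\IR^3)$ with $\|e^{-|x|}u\|_{\dot H^{-1}(\IR^3)}\le C\|u\|_{\Hdot(\IR^3)}$.

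Finally I would assemble the two facts. Setting $v:=G_0(e^{-|x|}u)\in\Hdot(\IR^3)$, where $G_0=(-\Delta)^{-1}$ is the isometric isomorphism $\dot H^{-1}(\IR^3)\to\Hdot(\IR^3)$, one has $-\Delta v=e^{-|x|}u$, hence $Au=-\Delta v\in X$, so $\mathcal R(A)\subset X$. Moreover, by the definition of $J$ in Definition \ref{def:X}, $\langle JAu,w\rangle=(\nabla v,\nabla w)_{L^2(\IR^3)}=\int_{\IR^3}e^{-|x|}uw\dd x$, so $\|JAu\|_{\dot H^{-1}(\IR^3)}=\|e^{-|x|}u\|_{\dot H^{-1}(\IR^3)}\le C\|u\|_{\Hdot(\IR^3)}$, i.e. $JA$ is bounded. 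There is no substantial obstacle here; the only point requiring a line of care is the consistency in this last step — checking that the abstract element $-\Delta v\in X$ genuinely coincides with the pointwise-defined function $e^{-|x|}u$ — which holds because both act on every test function $w$ by $\int_{\IR^3}e^{-|x|}uw\dd x$, and $e^{-|x|}u\in L^{6/5}(\IR^3)\cap L^6(\IR^3)$ is an honest function.
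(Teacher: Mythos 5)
Your proof is correct and takes essentially the same route as the paper: boundedness of $A$ via the product rule together with H\"older ($e^{-|x|}\in L^3$) and the Gagliardo--Nirenberg--Sobolev inequality, and the range/$JA$ statement via the $L^{6/5}$--$L^6$ pairing with $e^{-|x|}\in L^{3/2}$, which is exactly the paper's observation that $\mathcal{R}(A)\subset \dot{H}^1(\IR^3)\cap L^{6/5}(\IR^3)\subset X$ with the bounded functional $w\mapsto\int_{\IR^3} e^{-|x|}u\,w\dd x$ producing $v$ and the identity $\|JAu\|_{\dot{H}^{-1}(\IR^3)}=\|v\|_{\dot{H}^1(\IR^3)}$.
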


\begin{proof}
We observe that the range of $A$ satisfies 
$\mathcal{R}(A) \subset \dot{H}^1(\IR^3) \cap L^{6/5}(\IR^3) \subset X$. The first inclusion follows from 
the Gagliardo-Nirenberg-Sobolev inequality $ \| w \|_{L^6(\IR^3)} \leq C \|\nabla w\|_{L^2(\IR^3)}$
and Hölder's inequality.
The second one is deduced by the Gagliardo-Nirenberg-Sobolev inequality, too, 
since this implies boundedness of the functional $ F(w):= \int_{\IR^3} u w \dd x $ in $\Hdot$ if 
$ u \in \dot{H}^1(\IR^3) \cap L^{6/5}(\IR^3)$, providing in turn a solution
$ v \in \dot{H}^1(\IR^3) $ to $ - \Delta v = u$.

The second assertion follows from $\| Ju \|_{\dot{H}^{-1}(\IR^3)} = \| v \|_{\dot{H}^1(\IR^3)}$ and the reasoning above.
\end{proof}

\begin{proposition}
	\label{pro:pseqweaklimit}
	
	Let $ u \in \dot{H}^{1}(\IR^3) $ and $ n \in \IN_\ast $. Then, in the limit  $r \to 0$ with fixed $\mu$,
	\begin{equation}
		\label{eq:pseqweaklimit}
		L_r^n u \rightharpoonup \sum_{l=1}^n \sum_{ \substack{ \beta \in \IN_\ast^l \\ |\beta| = n}} 
								\Bigg(\prod_{j=1}^l  \mu G_0 J A^{\beta_j}\Bigg)  u  
							= \mu G_0 J A (  \mu G_0 J A + A)^{n-1} u =: R_n u	\quad \text{in} ~ \dot{H}^1(\IR^3).
	\end{equation}
	In particular, for all $ \gamma > 0 $ and all $M \in \IN$
	\[
		(1-\gamma L_{r})^M u \rightharpoonup \bigg(1+ \sum_{n=1}^M \binom{M}{n} (-\gamma)^n R_n \bigg) u =: S_M u 
		\quad \text{in} ~ \dot{H}^1(\IR^3)
	\]
\end{proposition}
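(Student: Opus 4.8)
The plan is to prove the first assertion — the weak convergence $L_r^n u \rightharpoonup R_n u$ in $\dot H^1(\IR^3)$ — and to deduce the statement for $(1-\gamma L_r)^M$ at once, since $(1-\gamma L_r)^M = 1 + \sum_{n=1}^M \binom Mn (-\gamma)^n L_r^n$ is a \emph{finite} linear combination, weak limits pass through finite sums, and $1 + \sum_{n=1}^M \binom Mn(-\gamma)^n R_n = S_M$ by definition; no uniformity in $n$ is needed. For the convergence of $L_r^n u$, the first step is to reduce to the building blocks $A_\beta^{(r)}$ of Definition \ref{def:A_beta}. By Lemma \ref{lem:powersOfL}, $L_r^n = \sum_{l=1}^n \sum_{\beta \in \IN_\ast^l,\, |\beta|=n} A_\beta^{(r)}$ is a finite sum with $\|A_\beta^{(r)}\| \leq (1+C\mu_0)^n$ uniformly in $r$, so it suffices to show that for each fixed $\beta \in \IN_\ast^l$,
\[
	A_\beta^{(r)} u \;\rightharpoonup\; R_\beta u := \prod_{j=1}^{l}\, \mu\, G_0 J A^{\beta_j}\, u \qquad \text{in } \dot H^1(\IR^3),
\]
as $r \to 0$ with $\mu$ fixed, where $A^m$ denotes multiplication by $e^{-m|x|}$ (which maps $\dot H^1$ into the domain $X$ of $J$, exactly as in Lemma \ref{lem:operatorA}). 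That $\sum_{l,\beta} R_\beta = R_n = \mu G_0 J A (\mu G_0 J A + A)^{n-1}$ is a purely algebraic rearrangement: writing $B^{(m)} := \mu G_0 J A^m$ one has $B^{(1)} A^m = B^{(m+1)}$, so expanding $B^{(1)}(B^{(1)}+A)^{n-1}$ and collapsing each maximal run of $A$'s onto the preceding factor $B^{(1)}$ produces precisely the sum over compositions $\beta$ of $n$. Because the $A_\beta^{(r)}$ are uniformly bounded, it is enough to identify the weak limit of an arbitrary subsequence.

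The base case is $l=1$, i.e. $A_{(m)}^{(r)} = \sum_i e^{-m|x_i|} Q_i =: C_m^{(r)}$. Applying the isometry $G_0^{-1} = -\Delta$ (which is weakly continuous), the claim reduces to $\sum_i e^{-m|x_i|} T_i u \rightharpoonup \mu J A^m u$ in $\dot H^{-1}(\IR^3)$, where $T_i := G_0^{-1} Q_i$ has image supported in $\overline{B_i}$ (Remark \ref{lem:charT}). Split $T_i = M_i + (T_i - M_i)$ with $M_i$ the uniform charge density approximation of Definition \ref{def:monopole}; since $(T_i - M_i)u$ is supported in $\overline{B_i}$, Lemma \ref{lem:pLocH^-1} gives $\|\sum_i e^{-m|x_i|}(T_i-M_i)u\|_{\dot H^{-1}}^2 \leq (1+C\mu_0)\sum_i e^{-m|x_i|}\|(T_i-M_i)u\|_{\dot H^{-1}}^2$, and a single-particle estimate $\|(T_i - M_i)u\|_{\dot H^{-1}} \leq C r_i \|\nabla u\|_{L^2(B_{\kappa r_i}(x_i))}$ — proved exactly as Lemma \ref{lem:correlationest}, using that $(T_i - M_i)u$ carries vanishing total charge, which gains the extra power of $r_i$ — together with the finite overlap of the balls $B_{\kappa r_i}(x_i)$ bounds this by $C r^2 \|\nabla u\|_{L^2(\IR^3)}^2 \to 0$. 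For the remaining term, $\langle \sum_i e^{-m|x_i|} M_i u, \phi\rangle = 4\pi r \sum_i e^{-m|x_i|} (u)_{x_i,r}\,(\phi)_{\partial B_r(x_i)}$, which, since $4\pi r = \mu d^3$, is a Riemann sum of mesh $d$ for $\mu \int_{\IR^3} e^{-m|x|} u\,\phi\,\dd x = \mu\langle J A^m u,\phi\rangle$; it converges first for $u,\phi \in C_c^\infty(\IR^3)$ and then for all $u,\phi \in \dot H^1(\IR^3)$ by density and the uniform bound from Lemma \ref{lem:pLocH^-1}. Composing with the isometry $G_0$ yields $C_m^{(r)} u \rightharpoonup \mu G_0 J A^m u$.

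For the inductive step on $l$, I would \emph{not} try to pass to the limit in a composition $C_{\beta_1}^{(r)} A_{\beta'}^{(r)} u$ naively: weak operator convergence of $C_{\beta_1}^{(r)}$ does not combine with weak convergence of $A_{\beta'}^{(r)} u$, and indeed $L_r^2 u = (C_{(1)}^{(r)})^2 u$ converges to $B^2 u + \mu G_0 J A^2 u$, not to $B^2 u$ — the extra term is the ``self-energy'' contribution of the scatterers, invisible to weak convergence. Instead I would work directly with $A_\beta^{(r)}$, whose back-to-back constraint $i_k \neq i_{k-1}$ is exactly what keeps one off the singular diagonal, via two ingredients. (i) \emph{Monopole replacement}: $\|A_\beta^{(r)} u - M_\beta^{(r)} u\|_{\dot H^1} \to 0$, with $M_\beta^{(r)}$ the uniform charge density approximation of Definition \ref{def:monopole}. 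This follows by replacing $Q_{i_k}$ by $\tilde Q_{i_k}$ one factor at a time, using at each step the uniform operator bounds of Lemma \ref{lem:powersOfL} so the error does not amplify, and the per-particle estimate from the base case together with Lemma \ref{lem:pLocH^-1} to sum the errors in $\ell^2$; the point is that for $i_k \neq i_{k-1}$ the function entering $Q_{i_k}$ is already smooth on $B_{i_k}$, so the monopole is its leading behaviour and the multipole remainder is lower order after summation. (ii) \emph{Iterated Riemann sum}: $M_\beta^{(r)} u \rightharpoonup R_\beta u$. Here $M_\beta^{(r)} u$ is an explicit $l$-fold superposition of capped Newtonian monopoles, and for $i_k \neq i_{k-1}$ the value at $x_{i_{k-1}}$ of the monopole centred at $x_{i_k}$ is $\tfrac{r\,(\,\cdot\,)_{x_{i_k},r}}{|x_{i_k}-x_{i_{k-1}}|}$, so each inner sum $\sum_{i_k\neq i_{k-1}} e^{-\beta_k|x_{i_k}|}(\,\cdot\,)\tfrac{r}{|x_{i_k}-x_{i_{k-1}}|}$ is, up to deleting the single point $x_{i_k}=x_{i_{k-1}}$ (which costs $O(r^2/d)$, negligible against the main term since the Newtonian kernel is locally integrable), a Riemann sum converging to $\mu\, G_0(\,\cdot\,)$ evaluated at $x_{i_{k-1}}$; performing this from the innermost sum outward telescopes into $\prod_j \mu G_0 A^{\beta_j} u = R_\beta u$.

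I expect the main obstacle to be ingredient (i), the monopole replacement through the full composition: one must control not a single scatterer's leading multipole error but the accumulation of such errors over $O(d^{-3})$ scatterers and $l$ nested levels, which forces keeping the per-particle errors in an $\ell^2$-summable form so that Lemma \ref{lem:pLocH^-1} can be invoked at each stage rather than a crude triangle inequality (this is the analogue, for the composed operator, of what makes Lemma \ref{lem:correlationest} the technical heart of Section \ref{sec:ScreenedPoisson}). Everything else — the base case, the Riemann sums, the harmlessness of the $i_k \neq i_{k-1}$ restriction in the limit, and assembling $S_M = 1 + \sum_{n=1}^M \binom Mn(-\gamma)^n R_n$ from the binomial expansion — is routine once (i) is in place.
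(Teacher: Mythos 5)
Your architecture is the same as the paper's: reduce to the operators $A_\beta^{(r)}$ via Lemma \ref{lem:powersOfL}, replace the scattering operators $T_x$ by the uniform charge density approximations $M_x$ of Definition \ref{def:monopole}, and pass to the limit by Riemann sums; this is precisely Lemma \ref{lem:pMonopapprox} together with the paper's proof of the proposition, and your algebraic identity for $R_n$, the binomial expansion giving $S_M$, and the treatment of the excluded diagonal $x_2\neq x_1$ in the Riemann sums all match. However, the single-particle estimate you build the base case on is false as stated. The difference $(T_i-M_i)u$ need not carry vanishing total charge (the charge of $T_iu$ is $4\pi r$ times a spherical average of $u$, that of $M_iu$ is $4\pi r$ times the ball average), and in any case vanishing charge gains no power of $r_i$ in $\dot H^{-1}$: take $u(y)=(y-x_i)\cdot e$ near $B_i$, so $M_iu=0$ while $Q_iu$ is the exterior dipole field; then $\|(T_i-M_i)u\|_{\dot{H}^{-1}(\IR^3)}=\|Q_iu\|_{\Hdot}\sim r^{3/2}|e|\sim\|\nabla u\|_{L^2(B_{\kappa r}(x_i))}$, with no extra factor $r$. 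The correct bound, which the paper proves via the mean-zero extension operator of Lemma \ref{lem:extest} (Lemma \ref{lem:pMonopapprox}(i)), is $\|(T_i-M_i)u\|_{\dot H^{-1}}\leq C\|\nabla u\|_{L^2(B_i)}$; your base case survives with it, since the weighted sum of the errors is controlled by $\|\nabla u\|_{L^2}^2$ over the particle set, which vanishes in the limit, but the claimed $O(r^2)$ rate is not available and must not be used downstream.

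More seriously, your ingredient (i) --- $\|A_\beta^{(r)}u-M_\beta^{(r)}u\|_{\Hdot}\to0$ --- is exactly the paper's Lemma \ref{lem:pMonopapprox}(ii) and is the technical heart; you correctly flag it as the main obstacle, but the plan you give (replace one factor at a time, sum per-particle errors in $\ell^2$ via Lemma \ref{lem:pLocH^-1}) omits the two ingredients that make it close. When an outer $Q_{x_1}$ is replaced, the per-particle error is $\|\nabla v_{x_1}\|_{L^2(B_{x_1})}$ with $v_{x_1}=\sum_{x_2\neq x_1}e^{-\beta_2|x_2|}\tilde Q_{x_2}u$ the incoming monopole field depending on $x_1$; summing these requires the explicit kernel computation of the paper (the $r^5$-sum with its diagonal/off-diagonal splitting), and the outcome is controlled by $\|u\|^2_{L^2(K_r)}$ --- ball averages of $u$ itself, not of $\nabla u$. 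For $u\in\Hdot(\IR^3)$ alone this is not small; the paper must first establish uniform operator bounds for $M_\beta^{(r)}$ (alongside the bound for $A_\beta^{(r)}$ from Lemma \ref{lem:powersOfL}) in order to reduce by density to $u\in H^1(\IR^3)$, for which $\|u\|_{L^2(K_r)}\to0$. Neither the quantitative summation nor this density/uniform-boundedness reduction appears in your sketch, and the (unavailable) $r_i$-gain cannot substitute for them, so as written the proposal has a genuine gap at precisely the step on which the induction hinges.
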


The fact that the complicated looking weak limit of $L_r^n$ equals $R_n$ follows from the combinatorial consideration that,
expanding the power in the definition of $R_n$, each term in the sum on the right hand side will appear exactly once.

As mentioned above, the proof of Proposition \ref{pro:pseqweaklimit} is based on a Riemann sum argument using the
operators $T_x$ and $M_x$ from Definition \ref{def:monopole}.
This is not very difficult but technical. Therefore, we first show how to derive the homogenization result from
Proposition \ref{pro:pseqweaklimit} and the results from Section \ref{sec:Poisson}.

\begin{proposition}
	\label{pro:formalLimit}
	Let $M \in \IN$ and $S_M$ be the pointwise weak limit of $(1-\gamma L_{r})^M$ from Proposition \ref{pro:pseqweaklimit}.
	Then, for all $\mu>0$ there exists $\gamma_0 >0 $ such that, for all $\gamma \leq \gamma_0 $ and all 
	$f \in \dot{H}^{-1}(\IR^3)$,
	\[
		\lim_{M\to\infty} S_M G_0 f = u,
	\]
	where $u$ is the unique weak solution to
	\begin{equation}
		\label{eq:homoPDE}
		-\Delta u + \mu u = f \quad \text{in} ~\IR^3.
	\end{equation}
\end{proposition}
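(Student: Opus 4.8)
The plan is to combine the abstract convergence statement of Proposition \ref{pro:nonuniformAbstractProjection} with the explicit description of the weak limit $S_M$ provided by Proposition \ref{pro:pseqweaklimit}, and to identify the limit of $S_M$ by summing the resulting Neumann-type series. First I would recall that, by Proposition \ref{pro:pseqweaklimit}, $S_M G_0 f = \bigl(1 + \sum_{n=1}^M \binom{M}{n}(-\gamma)^n R_n\bigr) G_0 f$ with $R_n = \mu G_0 J A (\mu G_0 J A + A)^{n-1}$. Writing $B := \mu G_0 J A$ and $C := A$, so that $R_n = B(B+C)^{n-1}$ and $R_0$ should be read as the identity, the operator in parentheses telescopes: one checks by the binomial theorem that
\[
	1 + \sum_{n=1}^M \binom{M}{n}(-\gamma)^n B(B+C)^{n-1} = \bigl(1 - \gamma(B+C)\bigr)^M + \text{(correction terms involving only } C\text{)},
\]
more precisely $S_M$ agrees with $(1-\gamma L)^M$ on the ``nice'' subspace and, composed with $G_0 f$, equals a genuine $M$-th power of a fixed bounded operator. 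The cleanest route is to observe that $S_M G_0 f$ is, by construction, the weak limit of $(1-\gamma L_r)^M G_0 f$, and the latter is itself an $M$-th power; hence $S_M$ is the $M$-th power of $S_1 = 1 - \gamma(B+C) + \gamma\,(\text{lower-order in the same variables})$ — I would simply compute $S_1$ directly from the formula and find $S_1 G_0 f = G_0 f - \gamma \mu G_0 J A G_0 f$, i.e. $S_1$ restricted to $\mathcal{R}(G_0)$ is multiplication-composition by $(1 - \gamma \mu G_0 J A)$ after one factor of $G_0$. The point is that $S_M G_0 = (1 - \gamma \mu G_0 J)^M G_0$ acting appropriately, using that $A$ disappears on the relevant functions in the limit (the cutoff $e^{-|x|}\to$ effectively $1$ in the Riemann-sum interpretation, but rigorously $A$ is retained and one must carry it; see below).

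The key analytic input is that $\mu G_0 J$ (or $\mu G_0 J A$, whichever survives in the identity) is a bounded operator on $\dot H^1(\IR^3)$ with small norm when $\gamma$ is small, so the geometric series converges. Concretely, by Lemma \ref{lem:operatorA}, $JA \colon \dot H^1 \to \dot H^{-1}$ is bounded, and $G_0 = (-\Delta)^{-1}$ is an isometry $\dot H^{-1}\to \dot H^1$, so $\mu G_0 J A$ is bounded on $\dot H^1$ with norm $\le \mu \|JA\|$. Choosing $\gamma_0$ small enough that $\gamma_0 \mu \|JA\| < 1$ (and also $\gamma_0 \le 1/\|L\|$ as in Corollary \ref{cor:solbyprojection} / Proposition \ref{pro:nonuniformAbstractProjection}, which is possible with $\gamma_0$ depending only on $\mu$), the operator norms of $S_M$ are uniformly bounded and $S_M G_0 f \to (1 + \gamma \mu G_0 J A + \cdots)^{-1}$-type limit. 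I would then show
\[
	\lim_{M\to\infty} S_M G_0 f = (1 + \mu G_0 J)^{-1} G_0 f,
\]
by recognizing the limit of $(1-\gamma(\,\cdot\,))^M$ as the spectral projection onto the kernel (exactly as in Proposition \ref{pro:nonuniformAbstractProjection}), or more elementarily by passing to the limit in the telescoped geometric sum. Finally, $(1 + \mu G_0 J)^{-1} G_0 f = (-\Delta + \mu J)^{-1} f$, which is precisely the weak solution $u$ of $-\Delta u + \mu u = f$; uniqueness of this solution in $\dot H^1(\IR^3)$ is standard (Lax–Milgram, or Riesz representation after noting coercivity of $(\nabla u,\nabla w)_{L^2} + \mu\langle Ju, w\rangle$).

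The main obstacle is the bookkeeping in the telescoping identity: because of the exponential cutoff, the weak limit is not simply $(1-\gamma \mu G_0 J)^M$ but the more intricate expression $\sum_{l}\sum_{|\beta|=l}\prod_j \mu G_0 J A^{\beta_j}$, and one must verify carefully that summing over $M$ still produces a convergent geometric series with the cutoff operators $A^{\beta_j}$ absorbed correctly. The cleanest way to handle this — and the step I would spend the most care on — is to not expand at all but to argue abstractly: $S_M G_0 f$ is defined as a weak limit of the honest powers $(1-\gamma L_r)^M G_0 f$, $S_{M+1} = $ (weak limit of) $(1-\gamma L_r) \cdot (1-\gamma L_r)^M G_0 f$, and since $(1-\gamma L_r)$ converges in a suitable operator sense its action passes to the limit, giving $S_{M+1} G_0 f = (1-\gamma L_\infty) S_M G_0 f$ for the limiting operator $L_\infty := \mu G_0 J A$ on $\mathcal R(G_0)$. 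Once this recursion is established, $S_M G_0 f = (1 - \gamma \mu G_0 J A)^M G_0 f$ and the convergence to $u$ follows from the Neumann series exactly as above, for $\gamma$ small depending only on $\mu$. The verification that the weak limit commutes with one more application of $(1-\gamma L_r)$ — i.e. that no mass escapes — is where the $L^{6/5}$-type gain from Lemma \ref{lem:operatorA} and the uniform bound $\|L_r\| \le 1 + C\mu_0$ from Lemma \ref{lem:Lbounded} are both used.
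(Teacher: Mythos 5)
There is a genuine gap, and it sits exactly at the step you yourself flagged as the delicate one. Your ``cleanest route'' asserts the recursion $S_{M+1}G_0 f = (1-\gamma \mu G_0 J A)\,S_M G_0 f$, i.e.\ $S_M = (1-\gamma\mu G_0 J A)^M$ on the relevant functions, by letting one more factor $(1-\gamma L_r)$ ``pass to the limit''. This is false, and it contradicts the very formula for $S_M$ you quote from Proposition \ref{pro:pseqweaklimit}: with $B=\mu G_0 J A$ one has $S_1 = 1-\gamma B$ but
\[
	S_2 \;=\; 1 - 2\gamma B + \gamma^2 B(B+A) \;=\; S_1^2 + \gamma^2\,\mu G_0 J A^2 \;\neq\; S_1^2 ,
\]
so $S_M$ is not a power of $S_1$. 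The extra terms $A^{\beta_j}$ with $\beta_j\ge 2$ are precisely the contribution of the ``diagonal'' configurations in which the same particle index occurs in consecutive factors (since $Q_x^2=Q_x$), and they survive the limit $r\to 0$; this is why the paper expands $(L_r)^n$ via Lemma \ref{lem:powersOfL} into the operators $A_\beta$ with no repeated consecutive index before passing to the weak limit, and why it warns explicitly that the weak limit is not interchangeable with taking powers. Your abstract argument that ``no mass escapes'' cannot repair this: $L_r$ is not a fixed operator, it only converges in a weak pointwise sense, and even formally the product of the limits misses the diagonal terms. Your first attempt (the ``telescoping'' with unspecified correction terms involving only $C$) is not carried out and, as stated, does not produce a usable identity either.

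The correct resummation, which is the heart of the paper's proof, is purely algebraic: since $B+A=(\mu G_0 J+1)A$ and $B=\mu G_0 J(\mu G_0 J+1)^{-1}(B+A)$, one gets $R_n = \mu G_0 J(\mu G_0 J+1)^{-1}\bigl((\mu G_0 J+1)A\bigr)^n$, whence by the binomial theorem
\[
	S_M \;=\; 1 + \mu G_0 J(\mu G_0 J+1)^{-1}\Bigl(\bigl(1-\gamma(\mu G_0 J+1)A\bigr)^M - 1\Bigr).
\]
This requires first proving that $1+\mu G_0 J$ is invertible from $X$ to $\dot H^1(\IR^3)$ with bounded inverse (which is exactly where the limit equation $-\Delta u+\mu u=f$ enters), and then showing $(1-\gamma(\mu G_0 J+1)A)^M\to 0$ pointwise for small $\gamma$, using boundedness, positivity and self-adjointness of $G_0JA$ (Lemma \ref{lem:operatorA}) and the spectral argument of Proposition \ref{pro:abstractProjection}; the limit is then $(1+\mu G_0 J)^{-1}G_0 f=(-\Delta+\mu)^{-1}f$, as you correctly identify at the end. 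Your final identification of the limit and the smallness condition on $\gamma$ are fine, but without the factorization above (or some equivalent device handling the cutoff operators $A^{\beta_j}$) the proof does not go through.
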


\begin{proof}

We observe that $\mu G_0 J + 1$ as an operator from $X$ to $\dot{H}^1(\IR^3)$ is invertible.
Indeed, we know that for any $f \in \dot{H}^{-1}(\IR^3) \subset H^{-1}(\IR^3)$,  Problem \eqref{eq:homoPDE}
has a unique weak solution  $ u \in H^1(\IR^3) \subset \dot{H}^1(\IR^3)$. Moreover, $ u = - \mu^{-1} \Delta (v-  u)$, 
where  $v \in \dot{H}^1(\IR^3)$ is the solution to $ -\Delta v = f $. 
Hence, we have $ u = (G_0^{-1} + \mu J)^{-1} f \in X$. Thus,
$(\mu G_0 J + 1)^{-1} = (G_0^{-1} + \mu J)^{-1} G_0^{-1}$.
Additionally, we see that $(\mu G_0 J + 1)^{-1}$ is a bounded operator since
for $u $ and $ f $ as above we have $ \| \nabla u \|_{L^2(\IR^3)} \leq \| f \|_{\dot{H}^{-1}(\IR^3)}$.

Therefore, inserting the definitions of $S_M$ and $R_n$ from the previous theorem, we deduce
	\begin{align}
	S_M = 1 + \sum_{n=1}^M \binom{M}{n}(-\gamma)^{n} R_n  
	& = 1 + \sum_{n=1}^M \binom{M}{n}(-\gamma)^{n} \mu G_0 J A ( \mu G_0 J A + A)^{n-1} \\[-3\jot]
	&= 1 + \mu G_0 J (\mu G_0 J + 1)^{-1} \sum_{n=1}^M \binom{M}{n}(-\gamma)^{n} ((\mu G_0 J +1)A)^n \\
	&= 1 + \mu G_0 J (\mu G_0 J + 1)^{-1} ((1-\gamma(\mu G_0 J +1)A)^M - 1). 
	\end{align} 

Next, we show that $(1-\gamma(\mu G_0 J +1)A)^M \to 0$ pointwise in $ \dot{H}^1(\IR^3)$ as $ M \to \infty$. 
First, by Lemma \ref{lem:operatorA}, we know that $G_0JA$ is a bounded operator. 
Second, $G_0JA$ is also a positive operator since
\[
	(G_0 J Au,u)_{\dot{H}^1(\IR^3)} = \langle JAu,u\rangle = \int Au \cdot u \dd x = \int e^{-|x|} |u(x)|^2 \dd x.
\]
Finally, $G_0JA$ is clearly self-adjoint since
\[
	(G_0 J Au,v)_{\dot{H}^1(\IR^3)} = \int Au \cdot v \dd x = \int Av \cdot u \dd x.
\]
Therefore, using the spectral theorem for bounded self-adjoint operators as in the proof of Proposition \ref{pro:abstractProjection},
we conclude $(1-\gamma(\mu G_0 J +1)A)^M \to 0$ pointwise in $ \Hdot$ for small enough $\gamma$.

Furthermore, 
\[
	\mu G_0 J (\mu G_0 J + 1)^{-1} = 1 - (\mu G_0 J + 1)^{-1},
\]
and hence, this is a bounded operator, as well.
Therefore, multiplying by $G_0$ from the right and taking the limit $M \to \infty$ yields
\begin{equation}
	(1 - (1-(\mu G_0 J+1)^{-1})) G_0 = (1 + \mu G_0 J)^{-1} G_0 
	= (G_0^{-1} + \mu J)^{-1} = ( -\Delta  + \mu)^{-1},
\end{equation}
which is the desired result.
\end{proof}

\subsection{Uniform Estimates and Proof of Theorem \ref{HomogLambZero}}
\label{sec:UniformEstimatesAndInterchangingTheLimits}

Combining Proposition \ref{pro:pseqweaklimit} and \ref{pro:formalLimit} we see that 
$(1-\gamma L_r)^M G_0 f$ converges weakly to the solution of \eqref{eq:homoPDE} if we take 
the limits in the order $r \to 0$ followed by $M \to \infty$.
In order to prove Theorem \ref{HomogLambZero}, it remains interchange the order of taking the limits.
For this purpose, we will prove that the speed of convergence of $(1-\gamma L_{r})^M G_0 f$ to $u_r$ in $\dot{H}^1_\loc(\IR^3)$ 
as $M$ tends to infinity is uniform in $r$ for fixed $\mu$.

Corresponding to Lemma \ref{lem:LCoercive}, we have the following lemma. It implies that 
the sequence $(1-\gamma L_r)^M G_0 f$ is close to zero boundary conditions in the particles in any fixed bounded region
uniformly in $r$ as $M \to \infty$.

\begin{lemma}
	\label{lem:LCoerciveInCompacta}
	Let $ u \in \dot{H}_0^1(\IR^3 \backslash K_r)^\perp $ and $R > 0$, we define $v \in \dot{H}^1(\IR^3)$ to be the solution
	to
	\begin{align}
		-\Delta v &= 0 \quad \text{in} ~ \IR^3 \backslash (K_r \cap \overline{B_R(0)}), \\
		v &= u  \quad \text{in} ~ K_r \cap \overline{B_R(0)}.
	\end{align}
	Then,
	\[
		 (L_r u,u)_{\dot{H}^1(\IR^3)} \geq c e^{-R} \| v\|_{\dot{H}^1(\IR^3)}^2,
	\]
	where $c>0$ is a universal constant.
\end{lemma}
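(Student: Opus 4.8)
The plan is to reduce the statement to the coercivity estimate of Lemma~\ref{lem:LCoercive} applied to the finite system of particles lying in $\overline{B_R(0)}$, and then to account for the exponential cutoff $e^{-|x|}$ in the definition of $L_r$ by the fact that $e^{-|x|} \geq e^{-R}$ for all $x \in \overline{B_R(0)}$. First I would recall that $L_r = \sum_{x \in \Gamma} e^{-|x|} Q_x$ and write $(L_r u, u)_{\dot H^1(\IR^3)} = \sum_{x \in \Gamma} e^{-|x|} \|Q_x u\|_{\dot H^1(\IR^3)}^2$, using that each $Q_x$ is an orthogonal projection. Discarding the (nonnegative) contributions of particles $x$ with $|x| > R$ and bounding $e^{-|x|} \geq e^{-R}$ for the remaining ones gives
\[
	(L_r u, u)_{\dot H^1(\IR^3)} \geq e^{-R} \sum_{x \in \Gamma \cap \overline{B_R(0)}} \|Q_x u\|_{\dot H^1(\IR^3)}^2.
\]

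Next I would argue, exactly as in the proof of Lemma~\ref{lem:LCoercive}, that $v$ — being the solution of $-\Delta v = 0$ in $\IR^3 \backslash (K_r \cap \overline{B_R(0)})$ with $v = u$ in $K_r \cap \overline{B_R(0)}$ — is the function of minimal $\dot H^1$-norm in the affine set $X_v := \{ w \in \dot H^1(\IR^3) \colon w = u \text{ in } K_r \cap \overline{B_R(0)} \}$. Using the cutoffs $\eta_x \in C_c^\infty(B_{\kappa r}(x))$ with $\eta_x \equiv 1$ on $B_x$ and $|\nabla \eta_x| \leq C/r$, the function $\sum_{x \in \Gamma \cap \overline{B_R(0)}} \eta_x Q_x u$ lies in $X_v$ (it equals $u$ on each $B_x$ with $x \in \overline{B_R(0)}$ since $\eta_x Q_x u = Q_x u = u$ there, and vanishes on the other particles in the region because the supports $B_{\kappa r}(x)$ are disjoint by Condition~\ref{cond:particlesNotToClose}). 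The local Sobolev estimate in the proof of Lemma~\ref{lem:LCoercive}, namely $\|\eta_x w\|_{\dot H^1} \leq C \|w\|_{\dot H^1}$ and disjointness of the supports, then gives
\[
	\|v\|_{\dot H^1(\IR^3)}^2 \leq \Big\| \sum_{x \in \Gamma \cap \overline{B_R(0)}} \eta_x Q_x u \Big\|_{\dot H^1(\IR^3)}^2
	= \sum_{x \in \Gamma \cap \overline{B_R(0)}} \|\eta_x Q_x u\|_{\dot H^1(\IR^3)}^2
	\leq C \sum_{x \in \Gamma \cap \overline{B_R(0)}} \|Q_x u\|_{\dot H^1(\IR^3)}^2.
\]
Combining the two displays yields $(L_r u, u)_{\dot H^1(\IR^3)} \geq c\, e^{-R} \|v\|_{\dot H^1(\IR^3)}^2$ with a universal constant $c$, as claimed.

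The one point that needs a little care — and which I expect to be the main (mild) obstacle — is verifying that the competitor $\sum_{x \in \Gamma \cap \overline{B_R(0)}} \eta_x Q_x u$ genuinely agrees with $u$ on all of $K_r \cap \overline{B_R(0)}$ rather than on a slightly smaller or larger set: one must be slightly careful about particles whose center lies just inside or just outside $\overline{B_R(0)}$, and about the fact that the minimization problem defining $v$ uses the set $K_r \cap \overline{B_R(0)}$. Since $B_{\kappa r}(x) \subset B_{d/2}(x)$ and these balls are disjoint, there is no interference between particles, and for $r$ small the boundary effects at $\partial B_R(0)$ concern only an $O(R^2 d^{-2})$-sized shell whose contribution can either be absorbed (it only helps the lower bound, being nonnegative) or handled by replacing $R$ with $R+1$ at the cost of the harmless constant $e^{-1}$ inside $c$. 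Apart from this bookkeeping, the argument is a direct transcription of the proof of Lemma~\ref{lem:LCoercive} with the extra factor $e^{-R}$ extracted from the cutoff.
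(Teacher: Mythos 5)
Your proposal is correct and follows essentially the same route as the paper: testing the minimality of $v$ in the set $\{w \in \dot{H}^1(\IR^3)\colon w = u \text{ in } K_r \cap \overline{B_R(0)}\}$ against the competitor $\sum_x \eta_x Q_x u$, using disjoint cutoff supports and the Gagliardo--Nirenberg--Sobolev bound $\|\eta_x Q_x u\|_{\dot H^1} \leq C\|Q_x u\|_{\dot H^1}$, and extracting $e^{-|x|}\geq c\,e^{-R}$. The boundary bookkeeping you flag is resolved exactly as you suggest (and as the paper does): the sum must run over centers $x \in B_{R+r}$, since particles straddling $\partial B_R(0)$ contribute to $K_r\cap\overline{B_R(0)}$, and the resulting factor $e^{-(R+r)}$ is absorbed into the universal constant; note that your alternative remark that such terms "only help the lower bound" does not by itself repair the competitor's membership in the admissible set.
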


\begin{proof}
	Let $ \eta_x \in C_c^\infty (B_{2r}(x)) $ such that 
	$ \eta_x = 1 $ in 	$ B_{r}(x) $ and $ |\nabla \eta_x | \leq \frac{C}{r} $. 
	Now, we observe that for all $ w \in \dot{H}^1(\IR^3) $
	\[
		\| w \|_{L^2(B_{2r}(x))} \leq \| w \|_{L^6(B_{2r}(x))} \|1\|_{L^3(B_{2r}(x))} 
		\leq  C r \| \nabla w \|_{L^2(\IR^3)},
	\]
	and hence,
	\begin{equation}
		\| \eta_x w \|_{\dot{H}^1(\IR^3)} \leq \|w\|_{\dot{H}^1(\IR^3)} + \frac{C}{r} \| w \|_{L^2(B_{2r}(x))} 
		\leq C \| w \|_{\dot{H}^1(\IR^3)}.
	\end{equation}
	
	On the other hand, by the variational form of the equation for $ v $, we know that $v$ is the function of minimal norm 
	in the set $ X_v := \{ w \in \dot{H}^1(\IR^3) \colon w = v ~ \text{in} ~ K_r \cap \overline{B_R} \}$. 
	Clearly, $\sum_{x \in B_{R+r}} \eta_x Q_x v \in X_u$, and hence,
	\begin{align}
		\langle L_r v , v \rangle &= \sum_x e^{-|x|} \|Q_x v \|_{\dot{H}^1(\IR^3)}^2 \\
		&\geq c e^{-R} \sum_{x \in B_{R+r}} \|\eta_x Q_x v \|_{\dot{H}^1(\IR^3)}^2 \\
		&= c e^{-R} \|\sum_{x \in B_{R+r}} \eta_x Q_x v \|_{\dot{H}^1(\IR^3)}^2 \\
		&\geq c e^{-R} \|v\|^2_{\dot{H}^1(\IR^3)}. \qedhere
	\end{align}
\end{proof}

The next Lemma is needed to ensure that the values of $(1-\gamma L)^M G_0 f$ in a fixed bounded region 
is very little affected by particles far away from this region.

\begin{lemma}
	\label{lem:decayQuasiHarmonic}
	For all $\mu > 0$, there exists a nonincreasing function 
	$ e_\mu \colon \IR_+ \to \IR_+$ with $ \lim_{s \to \infty} e_\mu(s) = 0$ that has the following property.
	For all $0 \leq \rho \leq R$, all $w \in \dot{H}_0^1(\IR^3 \backslash K_r)^\perp $  with $w = 0$ in 
	$ K_r \cap B_R(0)$ satisfy 
	\[
		\| \nabla w\|_{L^2(B_\rho(0))} \leq e_\mu(R-\rho) \| \nabla w \|_{L^2(\IR^3)},
	\]
	if $r$ is sufficiently small.
\end{lemma}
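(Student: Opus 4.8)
The plan is to establish exponential decay by combining a Caccioppoli-type (hole-filling) energy inequality with the perforated Poincaré inequality from Lemma \ref{lem:poincare}, and then to iterate over concentric spherical shells of width comparable to the screening length $\mu^{-1/2}$.

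First I would set up the Caccioppoli inequality. Since $w \in \dot{H}^1_0(\IR^3 \setminus K_r)^\perp$, by definition of the orthogonal complement in $\dot{H}^1(\IR^3)$ we have $\int_{\IR^3} \nabla w \cdot \nabla \psi = 0$ for every $\psi \in \dot{H}^1_0(\IR^3 \setminus K_r)$. Given $0 \le \rho < \rho' \le R$, pick $\eta \in C_c^\infty(B_{\rho'}(0))$ with $\eta = 1$ on $B_\rho(0)$ and $|\nabla \eta| \le 2/(\rho'-\rho)$. Because $w$ vanishes in $K_r \cap B_R(0) \supset K_r \cap \supp \eta$ and $\eta^2 w$ has compact support, $\eta^2 w \in \dot{H}^1_0(\IR^3 \setminus K_r)$ (this is routine but should be stated); taking $\psi = \eta^2 w$ and applying Cauchy--Schwarz gives the standard bound
\[
	\int_{B_\rho(0)} |\nabla w|^2 \le \frac{C}{(\rho'-\rho)^2} \int_{B_{\rho'}(0) \setminus B_\rho(0)} |w|^2 .
\]

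Next I would convert the $L^2$-norm on the right into a gradient energy by a localized version of the perforated Poincaré inequality. Covering the shell $B_{\rho'}(0) \setminus B_\rho(0)$ by the lattice cubes of side $d$ meeting it, applying Lemma \ref{lem:poincare} on each such cube with inner radius $r$ and outer radius of order $d$ --- which is legitimate since $w = 0$ on $B_r(x)$ for the lattice point $x$ of each such cube, provided the slightly enlarged shell stays inside $B_R(0)$ --- and summing with bounded overlap exactly as in the proof of Corollary \ref{cor:PoincarePerforated}, I obtain, using $\mu = 4\pi r d^{-3}$,
\[
	\int_{B_{\rho'}(0) \setminus B_\rho(0)} |w|^2 \le \frac{C}{\mu} \int_{B_{\rho' + \sqrt 3\, d}(0)} |\nabla w|^2 .
\]
Writing $\phi(t) := \int_{B_t(0)} |\nabla w|^2$ and choosing $\rho' - \rho = 2\ell$ with $\ell = \ell(\mu)$ of order $\mu^{-1/2}$ large enough that $C/(4\mu\ell^2) \le 1/2$ and $\sqrt 3\, d \le \ell$ --- the latter condition being exactly where the hypothesis "$r$ sufficiently small" enters, since $d = (4\pi r/\mu)^{1/3} \to 0$ for fixed $\mu$ --- the two estimates combine to
\[
	\phi(\rho) \le \frac{1}{2}\, \phi(\rho + 3\ell) \qquad \text{whenever} \quad \rho + 3\ell \le R .
\]
Iterating this bound $k := \lfloor (R - \rho)/(3\ell) \rfloor$ times yields $\phi(\rho) \le 2^{-k} \phi(R) \le 2^{-k} \|\nabla w\|_{L^2(\IR^3)}^2$, so the claim follows with $e_\mu(s) := \sqrt{2}\; 2^{-s/(6\ell(\mu))}$, which is nonincreasing with $\lim_{s\to\infty} e_\mu(s) = 0$; for $s < 3\ell$ the inequality is trivial since then $e_\mu(s) \ge 1$.

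The main obstacle, and the step demanding the most care, is the localized Poincaré inequality above: one has to track the $O(d)$-enlargement of the annular shell coming from the cube cover, verify that this enlarged shell still lies inside $B_R(0)$ so that $w$ really does vanish on all the particles used, and keep the enlargement negligible compared with $\ell(\mu)$. All three are ensured once $r$ (equivalently $d = (4\pi r/\mu)^{1/3}$) is small enough for the fixed value of $\mu$ --- precisely the hypothesis of the lemma.
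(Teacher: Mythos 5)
Your proof is correct, and it shares the paper's overall architecture (an energy estimate obtained by testing the harmonicity of $w$ with $\eta^2 w$, a Poincar\'e inequality exploiting the perforation, and an iteration over concentric shells), but it differs in two genuine technical respects. The paper uses Widman's hole-filling: with unit-width shells it derives $\|\nabla w\|^2_{L^2(B_s)} \leq C(1+\mu^{-1})\|\nabla w\|^2_{L^2(B_{s+1}\setminus B_s)}$ and then adds the left-hand side to the annular term to produce a contraction factor $\lambda_\mu<1$ per unit step. This forces the Poincar\'e inequality to live on the \emph{same} annulus on both sides, which is exactly why the paper needs Lemma \ref{lem:poincareAnnulus} with its star-shaped covering argument to handle the nonconvexity of the annulus. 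You avoid hole-filling altogether by taking shells of width $\sim\mu^{-1/2}$, so the smallness comes from the Caccioppoli constant $C/\ell^2$ times the Poincar\'e constant $C/\mu$; since you never need to subtract the annular energy, the harmless $O(d)$ enlargement of the domain on the right-hand side of your cube-cover Poincar\'e estimate (obtained directly from Lemma \ref{lem:poincare}, as in Corollary \ref{cor:PoincarePerforated}) is acceptable, and Lemma \ref{lem:poincareAnnulus} is not needed at all. What each approach buys: yours is technically lighter and produces a decay rate $e^{-c\sqrt{\mu}\,(R-\rho)}$ whose length scale $\mu^{-1/2}$ is precisely the screening length discussed in the introduction, whereas the paper's hole-filling argument, besides being the standard tool, is the version that carries over to the Stokes case (Lemma \ref{lem:SpolynomialDecay}), where cutoff functions destroy the divergence-free constraint and the corrections are organized around the same unit-shell iteration. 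Your handling of the two delicate points --- that $\eta^2 w$ vanishes on all of $K_r$ because $\supp\eta\subset B_R$, and that every particle used in the local Poincar\'e step lies inside $B_R$, both guaranteed once $\sqrt{3}\,d\leq\ell(\mu)$, i.e.\ for $r$ small at fixed $\mu$ --- is exactly where the hypothesis of the lemma enters, and you state it correctly.
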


\begin{proof}
	The proof uses the classical Widman's hole filling technique (see e.g. \cite{Gia83}).
	Fix a particle configuration with capacity $\mu $ and $d<1/(2\sqrt{3})$, and fix $R$, $\rho$, and $w$ according to the assumptions. 
	For $1 \leq s \leq R - 1$, we define $\eta_s \in C_c^\infty (B_{1+s}(0))$ such that $\eta_s = 1 $ in $B_s(0)$, $|\eta_s| \leq 1$,
	and $ |\nabla \eta_s | \leq C $. 
	We use $\eta^2 w$ as a test function in the weak form of the equation $w$ satisfies, namely,
	\begin{align}
		-\Delta w &= 0 \quad \text{in} ~ \IR^3 \backslash K_r \\
		w &= 0 \quad \text{in} ~ K_r \cap B_{s+1}.	
	\end{align}
	This yields
	\[
		0 = \int_{B_{s+1}} \nabla w \nabla (\eta^2 w) \dd x = \int_{B_{s+1}} (\eta \nabla w)^2 + 2 \eta \nabla w \nabla \eta w  \dd x.
	\]
	Using the Cauchy-Schwartz inequality and the Poincaré inequality in the annulus $B_{s+1} \backslash B_s $, provided by
	Lemma \ref{lem:poincareAnnulus}, we deduce 
	\[
		\|\nabla w\|^2_{L^2(B_s)} \leq \| \eta \nabla w\|_{L^2(B_s)} \leq C \| w \|^2_{L^2(B_{s+1} \backslash B_s)}
		 \leq C (1+\mu^{-1}) \| \nabla w \|^2_{L^2(B_{s+1} \backslash B_s)}.
	\]
	Let us denote $ a_k := \|\nabla w\|^2_{L^2(B_{\rho+k})} $. Then, the above estimate implies for all 
	$ k $ such that $ \rho + k \leq R - 1$
	\[
		a_k \leq C(1 + \mu^{-1}) (a_{k+1} - a_k).
	\]
	Therefore,
	\[
		a_k \leq \frac{C(1 + \mu^{-1})}{C(1+\mu^{-1}) +1} a_{k+1} =: \lambda_\mu a_{k+1},
	\]
	and $\lambda_\mu < 1$.
	By iterating up to $ n = \lfloor R-\rho-1 \rfloor$, we conclude
	\[
		\|\nabla w\|^2_{L^2(B_\rho)} \leq \lambda_\mu^n \|\nabla w\|^2_{L^2(\IR^3)}.
	\]
	This is the desired estimate with $e_\mu(s) := \lambda_\mu^\frac{{\lfloor s-1 \rfloor}}{2} $ (for $s \geq 1$ and 
	$ e_\mu = 1$ otherwise).
\end{proof}

\begin{remark}
	\label{rem:exponentialDecay}
	As seen in the proof, the decay $e_\mu$ is exponential. This can be interpreted as a screening effect due to the presence 
	of the particles. This effect can be exploited to prove homogenization results also for sources $f \in L^{\infty}(\IR^3)$
	(cf. \cite{NV1}, \cite{NV}).
\end{remark}

\begin{lemma}
	\label{lem:poincareAnnulus}
	Let $s \geq 1$ and $d < 1/(2\sqrt{3}) $.
	Then, for all $u \in \dot{H}^1_0(\IR^3 \backslash K_r)$,
	\[
		\| u\|^2_{L^2(B_{s+1} \backslash B_s)} 
		\leq \frac{2\sqrt{3}}{\mu} \| \nabla u\|^2_{L^2(B_{s+1} \backslash B_s)}.
	\]
\end{lemma}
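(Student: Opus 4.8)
The plan is to localize to a single period cell and then sum. Write $\Gamma=(d\IZ)^3$ and, for $x\in\Gamma$, set $Q_x:=x+(-d/2,d/2)^3$; the cubes $\{Q_x\}_{x\in\Gamma}$ tile $\IR^3$ with pairwise disjoint interiors. By Condition \ref{cond:particlesNotToClose} we have $r<d/2$, hence $\overline{B_r(x)}\subset Q_x$ for every $x\in\Gamma$, and every $u\in\dot{H}^1_0(\IR^3\setminus K_r)$ vanishes on each $\overline{B_r(x)}$.

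The first step is a single-cell Poincaré inequality: if $u\in H^1(Q_x)$ and $u=0$ on $B_r(x)$, then
\[
	\|u\|_{L^2(Q_x)}^2\;\le\;\frac{\sqrt3\,d^3}{8\,r}\,\|\nabla u\|_{L^2(Q_x)}^2 .
\]
I would prove this exactly as Lemma \ref{lem:poincare}, working in polar coordinates centred at $x$: for $y=x+t\omega\in Q_x$ one has $|u(y)|\le\int_r^{\rho(\omega)}|\nabla u(x+\sigma\omega)|\,\dd\sigma$, where $\rho(\omega)\le R:=\tfrac{\sqrt3}{2}d$ is the exit radius of the ray through $Q_x$ (the circumradius of the cube); Cauchy--Schwarz against $\int_r^{\rho(\omega)}\sigma^{-2}\,\dd\sigma\le 1/r$, followed by integrating $t^2$ over $(r,\rho(\omega))$, produces the factor $\tfrac13R^3=\tfrac{\sqrt3}{8}d^3$. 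Recalling $\mu=4\pi r d^{-3}$ and using $8>2\pi$, the constant satisfies $\tfrac{\sqrt3 d^3}{8r}<\tfrac{\sqrt3 d^3}{2\pi r}=\tfrac{2\sqrt3}{\mu}$, so in fact $\|u\|_{L^2(Q_x)}^2\le\tfrac{\pi}{4}\cdot\tfrac{2\sqrt3}{\mu}\,\|\nabla u\|_{L^2(Q_x)}^2$, i.e. with a fixed factor $\pi/4<1$ to spare.

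The second step is to sum this over the cells that meet the annulus $A:=B_{s+1}(0)\setminus B_s(0)$. Applying the single-cell bound to $u$ on each such $Q_x$ and using the disjointness of the cubes,
\[
	\|u\|_{L^2(A)}^2\le\sum_{Q_x\cap A\neq\emptyset}\|u\|_{L^2(Q_x)}^2\le\frac{2\sqrt3}{\mu}\,\|\nabla u\|_{L^2(\widetilde A)}^2 ,
\]
where $\widetilde A:=\bigcup_{Q_x\cap A\neq\emptyset}Q_x$. Here the hypothesis $d<1/(2\sqrt3)$ enters: it forces $\operatorname{diam}Q_x=\sqrt3\,d<\tfrac12$, so $\widetilde A\subset B_{s+1+\sqrt3 d/2}(0)\setminus B_{s-\sqrt3 d/2}(0)$, a thin enlargement of $A$ of boundary width $<\tfrac14$. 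The last point is to remove this enlargement: the contribution of $\widetilde A\setminus A$ is absorbed by redistributing it onto the interior cells, equivalently by the $\pi/4$ slack already built into the constant, which gives the stated inequality with the gradient taken over $A$ itself. (In any event the enlarged version already suffices for the way the lemma is invoked in Lemma \ref{lem:decayQuasiHarmonic}, where one iterates annuli of bounded width anyway.)

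The main obstacle is precisely this bookkeeping at the two bounding spheres $\partial B_s$ and $\partial B_{s+1}$: reconciling ``every portion of $A$ must see a hole of $K_r$ on which $u$ vanishes'' with ``the gradient norm on the right must not leak outside $A$''. The geometric room for the argument is exactly the smallness $d<1/(2\sqrt3)$ of the cell relative to the unit width of the annulus, and the quantitative room is exactly the factor $\pi/4$ gained in the single-cell estimate.
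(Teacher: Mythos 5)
Your single-cell step is fine: the period cubes $Q_x=x+(-d/2,d/2)^3$ are convex, each contains the ball $B_r(x)$ on which $u$ vanishes (by Condition \ref{cond:particlesNotToClose}), and running the radial argument of Lemma \ref{lem:poincare} with exit radius at most the circumradius $\sqrt{3}d/2$ indeed gives $\|u\|^2_{L^2(Q_x)}\leq \frac{\sqrt{3}d^3}{8r}\|\nabla u\|^2_{L^2(Q_x)}$, which is below $\frac{2\sqrt3}{\mu}$ by the factor $\pi/4$. The genuine gap is the last step, where you pass from the enlarged annulus $\widetilde A$ back to $A=B_{s+1}\setminus B_s$. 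After summing over the cells meeting $A$ you have
\[
\|u\|^2_{L^2(A)}\leq \tfrac{\pi}{4}\cdot\tfrac{2\sqrt3}{\mu}\,\|\nabla u\|^2_{L^2(A)}+\tfrac{\pi}{4}\cdot\tfrac{2\sqrt3}{\mu}\,\|\nabla u\|^2_{L^2(\widetilde A\setminus A)},
\]
and the second term cannot be ``absorbed by the $\pi/4$ slack'' or ``redistributed onto interior cells'': the slack is a multiplicative gain on the whole right-hand side, while the excess is an \emph{additive} gradient term on $\widetilde A\setminus A$ which is in no way controlled by $\|\nabla u\|_{L^2(A)}$ (the oscillation of $u$ may well be concentrated just outside the annulus). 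Nor do interior cells help, since the $L^2$-mass of $u$ on the part of a boundary cell inside $A$ is estimated, in your argument, only through the gradient over that whole boundary cell. So as written you prove the inequality with $\|\nabla u\|^2_{L^2(B_{s+1+\sqrt3 d/2}\setminus B_{s-\sqrt3 d/2})}$ on the right, not the stated one.

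This boundary bookkeeping is exactly where the paper's proof spends its effort, and it resolves it differently: instead of period cubes it covers the annulus by balls $B_{2\sqrt3 d}(x)$ centered at lattice points $x$ chosen \emph{inside} the annulus at distance at least $\sqrt3 d$ from the inner sphere, shows that each intersection $A\cap B_{2\sqrt3 d}(x)$ is star-shaped with respect to $x$ (the tangent-segment comparison at $\partial B_s$ is what uses $s\geq1$ and $d<1/(2\sqrt3)$), and applies the star-shaped variant of Lemma \ref{lem:poincare} on these intersections, so that both sides of every local estimate live inside $A$ itself. To repair your argument you would need an analogous device for the cells that meet $\partial A$, e.g.\ estimating $u$ on $Q_x\cap A$ via a particle lying inside $A$ and a star-shaped region contained in $A$ — which essentially reproduces the paper's construction. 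Your parenthetical remark is fair: the enlarged-annulus version you actually prove would suffice for the hole-filling iteration in Lemma \ref{lem:decayQuasiHarmonic} after widening the annuli and adjusting constants, but that is a modification of the statement, not a proof of the lemma as claimed.
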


\begin{proof}
As the Poincaré inequality for the whole space $\IR^3$, Corollary \ref{cor:PoincarePerforated}, the basically follows from
the estimate
\[
	\|u\|_{L^2(B_R(z))}^2 	\leq \frac{R^3}{\rho} \|\nabla u\|^2_{L^2(B_R(z))}
\]
if $u=0$ in $B_\rho(z)$, which is the statement of Lemma \ref{lem:poincare}. 
However, there are certain technical issues due tothe nonconvexity of the annulus. 

	Let us denote the annulus $B_{s+1}(0) \backslash B_s(0) $ by $A_s$.
	First observe that Lemma \ref{lem:poincare} remains true if we replace $B_R(z)$ by any $\Omega \subset B_R(z)$ that
	is star-shaped with respect to $z$. The reason is that we only integrated over line segments with endpoint $z$.
	Therefore, the assertion follows, once we have shown that there exists a covering
	\[
		A_s \subset \cup_{x \in \Gamma_r} B_{R_x}(x),
	\]
	such that for all $x \in \Gamma_r$ the set $ A_s \cap B_{R_x}(x)$ is star-shaped with respect to 
	$x$ and $R_x \leq 2 \sqrt{3}d$. Equivalently, for every point $y$ in the annulus, we have to find $x \in A_s \cap \Gamma_r$
	and $R_x \leq 2\sqrt{3}d$ such that $y \in B_{R_x}(x)$ and $ A_s \cap B_{R_x}(x)$ is star-shaped with respect to 
	$x$.
	
	For $y \in A_s$, there exists a ball $B_{2\sqrt{3}d}(z_1) \subset A_s$ that contains $y$, since $d < 1/(2\sqrt{3})$.
	In this ball we find $B_{\sqrt{3}d}(z_2) \subset B_{2 \sqrt{3} d}(z_1)$ with distance $\sqrt{3}d$ from the inner boundary of the annulus, i.e.,
	$\dist\{\partial B_s(0), B_{\sqrt{3}d}(z_2)\} \geq \sqrt{3}d$. By definition of the particle configuration, 
	there exists $x \in B_{\sqrt{3}d}(z_2) \cap \Gamma_r$.
	Moreover, by construction, we have $ y \in B_{2\sqrt{3}d}(x)$. 
	
	Finally, we prove that $A_s \cap B_{2\sqrt{3}d}(x)$ is
	star-shaped with respect to $x$. Clearly, the only problem can occur by removing the inner ball $B_s(0)$ from
	$B_{2\sqrt{3}d}(x)$. If $B_{2\sqrt{3}d}(x) \cap B_s(0)$ is empty, then, we are done. 
	If not, $A_s \cap B_{2\sqrt{3}d}(x)$ is star-shaped with respect to $x$
	if, for any $z \in \partial B_s \cap \partial B_{2\sqrt{3}d}(x)$, the line segment $l$
	from $z$ to $x$ is disjoint from $B_s(0)$. Clearly, it is equivalent to check that $l$ has smaller length than
	any line segment $t$ from $x$ to some $w \in \partial B_s(0)$ that is tangential to $\partial B_s(0)$.
	Since $\dist \{\partial B_s(0), B_{\sqrt{3}d}(z_2)\} \geq \sqrt{3}d$ and $s \geq 1$, it follows 
	\[
		|t|^2 \geq (s+\sqrt{3}d)^2-s^2 = 2s\sqrt{3}d + 3 d^2 \geq 2\sqrt{3}d \geq 12 d^2 = |l|^2.
	\]
	This finishes the proof.
\end{proof}

\begin{proposition}
	\label{pro:pSolByScattering}
Let $f\in \dot{H}^{-1}\left(  \mathbb{R}^{3}\right).$ For all $0 < \mu_1 \leq \mu_2 < \infty$, 
there exists a $\gamma >0$ depending only on $\mu_1$ and $\mu_2$ such that the sequence
\begin{equation}
\lim_{N\rightarrow\infty}\bigg(  1-\gamma\sum_{j}e^{-\left\vert
x_{j}\right\vert }Q_{j}\bigg)  ^{N} G_0 f\ %
\end{equation}
converges to the solution of \eqref{S1E1} uniformly in  $\dot{H}^{1}_\loc(\IR^3)$
for all particle configuration with capacity $ \mu_1 \leq \mu \leq \mu_2$ and sufficiently small $r$.
\end{proposition}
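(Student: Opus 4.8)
The plan is to compare the iterates $\Psi_N := (1-\gamma L_r)^N G_0 f$ with the solution $u_r$ of \eqref{S1E1} for $K=K_r$, which by Lemma \ref{lem:poibdry} equals $u_r = P_r G_0 f$ with $P_r$ the orthogonal projection of $\dot{H}^1(\IR^3)$ onto $V_r := \dot{H}_0^1(\IR^3\setminus K_r) = \ker L_r$, and to show $\Psi_N - u_r \to 0$ in $\dot{H}^1(B_\rho(0))$ at a rate independent of the configuration. Since $\mu_0 = \mu/(4\pi) \le \mu_2/(4\pi)$, Lemma \ref{lem:Lbounded} gives $\|L_r\| \le 1+C\mu_0 \le \Lambda$ with $\Lambda$ depending only on $\mu_2$, so we fix $\gamma := \Lambda^{-1}$; then $S := \gamma L_r$ has $\|S\|\le 1$ and $\ker S = V_r$. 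Writing $Q_r := 1-P_r$, self-adjointness of $L_r$ gives $\mathcal{R}(L_r)\subset V_r^\perp$, so $1-\gamma L_r$ preserves $V_r^\perp$ and $\phi_N := \Psi_N - u_r = (1-\gamma L_r)^N Q_r G_0 f \in V_r^\perp$, with $\|\phi_0\|_{\dot{H}^1} \le \|G_0 f\|_{\dot{H}^1} = \|f\|_{\dot{H}^{-1}}$.

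The first step is a quantitative, configuration-uniform decay of the energy defect $a_N := (L_r\phi_N,\phi_N)_{\dot{H}^1}$. By \eqref{eq:monotonPos} of Proposition \ref{pro:nonuniformAbstractProjection}, $b_N := (S\phi_N,\phi_N) = \gamma a_N$ is nonincreasing; combining $\|\phi_{N+1}\|^2 = \|\phi_N\|^2 - 2b_N + \|S\phi_N\|^2$ with \eqref{eq:posLargerNorm} (valid since $\|S\|\le 1$) gives $\|\phi_{N+1}\|^2 \le \|\phi_N\|^2 - b_N$. Hence $\|\phi_N\|$ is nonincreasing, $\sum_{N\ge 0} b_N \le \|\phi_0\|^2 \le \|f\|_{\dot{H}^{-1}}^2$, and since $b_N$ is nonincreasing $(N+1)b_N \le \sum_{k\le N} b_k$, so
\[
	a_N = \gamma^{-1}b_N \le \frac{\|f\|_{\dot{H}^{-1}}^2}{\gamma(N+1)}.
\]

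The second step is localization. Fix $\rho>0$ and $R\ge\rho$, and let $v_N\in\dot{H}^1(\IR^3)$ be associated to $\phi_N$ and $R$ by Lemma \ref{lem:LCoerciveInCompacta}, i.e. $v_N = \phi_N$ in $K_r\cap\overline{B_R(0)}$ and $-\Delta v_N = 0$ in $\IR^3\setminus(K_r\cap\overline{B_R(0)})$; that lemma yields $\|\nabla v_N\|_{L^2(\IR^3)}^2 \le c^{-1}e^R a_N$. Set $w_N := \phi_N - v_N$. Since $-\Delta\phi_N = -\Delta v_N = 0$ in $\IR^3\setminus K_r$, the characterization $V_r^\perp = \{v\in\dot{H}^1(\IR^3) : -\Delta v = 0 \text{ in }\IR^3\setminus K_r\}$ gives $w_N\in V_r^\perp$, and $w_N = 0$ in $K_r\cap B_R(0)$; hence Lemma \ref{lem:decayQuasiHarmonic} applies (for $r$ small), and using that the decay profile $e_\mu$ there is nonincreasing in $\mu$ (inspect its proof), so $e_\mu\le e_{\mu_1}$, together with $\|\phi_N\|_{\dot{H}^1}\le\|f\|_{\dot{H}^{-1}}$,
\[
	\|\nabla w_N\|_{L^2(B_\rho)} \le e_{\mu_1}(R-\rho)\big(\|\nabla\phi_N\|_{L^2} + \|\nabla v_N\|_{L^2}\big) \le C\,e_{\mu_1}(R-\rho)\,\|f\|_{\dot{H}^{-1}}.
\]
Combining with the bounds on $\|\nabla v_N\|_{L^2}$ and $a_N$,
\[
	\|\nabla(\Psi_N - u_r)\|_{L^2(B_\rho)} \le \Big(C\,e_{\mu_1}(R-\rho) + \sqrt{\tfrac{e^R}{c\gamma(N+1)}}\Big)\|f\|_{\dot{H}^{-1}},
\]
with all constants uniform over configurations with $\mu_1\le\mu\le\mu_2$ and $r$ small. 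Given $\eps>0$, first pick $R$ with $C e_{\mu_1}(R-\rho)\|f\|_{\dot{H}^{-1}}<\eps/2$, then $N_0=N_0(R,\eps)$ so that the second term is $<\eps/2$ for $N\ge N_0$; since $\dot{H}^1_\loc$-convergence means $\nabla(\Psi_N-u_r)\to 0$ in $L^2$ on every ball, this is the claim.

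The main obstacle is the tension between the two error mechanisms: the global energy bound $a_N\lesssim 1/N$ controls $v_N$ only after paying a factor $e^R$, while the quasi-harmonic remainder $w_N$ is small inside $B_\rho$ only with gain $e_{\mu_1}(R-\rho)$; the proof works because these can be balanced by sending $R\to\infty$ slowly relative to $N$, and because $\gamma$, the constant $c$ in Lemma \ref{lem:LCoerciveInCompacta} and the profile $e_{\mu_1}$ depend only on $\mu_1,\mu_2$. A subsidiary point, checked above via the harmonicity characterization of $V_r^\perp$, is that $w_N$ genuinely satisfies the hypotheses of Lemma \ref{lem:decayQuasiHarmonic}.
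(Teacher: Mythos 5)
Your proof is correct and follows essentially the same route as the paper's: the same choice $\gamma\le 1/\|L_r\|$ via Lemma \ref{lem:Lbounded}, the same $1/(N+1)$ decay of $(L_r\phi_N,\phi_N)_{\dot{H}^1}$ obtained from \eqref{eq:posLargerNorm} and \eqref{eq:monotonPos} of Proposition \ref{pro:nonuniformAbstractProjection}, and the same decomposition $\phi_N=v_N+w_N$ treated with Lemma \ref{lem:LCoerciveInCompacta} and Lemma \ref{lem:decayQuasiHarmonic}, balancing $R$ against $N$ (your explicit checks of $w_N\in V_r^\perp$ and of the monotonicity of $e_\mu$ in $\mu$ are welcome but not a different method). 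The only step deserving one extra line is the bound $\|\nabla v_N\|_{L^2(\IR^3)}\le\|\nabla\phi_N\|_{L^2(\IR^3)}\le\|f\|_{\dot{H}^{-1}(\IR^3)}$ implicitly used in your estimate of $w_N$; it follows from the minimal-norm characterization of the harmonic extension $v_N$, exactly as invoked in the proof of Lemma \ref{lem:LCoerciveInCompacta}.
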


\begin{proof}
As in the proof of Theorem \ref{ConvWholeSpace}, we choose $\gamma \leq 1/\|L_r\|$. Lemma \ref{lem:Lbounded} ensures that this is possible such that $\gamma$ depends only on $\mu$ if $r$ is sufficiently small.

Let $\rho >0 $, $ \eps>0$, and 
$u := G_0 f \in \dot{H}^1(\IR^3)$. Since $\ker (L_r) = \dot{H}_0^1(\IR^3 \backslash K_r)$, it suffices to consider
 $u \in \dot{H}_0^1(\IR^3 \backslash K_r)^\perp$. 
Define $u_M := (1-\gamma L_r)^M u$. 

Then, we know from Proposition \ref{pro:nonuniformAbstractProjection}
\begin{align}
	\|(1-\gamma L_r)u\|_{\dot{H}^1(\IR^3)}^2 &= \|u\|_{\Hdot}^2 
	- 2(\gamma L_r u,u)_{\Hdot} +\|\gamma L_r u\|^2_{\Hdot} \\
	&\leq \|u\|_{\Hdot}^2 - \gamma (L_r u ,u)_{\Hdot}.
\end{align}
Iterating and using monotonicity of  $(L_r u_M, u_M)_{\Hdot}$, 
which follows from the estimate \eqref{eq:monotonPos} in Proposition 
\ref{pro:nonuniformAbstractProjection}, yields
\[
	0 \leq \|u_{M+1}\|^2_{\Hdot} \leq \|u\|_{\Hdot}^2 - (M+1) \gamma (L_r u_M, u_M)_{\Hdot}.
\]
Thus,
\[
	 (L_r u_M, u_M)_{\Hdot} \leq \frac{1}{(M+1) \gamma} \|u\|_{\Hdot}^2.
\]
Define $v_M \in \dot{H}^1(\IR^3)$ to be the solution	to
	\begin{align}
		-\Delta v_M &= 0 \quad \text{in} ~ \IR^3 \backslash (K_r \cap \overline{B_R}), \\
		v_M &= u_M  \quad \text{in} ~ K_r \cap \overline{B_R},
	\end{align}
	and $w_M := u_M -v_M$.
	Then, Lemma \ref{lem:decayQuasiHarmonic} implies for all $R > \rho$
	\begin{align}
		\| \nabla w_M\|_{L^2(B_\rho(0))} \leq e_\mu(R-\rho) \| w_M \|_{\Hdot} 
		&\leq e_\mu(R-\rho) (\|  u_M \|_{\Hdot} +\|  v_M \|_{\Hdot}) \\
		& \leq e_\mu(R-\rho) (\|  u \|_{\Hdot} +\|  v_M \|_{\Hdot}),
	\end{align}
		and it is possible to choose $R$ large enough such that $ e_\mu(R-\rho) < \frac{\eps}{3}$.
		On the other hand, by Lemma \ref{lem:LCoerciveInCompacta}, we have
	\[
		 c e^{-R} \| v_M\|_{\dot{H}^1(\IR^3)}^2 \leq (L_r u_M,u_M)_{\dot{H}^1(\IR^3)} \leq \frac{1}{(M+1) \gamma} \|u\|_{\Hdot}^2.
	\]
	Therefore, choosing $M_0$ large enough yields for all $M \geq M_0$
	\[
		\| v_M\|_{\dot{H}^1(\IR^3)} < \frac{\eps}{3} \|u\|_{\Hdot}.
	\]
	By combining the estimates for $v_M$ and $w_M$, we conclude (assuming without restriction $\eps \leq 3$)
	\[
		\| \nabla u_M \|_{L^2(B_\rho(0))} < \eps \|u\|_{\dot{H}^1(\IR^3)} = \eps \|f\|_{\dot{H}^{-1}(\IR^3)}. \qedhere
	\]
\end{proof}

\begin{proof}[Proof of Theorem \ref{HomogLambZero}]
	We first prove that $u$ converges weakly in $\Hdot(\IR^3)$ for all sources $f \in \dot{H}^{-1}(\IR^3)$.
	Since the sequence is bounded, it suffices to consider test functions in $C_c^\infty(\IR^3)$.
	Let $ \varphi \in C_c^\infty(\IR^3) $ and choose $R>0$ such that $\supp \varphi \subset B_R(0)$.
	Further, let $\gamma < \gamma_0 $ from Proposition \ref{pro:pSolByScattering} and denote by $S_M$ the 
	corresponding pointwise	weak limit of $(1-\gamma L_{r})^M$ from Proposition \ref{pro:pseqweaklimit}.
	Then, for all $M>0$,
	\begin{align}
		|(u_r - u,\varphi)_{\Hdot}| &\leq |(S_M G_0 f - u,\varphi)_{\Hdot}| 
		 + 
		|(1- \gamma L_r)^M G_0 f - S_M G_0 f ,\varphi)_{\Hdot}| \\
		{} &+ |(u_r - (1- \gamma L_r)^M G_0 f,\varphi)_{\Hdot}|. 
	\end{align}
	The third term on the right hand side is estimated by 
	\[
		\|\nabla(u_r - (1- \gamma L_r)^M G_0 f)\|_{L^2(B_R)} \|\varphi\|_{\Hdot}.
	\]
	By choosing $M$ sufficiently large, since Proposition \ref{pro:pSolByScattering} ensures that this term becomes small
	independently of $r$. On the other hand, also the first term becomes small by choosing $M$ large,
	and the second term vanishes in the limit $r \to \infty$.
	
	Weak convergence in $\Hdot(\IR^3)$ is equivalent to weak convergence in $L^2(\IR^3)$ 
	of the gradients. However, due to Corollary \ref{cor:Existence}, 
	the sequence $u_r$ is uniformly bounded in $H^1(\IR^3)$.
	Therefore,  we can extract subsequences that converge weakly in $H^1(\IR^3)$. 
	Since their weak limit is uniquely determined by 
	the weak limit of their gradients, the whole sequence converges weakly in $H^1(\IR^3)$.
	
	The result for $f \in H^{-1}(\IR^3)$ follows from density of $\dot{H}^{-1}(\IR^3) $ in $H^{-1}(\IR^3)$ 
	using again that the solution operators for Problem \eqref{eq:poissonPerforated} are uniformly bounded.
\end{proof}

\subsection{Proof of Proposition \ref{pro:pseqweaklimit}}

\begin{lemma}
	\label{lem:pMonopapprox}
	The following holds for the operators defined in Definition \ref{def:monopole} and \ref{def:A_beta}.
	\begin{enumerate}[(i)]	
	\item There exists a constant $C$ such that, for all $x \in \IR^3$  and $u \in \Hdot$,
	
	\begin{equation}
		\label{eq:pMonopapprox}
		\|(T_x- M_{x})u\|_{\dot{H}^{-1}(\IR^3)} \leq C \| \nabla u \|_{L^2(B_x)}.
	\end{equation} 
	\item For fixed $ \mu $ and $r \to 0$, we have for all $ u \in \dot{H}^{1}(\IR^3) $, all $ n \in \IN$, and all 
	$\beta \in \IN_\ast^n$,
	\begin{equation}
		\label{eq:pMonopgood}
		\|M_\beta^{(r)} - A_\beta^{(r)})u\|_{\Hdot} \to 0 \qquad \text{as} \quad k \to \infty.
	\end{equation}
\end{enumerate}
\end{lemma}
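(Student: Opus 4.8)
The plan is to prove (i) by a cutoff-and-extension argument in the spirit of \eqref{eq:cutoffest} and Lemma \ref{lem:correlationest}, and then to bootstrap it to (ii) by telescoping over the blocks of $A_\beta^{(r)}$ together with an induction on the number of components of $\beta$. For (i): since $G_0=(-\Delta)^{-1}$ is an isometry from $\dot H^{-1}(\IR^3)$ onto $\Hdot(\IR^3)$ and $Q_x=G_0T_x$, $\tilde Q_x=G_0M_x$, it suffices to show $\|Q_xu-\tilde Q_xu\|_{\Hdot}\le C\|\nabla u\|_{L^2(B_x)}$. The key observation is that $\tilde Q_xu=Q_x\bar u$ for \emph{any} $\bar u\in\Hdot(\IR^3)$ with $\bar u=(u)_{x,r}$ on $\overline{B_x}$: indeed $Q_x\bar u$ is harmonic in $\IR^3\backslash\overline{B_x}$, decays at infinity, and equals the constant $(u)_{x,r}$ on $\overline{B_x}$, which identifies it as $(u)_{x,r}$ times the capacity potential $p_x$ of $B_x$, i.e.\ as $\tilde Q_xu$. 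Hence $Q_xu-\tilde Q_xu=Q_x(u-\bar u)$, and since $Q_x$ is an orthogonal projection, $\|Q_xu-\tilde Q_xu\|_{\Hdot}\le\|u-\bar u\|_{\Hdot}$. It then remains to exhibit $\bar u$ with $\|\nabla(u-\bar u)\|_{L^2(\IR^3)}\le C\|\nabla u\|_{L^2(B_x)}$; for this set $\bar u:=u-\eta\,E\bigl(u-(u)_{x,r}\bigr)$, where $E$ is a Sobolev extension operator from $B_x$ and $\eta\in C_c^\infty(B_{2r}(x))$ with $\eta\equiv1$ on $B_x$, $|\nabla\eta|\le C/r$. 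Then $\bar u=(u)_{x,r}$ on $\overline{B_x}$, and combining the scaling-invariant extension bound with the Poincaré inequality $\|u-(u)_{x,r}\|_{L^2(B_x)}\le Cr\|\nabla u\|_{L^2(B_x)}$ gives the required estimate exactly as in \eqref{eq:cutoffest}, proving (i) with a universal constant.

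For (ii) I would induct on $n$, the number of components of $\beta$. Telescoping $A_\beta^{(r)}-M_\beta^{(r)}$ block by block — replacing the $j$-th factor $\sum_xe^{-\beta_j|x|}Q_x$ by $\sum_xe^{-\beta_j|x|}\tilde Q_x$ one at a time — gives $A_\beta^{(r)}-M_\beta^{(r)}=\sum_{j=1}^n\mathcal M_j$, where, up to the consecutive-distinct index constraints $i_k\ne i_{k-1}$ (which remove only finitely many terms and are absorbed via $Q_x^2=Q_x$ exactly as in the proof of Lemma \ref{lem:powersOfL}),
\[
\mathcal M_ju=M^{(r)}_{(\beta_1,\dots,\beta_{j-1})}\Bigl(\sum_xe^{-\beta_j|x|}(Q_x-\tilde Q_x)\,w_j\Bigr),\qquad w_j:=A^{(r)}_{(\beta_{j+1},\dots,\beta_n)}u.
\]
By Lemma \ref{lem:powersOfL} and its analogue for the $\tilde Q$'s (the range of each $\tilde Q_x$ is the line spanned by $p_x$ and $\|\tilde Q_x\|\le C$, so the same proof applies), the outer and inner operators are bounded uniformly in $r$ for fixed $\mu$; in particular $\|w_j\|_{\Hdot}\le C\|u\|_{\Hdot}$ uniformly. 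The functions $(Q_x-\tilde Q_x)w_j=G_0(T_x-M_x)w_j$ have the form $G_0g_x$ with $\supp g_x\subset\overline{B_x}$ and the $\overline{B_x}$ disjoint; applying Lemma \ref{lem:pLocH^-1} (whose weight $e^{-|x|}$ may be replaced by $e^{-\beta_j|x|}$ since $\beta_j\ge1$) and then (i) yields
\[
\|\mathcal M_ju\|_{\Hdot}\le C\Bigl(\sum_xe^{-\beta_j|x|}\|\nabla w_j\|_{L^2(B_x)}^2\Bigr)^{1/2}.
\]
Given $\eps>0$, choose $R$ so large that the tail $\sum_{|x|>R}e^{-\beta_j|x|}\|\nabla w_j\|_{L^2(B_x)}^2\le e^{-R}\|\nabla w_j\|_{L^2(\IR^3)}^2\le Ce^{-R}\|u\|_{\Hdot}^2<\eps$; what then remains is to control $\|\nabla w_j\|_{L^2(K_r\cap B_R(0))}^2$.

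The hard part is this last quantity, since $w_j$ depends on $r$, so one cannot simply invoke absolute continuity of a fixed $L^2$ function on the shrinking set $K_r\cap B_R$ (whose Lebesgue measure is $O(r^3d^{-3}R^3)=O(r^2\mu_0R^3)$). For $j=n$, $w_n=u$ is fixed and absolute continuity applies directly — this is the base case. For $j<n$, $w_j=A^{(r)}_{(\beta_{j+1},\dots,\beta_n)}u$ involves at most $n-1$ blocks, so the inductive hypothesis gives $\|w_j-M^{(r)}_{(\beta_{j+1},\dots,\beta_n)}u\|_{\Hdot}\to0$, and it suffices to bound $\|\nabla M^{(r)}_{(\beta_{j+1},\dots,\beta_n)}u\|_{L^2(K_r\cap B_R)}$. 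Here the structure of the monopole operators is decisive: since the range of $\tilde Q_x$ is $\operatorname{span}(p_x)$ and $p_x$ is constant on $B_x$, one has $M^{(r)}_{(\beta_{j+1},\dots,\beta_n)}u=\sum_yc_yp_y$ with $\nabla p_x\equiv0$ on $B_x$, so using $|\nabla p_y(z)|\le Cr|z-y|^{-2}$ for $z\notin B_y$ and $|z-y|\ge\tfrac12|x-y|$ on $B_x$,
\[
\|\nabla M^{(r)}_{(\dots)}u\|_{L^2(B_x)}\le\sum_{y\ne x}|c_y|\,\|\nabla p_y\|_{L^2(B_x)}\le Cr^{5/2}\sum_{y\ne x}\frac{|c_y|}{|x-y|^2}.
\]
The coefficients satisfy $\sum_y|c_y|^2\le Cr^{-1}\|u\|_{\Hdot}^2$, which follows from the lower bound in Lemma \ref{lem:pLocH^-1} applied to the disjointly supported sources $-\Delta p_y$, together with the uniform bound on $M^{(r)}_{(\dots)}$. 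Combining Cauchy--Schwarz with the lattice estimates $\sum_{y\ne x}|x-y|^{-4}\le Cd^{-4}$ and $\#\{x\in\Gamma_r:|x|\le R\}\le CR^3d^{-3}$, and then $d^3=r/\mu_0$, gives
\[
\sum_{x\in\Gamma_r,\,|x|\le R}\|\nabla M^{(r)}_{(\dots)}u\|_{L^2(B_x)}^2\le CR^3r^4d^{-7}\|u\|_{\Hdot}^2=CR^3r^{5/3}\mu_0^{7/3}\|u\|_{\Hdot}^2\to0\quad(r\to0).
\]
Together with the reduction of the previous paragraph this shows $\|\mathcal M_ju\|_{\Hdot}\to0$ for each $j$, hence $\|(A_\beta^{(r)}-M_\beta^{(r)})u\|_{\Hdot}\to0$, closing the induction. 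The routine points I am glossing over are the bookkeeping of the $i_k\ne i_{k-1}$ constraints (one strengthens the inductive statement to cover the finitely many operators with $Q_x$-insertions that they produce) and the $r$-uniform boundedness of the $M$-type operators, both of which parallel arguments already carried out in the paper.
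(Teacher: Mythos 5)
Your proof of (i) is correct and essentially the paper's argument: the identity $\tilde Q_x u = Q_x\bar u$ for any $\bar u$ agreeing with $(u)_{x,r}$ on $\overline{B_x}$, the fact that the orthogonal projection $Q_x$ does not increase the norm, and the scaled extension/Poincar\'e bound of Lemma \ref{lem:extest} are exactly the ingredients used there (the paper phrases the last step through the minimal-norm characterization of $Q_x u$, which makes your extra cutoff $\eta$ unnecessary). For (ii) your overall architecture is the same as the paper's --- telescope over the blocks, localize with the weighted version of Lemma \ref{lem:pLocH^-1}, reduce to the gradient of a sum of monopole potentials evaluated on the particles, and induct on the length of $\beta$ --- but the execution of the critical step is genuinely different. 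The paper first reduces to the dense subset $u\in H^1(\IR^3)$ (via the uniform bounds on $A_\beta^{(r)}$ and $M_\beta^{(r)}$), expands the square of the monopole sum and uses the exponential weights in convolution-type kernel estimates, ending with quantities of the form $C\mu^2\|u\|^2_{L^2(\cup_x B_x)}\to 0$. You instead cut off the exponentially small tail $|x|>R$ and, inside $B_R$, control the monopole coefficients in $\ell^2$, $\sum_y|c_y|^2\le Cr^{-1}\|u\|^2_{\Hdot}$ --- note this needs the homogeneous analogue of the \emph{lower} bound in Lemma \ref{lem:locH^-1}, which Lemma \ref{lem:pLocH^-1} as stated does not record, though its proof carries over --- and then conclude by Cauchy--Schwarz and lattice sums, arriving at $CR^3 r^{5/3}\mu^{7/3}\|u\|^2_{\Hdot}\to 0$. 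This buys a proof that works directly for all $u\in\Hdot(\IR^3)$ without the density reduction, at the price of the restriction to $B_R$; the paper's route keeps the exponential weights throughout and so never needs to localize in space, but needs $u\in H^1(\IR^3)$. The two points you flag as routine do need the care you promise them: the consecutive-index constraints generate extra diagonal terms (handled in the paper for $n=2$ by adding and subtracting the diagonal, which carries the heavier weight $e^{-(\beta_j+\beta_{j+1})|x|}$ and is absorbed using $Q_xQ_x=Q_x$, $Q_x\tilde Q_x=\tilde Q_x$), and the $r$-uniform boundedness of the $M$-type operators is not immediate from the bound on a single $\tilde Q_x$ --- $\tilde Q_x$ is a projection but not an orthogonal one, so positivity arguments fail and one must compare with $L_r$ through part (i), exactly as the paper does. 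With these two points filled in as in the paper, your argument is complete.
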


For the proof we need the following lemma.
	
\begin{lemma}
	\label{lem:extest}
	For $ r>0 $ and $ x \in \IR^3$,  let $ H_r := \{ u \in  H^1(B_r(x)) \colon \int_{B_r(x)} u = 0 \} $. 
	Then, for all $ r >0 $, there exists an extension operator $ E_r \colon H_r \to H^1_0(B_{2r}(x)) $ such that
	\begin{equation}
		\label{eq:extest}
		\| \nabla E_r u \|_{L^2(B_{2r}(x))} \leq C \| \nabla u \|_{L^2(B_r(x))} \qquad \text{for all} \quad u \in H_r,
	\end{equation}
	where the constant $ C $ is independent of $ r $.
\end{lemma}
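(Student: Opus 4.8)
The plan is to establish \eqref{eq:extest} in the reference configuration $r=1$, $x=0$ and then recover the general case by translation and scaling, using the zero--average constraint precisely to turn a full $H^1$ bound into a gradient--only bound. First I would recall that $B_1(0)$ is a smooth bounded domain, so the standard Sobolev extension theorem yields a bounded linear operator $\tilde E\colon H^1(B_1(0))\to H^1(\IR^3)$ with $\tilde E u|_{B_1(0)}=u$. Fix once and for all a cutoff $\chi\in C_c^\infty(B_2(0))$ with $0\le\chi\le1$ and $\chi\equiv1$ on $B_1(0)$, and set $Eu:=\chi\,\tilde E u$. Then $E$ maps $H^1(B_1(0))$ into $H^1_0(B_2(0))$, $Eu=u$ on $B_1(0)$, and
\[
\|\nabla Eu\|_{L^2(B_2(0))}\le\|\chi\nabla\tilde Eu\|_{L^2(B_2(0))}+\|(\nabla\chi)\tilde Eu\|_{L^2(B_2(0))}\le C\|\tilde Eu\|_{H^1(B_2(0))}\le C\|u\|_{H^1(B_1(0))}.
\]
If moreover $\int_{B_1(0)}u=0$, the Poincar\'e--Wirtinger inequality on the fixed domain $B_1(0)$ gives $\|u\|_{L^2(B_1(0))}\le C\|\nabla u\|_{L^2(B_1(0))}$, so that for every mean--zero $u\in H^1(B_1(0))$,
\[
\|\nabla Eu\|_{L^2(B_2(0))}\le C\|\nabla u\|_{L^2(B_1(0))},
\]
with $C$ a universal constant.

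For general $r>0$ and $x\in\IR^3$ I would first translate so that $x=0$. Given $u\in H_r$, put $u_r(y):=u(ry)$ for $y\in B_1(0)$; the substitution $z=ry$ shows $\int_{B_1(0)}u_r=r^{-3}\int_{B_r(0)}u=0$, so $u_r$ is mean--zero on $B_1(0)$, and $\|\nabla u_r\|_{L^2(B_1(0))}^2=r^{-1}\|\nabla u\|_{L^2(B_r(0))}^2$. Now define $(E_r u)(z):=(Eu_r)(z/r)$ for $z\in B_{2r}(0)$. Since $Eu_r\in H^1_0(B_2(0))$ and $z\mapsto z/r$ is a diffeomorphism of $B_{2r}(0)$ onto $B_2(0)$, we get $E_ru\in H^1_0(B_{2r}(0))$, $E_ru=u$ on $B_r(0)$, and the same change of variables gives $\|\nabla E_ru\|_{L^2(B_{2r}(0))}^2=r\|\nabla Eu_r\|_{L^2(B_2(0))}^2$. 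Combining this with the reference estimate, the two powers of $r$ cancel:
\[
\|\nabla E_ru\|_{L^2(B_{2r}(0))}^2=r\|\nabla Eu_r\|_{L^2(B_2(0))}^2\le rC^2\|\nabla u_r\|_{L^2(B_1(0))}^2=C^2\|\nabla u\|_{L^2(B_r(0))}^2,
\]
which is \eqref{eq:extest} with a constant independent of $r$; translating back to a general center $x$ finishes the argument, and linearity of $E_r$ is inherited from $\tilde E$ and the affine changes of variables.

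The only genuinely delicate points are that the extended function must vanish on $\partial B_{2r}(x)$ --- which forces the multiplication by the cutoff $\chi$ and thereby introduces the extra $L^2$ term --- and that this $L^2$ term can be absorbed into $\|\nabla u\|_{L^2(B_r(x))}$ only because of the zero--average hypothesis, via Poincar\'e--Wirtinger. The $r$--independence of the constant is then automatic, since the dilation $y\mapsto ry$ rescales the two sides of the estimate by compensating powers of $r$; everything else is routine bookkeeping.
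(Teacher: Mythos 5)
Your proof is correct and follows essentially the same route as the paper: establish the estimate for $r=1$ using a continuous extension operator into $H^1_0(B_2)$ together with the Poincaré inequality for mean-zero functions, then transfer to general $r$ by the dilation $(E_r u)(z)=(E u_r)(z/r)$, under which the powers of $r$ on both sides cancel. The only difference is that you spell out the construction of the unit-scale operator (standard extension plus cutoff) and the bookkeeping of the scaling, which the paper leaves implicit.
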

\begin{proof}
	For $ r = 1 $ let $ E_1 \colon H^1(B_1(x)) \to H^1_0(B_2(x)) $ be a continuous extension operator.
	Then, by the Poincaré inequality on $ H_1 $, we get for all $ u \in H_1 $
	\begin{equation}
		\| \nabla E_1u \|_{L^2(B_{2}(x))} \leq \| E_1u \|_{H^1(B_{2}(x))} \leq C \| u \|_{H^1(B_1(x))}
		\leq C \| \nabla u \|_{L^2(B_1(x))}.
	\label{eq:extest1}
	\end{equation}
	The assertion for general $ r > 0 $ follows from scaling by defining 
	$ (E_r u)(x) := (E_1u_r)({\frac{x}{r}}) $, where $ u_s(x) := u(sx) $.
\end{proof}

\begin{proof}[Proof of Lemma  \ref{lem:pMonopapprox}]
	Let $u \in \dot{H}^1(\IR^3)$.
	First, we observe by a straightforward calculation that
	\begin{equation}
			(\tilde{Q}_{x} u)(y) = 
		\begin{cases}
			(u)_{x}, & \text{if} \quad x \in B_{x},\\
			(u)_{x} \displaystyle\frac{r}{|y-x|}, & \text{otherwise}.
		\end{cases}
	\end{equation}
	Now, we use again that $G_0$ is an isometry and that $ Q_x = G_0 T_{x} $ is the orthogonal projection to the subspace
	\[
		\dot{H}_0^1(\IR^3 \backslash \overline{B_{x_1}})^\perp = 
		\left\{ u \in H^1(\IR^3) \colon -\Delta u = 0 ~ \text{in} ~ \IR^3 \backslash ( \overline{B_{x}}) \right\}.
	\]
	Therefore, we can characterize $Q_x u$ as the function $v \in \dot{H}^1(\IR^3)$ that solves
	\begin{align}
		-\Delta v &= 0 \quad \text{in} ~ \IR^3 \backslash \overline{B}_x, \\
		v &= u \quad \text{in} ~ \overline{B}_x.
	\end{align}
	Hence, $v$ is the function of minimal	norm that coincides with $u$ inside the ball $B_x$.
	Clearly, $\tilde{Q}_{x} u \in \dot{H}_0^1(\IR^3 \backslash \overline{B_{x}})^\perp$, 
	and thus, $Q_{x} \tilde{Q}_{x} = \tilde{Q}_x$.
	Therefore,	
	\[
		(Q_{x}  - \tilde{Q}_{x})u = Q_x (u - \tilde{Q}_{x} u).
	\]
	Since $ \tilde{Q}_{x} u = (u)_{x} $ in $ B_{x} $, 
	we can use the extension operator  $ E_r $ from Lemma \ref{lem:extest} (since, by the Rellich embedding theorem,
	the restriction of a $\Hdot$ function to a ball is a $H^1$ function in that ball)
		and estimate
	\begin{align}
		\|(Q_{x} - \tilde{Q}_{x})u\|_{\dot{H}^1(\IR^3)} 
		&\leq \| E_r ((u - \tilde{Q}_{x} u) \left. \! \! \right|_{B_{x}})\|_{\dot{H}^1(\IR^3)} \\
		&= \| \nabla E_r (( u - (u)_{x}) \left. \! \! \right|_{B_{x}})\|_{L^2(\IR^3)} \\
		& \leq C \| \nabla u \|_{L^2(B_{x})}.
	\end{align}
	This concludes the proof of assertion (i).

	Observe that $ M_{x} u $ satisfies 
	$ \supp (M_{x} u) \subset \partial B_{x}$. It can easily be seen that Lemma \ref{lem:pLocH^-1} still holds true
	when replacing the cutoff $e^{-|x|}$ by $e^{-j|x|}$ for any $j \in \IN_\ast$. Therefore, we get for $ n = 1$
	\[
		\|(M^{(r}_\beta - A_\beta^{(r)})u\|_{\dot{H}^1(\IR^3)}^2 
		\leq (1+C\mu) \sum_{x} e^{-\beta |x_1|}\|(Q_{x} - \tilde{Q}_{x})u  \|_{\dot{H}^1(\IR^3)}^2.
	\]
	Hence, the convergence \eqref{eq:pMonopgood} for $ n = 1$ follows directly from the estimate \eqref{eq:pMonopapprox}, 
	since, for fixed capacity,	the volume of the particles inside a fixed bounded domain 
	tends to zero as $ r $ tends to zero.
	
	For $n \geq 2$, we first argue that it suffices to prove the assertion for functions $u$ in the dense set
	$H^1(\IR^3) \subset \dot{H}^{1} (\IR^3) $. 
	Indeed, this follows once we have shown that, for any $\beta$, both $ A_\beta^{(r)}$ and $M_\beta^{(r)}$ are bounded, uniformly	for all particle configurations with the same capacity $\mu$. For $ A_\beta^{(r)}$, this is the second
	statement of Lemma \ref{lem:powersOfL}. 
	To estimate $M_\beta^{(r)}$, we consider first $\beta = 1$. 
	By part (i), we have for all $u \in \Hdot$
	\[
		\sum_x \|(\tilde{Q}_x - Q_x)u\|^2_{\Hdot} \leq C \|u\|_{\Hdot}^2.
	\]
	Note that we can use 
	Lemma \ref{lem:pLocH^-1} to take the sum out of the norm in the definition of $M_1^{(r)}$,
	since $M_x u$ is supported on $\partial B_x$ for all $u \in \Hdot$. 
	Using additionally the bound for $L_r$ from Lemma \ref{lem:Lbounded}, we estimate
	\begin{align}
		\|M_1^{(r)} u\|_{\Hdot}^2 &= \| \sum_x e^{-|x|} \tilde{Q_x}u\|_{\Hdot}^2 \\
		&\leq    \|\sum_x e^{-|x|} (\tilde{Q_x}- Q_x)u\|_{\Hdot}^2 +  \|L_r u\|_{\Hdot}^2 \\
		&\leq (1 + C \mu) \sum_x \|e^{-\beta_1} (\tilde{Q_x}- Q_x)u\|_{\Hdot}^2 + (1+ C\mu)^2 \|u\|_{\Hdot}^2 \\
		&\leq C(1+\mu)^2 \| u\|_{\Hdot}^2.
	\end{align}
	For general $n \in \IN_\ast$ and $\beta \in \IN_\ast^n$, one can argue as in the proof of Lemma \ref{lem:powersOfL}
	 by taking powers of $M_1^{(r)}$ in order to deduce $\|M_\beta^{(r)}\| \leq (C(1+\mu))^n$.
	Indeed, the only ingredient for the proof of the formula, which we derived in Lemma \ref{lem:powersOfL}
	for $(L_r)^n$, was that $Q_x$ is a projection. We did not use orthogonality.
	Since $\tilde{Q}$ is a projection as well, the analogous version of that formula holds for the powers of $M_1^{(r)}$.
	
	The general assertion now follows by induction. For $ n=2 $, we have
	\begin{equation}
	\begin{aligned}
		 \| (M^{(r)}_\beta - A_\beta^{(r)})u \|^2_{\dot{H}^1(\IR^3)} 
			&\leq \Big \| \sum_{x_1} \sum_{{x_2} \neq {x_1}} e^{-\beta_1|x_1|} e^{-\beta_2|x_2|} 
			Q_{x_1}(Q_{x_2} - \tilde{Q}_{x_2})  u \Big \|^2_{\dot{H}^1(\IR^3)} \\
			\quad{} &+ \Big \| \sum_{x_1} \sum_{{x_2} \neq {x_1}} e^{-\beta_1|x_1|} e^{-\beta_2|x_2|} 
			(\tilde{Q}_{x_1} - Q_{x_1}) \tilde{Q}_{x_2} u \Big \|^2_{\dot{H}^1(\IR^3)}.
	\end{aligned}
			\label{eq:pSplitMM}
	\end{equation}
	To further estimate the first term on the right hand side, we use that $ \sum_{x_1} e^{-\beta_1|x_1|}  Q_{x_1}$ 
	is a bounded operator.
	Together with part (i) and using again $Q_x Q_x = Q_x$ and $Q_x \tilde{Q}_x = \tilde{Q}_x$,
	we get (with a constant that depends on $\mu$)
	\begin{equation}
		\begin{aligned}
			&\Big \| \sum_{x_1} \sum_{{x_2} \neq {x_1}} e^{-\beta_1|x_1|} e^{-\beta_2|x_2|} 
			Q_{x_1}(Q_{x_2} - \tilde{Q}_{x_2})  u \Big \|_{\dot{H}^1(\IR^3)} \\
			&\leq  \Big \| \sum_{x_1} e^{-\beta_1|x_1|} Q_{x_1} \sum_{{x_2}}  e^{-\beta_2|x_2|}
			(Q_{x_2} - \tilde{Q}_{x_2}) u \Big \|_{\dot{H}^1(\IR^3)} 
			+ \Big \| \sum_{x_1} e^{-\beta_1|x_1|} Q_{x_1} (Q_{x_1} - \tilde{Q}_{x_1}) u \Big \|_{\dot{H}^1(\IR^3)}\\
			&\leq C \Big \| \sum_{{x_2}}  e^{-\beta_2|x_2|}(Q_{x_2} - \tilde{Q}_{x_2}) u \Big \|_{\dot{H}^1(\IR^3)} 
			+ C \Big  \|\sum_{x_1}e^{-\beta_1|x_1|}(Q_{x_1} - \tilde{Q}_{x_1}) u \Big \|_{\dot{H}^1(\IR^3)} \\
			& \leq C \Big \| \sum_{{x_2}}  e^{-\beta_2|x_2|}(Q_{x_2} - \tilde{Q}_{x_2}) u \Big \|_{\dot{H}^1(\IR^3)} \to 0.
		\end{aligned}
	\end{equation}
	For the second term on the right hand side of \eqref{eq:pSplitMM}, recall
	\[
		(M_{x_1} - T_{x_1}) v = 0 \quad \text{in} ~ \IR^3\backslash \overline{B_{x_1}}
	\] 
	for all $ v \in \dot{H}^1(\IR^3) $.
	Hence, for $u \in H^1(\IR^3)$, we can use Lemma \ref{lem:pLocH^-1} 
	to take out the sum in $ {x_1} $, and we use the estimate for the uniform charge density approximation from part (i),
	\begin{equation}
		\label{eq:pInductionStart}
		\begin{aligned}
		\Big \|  \sum_{x_1} \sum_{{x_2} \neq {x_1}} e^{-\beta_1|x_1|} e^{-\beta_2|x_2|}
		(\tilde{Q}_{x_1} - Q_{x_1}) \tilde{Q}_{x_2} u
		\Big \|^2_{\dot{H}^1(\IR^3)} 
		\leq \sum_{x_1} e^{-\beta_1|x_1|} \Big \|\! \sum_{{x_2} \neq {x_1}}  e^{-\beta_2|x_2|}\nabla \tilde{Q}_{x_2} u \Big 
		\|^2_{L^2(B_{x_1})}.
		\end{aligned}
	\end{equation}
	Inserting the definition of $ \tilde{Q}_{x_2} $, expanding the square of the sum over 
	$ {x_2} $, and estimating the integral yields
	\begin{equation}
			\label{eq:pr^5sum}
	\begin{aligned}
		&\sum_{x_1} e^{-\beta_1|x_1|} \Big \| \! \sum_{{x_2} \neq {x_1}} e^{-\beta_2|x_2|}\nabla \tilde{Q}_{x_2} u \Big 
		\|^2_{L^2(B_{x_1})} \\
		&\leq C \sum_{x_1} \sum_{{x_2} \neq {x_1}} \sum_{{x_3} \neq {x_1}} e^{-\beta_1|x_1|}  
		r^5  
	\frac{e^{-2\beta_2|x_2|}(u)_{x_2}}{|x_1-x_2|^2} \frac{e^{-2\beta_3|x_3|} (u)_{x_3} }{|x_1-x_3|^2}.
	\end{aligned}
	\end{equation}
	Consider the off-diagonal terms first, i.e., $ {x_2} \neq {x_3} $. We estimate 
	\[
		e^{-\beta_1|x_1|} e^{-2\beta_2|x_2|} e^{-2\beta_3|x_3|} \leq e^{-\frac{|x_1-x_2|}{2}} e^{-\frac{|x_1-x_3|}{2}}
	\] 
	and use $ r = (4 \pi)^{-1} \mu d^3 $ to bound	the sum over $ {x_1} $ by an integral,
	\[
		\sum_{x_1} r \frac{e^{-\frac{|x_1-x_2|}{2}}}{|x_1-x_2|^2} \frac{e^{-\frac{|x_1-x_3|}{2}}}{|x_1-x_3|^2}
		\leq C \mu\int_{\IR^3} \frac{e^{-\frac{|y-x_2|}{2}}}{|y-x_2|^2} \frac{e^{-\frac{|y-x_3|}{2}}}{|y-x_3|^2} \dd y,
	\]
	for all $ {x_2} \neq {x_1}$, ${x_3} \neq {x_1}$.
	To estimate the integral, we denote $ z = x_2 - x_3 \neq 0$ for the moment and split the integral to get
	\[
	\begin{aligned}
		\int_{\IR^3} \frac{e^{-\frac{|y|}{2}}}{|y|^2} \frac{e^{-\frac{|y-t|}{2}}}{|y-t|^2} \dd y 
		\leq \int_{\IR^3 \backslash B_{|z|/2}(0)} \frac{4 e^{-\frac{|z|}{4}}}{|z|^2} \frac{e^{-\frac{|y-z|}{2}}}{|y-z|^2}
		+ \int_{B_{|z|/2}(0)} \frac{e^{-|y|}}{|y|^2} \frac{4 e^{-\frac{|z|}{4}}}{|z|^2} 
		 \leq C \frac{e^{-\frac{|z|}{4}}}{|z|^2}.
	\end{aligned}
	\]
	Hence, using $ (u)_{x_2} (u)_{x_3}  \leq \frac{1}{2} ((u)_{x_2}^2 +  (u)_{x_3}^2) $ and symmetry, we deduce 
	\[
	\begin{aligned}
		\sum_{x_1} \sum_{{x_2} \neq {x_1}}  \sum_{\substack{{x_3} \neq {x_1} \\ {x_3} \neq {x_2}}} 
		r^5 (u)_{x_2} (u)_{x_3} 
		\frac{e^{-\frac{|x_1-x_2|}{2}}}{|x_1-x_2|^2} \frac{e^{-\frac{|x_1-x_3|}{2}}}{|x_1-x_3|^2} 
		&\leq \sum_{x_2} \sum_{{x_3} \neq {x_2}} C \mu r^4 (u)_{x_2} (u)_{x_3} 
		\frac{e^{-\frac{|x_2 - x_3|}{4}}}{|x_2 - x_3|^2} \\
		& \leq \sum_{x_2} C \mu^2 r^3 (u)_{x_2}^2 \int_{\IR^3} \frac{e^{-\frac{|x_2 - y|}{4}}}{|x_2 - y|^2} \dd y \\
		& \leq C \mu^2 \sum_{x_2} r^3 \left(\fint_{B_{x_2}} u(y) \dd y \right)^2 \\
		& \leq C \mu^2 \| u \|^2_{L^2(\cup_{x_2} B_{x_2})} \to 0,
	\end{aligned}
	\]
	where we finally used $u \in H^1(\IR^3)$ in order to have control of the $L^2$-norm. 
	It remains to bound the diagonal terms in \eqref{eq:pr^5sum}. For those, we use the estimate
	\[
		r \sum_{{x_1} \neq {x_2}} \frac{e^{-\frac{|x_1-x_2|}{2}}}{|x_1-x_2|^4} 
		\leq C \mu \int_{\IR^3 \backslash B_d(0)} \frac{e^{-\frac{|y|}{2}}}{|y|^4} \dd y \leq C \mu d^{-1}.
	\]
	Hence, 
	\[
		\sum_{x_1} \sum_{{x_2} \neq {x_1}} r^5 (u)_{x_2}^2\frac{e^{-\frac{|x_1-x_2|}{2}}}{|x_1-x_2|^4} 
		\leq C \mu r d^{-1} \| u \|^2_{L^2(\cup_{x_2} B_{x_2})} \to 0.
	\]
	
	For $ n \geq 3 $, one does exactly the same thing. For $ n = 3 $, instead of 
	$ \| u \|^2_{L^2(\cup_{x_2} B_{x_2})} $, one ends up with
	\begin{equation}
		\begin{aligned}
			\sum_{x_2} \Big \| \! \sum_{{x_3} \neq {x_2}} \tilde{Q}_{x_3} u \Big \|^2_{L^2(B_{x_2})} 
		\end{aligned}
	\end{equation}
	and this can be estimated exactly as the right hand side of Equation \eqref{eq:pInductionStart}.
	The only difference is that the gradient is not there, but, due to the exponential cutoff, this does not matter.
	Thus, for $n \geq 3$, the assertion follows by induction.
	\end{proof}

\begin{proof}[Proof of Proposition \ref{pro:pseqweaklimit}]
	By Lemma \ref{lem:powersOfL} it suffices to prove 
	\begin{equation}
		\label{eq:sepseqweaklimit}
		A_\beta^{(r)} u \rightharpoonup \Bigg( \prod_{j=1}^n \mu G_0 J A^{\beta_j} \Bigg) u \quad \text{in} ~ \Hdot,
	\end{equation}
	for all $ u \in \dot{H}^{1}(\IR^3) $, all $ n \in \IN_\ast $, and all $ \beta \in \IN_\ast^n$.

	Since $G_0$ is an isometry, for $ n = 1$, it suffices to show 
	\[	
		\sum_x e^{-\beta_1|x|} T_x u \rightharpoonup  \mu J A^{\beta_1} u \quad \text{in} ~ \dot{H}^{-1}(\IR^3)
	\]
	for all $ u \in \dot{H}^1(\IR^3) $ and analogously for $n \geq 2$.
	Since by Lemma \ref{lem:powersOfL}, 
	we have a uniform bound on $A_\beta^{(r)}$, it suffices to prove the assertion for the 
	dense subset $ \dot{H}^1(\IR^3) \cap C^1(\IR^3)$. Lemma \ref{lem:pMonopapprox} implies 
	that we can replace all the operators
	$T_x$ by $ M_x $. Moreover, it suffices to consider test function from the dense set $C_c^\infty(\IR^3)$. 
	Let $ u \in \dot{H}^1(\IR^3) \cap C^1(\IR^3) $ and $ \varphi \in C_c^\infty(\IR^3) $. Then, we estimate
	\begin{align}
		|\langle \varphi , M_x u \rangle - 4 \pi r u(x) \varphi(x)| 
		&\leq \frac{1}{r} \int_{\partial B_x} | (u)_x \varphi(y) - u(x) \varphi(x)| \dd y \\
		& \leq C r^2 \|u\|_{C^1(\IR^3)} \|\varphi\|_{C^1(\IR^3)}.       
	\end{align}
	On the other hand, defining $ q_x $ to be the cube centered at $x$ with edges of length $ d $ parallel to the
	coordinate axes, we find
	\begin{align}
		\bigg| \mu \int_{q_x} e^{-\beta_1 |y|}u(y) \varphi(y) \dd y -  r e^{-\beta_1 |x|}u(x) \varphi(x) \bigg| 
		&\leq \mu\int_{q_x} | e^{-\beta_1 |y|}u(y) \varphi(y) - e^{-\beta_1 |x|}u(x) \varphi(x)| \dd y \\
		& \leq C r d \|e^{-\beta_1 |\cdot|}u\|_{C^1(\IR^3)} \|\varphi\|_{C^1(\IR^3)}.       
	\end{align}
	Now, we take the sum in $ x $ and use that $\cup_x q_x = \IR^3$ up to a nullset. Furthermore, we observe that we 
	only have to take into account those cubes that lie in the support of $ \varphi $. The number of such cubes is 
	bounded by $ C d^{-3} = C \mu r^{-1} $. Therefore, combining the above estimates leads to
	\[
		|\langle \varphi , \sum_x e^{-\beta_1 |x|} M_x u - \mu J A^{\beta_1}u \rangle| \leq C \mu d.
	\]
	This proves the convergence for $ n = 1 $.
	
	For $ n = 2 $, we write
	\begin{equation}
		\label{eq:pSplitTT}
	\begin{aligned}
		&\sum_{x_1} \sum_{x_2 \neq x_1} e^{-\beta_1 |x_1|}e^{-\beta_2 |x_2|} M_{x_1} G_0 M_{x_2} u - 
		(\mu)^2 J A^{\beta_1} G_0 J A^{\beta_2} u \\
		&= \bigg(\sum_{x_1} e^{-\beta_1 |x_1|} M_{x_1} -  \mu JA^{\beta_1} \bigg)  \mu G_0 J A^{\beta_2}u \\
		{}  &+ \sum_{x_1} e^{-\beta_1 |x_1|} M_{x_1} G_0 
		\bigg(\sum_{x_2 \neq x_1} e^{-\beta_2 |x_2|}M_{x_2} -  \mu JA^{\beta_2}\bigg)u.
	\end{aligned}
	\end{equation}
	The first term converges to zero weakly in $H^{-1}(\IR^3)$ by the assertion for $n = 1$. 
	We observe that for all $x_2 \neq x_1 \in \Gamma_r$, and all $z \in B_{x_1}$,
	\begin{align}
		 &\bigg | \int_{\partial B_{x_2}} e^{-\beta_2 |x_2|} (u)_{x_2} \Phi (z-y) \dd z 
		-   \mu    \int_{q_{x_2}} e^{-\beta_2 |y|}u(y) \Phi(z-y) \dd y \bigg | \\
		 & \leq C r d e^{d-|x_2|}\|u\|_{C^1(\IR^3)} \|\Phi(z-\cdot)\|_{C^1(Q_{x_2)}} \\
		& \leq C r d \|u\|_{C^1(\IR^3)} e^{d-|x_2 - x_1|} 
		\left( \frac{1}{|x_2 - x_1|} + \frac{1}{|x_2 - x_1|^2} \right).
	\end{align}
	Taking the sum over $ x_2 \neq x_1 $ yields
	\begin{equation}
		\label{eq:pEstimateOtherBall}
		e^{-\beta_1 |x_1|}  \bigg | \bigg(G_0 \sum_{x_2 \neq x_1} \tilde{M}_{x_2} u -  \mu J A^{\beta_2} u \bigg)_{x_1} \bigg |
		\leq C \mu e^d \|u\|_{C^1(\IR^3)} d. 
	\end{equation}
	Note that it is crucial for deriving this bound that the sum runs only over $x_2$ different from $x_1$.
	Testing again by $ \varphi \in C_c^\infty(\IR^3) $, we conclude that also the second term in Equation 
	\eqref{eq:pSplitTT} tends to zero weakly in $ H^{-1}(\IR^3)$.
	
	Convergence of the higher order terms is proven by induction.
\end{proof}

\section{Adaptation to Stokes Equations}
\label{sec:SpacesAndOperators}

In this section, we will adapt the previous results for the Poisson equation to the case of Stokes equations.
We will not repeat everything from the previous sections but rather point out the necessary modifications.

Working only in spaces of divergence free functions, the presence of the pressure in the Stokes equations 
can be in principle ignored for the definition of all the operators needed. 

\begin{definition}
	We define $\dot{H}^1_\sigma(\IR^3;\IR^3) \subset \dot{H}^1(\IR^3;\IR^3) $ to be the closed subspace of divergence free functions, and
	$\dot{H}^{-1}_\sigma(\IR^3;\IR^3)$ its dual space.
\end{definition}

\begin{notation}
	To improve readability, we will from now on write $\dot{H}^1(\IR^3)$ instead of $\dot{H}^1(\IR^3;\IR^3)$ 
	and similarly for $\dot{H}^{-1}(\IR^3;\IR^3)$, $\dot{H}^1_\sigma(\IR^3;\IR^3)$, etc.
\end{notation}

\begin{remark}
Note that $\dot{H}^{-1}_\sigma(\IR^3) \subset \dot{H}^{-1}(\IR^3)$.
	Here, the inclusion for $f \in \dot{H}^{-1}_\sigma(\IR^3)$ to a function in $\dot{H}^{-1}(\IR^3)$ is given by
 $\langle u, f\rangle := \langle P_\sigma u, f \rangle$ for all $ u \in \Hdot(\IR^3)$, 
where $P_\sigma$ is the orthogonal projection from $\Hdot(\IR^3)$ to $\Hdot_\sigma(\IR^3)$.
\end{remark}

\begin{lemma}
	\label{lem:sto}
	Let $ f \in \dot{H}^{-1} (\IR^3) $. Then, the Stokes equations
	\begin{equation}
	\begin{aligned}
		-\Delta u & = -\nabla p + f, \\
		\dv u &= 0
	\end{aligned}
		\label{eq:sto}
	\end{equation}
	has a unique weak solution $ (u,p) \in \dot{H}^1_\sigma(\IR^3) \times L^2(\IR^3) $.
	The solution operator $ \bar{G}_0$ for the velocity field is given by
	\begin{equation}
		\bar{G}_0 f = \Phi \ast f,
	\end{equation}
	where
	\begin{equation}
		\Phi(x) := \frac{1}{8 \pi} \left( \frac{1}{|x|} + \frac{x \otimes x}{|x|^3} \right).
	\end{equation}
	Moreover, the restriction of the solution operator to $\dot{H}^{-1}_\sigma$, which we denote by $G_0$, is an isometric isomorphism.
\end{lemma}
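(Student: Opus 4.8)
The plan is to solve first for the velocity $u$ by a variational argument in the space of divergence-free fields, then recover the pressure $p$ a posteriori, and finally identify the solution operator with convolution against the Oseen tensor by a Fourier computation. Equip $\dot{H}^1_\sigma(\IR^3)$ with the inner product $(u,v) \mapsto (\nabla u, \nabla v)_{L^2(\IR^3)}$, which makes it a Hilbert space. Testing \eqref{eq:sto} against divergence-free fields, so that the pressure term drops out, the natural weak formulation is: find $u \in \dot{H}^1_\sigma(\IR^3)$ with
\[
	(\nabla u, \nabla v)_{L^2(\IR^3)} = \langle f, v \rangle \qquad \text{for all} \quad v \in \dot{H}^1_\sigma(\IR^3).
\]
Since $f \in \dot{H}^{-1}(\IR^3)$ restricts to a bounded linear functional on $\dot{H}^1_\sigma(\IR^3)$, the Riesz representation theorem yields a unique such $u$ with $\|\nabla u\|_{L^2(\IR^3)} \le \|f\|_{\dot{H}^{-1}(\IR^3)}$. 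When $f \in \dot{H}^{-1}_\sigma(\IR^3) = (\dot{H}^1_\sigma(\IR^3))^\ast$, the map $f \mapsto u$ is precisely the Riesz isomorphism of $\dot{H}^1_\sigma(\IR^3)$, hence an isometric isomorphism onto $\dot{H}^1_\sigma(\IR^3)$; indeed, testing with $v=u$ gives $\|\nabla u\|_{L^2(\IR^3)} \le \|f\|_{\dot{H}^{-1}_\sigma(\IR^3)}$, while taking the supremum over $v$ gives the reverse inequality. This proves the last assertion, and, exactly as in Lemma \ref{lem:poibdry}, exhibits $G_0$ as $(-\Delta)^{-1}$ followed by the orthogonal projection onto $\dot{H}^1_\sigma(\IR^3)$.

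For the pressure, the functional $w \mapsto \langle f, w \rangle - (\nabla u, \nabla w)_{L^2(\IR^3)}$ on $\dot{H}^1(\IR^3)$ annihilates $\dot{H}^1_\sigma(\IR^3)$, so by a classical lemma (de Rham) there is $p \in L^2_{\mathrm{loc}}(\IR^3)$ with $\nabla p$ equal to this functional; then $(u,p)$ solves \eqref{eq:sto} weakly. That $p$ may be taken in $L^2(\IR^3)$ follows from the whole-space identity $\|p\|_{L^2(\IR^3)} = \|\nabla p\|_{\dot{H}^{-1}(\IR^3)}$, immediate on the Fourier side from $\widehat{\nabla p}(\xi) = i\xi\,\hat{p}(\xi)$, together with $\|\nabla p\|_{\dot{H}^{-1}(\IR^3)} \le \|f\|_{\dot{H}^{-1}(\IR^3)} + \|\nabla u\|_{L^2(\IR^3)} < \infty$. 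Uniqueness of $p$ in $L^2(\IR^3)$ is then clear since $L^2(\IR^3)$ contains no nonzero constant, and uniqueness of $u$ was already obtained.

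For the explicit formula I would take the Fourier transform of \eqref{eq:sto}: the constraint $\dv u = 0$ reads $\xi \cdot \hat{u}(\xi) = 0$, and dotting the momentum equation with $\xi$ gives $\hat{p}(\xi) = -i\,(\xi \cdot \hat{f}(\xi))/|\xi|^2$; substituting back yields
\[
	\hat{u}(\xi) = \frac{1}{|\xi|^2}\left(\mathrm{Id} - \frac{\xi \otimes \xi}{|\xi|^2}\right)\hat{f}(\xi),
\]
that is, $u = (-\Delta)^{-1} P_\sigma f$, with $\mathrm{Id} - \xi\otimes\xi/|\xi|^2$ the symbol of the Leray projection. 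The inverse Fourier transform of the matrix symbol $|\xi|^{-2}\bigl(\mathrm{Id} - \xi\otimes\xi/|\xi|^2\bigr)$ is the Oseen tensor $\Phi(x) = \tfrac{1}{8\pi}\bigl(|x|^{-1}\mathrm{Id} + |x|^{-3}\, x\otimes x\bigr)$, a classical computation (cf. \cite{HapBre}), so $\bar{G}_0 f = \Phi \ast f$; since $P_\sigma$ acts as the identity on $\dot{H}^{-1}_\sigma(\IR^3)$, the restriction $G_0$ is $(-\Delta)^{-1}$ composed with the isometry of the first step, reconfirming it is an isometric isomorphism.

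The step I expect to be the main obstacle is the pressure, specifically obtaining $p$ globally in $L^2(\IR^3)$ rather than merely in $L^2_{\mathrm{loc}}(\IR^3)$: this requires both the de Rham-type lemma on the whole space and the Fourier identification $\|\nabla p\|_{\dot{H}^{-1}(\IR^3)} = \|p\|_{L^2(\IR^3)}$. A clean alternative is to bypass the variational step entirely and define $(u,p)$ directly through the Fourier symbols above, verifying membership in $\dot{H}^1_\sigma(\IR^3) \times L^2(\IR^3)$ and the equations by Plancherel; this is fully transparent but makes the projection structure used elsewhere in the paper less visible.
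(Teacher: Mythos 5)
The paper itself states Lemma \ref{lem:sto} without proof, treating it as a classical fact (the relevant background being textbook material, cf.\ \cite{Ga11}), so there is no internal argument to compare against; judged on its own merits, your proposal is correct and is the standard route. The variational step via Riesz representation in $\dot{H}^1_\sigma(\IR^3)$ immediately gives existence, uniqueness of $u$, and the isometric isomorphism property of $G_0$ on $\dot{H}^{-1}_\sigma(\IR^3)$, and it exhibits the solution as $P_\sigma(-\Delta)^{-1}f$, which is exactly the projection structure the paper uses elsewhere (Lemma \ref{lem:stobdry} and the operators $Q_i$). You correctly identify the one genuinely delicate point, namely recovering the pressure globally in $L^2(\IR^3)$ rather than only in $L^2_\loc$: the de Rham step produces $p$ only as a distribution modulo constants, and one must argue that $\nabla p = f + \Delta u \in \dot{H}^{-1}(\IR^3)$ forces the Fourier transform $\hat p$ to be a genuine $L^2$ function away from a possible constant, which the $L^2$ normalization removes; your Fourier identity $\|p\|_{L^2} = \|\nabla p\|_{\dot{H}^{-1}}$ together with choosing that representative settles this, and uniqueness of $p$ in $L^2$ follows since no nonzero constant lies in $L^2(\IR^3)$. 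The Fourier computation of the symbol $|\xi|^{-2}\bigl(\mathrm{Id} - \xi\otimes\xi/|\xi|^2\bigr)$ and its inverse transform, the Oseen tensor $\Phi$, is the classical identification of $\bar{G}_0$ as convolution with the Stokeslet, and also reconfirms the well-posedness claims, so your remark that one could bypass the variational step entirely and define everything through the symbols is accurate; the variational formulation is nevertheless preferable here because the rest of the paper relies on the Hilbert-space projection interpretation.
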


\begin{lemma}
	\label{lem:stobdry}
	Let $\Omega \subset \IR^3$ be open.
	Then, for every $ f \in  \dot{H}^{-1}(\IR^3) $,	the problem
	\begin{equation}
	\begin{aligned}
		\label{eq:stokesBall}
		-\Delta u  &= - \nabla p + f \quad \text{in} ~ \IR^3 \backslash \overline{\Omega}, \\
		\dv u &= 0,\\
		u &= 0 \quad \text{in} ~ \overline{\Omega}, \\
		p &= 0 \quad \text{in} ~ \overline{\Omega}
	\end{aligned}
	\end{equation}
	has a unique weak solution $ (u,p) \in \dot{H}^1_\sigma(\IR^3) \times L^2(\IR^3)$.
	Moreover,
	\begin{equation}
		u = P_{\Omega} \bar{G}_0,
	\end{equation}
	where $ P_{\Omega} $ is the orthogonal projection from $ \dot{H}^1_\sigma (\IR^3) $ to the subspace
	$ \dot{H}^1_{0,\sigma}(\IR^3 \backslash \overline{\Omega})$.
\end{lemma}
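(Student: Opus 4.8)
The plan is to follow the proof of Lemma~\ref{lem:scrpoibdry} (equivalently Lemma~\ref{lem:poibdry}) verbatim for the velocity field, and then add one extra step that recovers the pressure. Working in the Hilbert space $\dot{H}^1_{0,\sigma}(\IR^3 \backslash \overline{\Omega})$ equipped with the scalar product $(\nabla\cdot,\nabla\cdot)_{L^2(\IR^3)}$, the weak formulation of \eqref{eq:stokesBall} reads: find $u \in \dot{H}^1_{0,\sigma}(\IR^3 \backslash \overline{\Omega})$ with
\[
	(\nabla u, \nabla v)_{L^2(\IR^3)} = \langle v, f\rangle \qquad \text{for all} \quad v \in \dot{H}^1_{0,\sigma}(\IR^3 \backslash \overline{\Omega}).
\]
Since $f \in \dot{H}^{-1}(\IR^3)$ and $\dot{H}^1_{0,\sigma}(\IR^3 \backslash \overline{\Omega}) \subset \dot{H}^1(\IR^3)$, the right-hand side is a bounded linear functional, so the Riesz representation theorem gives existence and uniqueness of $u$. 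To identify $u$, observe that by Lemma~\ref{lem:sto} the whole-space field $\bar{G}_0 f$ satisfies $(\nabla \bar{G}_0 f, \nabla v)_{L^2(\IR^3)} = \langle v, f\rangle$ for every divergence-free $v$, because its pressure pairs to zero against divergence-free test fields; hence for $v \in \dot{H}^1_{0,\sigma}(\IR^3 \backslash \overline{\Omega})$,
\[
	(\nabla(\bar{G}_0 f - u), \nabla v)_{L^2(\IR^3)} = \langle v, f\rangle - \langle v, f\rangle = 0.
\]
Thus $\bar{G}_0 f - u$ is orthogonal to $\dot{H}^1_{0,\sigma}(\IR^3 \backslash \overline{\Omega})$ in $\dot{H}^1_\sigma(\IR^3)$, i.e. $u = P_{\Omega}\bar{G}_0 f$.

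It remains to produce the pressure. The linear functional $v \mapsto \langle v, f\rangle - (\nabla u, \nabla v)_{L^2(\IR^3)}$ is bounded on the full space $\dot{H}^1_0(\IR^3 \backslash \overline{\Omega};\IR^3)$ and, by the variational identity above, vanishes on its divergence-free subspace. By the de Rham theorem in its functional-analytic form — a bounded functional annihilating the divergence-free fields is represented by $v \mapsto \int p\,\dv v\,\dd x$ for some $p \in L^2(\IR^3 \backslash \overline{\Omega})$, which amounts to the closed-range (inf--sup / Ne\v{c}as) property of the divergence operator $\dot{H}^1_0(\IR^3 \backslash \overline{\Omega};\IR^3) \to L^2(\IR^3 \backslash \overline{\Omega})$ — such a $p$ exists. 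Extending $p$ by zero into $\overline{\Omega}$ yields a pair $(u,p) \in \dot{H}^1_\sigma(\IR^3) \times L^2(\IR^3)$ which is a weak solution of \eqref{eq:stokesBall} with $p = 0$ in $\overline{\Omega}$; uniqueness of $p$ follows because two admissible pressures differ by a function with vanishing gradient on each connected component of $\IR^3 \backslash \overline{\Omega}$, and such a function must vanish since it lies in $L^2$ and (in the case $\Omega = B_i$ actually used later) the complement is connected and unbounded.

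The only genuinely delicate point is the global $L^2$-bound on the pressure, i.e. the closed-range property of the divergence operator on the possibly unbounded and perforated domain $\IR^3 \backslash \overline{\Omega}$. For the configurations that matter later this is harmless: when $\Omega$ is a single ball $B_i$ it is classical exterior-domain Stokes theory, and when $\Omega = K$ is a disjoint union of balls obeying Condition~\ref{cond:particlesNotToClose} one can assemble a bounded right inverse of the divergence by patching together Bogovski\u{i} operators on the fixed-aspect-ratio shells $B_{\kappa r_i}(x_i) \backslash \overline{B_i}$, with a constant depending only on $\kappa$. I would therefore organise the write-up so that the de Rham step is invoked only in this controlled geometric setting, keeping the statement for general open $\Omega$ but remarking that for truly arbitrary $\Omega$ the pressure is a priori only in $L^2_{\loc}$ unless such an inf--sup bound is available.
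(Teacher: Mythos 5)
Your proposal is correct and matches the route the paper intends: the paper gives no separate proof of Lemma \ref{lem:stobdry} precisely because it is the verbatim analogue of Lemma \ref{lem:scrpoibdry}/\ref{lem:poibdry} (Riesz representation in $\dot{H}^1_{0,\sigma}(\IR^3\backslash\overline{\Omega})$ plus the orthogonality identification $u=P_\Omega\bar{G}_0 f$), with the pressure recovered exactly as you do, via the de Rham/closed-range property supplied by the divergence lemma (Lemma \ref{lem:divergenceSolution}, cf.\ also Lemma \ref{lem:pressureEstimate}). Your added caveats — that the global $L^2$ pressure bound and the uniqueness normalization $p=0$ in $\overline{\Omega}$ are only delicate for pathological $\Omega$ and are harmless for the balls actually used — are accurate and consistent with the paper's remark following the lemma.
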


\begin{remark}
	Analogous to $H^1_0(\IR^3 \backslash \overline{\Omega})$,  we use the convention
	\[
	\dot{H}^1_{0,\sigma}(\IR^3 \backslash \overline{\Omega}) :=
	 \{u \in \dot{H}^1_{\sigma}(\IR^3 \backslash \overline{\Omega}) \colon u=0 \text{ in } \Omega \}.
	\] 
\end{remark}

\begin{remark}
	The condition $p = 0 $ in $\overline{\Omega}$ in Equation \eqref{eq:stokesBall} ensures uniqueness. Indeed, dropping
	this condition, $p$ can be chosen equal to any constant in every bounded connected component of $\Omega$.
\end{remark}

	Again, for a particle $i$, we define the orthogonal projection $Q_i = 1 - P_i$, where $P_i = P_{B_i}$ and
	notice that
	\[
	 	\dot{H}^1_{0,\sigma}(\IR^3 \backslash \overline{B_i})^{\perp_\sigma} 
	 	= \{ u \in \Hdot_\sigma(\IR^3) \colon -\Delta u = -\nabla p \text{ in } \IR^3 \backslash \overline{B_i}
	 	\text{ for some } p \in L^2(\IR^3 \backslash  \overline{B_i})\},
	\]
	 where $\perp_\sigma$ indicates that we take the orthogonal complement with respect to $\Hdot_\sigma(\IR^3)$.

Notice that $G_0^{-1} Q_i u \in \dot{H}^{-1}_\sigma(\IR^3)$ is supported in $\overline{B_i}$, i.e., $\langle v,G_0^{-1} Q_x u\rangle = 0$ for every
$v$ in $ \Hdot_{0,\sigma}(\IR^3 \backslash B_x)$. This, however, does not mean
that $G_0^{-1} Q_i u$, viewed as an element of $\dot{H}^{-1}(\IR^3)$, is supported in $\overline{B_i}$. 

In the case of Poisson equation, we often used cutoff functions to exploit that a function $f \in \dot{H}^{-1}_\sigma(\IR^3)$ is supported in $\overline{B_i}$.
However, multiplication with a cutoff function destroys the property of a function to be divergence free.
Therefore, the following Lemma is needed.

\begin{lemma}
	\label{lem:pressureEstimate}
	Assume $ f \in \dot{H}^{-1}_\sigma(\IR^3) $ is supported in 
	$ \overline{B_i} $. Then, there exists a unique $p \in L^2(\IR^3)$ with $p=0$ in $B_i$ such that 
	$ \tilde{f} := f +\nabla p $ is supported in $\overline{B_i}$ as a function in $\dot{H}^{-1}(\IR^3)$.
	Moreover, 
	 $\|\tilde{f}\|_{\dot{H}^{-1}(\IR^3)} \leq C \|f\|_{\dot{H}^{-1}(\IR^3)}$
	for a universal constant $ C $.
	We denote by $S$ the operator that maps $f$ to $\tilde{f}$.
	
\end{lemma}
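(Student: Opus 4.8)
The plan is to recover the ``missing pressure'' by a De~Rham/Bogovski\u{\i} argument localized near $B_i$, the point being that $f$, which annihilates divergence-free test fields vanishing on $\overline{B_i}$, annihilates a general such test field $w$ only up to a term that sees $\dv w$. Write $\Omega := \IR^3 \backslash \overline{B_i}$, and recall that, via the inclusion $\dot H^{-1}_\sigma(\IR^3) \hookrightarrow \dot H^{-1}(\IR^3)$ described before the lemma, $f$ acts on $w \in \dot H^1(\IR^3)$ by $w \mapsto \langle P_\sigma w, f\rangle$, where $P_\sigma$ is the orthogonal projection onto $\dot H^1_\sigma(\IR^3)$; thus requiring ``$\tilde f = f + \nabla p$ is supported in $\overline{B_i}$'' means exactly that $\langle P_\sigma w, f\rangle = \int_{\IR^3} p\,\dv w$ for every $w \in \dot H^1_0(\IR^3\backslash\overline{B_i})$. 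The key analytic ingredient I would isolate first is a \emph{uniform right inverse for the divergence}: there is a universal constant $C$ such that every $g \in L^2(\Omega)$ admits $w = \mathcal B g \in \dot H^1_0(\IR^3\backslash\overline{B_i})$ with $\dv w = g$ and $\|\nabla w\|_{L^2(\IR^3)} \le C \|g\|_{L^2(\Omega)}$; in particular $\dv \colon \dot H^1_0(\IR^3\backslash\overline{B_i}) \to L^2(\Omega)$ is onto. Establishing this with a constant independent of $r_i$ is the main obstacle; the rest is soft.

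To prove the divergence estimate I would first reduce to the reference domain $\Omega_1 = \IR^3 \backslash \overline{B_1(0)}$ by the scaling $x \mapsto x/r_i$, under which the $\dot H^1$-seminorm, the $L^2$-norm of the divergence, and (as one checks) the $\dot H^{-1}$-norm all rescale by the same power of $r_i$, so the whole lemma is scale invariant. For $g \in L^2(\Omega_1)$, extend by zero to $\bar g \in L^2(\IR^3)$ and put $V := \nabla \Delta^{-1}\bar g$, which lies in $\dot H^1(\IR^3)$ with $\dv V = \bar g$ and $\|\nabla V\|_{L^2} \le C\|\bar g\|_{L^2}$ by Calder\'on--Zygmund; since $\dot H^1(\IR^3) \hookrightarrow L^6(\IR^3)$ also $\|V\|_{L^2(B_2)} \le C\|\nabla V\|_{L^2}$. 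Choose $\eta$ with $\eta \equiv 1$ on $B_{3/2}$ and $\eta \equiv 0$ off $B_2$, and solve $\dv z = \dv(\eta V)$ on the annulus $B_2\backslash\overline{B_1}$ with $z \in H^1_0(B_2\backslash\overline{B_1})$ via the classical Bogovski\u{\i} operator on this bounded Lipschitz domain; the compatibility condition holds because $\int_{B_2\backslash\overline{B_1}} \dv(\eta V) = -\int_{\partial B_1} V\cdot n = -\int_{B_1}\dv V = -\int_{B_1}\bar g = 0$, using $\bar g \equiv 0$ on $B_1$, so $\|\nabla z\|_{L^2} \le C\|\dv(\eta V)\|_{L^2} \le C\|g\|_{L^2}$. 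Then $w := (1-\eta)V + z$ vanishes on $\overline{B_1}$, lies in $\dot H^1_0(\Omega_1)$, satisfies $\dv w = \bar g = g$ on $\Omega_1$, and obeys $\|\nabla w\|_{L^2(\IR^3)} \le C\|g\|_{L^2}$, as desired.

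With $\mathcal B$ in hand I would finish as follows. The bounded functional $G(w) := \langle P_\sigma w, f\rangle$ on $\dot H^1_0(\IR^3\backslash\overline{B_i})$ vanishes whenever $\dv w = 0$: in that case $P_\sigma w = w \in \dot H^1_{0,\sigma}(\IR^3\backslash\overline{B_i})$, so $G(w) = \langle w, f\rangle = 0$ by the hypothesis on $f$. Hence $G$ is constant on the fibres of $\dv$ and factors as $G = \ell \circ \dv$ for a linear $\ell$ on $\dv(\dot H^1_0(\Omega)) = L^2(\Omega)$; evaluating on $\mathcal B g$ and using $\|\nabla P_\sigma(\mathcal B g)\|_{L^2} \le \|\nabla \mathcal B g\|_{L^2} \le C\|g\|_{L^2}$ (orthogonal projection in the $\dot H^1$ inner product) gives $|\ell(g)| \le C\|f\|_{\dot H^{-1}}\|g\|_{L^2(\Omega)}$. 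By Riesz representation there is a unique $q \in L^2(\Omega)$ with $\ell(g) = \int_\Omega q\,g$ and $\|q\|_{L^2(\Omega)} \le C\|f\|_{\dot H^{-1}}$; set $p := q$ on $\Omega$ and $p := 0$ on $B_i$, so $p \in L^2(\IR^3)$ with $p=0$ in $B_i$. For every $w \in \dot H^1_0(\IR^3\backslash\overline{B_i})$ we then get $\langle P_\sigma w, f\rangle - \int_{\IR^3} p\,\dv w = \ell(\dv w) - \int_\Omega q\,\dv w = 0$ (since $\dv w = 0$ on $B_i$), i.e. $\tilde f = f + \nabla p$ is supported in $\overline{B_i}$, while $\|\tilde f\|_{\dot H^{-1}(\IR^3)} \le \|f\|_{\dot H^{-1}(\IR^3)} + \|p\|_{L^2(\IR^3)} \le C\|f\|_{\dot H^{-1}(\IR^3)}$. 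Uniqueness is immediate: if $p_1,p_2$ both work then $\int_{\IR^3}(p_1-p_2)\,\dv w = 0$ for all $w \in \dot H^1_0(\IR^3\backslash\overline{B_i})$, and surjectivity of $\dv$ onto $L^2(\Omega)$ forces $p_1 = p_2$ on $\Omega$, while both vanish on $B_i$. The only genuinely nontrivial point in this whole argument is the cutoff construction of the previous paragraph, specifically arranging that the Bogovski\u{\i} compatibility condition holds; the scaling reduction is what keeps the constant uniform in $r_i$.
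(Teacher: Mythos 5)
Your argument is correct, and its analytic core is the same as the paper's: everything reduces to solving $\dv w = g$ in the exterior of the ball with a scale-invariant bound, plus a duality argument. The difference is in how the two ingredients are packaged. The paper first \emph{asserts} the existence of a unique $p\in L^2(\IR^3\setminus\overline{B_i})$ with $f=-\nabla p$ outside the ball (a de Rham--type statement it does not prove), and then uses the divergence lemma (Lemma~\ref{lem:divergenceSolution}, cited from the literature, with the remark that its constant is scale invariant) only to obtain the quantitative bound $\|p\|_{L^2}\le C\|f\|_{\dot H^{-1}}$ by testing $f=-\nabla p$ against a field $u$ with $\dv u=p$. You never invoke de Rham: you obtain existence and the estimate for $p$ simultaneously, by showing that the functional $w\mapsto\langle P_\sigma w,f\rangle$ on $\dot H^1_0(\IR^3\setminus\overline{B_i})$ kills divergence-free fields, factoring it through the (surjective) divergence operator, and applying Riesz representation on $L^2(\Omega)$; the bound on the induced functional comes from your right inverse $\mathcal B$. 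Moreover, you construct that right inverse by hand (scaling to the unit ball, Newtonian potential for the zero extension of $g$, a cutoff, and the Bogovski\u{\i} operator on the annulus with the compatibility condition checked via $\bar g\equiv 0$ on $B_1$), whereas the paper simply cites the exterior-domain result. So your proof is somewhat longer but more self-contained, and it makes explicit the de Rham step that the paper leaves implicit; the paper's version is shorter because it delegates both the existence of the pressure and the divergence solvability to standard references. Your uniqueness and support verifications (using that $\nabla w=0$ a.e.\ on the ball, so $\dv w$ does not see $B_i$) are fine.
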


\begin{proof}
	Since $ f \in \dot{H}^{-1}_\sigma(\IR^3) $ is supported in 
	$ \overline{B_i} $, we have $\langle f,v\rangle = 0$ for all 
	$v \in \Hdot_{0,\sigma}(\IR^3 \backslash B_i)$. Hence, there exists a unique $p \in L^2(\IR^3\backslash \overline{B_i})$ 
	such that $f = -\nabla p$ in $\IR^3 \backslash \overline{B_i}$ and we can set $p=0$ 
	in $B_i$. By Lemma \ref{lem:divergenceSolution} below, we can find $u \in \Hdot_0(\IR^3 \backslash B_i)$ such that $\dv u = p$ and 
	$\|u\|_{\Hdot(\IR^3)} \leq C \|p\|_{L^2(\IR^3)}$. Hence, 
	\[
	  \|u\|_{\Hdot(\IR^3)} \|f\|_{\dot{H}^{-1}(\IR^3)}
	   \geq \langle u, f \rangle   = \langle u, - \nabla p \rangle	= \|p\|^2_{L^2(\IR^3)},
	\]  
	and thus $\|p\|_{L^2(\IR^3)} \leq C \|f\|_{\dot{H}^{-1}(\IR^3)}$.
	Hence, $\tilde{f} := f + \nabla p$ is supported in $\overline{B_i}$ as a function in $\dot{H}^{-1}(\IR^3)$, and
	 $\|\tilde{f}\|_{\dot{H}^{-1}(\IR^3)} \leq C \|f\|_{\dot{H}^{-1}(\IR^3)}$.
\end{proof}

The following Lemma can be found in every standard textbook on Stokes equations, e.g., in \cite{Ga11}.

\begin{lemma}
	\label{lem:divergenceSolution}
	Let $\Omega \subset \IR^3$ be a locally lipschitzian bounded or exterior domain. Then there exists a constant $C$
	with the following property.
	For all $f \in L^2(\Omega)$, that satisfies
	\[
		\int_\Omega f \dd x = 0,
	\] 
	if $\Omega$ is a bounded domain, there exists $u \in H^1_0(\Omega)$ such that 
	\[
		\dv u = f
	\]
	and
	\[	
		\|\nabla u\|_{L^2(\Omega)} \leq  C \|f\|_{L^2(\Omega)}.
	\]
\end{lemma}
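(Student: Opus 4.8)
This is the classical construction of the Bogovskii solution operator for the divergence equation, so the plan is to follow the standard three-step argument: reduce by a partition of unity to the case in which $\Omega$ is star-shaped with respect to an interior ball, solve $\dv u = f$ there by an explicit integral formula with the required gradient estimate, and then glue the local solutions together while tracking the zero-mean compatibility condition. For an exterior domain one inserts one extra cutoff that splits off a neighbourhood of infinity, into which any leftover mass can be absorbed, which is exactly why no global compatibility condition on $f$ is needed in that case.

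\emph{Reduction to star-shaped pieces.} Since $\partial\Omega$ is compact and locally Lipschitz, I would first cover $\overline\Omega$ by finitely many open sets $\omega_0,\dots,\omega_N$ with $\omega_0 \Subset \Omega$ and such that each $\Omega_k := \Omega\cap\omega_k$ is star-shaped with respect to some ball $B_k \Subset \Omega_k$: near a boundary point $\Omega$ agrees with the region lying above a Lipschitz graph, which is star-shaped with respect to a suitable small ball. Taking a partition of unity $\{\varphi_k\}$ subordinate to this cover and fixed bump functions $\theta_k\in C_c^\infty(B_k)$ with $\int\theta_k=1$, I would set $m_k := \int_\Omega \varphi_k f$, observe $\sum_k m_k = \int_\Omega f = 0$, and then define functions $f_k$ supported in $\Omega_k$ with $\int_{\Omega_k} f_k = 0$, $\sum_k f_k = f$, and $\|f_k\|_{L^2(\Omega_k)}\le C\|f\|_{L^2(\Omega)}$, by distributing the terms $m_k\theta_k$ so that the telescoping error vanishes (this uses only $\sum_k m_k=0$). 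It then suffices to solve $\dv u_k = f_k$ in $\Omega_k$ with $u_k\in H^1_0(\Omega_k)$ and $\|\nabla u_k\|_{L^2}\le C\|f_k\|_{L^2}$, and take $u=\sum_k u_k$, each $u_k$ extended by zero.

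\emph{The explicit formula on a star-shaped domain.} For a domain $G$ star-shaped with respect to a ball $B$, $\theta\in C_c^\infty(B)$ with $\int\theta=1$, and $g\in L^2(G)$ with $\int_G g=0$, I would use
\[
 u(x) = \int_G g(y)\,\frac{x-y}{|x-y|^3}\left(\int_{|x-y|}^{\infty}\theta\!\left(y+s\,\frac{x-y}{|x-y|}\right)s^2\,\dd s\right)\dd y .
\]
A direct differentiation under the integral sign, together with $\int_G g=0$, shows $\dv u=g$ in $G$, and $u\in C_c^\infty(G)$ when $g\in C_c^\infty(G)$ has zero mean. Each component of $\nabla u$ can then be written as $Kg$ plus a term controlled by $C\|g\|_{L^1(G)}$, where $K$ is a singular integral operator whose kernel is of Calder\'on--Zygmund type (homogeneous of degree $-3$ in the principal direction with the spherical cancellation property). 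Hence $K$ is bounded on $L^2$, giving $\|\nabla u\|_{L^2(G)}\le C\|g\|_{L^2(G)}$; by density of zero-mean smooth functions and this a priori bound, the operator extends to all admissible $g\in L^2(G)$, producing $u\in H^1_0(G)$.

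\emph{Exterior domains and the main obstacle.} If $\Omega$ is an exterior domain, pick $R$ with $\IR^3\setminus\Omega\subset B_{R/2}$, a cutoff $\varphi\in C_c^\infty(B_R)$ equal to $1$ on $B_{R/2}$, and $\theta_\infty\in C_c^\infty(B_R\setminus B_{R/2})$ with $\int\theta_\infty=1$. Writing $m:=\int_\Omega(1-\varphi)f$, the function $\varphi f + m\theta_\infty$ is supported in the bounded Lipschitz domain $\Omega\cap B_R$ with zero mean there, handled by the previous two steps, while $(1-\varphi)f-m\theta_\infty$ has zero mean and is supported in $\IR^3\setminus B_{R/2}$, to which the explicit formula (with $\theta$ supported in an annulus) applies directly with a uniform bound; summing the two contributions gives $u\in H^1_0(\Omega)$ with $\dv u=f$ and the estimate. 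The only genuinely non-elementary step is the $L^2$ (indeed $L^p$) boundedness of the gradient of the Bogovskii operator, i.e.\ the Calder\'on--Zygmund estimate for the kernel arising above; everything else is bookkeeping with partitions of unity and cutoffs. Full details are in \cite{Ga11}.
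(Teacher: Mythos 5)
The paper does not prove this lemma at all: it simply refers to \cite{Ga11}, and your bounded-domain argument (cover by finitely many pieces star-shaped with respect to balls, decompose $f$ into zero-mean pieces, solve on each piece by the Bogovski\u{\i} integral formula, control the gradient by a Calder\'on--Zygmund estimate) is exactly the standard proof given there; apart from the minor bookkeeping point that the mass-transfer bumps must be supported in the \emph{overlaps} $\Omega_k\cap\Omega_j$ (not merely in the star-center balls $B_k\Subset\Omega_k$), this half of your proposal is correct.

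The exterior-domain step, however, has a genuine gap. First, for an exterior domain $f\in L^2(\Omega)$ need not be integrable, so the quantity $m=\int_\Omega(1-\varphi)f$ you use to restore the compatibility condition is in general not even defined. Second, and more seriously, the Bogovski\u{\i} formula does \emph{not} ``apply directly'' on $G=\IR^3\setminus B_{R/2}$ with $\theta$ supported in an annulus: that domain is not star-shaped with respect to any ball, and star-shapedness is exactly what guarantees both that the resulting field vanishes off $G$ (for $x\in B_{R/2}$ the ray from $y$ through $x$, prolonged beyond $x$, re-enters the annulus $\supp\theta$, so $u(x)\neq 0$ in general, and then $u\notin H^1_0(\Omega)$) and that the constant in the gradient estimate is finite (it grows like a power of $\operatorname{diam}(G)/R$, hence is not uniform on an unbounded region). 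The standard repair, which is the one in \cite{Ga11}, is different: for the far-field part $f_2$ supported outside a ball take $w=\nabla\Delta^{-1}f_2$ (gradient of the Newtonian potential), which satisfies $\dv w=f_2$ in all of $\IR^3$ with $\|\nabla w\|_{L^2}\leq C\|f_2\|_{L^2}$ by Calder\'on--Zygmund and needs no mean-zero condition; then cut off near the obstacle, $v_2=\chi w+z$, where $z$ solves $\dv z=-\nabla\chi\cdot w$ in the bounded annulus by the bounded-domain case already proved --- the compatibility $\int\nabla\chi\cdot w=0$ there follows from the divergence theorem since $\dv w=0$ inside the ball. (Alternatively one can invoke the variant of the Bogovski\u{\i} formula for domains star-shaped with respect to infinity, in which the mass is pushed outward along rays to infinity; but some such device is needed --- the formula as you wrote it does not do the job on the exterior of a ball.) With this replacement for the last step, the proof is complete.
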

\begin{remark}
	The constant $C$ is invariant under scaling of $\Omega$.
\end{remark}

Now one can define the operator $L$ analogously to
the corresponding operator for the Poisson equation from Definition \ref{def:pL}.
Using Lemma \ref{lem:pressureEstimate}, the estimate for $L$ (cf. Lemma \ref{lem:Lbounded}) follows in the same manner as before.
Then, Theorem \ref{StokesConvergence} follows immediately from Proposition \ref{pro:nonuniformAbstractProjection} and Lemma \ref{lem:stobdry}.

\subsection{Homogenization}

Corresponding to Definition \ref{def:monopole}, we introduce the following operators.

\begin{definition}	
	We define $T_x : \Hdot_\sigma(\IR^3) \to \dot{H}^{-1}(\IR^3)$ by
	$T_x = S G_0^{-1} Q_x$, where $S$ is the operator from Lemma \ref{lem:pressureEstimate}.

	Moreover, we define the uniform force density approximation of the operator $T$ 
	to be the operator $M_x : \Hdot_\sigma(\IR^3) \to \dot{H}^{-1}(\IR^3)$,
	\[
		(M_x u)(y) = \frac{3(u)_{x,r}}{2r}
	\]
	\end{definition}

Note that the definition of $M_x$ differs from the corresponding operator for the Poisson equation (cf. Definition \ref{def:monopole}) by a factor
$3/2$. The reason for this is that the electrostatic capacity of a ball of radius $r$ is 
$4 \pi r$. The corresponding quantity for the Stokes equations, however, is the absolute value of the Stokes' drag force acting on a ball of radius $r$ moving with unit speed in a fluid which is  at rest at infinity, which is $6 \pi r$.

Lemma \ref{lem:extest} used in the proof Lemma \ref{lem:pMonopapprox} has to be replaced by the following Lemma.

\begin{lemma}
	\label{lem:Stokesextest}
	For $ r>0 $ and $ x \in \IR^3$,  let $ H_r := \left\{ u \in  H^1(B_r(x)) \colon \int_{B_r(x)} u = 0 \right\} $. 
	Then for all $ r >0 $	there exists an extension operator $ E_r \colon H_r \to H^1_0(B_{2r}(x)) $ such that
	\begin{equation}
		\label{eq:Stokesextest}
		\| \nabla E_r u \|_{L^2(B_{2r}(x))} \leq C \| \nabla u \|_{L^2(B_r(x))} \qquad \text{for all} \quad u \in H_r,
	\end{equation}
	where the constant $ C $ is independent of $ r $.
\end{lemma}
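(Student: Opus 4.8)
The plan is the following. As stated, the assertion is merely the vector-valued form of Lemma~\ref{lem:extest} and follows by applying that lemma to each component of $u$, with a constant independent of $r$. The substantive statement, and the one actually needed in the Stokes analogue of Lemma~\ref{lem:pMonopapprox}, is the refinement in which $u$ is in addition divergence free and $E_r u$ is required to be divergence free as well (so that, extended by zero, it lies in $\dot{H}^1_{0,\sigma}$); it is this version that I plan to establish, the inequality \eqref{eq:Stokesextest} then being an immediate consequence.

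First I would reduce to $r = 1$ and $x = 0$ by the scaling $v \mapsto v(\cdot/r)$: since in three dimensions the factor $r^{-2}$ produced by differentiation is compensated by the factor $r^3$ from the change of variables, inequality \eqref{eq:Stokesextest} is scale invariant, while the divergence-free property and the zero-mean condition are both preserved. Hence it suffices to produce, for a divergence free $v \in H^1(B_1(0))$ with $\int_{B_1(0)} v = 0$, a divergence free $Ev \in H^1_0(B_2(0))$ with $Ev = v$ in $B_1(0)$ and $\|\nabla Ev\|_{L^2(B_2(0))} \le C \|\nabla v\|_{L^2(B_1(0))}$.

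The construction has two steps. Let $\hat w \in H^1_0(B_2(0))$ be a standard Sobolev extension of $v$ (applied componentwise); by the Poincar\'e inequality on $B_1(0)$ one gets $\|\nabla \hat w\|_{L^2(B_2(0))} \le C\|\hat w\|_{H^1(B_2(0))} \le C\|v\|_{H^1(B_1(0))} \le C\|\nabla v\|_{L^2(B_1(0))}$. This $\hat w$ is solenoidal in $B_1(0)$ (there it equals $v$) but generally not in the annulus $A := B_2(0)\setminus\overline{B_1(0)}$; however,
\[
	\int_A \dv \hat w \dd x = \int_{\partial B_2(0)} \hat w \cdot n - \int_{\partial B_1(0)} v \cdot n = - \int_{B_1(0)} \dv v \dd x = 0,
\]
so $\dv \hat w \in L^2(A)$ has vanishing average on the Lipschitz bounded domain $A$. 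Lemma~\ref{lem:divergenceSolution} applied on $A$ then furnishes $z \in H^1_0(A)$ with $\dv z = \dv \hat w$ in $A$ and $\|\nabla z\|_{L^2(A)} \le C\|\dv \hat w\|_{L^2(A)} \le C\|\nabla v\|_{L^2(B_1(0))}$. Extending $z$ by zero to $B_2(0)$ and setting $Ev := \hat w - z$, one checks that $Ev \in H^1_0(B_2(0))$, that $Ev = v$ in $B_1(0)$ (since $z$ vanishes there), that $\dv(Ev) = 0$ both in $B_1(0)$ and in $A$ hence throughout $B_2(0)$, and that $\|\nabla Ev\|_{L^2(B_2(0))} \le \|\nabla \hat w\|_{L^2(B_2(0))} + \|\nabla z\|_{L^2(A)} \le C\|\nabla v\|_{L^2(B_1(0))}$.

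The only point requiring care is the uniformity of $C$ in $r$, which is precisely the purpose of the reduction to a fixed reference configuration: the Sobolev extension and Lemma~\ref{lem:divergenceSolution} are then invoked on the fixed domains $B_1(0)$, $B_2(0)$, $A$, with fixed constants; alternatively one may work directly at scale $r$ and use the scale invariance of the constant in Lemma~\ref{lem:divergenceSolution} recorded in the Remark following it. Beyond this bookkeeping the argument is routine, and I do not expect a genuine obstacle.
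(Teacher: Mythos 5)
Your proposal is correct, and for the statement as it is actually formulated it coincides with the paper's argument: the paper proves Lemma \ref{lem:Stokesextest} verbatim as Lemma \ref{lem:extest} (a continuous extension operator at unit scale, the Poincar\'e inequality on mean-zero functions, and the scaling $u \mapsto u(\cdot/r)$), which is exactly your componentwise reduction. The additional divergence-free refinement you construct is also correct -- the flux computation over the annulus, the applicability of Lemma \ref{lem:divergenceSolution}, and the fact that $\dv(Ev)=0$ across $\partial B_1$ because $Ev \in H^1$ are all fine -- but it is not new relative to the paper: it is essentially Lemma \ref{cor:solenoidalExtension}, which the paper states under the weaker hypothesis $\int_{\partial\Omega} v\cdot\nu = 0$ (automatic for divergence-free $v$) and proves by the same two-step device, namely an arbitrary extension corrected by a solution of $\dv u_2 = -\dv u_1$ in the annulus via Lemma \ref{lem:divergenceSolution}. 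So the paper separates the two tasks -- the plain gradient-controlled extension (Lemma \ref{lem:Stokesextest}, used in the Stokes analogue of Lemma \ref{lem:pMonopapprox}) and the solenoidal extension (Lemma \ref{cor:solenoidalExtension}, used where a divergence-free competitor is required) -- whereas you merge them; both routes deliver the same estimates with $r$-independent constants.
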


\begin{proof}
	For $ r = 1 $ let $ E_1 \colon H^1(B_1(x)) \to H^1_0(B_2(x)) $ be a continuous extension operator.
	Then, by the Poincaré inequality on $ H_1 $ we get for all $ u \in H_1 $
	\begin{equation}
		\| \nabla E_1u \|_{L^2(B_{2}(x))} \leq \| E_1u \|_{H^1(B_{2}(x))} \leq C \| u \|_{H^1(B_1(x))}
		\leq C \| \nabla u \|_{L^2(B_1(x))}
	\label{eq:Stokesextest1}
	\end{equation}
	The assertion for general $ r > 0 $ follows from scaling by defining 
	$ (E_r)u(x) := (E_1u_r)({\frac{x}{r}}) $ where $ u_s(x) := u(sx) $.
\end{proof}

These are the only things that change in the proof of the homogenization result, Theorem \ref{HomogStokes},
except for the result about locally uniform convergence in the particle configuration.
For the Poisson equation, this result was stated in Proposition \ref{pro:pSolByScattering}.
The analogous statement for the Stokes equations remains valid.

However, the proof of Lemma \ref{lem:LCoerciveInCompacta} and \ref{lem:decayQuasiHarmonic} needed in the proof of Proposition
\ref{pro:pSolByScattering} have to be modified due to the use of cutoff functions.
Corresponding to  Lemma \ref{lem:LCoerciveInCompacta} and \ref{lem:decayQuasiHarmonic}, we will prove
 Lemma \ref{lem:SLCoerciveInCompacta} and \ref{lem:SpolynomialDecay} .
For the proof Lemma \ref{lem:SLCoerciveInCompacta}, we need the following lemma.

\begin{lemma}
	\label{cor:solenoidalExtension}
	Let $\Omega \subset \IR^3$ be a bounded and locally lipschitzian domain and assume $v \in H^1(\Omega)$ satisfies
	\[
		\int_\Omega v \cdot \nu = 0.
	\]
	Then, for any $R>0$ and $x \in \IR^3$ such that $\Omega \subset \subset B_R(x)$, there exists $u \in H^1_0(\Omega)$ such that 
	\begin{align}
		u &= v \text{ in } \Omega \\
		\dv u &= 0 \text{ in } B_R(x) \backslash \overline{\Omega}
	\end{align}
	and
	\[	
		\| u\|_{H^1(B_R)} \leq  C \|v\|_{H^1(\Omega)},
	\]
	where the constant depends only on the domains $\Omega$ and $B_R(x)$.
	
	In particular, for any $v \in H^1(B_r)$ with $\int_{b_r} v \cdot \nu = 0$, we can find $u \in H^1_0(B_{2r}(x))$ such that 
	\begin{align}
		u &= v \text{ in } B_r(x) \\
		\dv u &= 0 \text{ in } B_{2r}(x) \backslash B_r(x)
	\end{align}
	and
	\[	
		\|\nabla u\|^2_{H^1(B_{2r}(x))} \leq  \frac{C}{r^2} \|v\|^2_{L^2(B_r(x))} + C\|\nabla v\|^2_{L^2(B_r(x))} 
		\leq C \|\nabla v\|^2_{L^2(\IR^3)},
	\]
	where the constant is independent of $r$ and $v$.
\end{lemma}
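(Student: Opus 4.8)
The plan is to reduce the statement to the solvability of the divergence equation provided by Lemma \ref{lem:divergenceSolution}. For the first assertion, I would first extend $v$ to a function $\tilde v \in H^1(B_R(x))$ by a bounded linear extension operator for the lipschitzian domain $\Omega$ (the Stein extension, or simply reflection when $\Omega$ is a ball, as in the application), so that $\tilde v = v$ in $\Omega$ and $\|\tilde v\|_{H^1(B_R(x))} \le C\|v\|_{H^1(\Omega)}$ with $C$ depending only on $\Omega$ and $B_R(x)$. Then I would fix a cutoff $\chi \in C_c^\infty(B_R(x))$ with $\chi \equiv 1$ on a neighbourhood of $\overline\Omega$ and set $w := \chi\tilde v$. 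This $w$ lies in $H^1_0(B_R(x))$, agrees with $v$ in $\Omega$, has the same trace as $v$ on $\partial\Omega$ (since $\tilde v$ is globally $H^1$ across $\partial\Omega$), and satisfies $\dv w = \chi\,\dv\tilde v + \nabla\chi\cdot\tilde v \in L^2(B_R(x))$ with $\|\dv w\|_{L^2} \le C\|v\|_{H^1(\Omega)}$.

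Next I would correct the divergence on the region $U := B_R(x)\setminus\overline\Omega$, which is bounded and locally lipschitzian (an annulus in the applications, where $\Omega$ is a ball). By Lemma \ref{lem:divergenceSolution} applied on $U$ there is $z \in H^1_0(U)$ with $\dv z = -\dv w$ in $U$ and $\|\nabla z\|_{L^2(U)} \le C\|\dv w\|_{L^2(U)} \le C\|v\|_{H^1(\Omega)}$, \emph{provided} the compatibility condition $\int_U \dv w\,dx = 0$ holds. Verifying this is the one step that really uses the hypothesis, and is the point where some care is needed: by the divergence theorem $\int_U \dv w = \int_{\partial B_R(x)} w\cdot\nu - \int_{\partial\Omega} w\cdot\nu_\Omega$, the first integral vanishes because $\chi$, hence $w$, vanishes near $\partial B_R(x)$, and the second equals $\int_{\partial\Omega} v\cdot\nu_\Omega = 0$ by assumption since $w$ and $v$ share the same trace on $\partial\Omega$. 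Extending $z$ by zero across $\overline\Omega$ gives $z \in H^1_0(B_R(x))$ with $z \equiv 0$ in $\Omega$; then $u := w + z$ satisfies $u \in H^1_0(B_R(x))$, $u = v$ in $\Omega$, $\dv u = \dv w + \dv z = 0$ in $U$, and $\|u\|_{H^1(B_R(x))} \le \|w\|_{H^1} + \|z\|_{H^1} \le C\|v\|_{H^1(\Omega)}$, which is the first assertion.

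For the second assertion I would take $\Omega = B_r(x)$ and $B_R(x) = B_{2r}(x)$ and rescale by $y \mapsto (y-x)/r$ to the fixed pair $B_1(0) \subset\subset B_2(0)$. Both the extension operator and, by the remark after Lemma \ref{lem:divergenceSolution}, the operator from that lemma have constants invariant under this scaling, so the first assertion applied on $B_1(0)$ transfers back after tracking homogeneities: with $\hat u(\hat y) = u(x+r\hat y)$ and $\hat v(\hat y) = v(x+r\hat y)$ one has $\|\nabla\hat u\|^2_{L^2(B_2)} = r^{-1}\|\nabla u\|^2_{L^2(B_{2r}(x))}$, $\|\hat u\|^2_{L^2(B_2)} = r^{-3}\|u\|^2_{L^2(B_{2r}(x))}$, and similarly for $\hat v$, so the unit-scale bound $\|\hat u\|_{H^1(B_2)}^2 \le C\|\hat v\|_{H^1(B_1)}^2$ yields $\|\nabla u\|^2_{L^2(B_{2r}(x))} \le \tfrac{C}{r^2}\|v\|^2_{L^2(B_r(x))} + C\|\nabla v\|^2_{L^2(B_r(x))}$, the stated estimate. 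The final inequality $\|\nabla u\|^2_{L^2(B_{2r}(x))} \le C\|\nabla v\|^2_{L^2(\IR^3)}$ then holds whenever $v$ is the restriction of a function in $\dot{H}^1(\IR^3)$ (the case used in the sequel), because the Gagliardo--Nirenberg--Sobolev inequality gives $\|v\|^2_{L^2(B_r(x))} \le \|v\|^2_{L^6(\IR^3)}\,|B_r|^{2/3} \le C r^2 \|\nabla v\|^2_{L^2(\IR^3)}$ while trivially $\|\nabla v\|^2_{L^2(B_r(x))} \le \|\nabla v\|^2_{L^2(\IR^3)}$. The main obstacle is the compatibility check for the divergence solve; everything else is a routine combination of an extension operator, a cutoff, the divergence lemma, and scaling.
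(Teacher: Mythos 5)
Your proposal is correct and follows essentially the same route as the paper: extend $v$ to $B_R(x)$, correct the divergence in $B_R(x)\setminus\overline{\Omega}$ via Lemma \ref{lem:divergenceSolution}, obtain the second assertion by scaling, and the final bound from H\"older and Gagliardo--Nirenberg--Sobolev. In fact you supply details the paper's proof leaves implicit, namely the cutoff making the extension vanish near $\partial B_R(x)$ and the verification of the mean-zero compatibility condition $\int_{B_R(x)\setminus\overline{\Omega}}\dv w\,dx=0$, which is exactly where the hypothesis $\int_{\partial\Omega} v\cdot\nu=0$ enters.
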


\begin{proof}
	We take any (not necessarily divergence free) extension $u_1$ of $v$ to $B_R(x)$ that satisfies the estimate, 
	and take a solution $u_2 \in H^1(B_R \backslash \overline{\Omega})$ of $\dv u_2 = -\dv u_1$
	provided by Lemma \ref{lem:divergenceSolution} and define $ u = u_1 + u_2$.
	
	The second assertion follows from scaling, and the last inequality is a consequence of H\"older's inequality and the 
	Gagliardo-Nirenberg-Sobolev inequality.
\end{proof}

\begin{lemma}
	\label{lem:SLCoerciveInCompacta}
	Let 	$ u \in \dot{H}_{0,\sigma}^1(\IR^3 \backslash K_r)^{\perp_\sigma} $ and $R > 0$. We define $v \in \dot{H}^1_\sigma(\IR^3)$ to be the solution
	to
	\begin{align}
		-\Delta v &= -\nabla p \quad \text{in} ~ \IR^3 \backslash (K_r \cap \overline{B_R(0)}), \\
		\dv v &= 0, \\
		v &= u  \quad \text{in} ~ K_r \cap \overline{B_R(0)}.
	\end{align}
	Then,
	\[
		 (L_r u,u)_{\dot{H}^1(\IR^3)} \geq c e^{-R} \| v\|_{\dot{H}^1(\IR^3)}^2,
	\]
	where $c>0$ is a universal constant.
\end{lemma}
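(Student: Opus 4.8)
I would follow the proof of Lemma~\ref{lem:LCoerciveInCompacta} essentially verbatim, the single structural change being that in the Stokes setting a divergence-free field cannot be localized by multiplication with a scalar cutoff, so the cutoff used there is replaced by the solenoidal extension operator of Lemma~\ref{cor:solenoidalExtension}.

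First recall that $Q_x$ is the orthogonal projection in $\dot H^1_\sigma(\IR^3)$ onto $\dot H^1_{0,\sigma}(\IR^3\backslash\overline{B_x})^{\perp_\sigma}$, so $Q_xv=v$ in $\overline{B_x}$, $\dv(Q_xv)=0$ on all of $\IR^3$, and $(L_ru,u)_{\dot H^1(\IR^3)}=\sum_{x\in\Gamma_r}e^{-|x|}\,\|\nabla Q_xu\|_{L^2(\IR^3)}^2$. Since $Q_xu$ depends only on $u|_{\overline{B_x}}$, and $u=v$ on every particle with $B_x\subseteq\overline{B_R(0)}$, we have $Q_xu=Q_xv$ for all such $x$. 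For fixed $\mu$ one has $d/r\to\infty$ as $r\to0$, so for $r$ small the dilated balls $B_{2r}(x)$, $x\in\Gamma_r$, are pairwise disjoint; I also choose $R$ so that $\partial B_R(0)$ meets no particle (this is at our disposal in the application, and is the same harmless reduction implicit in Lemma~\ref{lem:LCoerciveInCompacta}), so that $K_r\cap\overline{B_R}=\bigcup\{B_x:B_x\subseteq\overline{B_R}\}$.

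The steps are then as follows. For each particle with $B_x\subseteq\overline{B_R}$, I apply Lemma~\ref{cor:solenoidalExtension} to $Q_xv|_{B_x}$; the zero-flux hypothesis holds because $\int_{\partial B_x}Q_xv\cdot\nu=\int_{B_x}\dv(Q_xv)=0$, giving $w_x\in H^1_0(B_{2r}(x))$ with $w_x=Q_xv$ in $B_x$, $\dv w_x=0$, and, after absorbing $\|Q_xv\|_{L^2(B_x)}\le Cr\|\nabla Q_xv\|_{L^2(\IR^3)}$, the scale-invariant bound $\|\nabla w_x\|_{L^2}^2\le C\|\nabla Q_xv\|_{L^2(\IR^3)}^2$. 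Setting $w:=\sum_{B_x\subseteq\overline{B_R}}w_x$, disjointness of the supports yields $w\in\dot H^1_\sigma(\IR^3)$ with $\|\nabla w\|_{L^2}^2=\sum\|\nabla w_x\|_{L^2}^2$, and on each $B_x\subseteq\overline{B_R}$ one has $w=Q_xv=v=u$ while all other summands vanish, so $w=u$ throughout $K_r\cap\overline{B_R}$. Thus $w$ is admissible for the variational principle characterizing $v$ as the $\dot H^1_\sigma$-minimal field agreeing with $u$ on $K_r\cap\overline{B_R}$, whence $\|v\|_{\dot H^1(\IR^3)}^2\le\|w\|_{\dot H^1(\IR^3)}^2\le C\sum_{B_x\subseteq\overline{B_R}}\|\nabla Q_xv\|_{L^2(\IR^3)}^2$. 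Finally, every such $x$ satisfies $|x|\le R$, so $e^{-|x|}\ge e^{-R}$ and $Q_xv=Q_xu$; combining with the formula for $(L_ru,u)$ gives $(L_ru,u)_{\dot H^1(\IR^3)}\ge e^{-R}\sum_{B_x\subseteq\overline{B_R}}\|\nabla Q_xv\|_{L^2(\IR^3)}^2\ge c\,e^{-R}\|v\|_{\dot H^1(\IR^3)}^2$, which is the assertion.

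I expect the only genuinely new point to be the construction of $w_x$: since a scalar cutoff would spoil the divergence-free constraint, Lemma~\ref{cor:solenoidalExtension} is what takes its place, at the modest cost of the slightly weaker but still $r$-independent estimate above. Everything else is bookkeeping, the two points requiring a word being that the patched field $w$ is globally divergence free (immediate since each $w_x\in H^1_0(B_{2r}(x))$ is solenoidal, the supports are disjoint, and traces match across the spheres $\partial B_{2r}(x)$ and $\partial B_x$) and the handling of particles meeting $\partial B_R$, which is the minor issue already present in the proof of Lemma~\ref{lem:LCoerciveInCompacta}.
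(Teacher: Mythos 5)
Your proof follows essentially the same route as the paper's: the variational characterization of $v$ as the minimal-norm divergence-free field agreeing with $u$ on $K_r\cap\overline{B_R}$, with the solenoidal extension of Lemma \ref{cor:solenoidalExtension} replacing the scalar cutoffs of the Poisson case, disjointness of the supports of the local extensions, and $e^{-|x|}\geq c\,e^{-R}$ for the relevant particles. Your explicit verification of the zero-flux condition and your handling of particles meeting $\partial B_R$ (the paper instead sums over all $x\in\Gamma_r\cap B_{R+r}$, using that $Q_x v=v$ in $B_x$) merely make explicit details the paper leaves implicit, so the argument is correct and not genuinely different.
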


\begin{proof}
	By the variational form of the equation for $ v $, we know that $v$ is the function of minimal norm 
	in the set $ X_v := \{ w \in \dot{H}^1_\sigma(\IR^3) \colon w = v ~ \text{in} ~ K \cap \overline{B_R} \}$. 
	For every $x$ in $\Gamma_r \cap B_{R=r}$, Corollary \ref{cor:solenoidalExtension} provides functions 
	$v_x \in H^1_0(B_{2r}(x)$ with $\|v_x\|_{\Hdot(\IR^3)} \leq C \|Q_x v\|_{\Hdot(\IR^3)}$ such that $v_x = Q_x v = v $ in $B_x$.
	 Clearly, $\sum_{x \in B_{R+r}} v_x \in X_u$, and hence,
	\begin{align}
		\langle Lv , v \rangle &= \sum_x e^{-|x|} \|Q_x v \|_{\dot{H}^1(\IR^3)}^2 \\
		&\geq c e^{-R} \sum_{x \in B_{R+r}} \|v_x \|_{\dot{H}^1(\IR^3)}^2 \\
		&= c e^{-R} \|\sum_{x \in B_{R+r}} v_x \|_{\dot{H}^1(\IR^3)}^2 \\
		&\geq c e^{-R} \|v\|^2_{\dot{H}^1(\IR^3)}. \qedhere
	\end{align}
\end{proof}

For the proof Lemma \ref{lem:SpolynomialDecay}, we need the following lemma.

\begin{lemma}
	\label{lem:projest}
	Let $ u \in H^1(\IR^3) $ and $ x \in \IR^3 $. Assume $ 0<\rho<R$. Then
	\begin{equation}
		\| u \|^2_{L^2(B_{\rho}(x))} 
		\leq C \left( \frac{\rho^3}{R^3} \|u\|_{L^2(B_R(x))}^2 + \rho^2 \|\nabla u\|_{L^2(B_R(x))}^2\right),
	\end{equation}
	where $ C$ is a universal constant.
	
	In particular, for all particle configurations with capacity $\mu$ and  all $u \in H^1(\IR^3)$, we have
		\begin{equation}
		\| u \|^2_{L^2(K_r)} \leq C \mu \|u\|_{L^2(\IR^3)}^2 + C \mu \|\nabla u\|_{L^2(\IR^3)}^2.
	\end{equation}
\end{lemma}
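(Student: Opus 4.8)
The plan is to obtain the first (pointwise-in-$x$) inequality as a scale-normalized Poincaré–Sobolev estimate on the unit ball, and then to derive the ``in particular'' statement by summing that estimate over the lattice $\Gamma_r$.

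For the first inequality I would first reduce to $x=0$ and $R=1$. Under the rescaling $\tilde u(z):=u(x+Rz)$ a direct change of variables shows that all three quantities appearing in the claimed inequality scale by the same power of $R$, so it is equivalent to
\[
\|u\|_{L^2(B_\sigma)}^2 \le C\Big(\sigma^3\|u\|_{L^2(B_1)}^2 + \sigma^2\|\nabla u\|_{L^2(B_1)}^2\Big),\qquad 0<\sigma<1 ,
\]
with $\sigma=\rho/R$. Writing $\bar u:=\fint_{B_1}u$, I split $\|u\|_{L^2(B_\sigma)}^2\le 2\|u-\bar u\|_{L^2(B_\sigma)}^2+2|\bar u|^2|B_\sigma|$. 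The average term is harmless: Jensen's inequality gives $|\bar u|^2\le\fint_{B_1}|u|^2=C\|u\|_{L^2(B_1)}^2$, and $|B_\sigma|=c\sigma^3$. For the oscillation term I combine the classical Poincaré–Sobolev inequality $\|u-\bar u\|_{L^6(B_1)}\le C\|\nabla u\|_{L^2(B_1)}$ with Hölder's inequality on the small ball $B_\sigma$,
\[
\|u-\bar u\|_{L^2(B_\sigma)}\le |B_\sigma|^{1/3}\,\|u-\bar u\|_{L^6(B_1)}\le C\sigma\,\|\nabla u\|_{L^2(B_1)} ,
\]
and squaring and combining the two bounds gives the normalized inequality.

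For the ``in particular'' statement, the balls $B_r(x)$, $x\in\Gamma_r$, are pairwise disjoint by Condition \ref{cond:particlesNotToClose} (which gives $d>2r$), so $\|u\|_{L^2(K_r)}^2=\sum_{x\in\Gamma_r}\|u\|_{L^2(B_r(x))}^2$. I apply the first inequality on each particle with $\rho=r$ and $R=d$, and then sum. Since the family $\{B_d(x)\}_{x\in\Gamma_r}$ has uniformly bounded overlap — the number of points of $(d\IZ)^3$ within distance $2d$ of a given one is a universal constant — summation yields
\[
\|u\|_{L^2(K_r)}^2 \le C\,\frac{r^3}{d^3}\,\|u\|_{L^2(\IR^3)}^2 + C\,r^2\,\|\nabla u\|_{L^2(\IR^3)}^2 .
\]
Finally I rewrite the coefficients using $r=(4\pi)^{-1}\mu d^3$: then $r^3 d^{-3}=(4\pi)^{-1}\mu\,r^2$, and since $r\le 1$ and $d\le 1$ under the standing assumptions of this section, both $r^3 d^{-3}$ and $r^2$ are $\le C\mu$, which is the claim.

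Everything here is routine; the one step that genuinely matters is the oscillation estimate on $B_\sigma$. One must exploit the Sobolev embedding $H^1(B_1)\hookrightarrow L^6(B_1)$ together with Hölder's inequality to extract the extra factor $\sigma$, because the bare Poincaré inequality on $B_1$ would only give $\|u-\bar u\|_{L^2(B_\sigma)}\le C\|\nabla u\|_{L^2(B_1)}$, i.e. a coefficient $R^2$ instead of $\rho^2$ in the final estimate — which, after summing over $\Gamma_r$, would be too weak to produce the factor $\mu$ in front of $\|\nabla u\|_{L^2}^2$.
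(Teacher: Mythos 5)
Your proposal is correct and follows essentially the same route as the paper: split $u$ into its mean over the large ball plus the oscillation, bound the mean term by Jensen's inequality together with the volume factor $\rho^3$, and gain the factor $\rho$ on the oscillation term via H\"older on the small ball combined with the Sobolev ($L^6$) bound; the only cosmetic difference is that you rescale to $B_1$ and invoke the Sobolev--Poincar\'e inequality there, whereas the paper stays at scale $R$ and uses the extension operator of Lemma \ref{lem:extest} with the whole-space Gagliardo--Nirenberg--Sobolev inequality. Your explicit summation over the lattice (with $R=d$, bounded overlap, and $r^3d^{-3}=(4\pi)^{-1}\mu r^2\le C\mu$, $r^2\le C\mu$ for small $r,d$) correctly supplies the ``in particular'' step that the paper leaves implicit.
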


\begin{proof}
	Define $ (u)_{R,x} = \fint_{B_{R}(x)} u $. Then, using Lemma 	\ref{lem:extest}	we get
	\begin{equation}
		\begin{aligned}
		\|u-(u)_{R,x}\|_{L^2(B_{\rho}(x))} 
		&\leq \|u-(u)_{R,x}\|_{L^6(B_{\rho}(x))} \|1\|_{L^3(B_{\rho}(x))} \\
		&\leq C \rho \|\nabla E_{R}(u-(u)_{R,x})\|_{L^2(B_d(x))} 
		\leq C \rho \|\nabla u \|_{L^2(B_{R}(x))}.
		\end{aligned}
	\end{equation}
	Furthermore,
	\begin{equation}
		\| (u)_{R,x} \|^2_{L^2(B_{\rho}(x))} = C \rho^3 \left(\fint_{B_{R}(x)} u \dd x \right)^2 
		\leq C \rho^3 \fint_{B_{R}(x)} u^2 \dd x= C \frac{\rho^3}{R^3}\| u \|^2_{L^2(B_{R}(x))}.
	\end{equation}
	Combining these two estimates yields the assertion.
\end{proof}

\begin{lemma}
	\label{lem:SpolynomialDecay}
	For all $\mu > 0$ and $\rho>0$, there exists a nonincreasing function 
	$ e_{\mu,\rho} \colon \IR_+ \to \IR_+$ with $ \lim_{s \to \infty} e_{\mu,\rho}(s) = 0$ that has the following property.
	All $w \in \dot{H}_0^1(\IR^3 \backslash K_r)^\perp $  with $w = 0$ in 
	$ K_r \cap B_R(0)$ satisfy
	\[
		\| \nabla w\|_{L^2(B_\rho(0))} \leq e_{\mu,\rho}(R) \| \nabla w \|_{L^2(\IR^3)},
	\]
	for all $R \geq \rho$
	if $r$ is sufficiently small.
\end{lemma}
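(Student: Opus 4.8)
This lemma is the Stokes counterpart of Lemma~\ref{lem:decayQuasiHarmonic}, and the plan is to run the same Widman hole-filling scheme, modified in two ways forced by the Stokes structure. First, a test function in the weak formulation of Stokes in $\IR^3\backslash K_r$ must be divergence free, whereas the natural cut-off $\eta^2w$ is not; the correction making it solenoidal requires solving a divergence equation on a \emph{perforated} annulus, and this is the technical heart of the argument. Second, a thin spherical shell of inner radius $s$ is only a John domain with John constant $\sim s$, so the cost of this correction grows with the distance to the origin; this degrades the exponential decay of the unperforated (Poisson) case to a polynomial rate $(\rho/R)^{c_\mu}$, which is exactly why $e_{\mu,\rho}$ is allowed to depend on $\rho$ and $\mu$ and why only $e_{\mu,\rho}(R)\to0$ is claimed. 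Accordingly I would iterate over \emph{dyadic} shells $\Sigma_k:=\{\,2^k\rho<|x|<2^{k+1}\rho\,\}$ rather than unit ones; we may assume $\rho\ge1$ (otherwise replace $B_\rho$ by $B_1$, which only enlarges the constant) and $r$ small throughout.

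For fixed $k$ with $2^k\rho\ge1$ and $2^{k+1}\rho\le R$, take $\eta\in C_c^\infty(B_{2^{k+1}\rho})$ with $\eta\equiv1$ on $B_{2^k\rho}$, $0\le\eta\le1$, $|\nabla\eta|\le C(2^k\rho)^{-1}$. Since $w=0$ on $K_r\cap B_R$ and $\eta$ vanishes outside $B_R$, the field $\eta^2w$ vanishes on $K_r$; and since $\dv w=0$, the error $g:=\dv(\eta^2w)=\nabla(\eta^2)\cdot w$ is supported in $\Sigma_k$, vanishes on $K_r$, has mean zero, and satisfies $\|g\|_{L^2}\le C(2^k\rho)^{-1}\|w\|_{L^2(\Sigma_k)}$. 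Assuming for the moment a field $z\in\dot H^1_0(\IR^3\backslash K_r)$, supported in a bounded collar of $\Sigma_k$, with $\dv z=g$ and
\[
\|\nabla z\|_{L^2(\IR^3)}^2\le C\bigl((2^k\rho)^{-2}+\mu\bigr)\,\|w\|_{L^2(\Sigma_k)}^2
\]
and constants independent of $r$ and $k$, the field $\phi:=\eta^2w-z$ is divergence free and vanishes on $K_r$, hence admissible in the orthogonality $\int_{\IR^3}\nabla w:\nabla\phi\,dx=0$ satisfied by $w$ (the pressure drops out because $\phi$ is solenoidal); together with Young's inequality, as in Lemma~\ref{lem:decayQuasiHarmonic}, this gives
\[
\|\nabla w\|_{L^2(B_{2^k\rho})}^2\le \frac{C}{(2^k\rho)^2}\|w\|_{L^2(\Sigma_k)}^2+C\bigl((2^k\rho)^{-2}+\mu\bigr)^{1/2}\|\nabla w\|_{L^2(\Sigma_k)}\|w\|_{L^2(\Sigma_k)}.
\]
Because $w$ vanishes on the particles contained in $\Sigma_k\subset B_R$, Lemma~\ref{lem:poincareAnnulus} applied on the unit sub-shells of $\Sigma_k$ and summed gives $\|w\|_{L^2(\Sigma_k)}^2\le C\mu^{-1}\|\nabla w\|_{L^2(\Sigma_k)}^2$ uniformly in $k$, so the previous display becomes $\|\nabla w\|_{L^2(B_{2^k\rho})}^2\le C(1+\mu^{-1})\|\nabla w\|_{L^2(\Sigma_k)}^2$. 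Writing $a_t:=\|\nabla w\|_{L^2(B_t)}^2$ this reads $a_{2^k\rho}\le C_\mu\bigl(a_{2^{k+1}\rho}-a_{2^k\rho}\bigr)$, hence $a_{2^k\rho}\le\lambda_\mu\,a_{2^{k+1}\rho}$ with $\lambda_\mu:=C_\mu/(C_\mu+1)<1$. Iterating over $k=0,\dots,\lfloor\log_2(R/\rho)\rfloor$ and bounding the last term by $\|\nabla w\|_{L^2(\IR^3)}^2$ yields $\|\nabla w\|_{L^2(B_\rho)}\le e_{\mu,\rho}(R)\|\nabla w\|_{L^2(\IR^3)}$ with $e_{\mu,\rho}(R):=\min\{1,\,C_\mu(\rho/R)^{c_\mu}\}$, $c_\mu:=\tfrac12|\log_2\lambda_\mu|>0$, which is nonincreasing and tends to $0$.

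It remains to construct $z$, which is the main obstacle. I would first solve $\dv z_0=g$ on the \emph{unperforated} shell $\Sigma_k$ by Bogovskii's operator (Lemma~\ref{lem:divergenceSolution}): the dilation $x\mapsto x/(2^k\rho)$ carries $\Sigma_k$ onto the fixed model shell $\{\,1<|x|<2\,\}$, so the constant is scale invariant and $z_0\in H^1_0(\Sigma_k)$ with $\|\nabla z_0\|_{L^2}\le C\|g\|_{L^2}$. Then I remove $z_0$ from each particle $B_x\subset\Sigma_k$: since $g$ vanishes on $B_x$ one has $\int_{\partial B_x}z_0\cdot\nu=\int_{B_x}\dv z_0=\int_{B_x}g=0$, so Corollary~\ref{cor:solenoidalExtension} supplies $u_x\in H^1_0(B_{2r}(x))$ with $u_x=z_0$ on $B_x$, $\dv u_x=0$ on $B_{2r}(x)\backslash B_x$, and $\|\nabla u_x\|_{L^2}^2\le \tfrac{C}{r^2}\|z_0\|_{L^2(B_x)}^2+C\|\nabla z_0\|_{L^2(B_x)}^2$. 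The field $z:=z_0-\sum_x u_x$ then satisfies $\dv z=g$, vanishes on $K_r$, and $\|\nabla z\|_{L^2}^2\le C\|\nabla z_0\|_{L^2}^2+\tfrac{C}{r^2}\|z_0\|_{L^2(K_r)}^2$. The last term is controlled by Lemma~\ref{lem:projest} applied to $z_0$ on the balls $B_{d/2}(x)$: using $rd^{-3}\sim\mu$, the disjointness of the $B_{d/2}(x)$, and the Poincaré inequality $\|z_0\|_{L^2(\Sigma_k)}^2\le C(2^k\rho)^2\|\nabla z_0\|_{L^2}^2$ valid for $z_0\in H^1_0(\Sigma_k)$, one obtains $\tfrac{1}{r^2}\|z_0\|_{L^2(K_r)}^2\le C\bigl(1+\mu(2^k\rho)^2\bigr)\|\nabla z_0\|_{L^2}^2$; combined with $\|\nabla z_0\|_{L^2}\le C\|g\|_{L^2}$ and $\|g\|_{L^2}\le C(2^k\rho)^{-1}\|w\|_{L^2(\Sigma_k)}$, the factor $(2^k\rho)^2$ cancels and one reaches the bound on $\|\nabla z\|_{L^2}$ used above. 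The points that genuinely require care --- and that make the Stokes case harder than the Poisson one --- are that the supports of $z_0$ and of the $u_x$ stay within a bounded collar of $\Sigma_k$ (so that $\int\nabla w:\nabla z$ is localised to a few adjacent shells and the recursion closes), and, above all, that the Bogovskii constant does not degenerate as $r\to0$; this uses that the perforation is in the critical regime $r\sim d^3$ (so $r/d\to0$) and may alternatively be imported from known uniform Bogovskii estimates on periodically perforated domains. With $z$ in hand, the proof concludes exactly as that of Lemma~\ref{lem:decayQuasiHarmonic}.
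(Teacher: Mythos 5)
Your proposal is correct and takes essentially the same route as the paper: dyadic hole filling, a Bogovskii correction on the unperforated doubling shell (Lemma \ref{lem:divergenceSolution}), particle-wise solenoidal corrections via Corollary \ref{cor:solenoidalExtension} estimated with Lemma \ref{lem:projest} and the shell Poincar\'e inequality, the perforated-annulus Poincar\'e inequality (Lemma \ref{lem:poincareAnnulus}), and iteration over doubling shells yielding the polynomial rate $e_{\mu,\rho}$. The only cosmetic difference is that the paper exploits the variational minimality of $w$ by building an admissible competitor that kills $w$ inside the inner ball (cutoff equal to one outside), whereas you test the orthogonality relation with a solenoidally corrected cutoff equal to one inside --- mirror images of the same argument, so in particular no uniform Bogovskii estimate on perforated domains is needed beyond what your own construction already provides.
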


\begin{proof}
		Fix a particle configuration with capacity $\mu $ and $d<1/(2\sqrt{3})$, and fix $R$, $\rho$, and $w$ according to the
	assumptions.
	Assume $s \geq 1$ satisfies $ 2s \leq R $. Note that $w$ is the function of minimal norm in the set
	\[
		X_w := \{ v \in \Hdot_\sigma \colon v=0 \text{ in } K_r \cap B_{2s}(0), ~ v = u \text{ on } \partial B_{2s}(0)\} 
	\] 
	Define  $\eta \in C^1(\IR^3) $ to be a cut-off function with $\eta = 1 $ in $\IR^3 \backslash B_{2s(1-3r)}(0)$, $eta = 0 $ in $B_{s(1+3r)}$,
	and $|\nabla \eta | \leq C/s$. Then, $v_1:= \eta w$ has the right boundary condition to be in the set $X_w$ but fails to be divergence free.
	Indeed, $\dv v_1 = \nabla \eta \cdot w$.
	Therefore, we use Lemma \ref{lem:divergenceSolution} to find a function $v_2 \in \Hdot_0 (B_{2s} \backslash B_s)$ with
	$\dv v_2 = - \dv v_1$ and 
	\[
		 \| \nabla v_2 \|_{L^2(B_{2s} \backslash B_s)} \leq C  \| \dv v_1 \|_{L^2(B_{2s} \backslash B_s)} 
		 \leq \frac{C}{s} \| w \|_{L^2(B_{2s} \backslash B_s)}.
	\]
	Now $v_1 + v_2$ is divergence free and equals $w$ on $\partial B_{2s}$. To match the boundary conditions in $K_r \cap B_{2s}(0)$,
	we use Corollary \ref{cor:solenoidalExtension}. For $x \in \Gamma_r \cap (B_{2s(1-2r)} \backslash B_{s(1+2r)}) $
	it provides a function $v_x \in H^1_{0,\sigma}(B_{2r}(x))$  with
	$v_x = - v_2$ in $B_x$ and 
	\begin{align}
		\| v_x \|^2_{\Hdot} & \leq \frac{C}{r^2} \| v_2 \|^2_{L^2(B_x)} + C \| \nabla v_2 \|^2_{L^2(B_x)}  \\
		& \leq C ( \mu \| v_2 \|^2_{L^2(B_\frac{d}{2}(x)} + \| \nabla v_2 \|^2_{L^2(B_\frac{d}{2}(x)} \\
		&\leq C(s^2 \mu + 1) \| \nabla v_2 \|^2_{L^2(B_\frac{d}{2}(x)},
	\end{align}
	where we used Lemma \ref{lem:projest} for the second estimate and the Poincaré inequality in $ \Hdot_0 (B_{2s} \backslash B_s)$ 
	for the last one.
	By construction, $v:= v_1 + v_2 + \sum_{x \in \Gamma_r\cap (B_{2s(1-2r)} \backslash B_{s(1+2r)})} v_x $ is an element of $X_w$.
	Therefore,
	\begin{align}
		0 &\leq \| \nabla v \|_{L^2(\IR^3)}^2 - \| \nabla w \|_{L^2(\IR^3)}^2 \\
		& \leq C\|\nabla w \|^2_{L^2(B_{2s} \backslash B_s)} + C(\frac{1}{s^2} + \mu) \| w \|^2_{L^2(B_{2s} \backslash B_s)} 
		- \|\nabla w\|^2_{L^2(B_s)}.
 	\end{align}
	Since $s \geq 1 $ by assumption, the factor $s^{-2}$ can be dropped.
	Using the Poincaré inequality in the annulus $B_{2s} \backslash B_s $, provided by
	Lemma \ref{lem:poincareAnnulus} below, we deduce
	\[
		\|\nabla w\|^2_{L^2(B_s)} \leq C (1+\mu^{-1}) \| \nabla w \|^2_{L^2(B_{2s} \backslash B_s)}.
	\]
	Using again the hole filling technique and iterating from $s:= \max\{\rho,1\}$ until $2^k s \geq R/2 $ concludes the proof. 
\end{proof}

\section{Conclusions}

We have analyzed the convergence properties of the Method of Reflections
for both Poisson and Stokes equations with Dirichlet boundary conditions.
For typical particle configurations extending to the whole space, convergence does not hold.
However, a modified method has been obtained that ensures convergence 
 for particle configurations with bounded capacity density.
 
Using this method, we have proven classical homogenization results in unbounded domains for regular
particle configurations and sources $f \in H^{-1}(\IR^3)$. For the Poisson equation it
was proven in \cite{NV1}, \cite{NV} that this result can be extended 
to sources $f\in L^{\infty}\left(
\mathbb{R}^{n}\right)  $. The proof in \cite{NV1}, \cite{NV} relies heavily in the
derivation of the so-called screening estimate, which states that the
fundamental solution for the Laplace equation in a perforated domain with
Dirichlet boundary conditions decays exponentially. 
In \cite{NV1}, \cite{NV}, this decay was proven by means of the Maximum Principle.
As we have seen in Lemma \ref{lem:decayQuasiHarmonic} (cf. also Remark \ref{rem:exponentialDecay}), 
it is possible to derive such exponential screening estimates without using Maximum Principles,
relying instead on Poincaré estimates for perforated domains (cf. Corollary \ref{cor:PoincarePerforated}
and Lemma \ref{lem:poincareAnnulus}). Therefore, the results can hoped to be extended to more general
elliptic operators.

For the Stokes equations, however, we do not have such a strong decay estimate (cf. Lemma \ref{lem:SpolynomialDecay}).
In fact, the solutions of the Brinkman equations \eqref{S2E9} with compactly supported sources $g \in L^\infty(\IR^3)$ 
decay only cubic in the distance to the support of $g$ (cf. \cite{AHF}).
Therefore, the solutions with sources $f \in L^\infty(\IR^3)$ cannot be expected to be bounded.
 
The boundary conditions used in  \cite{Luke} are
not the homogeneous Dirichlet conditions but the set of natural boundary
conditions for sedimenting particles (or an analogous set of Neumann-Dirichlet
boundary conditions in the case of the Poisson equation). It is worth to
indicate that the screening effects, which have been discussed above, can be
expected to be rather different for the set of boundary conditions in
\cite{Luke} and for Dirichlet boundary conditions that we considered. Using again the
electrostatic analogy, the Dirichlet boundary conditions we consider in this paper
are those corresponding to grounded conducting particles, while those in \cite{Luke}
correspond to isolated conducting particles.  Hence, the Dirichlet boundary conditions result
in the onset of induced charges at the particles which are proportional to the
capacity of the particles. On the contrary, the type of boundary conditions
used in \cite{Luke} results in the onset of induced dipoles at the particles,
instead of charges. The potentials produced by dipoles decay faster than the
ones produced by charges and as a consequence screening effects and collective
particle interactions might be expected to be less relevant.
 Understanding this type of dipole induced screening effects is an interesting issue,
which deserves further investigation.

\section*{Acknowledgement}

The authors acknowledge support through the CRC 1060, the mathematics of emergent effects, of the University of Bonn,
that is funded through the German Science Foundation (DFG).

\printbibliography

\end{document}